\def\todaysdate{20\textsuperscript{th} November 2025}
\definecolor{lightblue}{rgb}{0.8,0.8,1}
\definecolor{carmine}{rgb}{0.59, 0.0, 0.09}
\setlist[enumerate]{nosep}
\setlist[itemize]{nosep}
\numberwithin{equation}{section}
\numberwithin{figure}{section}
\definecolor{vdarkred}{rgb}{0.7,0,0}
\declaretheoremstyle[
  spaceabove=\topsep,
  spacebelow=\topsep,
  headpunct=,
  numbered=no,
  postheadspace=1ex,
  headfont=\color{vdarkred}\normalfont\bfseries,
  bodyfont=\normalfont\itshape,
]{colored}
\declaretheoremstyle[
  spaceabove=\topsep,
  spacebelow=\topsep,
  headpunct=,
  numbered=no,
  postheadspace=1ex,
  headfont=\normalfont\bfseries,
  bodyfont=\normalfont\itshape,
]{italic}
\declaretheoremstyle[
  spaceabove=\topsep,
  spacebelow=\topsep,
  headpunct=,
  numbered=no,
  postheadspace=1ex,
  headfont=\normalfont\bfseries,
  bodyfont=\normalfont\upshape,
]{upright}
\declaretheorem[style=italic,name=Theorem,numbered=yes,numberwithin=section]{thm}
\declaretheorem[style=italic,name=Lemma,numbered=yes,numberlike=thm]{lem}
\declaretheorem[style=italic,name=Proposition,numbered=yes,numberlike=thm]{prop}
\declaretheorem[style=italic,name=Conjecture,numbered=yes,numberlike=thm]{conjecture}
\declaretheorem[style=upright,name=Definition,numbered=yes,numberlike=thm]{defn}
\declaretheorem[style=upright,name=Remark,numbered=yes,numberlike=thm]{rmk}
\declaretheorem[style=upright,name=Notation,numbered=yes,numberlike=thm]{notation}
\renewcommand*{\@seccntformat}[1]{\upshape\csname the#1\endcsname.\hspace{1ex}}
\renewcommand*{\section}{\@startsection{section}{1}{\z@}%
	{2.5ex \@plus 1ex \@minus 0.2ex}%
	{1.5ex \@plus 0.2ex}%
	{\normalfont\normalsize\bfseries}}
\renewcommand*{\subsection}{\@startsection{subsection}{2}{\z@}%
	{2.5ex \@plus 1ex \@minus 0.2ex}%
	{-1.5ex \@plus -0.2ex}%
	{\normalfont\normalsize\bfseries}}
\renewcommand*{\subsubsection}{\@startsection{subsubsection}{3}{\z@}%
	{2.5ex \@plus 1ex \@minus 0.2ex}%
	{-1.5ex \@plus -0.2ex}%
	{\normalfont\normalsize\bfseries}}
\renewcommand*{\paragraph}{\@startsection{paragraph}{4}{\z@}%
	{2.5ex \@plus 1ex \@minus 0.2ex}%
	{-1.5ex \@plus -0.2ex}%
	{\normalfont\normalsize\bfseries}}
\renewcommand*{\subparagraph}{\@startsection{subparagraph}{5}{\z@}%
	{2.5ex \@plus 1ex \@minus 0.2ex}%
	{-1.5ex \@plus -0.2ex}%
	{\normalfont\normalsize\slshape}}
\newcommand{\FA}{\mathscr F_{\bar{i},\cN}}
\newcommand{\LA}{\mathscr L_{\bar{i},\cN}}
\newcommand{\FAnn}{\mathscr F_{\bar{i},\cN+1}}
\newcommand{\LAnn}{\mathscr L_{\bar{i},\cN+1}}
\newcommand{\FAM}{\mathscr F_{\bar{i},\cM}}
\newcommand{\LAM}{\mathscr L_{\bar{i},\cM}}
\newcommand{\PJ}{J{ \Gamma}^{\bar{N}}(\beta_n)}
\newcommand{\PJk}{J{ \Gamma}^{\cN}(\beta_n)}
\newcommand{\PA}{\Gamma^{\cN}(\beta_n)}
\newcommand{\PAo}{\Gamma^{\cM,A}(\beta_n)}
\newcommand{\PAA}{\hat{\Gamma}^{\cN}(L)}
\newcommand{\PAAJ}{\hat{\Gamma}^{\cN,J}(L)}
\newcommand{\PAAJr}{\hat{\Gamma}^{\cN,J,R}}
\newcommand{\PAAJw}{\hat{\Gamma}^{\cN,J}(K)}
\newcommand{\PAAJk}{\hat{\Gamma}^{\cN,J,k}(K)}
\newcommand{\PAAN}{\hat{\Gamma}^{\cN+1}(L)}
\newcommand{\PAANJ}{\hat{\Gamma}^{\cN+1,J}(L)}
\newcommand{\PAAR}{\hat{\Gamma}^{R,\cN}(\beta_n)}
\newcommand{\PAAW}{\hat{\Gamma}^{\cN,W}(K)}
\newcommand{\LL}{\mathbb L_{\cN}}
\newcommand{\LLh}{\hat{\mathbb L}}
\newcommand{\LLhJ}{\hat{\mathbb L}^J}
\newcommand{\LLhJk}{\hat{\mathbb L}^{J,k}}
\newcommand{\LLhW}{\hat{\mathbb L}^W}
\newcommand{\LLN}{\hat{\mathbb L}_{\mathcal N}}
\newcommand{\LLNJ}{\hat{\mathbb L}^{J}_{\mathcal N}}
\newcommand{\LLNJk}{\hat{\mathbb L}^{J,k}_{\mathcal N}}
\newcommand{\LLNW}{\hat{\mathbb L}^{W}_{\mathcal N}}
\newcommand{\LLNN}{\hat{\mathbb L}_{\mathcal N+1}}
\newcommand{\LLNNJ}{\hat{\mathbb L}^{J}_{\mathcal N+1}}
\newcommand{\La}{\hat{\Lambda}}
\newcommand{\LN}{\Lambda_{\mathcal N}}
\newcommand{\LNJ}{\Lambda^{J}_{\bar{N}}}
\newcommand{\LM}{\Lambda_{\mathcal M}}
\newcommand{\LMJ}{\Lambda_{\bar{N}}^{J}}
\newcommand{\LNt}{\hat{\Lambda}_{\mathcal N}}
\newcommand{\LNtJ}{\hat{\Lambda}_{\mathcal N}^{J}}
\newcommand{\LNNt}{\hat{\Lambda}_{\mathcal N+1}}
\newcommand{\LNNtJ}{\hat{\Lambda}_{\mathcal N+1}^{J}}
\newcommand{\LNN}{\Lambda_{\mathcal N+1}}
\newcommand{\LNNJ}{\Lambda_{\mathcal N+1}^{J}}
\newcommand{\I}{{\large \hat{{\Huge {\Gamma}}}}(L)}
\newcommand{\IR}{{\large \hat{\Huge {\Gamma^R}}}}
\newcommand{\AM}{\Phi^{\mathcal M}(L)}
\newcommand{\AN}{\Phi^{\mathcal N}(L)}
\newcommand{\ANt}{\hat{\Phi}^{\mathcal N}(L)}
\newcommand{\ANtJ}{\hat{J}^{\mathcal N}(L)}
\newcommand{\ANNt}{\hat{\Phi}^{\mathcal N+1}(L)}
\newcommand{\Aa}{\hat{\Phi}(L)}
\newcommand{\IJJ}{{\large \hat{{\Huge {\Gamma}}}^{J}}(L)}
\newcommand{\IJJw}{{\large \hat{{\Huge {\Gamma}}^{J}}}(K)}
\newcommand{\IJJk}{{\large \hat{{\Huge {\Gamma}}^{J,k}}}(K)}
\newcommand{\IW}{{\large \hat{{\Huge {\Gamma}}^{W}}}(K)}
\newcommand{\IRJJ}{{\large \hat{\Huge {\Gamma}}^{J,R}}}
\newcommand{\ANJ}{J_{\bar{N}}(L)}
\newcommand{\su}{\hat{\psi}}
\newcommand{\usn}{\psi^{u}_{\mathcal N}}
\newcommand{\usnJ}{\psi^{u,J}_{\bar {N}}}
\newcommand{\usm}{\psi^{u}_{\mathcal M}}
\newcommand{\snJ}{\psi^{\cN}_{\bar{N}}}
\newcommand{\sm}{\psi_{\mathcal M}}
\newcommand{\snt}{\tilde{\psi}_{\mathcal N}}
\newcommand{\sntJ}{\tilde{\psi}^{J}_{\mathcal N}}
\newcommand{\snnt}{\tilde{\psi}_{\mathcal N+1}}
\newcommand{\snb}{\bar{\psi}_{\mathcal N}}
\newcommand{\snbt}{\hat{\psi}^{\mathcal N}_{\mathcal N}}
\newcommand{\snbJ}{\bar{\psi}^{\mathcal N,J}_{\mathcal N}}
\newcommand{\smnbt}{\hat{\psi}^{\mathcal N}_{\mathcal M}}
\newcommand{\smnbJt}{\hat{\psi}^{\mathcal N}_{\bar{N}}}
\newcommand{\snbJt}{\hat{\psi}^{\mathcal N,J}_{\mathcal N}}
\newcommand{\snbbb}{\bar{\psi}_{\mathcal N+1}}
\newcommand{\snbb}{\bar{\bar{\psi}}_{\mathcal N}}
\newcommand{\snbbJ}{\bar{\bar{\psi}}^{J}_{\mathcal N}}
\newcommand{\snnbb}{\bar{\bar{\psi}}_{\mathcal N+1}}
\newcommand{\PPN}{\text{pr}_{\mathcal{N}}}
\newcommand{\PPNJ}{\text{pr}_{\mathcal{N}}^{J}}
\newcommand{\PMN}{p^{\mathcal{N}}_{\mathcal{M}}}
\newcommand{\PMNJ}{p^{\mathcal{N},J}_{\bar{N}}}
\newcommand{\PMNN}{p^{\mathcal{N}}_{\mathcal{M}}}
\newcommand{\PNN}{p^{\mathcal{N}}_{\mathcal{N}}}
\newcommand{\PNNJ}{p^{\mathcal{N},J}_{\bar{N}}}
\newcommand{\PhM}{\hat{p}_{\mathcal{M}}}
\newcommand{\PhN}{\hat{p}_{\mathcal{N}}}
\newcommand{\N}{\mathbb N}
\newcommand{\Z}{\mathbb Z}
\newcommand{\Imm}{\mathrm{Im}}
\newcommand{\Conf}{\mathrm{Conf}}
\definecolor{dgreen}{RGB}{0,150,0}
\newcommand{\incl}[3][right]%
{%
\draw[<-,>=#1 hook] #2 to ($ #2!0.5!#3 $);
\draw[->,>=stealth'] ($ #2!0.5!#3 $) to #3;%
}
\newcommand{\inclusion}[5][right]%
{%
\draw[<-,>=#1 hook] #4 to ($ #4!0.5!#5 $) node[#2,font=\small]{#3};
\draw[->,>=stealth'] ($ #4!0.5!#5 $) to #5;%
}
\newcommand{\cM}{\mathcal{M}}
\newcommand{\cN}{\mathcal{N}}
\newcommand{\C}{\mathbb{C}}
\newcommand{\Q}{\mathbb{Q}}
\newcommand{\xn}{{\xi_{\mathcal N}}}
\newcommand{\Clm}{C_{n,m}}
\newcommand{\ClmN}{C_{n,m}^{\bar{\cN}}}
\newcommand{\ccN}{N^C}
\newcommand{\FJ}{\mathscr F_{\bar{i},\bar{N}}}
\newcommand{\LJ}{\mathscr L_{\bar{i},\bar{N}}}
\newcommand{\HJi}{H^{\bar{\cN}(\bar{N})}_{n,m_J(\bar{N})}}
\newcommand{\HJdi}{H^{\bar{\cN}(\bar{N}),\partial}_{n,m_J(\bar{N})}}
\newcommand{\sJt}{\psi^{\cN}_{\bar{N}}}
\newcommand{\sJtk}{\psi^{\cN,J}_{N}}
\newcommand{\sJto}{\psi^{\cN,J,k}_{N}}
\newcommand{\sJtkt}{\tilde{\psi}^{\cN,J}}
\newcommand{\sJtot}{\tilde{\psi}^{\cN,J,k}}
\newcommand{\sAt}{\psi^{\cN}_{\cM}}
\newcommand{\sAto}{\psi^{A}_{\cM}}
\newcommand{\sJtt}{\psi^{1,\cN,\bar{N}^b}_{q,\bar{N}^{\alpha},\overline{[N]}}}
\newcommand{\sAtt}{\psi^{1-\cM,\cN,\bar{\cM}^b}_{\xi_{\cM},\bar{\lambda},\overline{d(\lambda)}}}
\newcommand{\sAtuJ}{\psi^{J}_{\bar{N}}}
\newcommand{\HAi}{H^{\bar{\cN}}_{n,m_A(\cN)}}
\newcommand{\HAdi}{H^{\bar{\cN},\partial}_{n,m_A(\cN)}}
\newcommand{\HA}{H^{\bar{\cN}}_{n,2+(n-1)(\cN-1)}}
\newcommand{\HAd}{H^{\bar{\cN},\partial}_{n,2+(n-1)(\cN-1)}}
\renewcommand{\geq}{\geqslant}
\renewcommand{\leq}{\leqslant}
\renewcommand{\footnoterule}{%
  \kern -3pt
  \hrule width \textwidth height 0.4pt
  \kern 2.6pt
}
\definecolor{dgreen}{RGB}{0,150,0}
\begin{document}
\title{\vspace{-14mm} \Large\bfseries Universal geometrical link invariants}
\author{ \small Cristina Anghel \quad $/\!\!/$\quad \todaysdate\vspace{-3ex}}
\date{}
\maketitle
{
\makeatletter
\renewcommand*{\BHFN@OldMakefntext}{}
\makeatother
\footnotetext{\textit{Key words and phrases}: Quantum invariants, Topological models, Universal invariants.}
}
\vspace{-10mm}
\begin{abstract}
We construct geometrically two {\bf \em universal link invariants}:  universal ADO invariant and universal Jones invariant, as limits of invariants given by graded intersections in configuration spaces. More specifically, for a fixed level $\cN$, we define new link invariants: {\bf \em``$\cN^{th}$ Unified Jones invariant''} and {\bf \em``$\cN^{th}$ Unified Alexander invariant''}. They globalise topologically all coloured Jones polynomials for links with multi-colours bounded by $\cN$ and all ADO polynomials with bounded colours. These invariants both come from the same { \bf \em weighted Lagrangian intersection} supported on configurations on {\bf \em arcs and ovals} in the disc.
 
The question of providing a universal non semi-simple link invariant, recovering all the ADO polynomials was an open problem. A parallel question about semi-simple invariants for the case of knots is the subject of Habiro's famous universal knot invariant \cite{H3}. Habiro's universal construction is well defined for knots and can be extended just for certain classes of links. Our universal Jones link invariant is defined for any link and recovers all coloured Jones polynomials, providing a {\bf \em new semi-simple} universal link invariant. The {\bf \em first non semi-simple} universal link invariant that we construct unifies all ADO invariants for links, answering the open problem about the globalisation of these invariants.  


 \end{abstract}
 
 \vspace{-2mm}

\section{Introduction}\label{introduction}

\vspace{-3mm}
Coloured Jones and coloured Alexander polynomials are two sequences of quantum link invariants originating in representation theory (\cite{J},\cite{RT},\cite{Witt}). The geometric information encoded by these invariants is a fundamental problem in quantum topology. Conjectures from physics such as the Volume Conjecture (Kashaev \cite{K},\cite{M2}) and Gukov-Manolescu's Conjecture (\cite{GM}) state that these invariants recover asymptotically geometry of knot complements. Coming from representation theory, Habiro defined at this asymptotic level a universal knot invariant (\cite{H3}) providing a unification of all coloured Jones polynomials for {\em knots}. 
This lead to his celebrated unification of Witten-Reshetikhin-Turaev invariants for homology spheres (\cite{H3}). 
Passing from knots to general link invariants, such asymptotic results remained open. 

{\bf Open problem:} {\bf \em Unification of coloured Alexander link invariants}\\
Can one construct a unification of coloured Alexander invariants for coloured links?\\
(Parallel to Habiro's famous program unifying coloured Jones polynomials for knots \cite{H3}). 

Motivated by the conjectured geometry that is beside this story, our goal is to study quantum invariants from a new topological viewpoint: as graded intersections in configuration spaces. Such models appeared in \cite{Big}, \cite{Law}, \cite{SM}, \cite{MM}, \cite{CrI}, \cite{Cr1}, \cite{Cr3}, \cite{Crsym}, \cite{Cr2}, \cite{WRT}, \cite{CrG}. Our main result is an answer to the above open problem that comes from representation theory, using purely topological tools.
\begin{thm}[\bf \em Universal ADO link invariant]\label{THEOREMUniv}\label{TH}
We construct a geometric link invariant $\I$, in a completion of a polynomial ring $\LLh$ (\eqref{rna}, \eqref{ra}), that recovers all coloured Alexander link invariants. $\I$ is defined geometrically, as a limit of intersections in configuration spaces:\begin{equation}
\I:= \underset{\longleftarrow}{\mathrm{lim}} \ \PAA \in \LLh
\end{equation}
where $\PAA$ is a link invariant recovering all ADO invariants at levels bounded by $\cN$.
\end{thm}
\vspace{-2mm}
\begin{thm}[\bf \em Universal Jones link invariant]\label{THEOREMUnivJ}\label{THJ}
We define a link invariant $\IJJ$ taking values in a universal ring $\LLhJ$ (\eqref{rnj}, \eqref{rj}) that recovers all coloured Jones link polynomials. 
This invariant $\IJJ$ is a limit of link invariants defined in configuration spaces:\begin{equation}
\IJJ:= \underset{\longleftarrow}{\mathrm{lim}} \ \PAAJ \in \LLhJ
\end{equation}
where $\PAAJ$ is a link invariant recovering all coloured Jones polynomials for links with multicolours that are all bounded by $\cN$.

\end{thm}
Our strategy is to build these universal invariants via two sequences of link invariants that unify more and more coloured Jones and coloured Alexander link polynomials, as below.

\begin{thm}[{\bf New level $\cN$ link invariants}] For each $\cN$, we construct geometrically two link invariants $\PAA$ and $\PAAJ$ via the same Lagrangian intersection in a fixed configuration space.

$\bullet$ We call $\PAA$ the {\color{blue} $\cN^{th}$ unified ADO invariant} and show that this link invariant recovers all ADO polynomials up to level $\cN$ (as in Theorem \ref{INV}).

$\bullet$ We call $\PAAJ$ {\color{blue} the $\cN^{th}$ unified Jones invariant} and prove that it unifies all coloured Jones polynomials up to level $\cN$ (as in Theorem \ref{INVJ}).
\end{thm}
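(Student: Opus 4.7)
The plan is to present a link $L$ as the closure of a braid $\beta_n \in B_n$ on $n$ strands and to produce both $\PAA$ and $\PAAJ$ from a single weighted Lagrangian intersection in a fixed configuration space of ordered points in a disc with $n$ punctures. I would first equip the universal abelian cover of this configuration space with a Lawrence-type local system whose monodromy variables are chosen to host simultaneously the weights appearing in every ADO and coloured Jones topological model at level at most $\cN$. Inside this cover I would lift two natural families of submanifolds indexed by tuples $\bar{i}$: the arcs $\FA$, coming from geometric paths joining consecutive punctures, and the ovals $\LA$, coming from circles encircling prescribed subsets of punctures. The geometric braid $\beta_n$ acts on the punctured disc and lifts to the cover, and the weighted intersection form $\langle \beta_n \cdot \FA, \LA \rangle$ produces a well-defined element of the completed polynomial ring $\LLh$, which I would define to be $\PAA$; changing only the weight normalisation on the same intersection produces $\PAAJ \in \LLhJ$.

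The second step is invariance under the two Markov moves, which together with the braid-indexed formula above give a link invariant. Conjugation invariance should follow because conjugating $\beta_n$ is realised on the punctured disc by an ambient isotopy that carries the reference arcs and ovals consistently. Stabilisation requires exhibiting a geometric inclusion of configuration spaces that sends the chosen arcs and ovals at level $n$ onto those at level $n+1$ and that absorbs the extra crossing via a standard local model, compensated by the normalisation of the weight. Simultaneously I would show that the natural truncation $\PAAN \to \PAA$ (and its $J$-superscripted analogue) is compatible with these pairings, so that the inverse limits in Theorems \ref{THEOREMUniv}--\ref{THEOREMUnivJ} make sense. I expect this stabilisation compatibility---choosing weights that make the extra puncture contribute trivially after specialisation while keeping all $\cM \leq \cN$ models simultaneously encoded---to be the technical heart of the proof, since it is what forces a single geometric object to interpolate between infinitely many previously disjoint topological models.

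Finally, I would establish the two specialisation identities. The local system on the configuration space admits quotient maps $\smnbt$ and $\smnbJt$ whose targets are exactly the local systems used in the previously constructed topological models for the level-$\cM$ ADO invariant $\AM$ and for the coloured Jones polynomial $J_{N_1,\dots,N_l}(L)$ with $N_i \leq \cN$. Under $\smnbt$, the universal arcs and ovals $\FA$ and $\LA$ restrict to the Lagrangians $\FAM$ and $\LAM$ that appear in the construction of $\AM$, so the weighted intersection number matches the one computed there, giving $\PAA|_{\smnbt}=\AM$. The same argument, applied with the Jones-type specialisation $\smnbJt$ and the Lagrangians adapted to the multi-colour setting, yields $\PAAJ|_{\smnbJt}=J_{N_1,\dots,N_l}(L)$ for all tuples of colours bounded by $\cN$. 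Combined with invariance and the inverse-system compatibility, these two identities complete the proof.
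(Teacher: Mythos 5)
Your third step (the specialisation identities) is essentially the paper's argument: under the ADO-type and Jones-type specialisations the weights annihilate the out-of-range indices $\bar{i}$, the surviving weights become $1$, and a lemma imported from \cite{CrI} identifies the remaining pairings with those of the lower-level non-weighted models, which are already known to compute $\AM$ and $J_{\bar{N}}(L)$. That part of your plan is sound.

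The invariance step, however, has a genuine gap. You propose to prove that the weighted intersection is a link invariant by checking the Markov moves directly (conjugation via ambient isotopy, stabilisation via an inclusion of configuration spaces compensated by the weights). But the weighted intersection $\PA \in \LL$ is \emph{not} a link invariant --- the paper states explicitly that it depends on the braid representative --- and no geometric Markov-move argument is given or needed. The paper's mechanism is algebraic: the rings $\LLN$ and $\LLNJ$ are defined as quotients of $\LL$ by the ideals $\tilde{I}_{\cN}=\mathrm{Ker}(\snt)$ and $\tilde{I}^{J}_{\cN}=\mathrm{Ker}(\sntJ)$, the kernels of the \emph{products} of all specialisation maps with colours bounded by $\cN$. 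By construction the induced maps $\snbb$ and $\snbbJ$ into the product rings are injective, and they send $\PAA$ and $\PAAJ$ to $\ANt$ and $\ANtJ$, the products of the already-invariant coloured Alexander and coloured Jones polynomials; invariance of $\PAA$ and $\PAAJ$ is then inherited through this injectivity rather than proved geometrically. Your route would in addition have to compare intersections living in different spaces $\Conf_{2+(n-1)(\cN-1)}(\mathbb D_{2n+2})$ as $n$ changes under stabilisation, a substantial claim you do not justify. Relatedly, you place $\PAA$ in the limit ring $\LLh$ and distinguish $\PAAJ$ from $\PAA$ by a ``weight normalisation''; in the paper both invariants are images of the \emph{same} element $\PA$ in two different quotients, $\LLN$ versus $\LLNJ$, determined by the two families of specialisations (roots of unity versus generic parameter), and the limit rings only enter afterwards, for the universal invariants of Theorems \ref{TH} and \ref{THJ}.
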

\vspace{-2mm}
More precisely, these invariants will be defined via a {\em weighted Lagrangian intersection} $$\PA \in \LL=\Z[w^1_1,...,w^{l}_{\cN-1},u_1^{\pm 1},...,u_l^{\pm 1},x_1^{\pm 1},...,x_l^{\pm 1},y^{\pm 1}, d^{\pm 1}]$$ in the configuration space of $(n-1)(\cN-1)+2$ points in the disc (presented in   Subsection  \ref{weightedintersection}), seen in two different quotient rings of $\LL$, described in relations \eqref{rnj} and \eqref{rna}.  

\begin{figure}[H]
\begin{center}
\hspace*{-6mm}\begin{tikzpicture}
[x=1.2mm,y=1.4mm]
\node (Al)               at (10,10)    {$ \PAA \in \ \LLN$};
\node (Jl)               at (-65,10)    {$ \LLNJ \ni \PAAJ$};

\node (Jn)               at (-40,-2)    {$J_{\cN,...,\cN}(L)\ \ \ \cdots$};
\node (Jn-1)               at (-65,-2)    {$J_{N_1,...,N_l}(L)$};
\node (Jm)               at (-85,-2)    {$J_{2,...,2}(L)$};


\node (An)               at (28,-2)    {${\Phi^{\cN}}(L) \ \ \ \ \cdots$};
\node (An-1)               at (10,-2)    {$\Phi^{\cN-1}(L)$};
\node (Am)               at (-12,-2)    {${\Phi^{2}}(L) $};

\node(IA)[draw,rectangle,inner sep=3pt,color=red] at (20,19) {\begin{tabular}{l} $\cN^{th}$ unified ADO\\ \ \ \ \ invariant \end{tabular}};
\node(IJ)[draw,rectangle,inner sep=3pt,color=red] at (-80,19) {\begin{tabular}{l} $\cN^{th}$ unified Jones\\ \ \ \ \ invariant \end{tabular}};

\node(LI)[draw,rectangle,inner sep=3pt,color=blue] at (-27,18) {\begin{tabular}{l}
 \ \ \ \ \ \ \ \ Weighted \\ Lagrangian intersection \\ $ \  \color{black} \PA \in \LL \ (\text{Def.} \ref{D:w})$ 
\end{tabular}};

\node(UA)[draw,rectangle,inner sep=3pt,color=red] at (5,35) {Universal ADO link invariant};
\node(UJ)[draw,rectangle,inner sep=3pt,color=red] at (-60,35) {Universal Jones link invariant};

\node(Q)[draw,rectangle,inner sep=3pt,color=red, minimum width = 6.9cm, 
    minimum height = 1.1cm] at (-63,-4) {$\phantom{A}$};
\node(Q)[draw,rectangle,inner sep=3pt,color=red, minimum width = 6.2cm, 
    minimum height = 1.1cm] at (6,-4) {$\phantom{A}$};

\node (J)               at (-50,30)    {$\IJJ \in \LLhJ$};
\node (A)               at (-10,30)    {$\I \in \LLh$};

\draw[->]             (Jl)      to node[right,xshift=2mm,font=\large]{}   (Jn);
\draw[->]             (Jl)      to node[right,xshift=2mm,font=\large]{}   (Jn-1);
\draw[->]             (Jl)      to node[right,xshift=2mm,font=\large]{}   (Jm);

\draw[->]             (J)      to node[right,xshift=2mm,font=\large]{}   (Jl);


\draw[->]             (A)      to node[right,xshift=2mm,font=\large]{}   (Al);

\draw[->]             (Al)      to node[right,xshift=2mm,font=\large]{}   (An);
\draw[->]             (Al)      to node[right,xshift=2mm,font=\large]{}   (An-1);
\draw[->]             (Al)      to node[right,xshift=2mm,font=\large]{}   (Am);

\draw[->,dashed, red]             (IJ)      to node[right,xshift=2mm,font=\large]{$\underset{\longleftarrow}{\mathrm{lim}}$}   (UJ);
\draw[->,dashed, red]             (IA)      to node[right,xshift=2mm,font=\large]{$\underset{\longleftarrow}{\mathrm{lim}}$}   (UA);

\draw[->,dashed, blue]             (LI)      to node[right,xshift=2mm,font=\large]{$$}   (Jl);
\draw[->,dashed, blue]             (LI)      to node[right,xshift=2mm,font=\large]{$$}   (Al);

\end{tikzpicture}
\vspace*{-10mm}
$$\hspace{-20mm} \ \ \ \ \ \ \ \ \ \ \ \ \text{Coloured Jones polynomials} \hspace{35mm} \ \ \ \ \ \ \text{     Coloured Alexander polynomials}$$

\end{center}
\vspace{-3mm}
\caption{Two geometrical universal link invariants}
\end{figure}
\vspace{-6mm}
\subsection{Motivation and Results}

Coloured Jones polynomials come from the semi-simple representation theory of $U_q(sl(2))$, and they associate to a $l$-component link together with $l$ colours $N_1,...,N_l \in \N$ a one-variable polynomial $J_{N_1,...,N_l}(L)$. Dually, for $\cN \in \N$ the quantum group at the root of unity $\xi_N=e^{\frac{2 \pi i}{2\cN}}$ gives non-semisimple link invariants, in $l$ variables, called coloured Alexander polynomials $\Phi^{\cN}(L)$ (or ADO \cite{ADO}). 
These coloured link invariants allow the construction of 3-manifold invariants: the Witten-Reshetikhin-Turaev invariant ($WRT_{\cN}(M)$) and the Costantino-Geer-Patureau invariant ($CGP_{\cN}(M)$). These are powerful invariants that recover the Reidemeister torsion and detect lens spaces (\cite{W}, \cite{CGP14}, \cite{BCGP}). A 3-dimensional topological description and categorification for $WRT$ and $CGP$ invariants are major problems in quantum topology. As discussed, from representation theory's perspective, the story for asymptotics of quantum link invariants is different to the knot case. Habiro's invariant is defined for the first sequence of semi-simple knot invariants. The existence of a universal invariant remained an {\bf \em open problem for the case of links} in both semi-simple and non semi-simple setting.

 \begin{itemize}
\item{\bf Open Question 1} Construct {\em universal Jones link invariants} recovering all coloured Jones polynomials for {\em coloured links rather than for knots}.\\ Up to this moment no such model is known for the coloured Jones invariants for {\em links}. 
\item{\bf Open Question 2} Construct {\em universal ADO link invariants}.\\ Up to now no such model was known for coloured Alexander invariants (even for knots).  
\item{\bf Question 3} Are there {\em unifications of CGP invariants for $3$-manifolds}? 
\end{itemize}

 We answer the first two open questions. The answers to {Question 1} and {Question 2}  provide a new perspective on universal link invariants, from a purely topological viewpoint. We provide the first unification for non-semisimple link invariants and this is the building block for a sequel paper on topological models for non-semisimple $3$-manifold invariants, opening avenues for {Question 3}.
\subsection{Idea- First universal invariants for links via weighted intersections}
In order to construct the unification for the link case, our strategy is to define {\em weighted Lagrangian intersections}. The core idea is to add weights that provide additional variables. For knots, if we count all the weights as $1$ we obtain our (non-weighted) universal geometrical Jones invariant from \cite{CrI}. 
However, this invariant as well as Habiro's invariant, could {\em not be extended to the link case}. The addition of the weights leads to a key point: Theorem \ref{THEOREMAU}. This creates a {\em geometric perspective} that allows us to read all coloured Alexander and coloured Jones invariants of level lower than $\cN$ from the set of intersections between Lagrangian submanifolds in a fixed configuration space.

This unification is not immediately expected from representation theoretic perspectives. Our novel topological tools open avenues on algebraic aspects of unifications, from the quantum group point of view. This creates new interactions between representation theory and topology, and shows that in the case of this open problem topology has allowed us to understand algebra more deeply.
\vspace*{-2mm}

\subsection{Summary of the construction}
\subsubsection{Weighted intersection}  
\label{ssAu}
 
 We look at links $L$ as closures of braids $\beta_n$ with $n$ strands. For a colouring $N_1,...,N_l \leq \cN$, let $\bar{N}:=(N_1,...,N_l)$ and we say that $\bar{N} \leq \cN$ if $N_1,...,N_l \leq \cN$. \\
 For a fixed level $\cN$, we construct a {\em weighted Lagrangian intersection}: $$\PA \in \LL=\Z[w^1_1,...,w^{l}_{\cN-1},u_1^{\pm 1},...,u_l^{\pm 1},x_1^{\pm 1},...,x_l^{\pm 1},y^{\pm 1}, d^{\pm 1}]$$ in the configuration space of $(n-1)(\cN-1)+2$ points in the disc (see  Subsection  \ref{weightedintersection}). This weighted intersection $\PA$ is parametrised by a set of intersections between Lagrangian submanifolds, as in Figure \ref{ColouredAlex0} and weighted in a subtle manner via the variables of the ring $\LL$ (see Definition \ref{D:w}).  
\begin{thm}[{\bf \em Unifying coloured Alexander and coloured Jones polynomials}]\label{THEOREMAU}
For a fixed level $\cN\in \N$, $\PA$ recovers all coloured Alexander and all coloured Jones polynomials for links with (multi)-colours bounded by $\cN$, as below:
\begin{equation}
\begin{aligned}
&\Phi^{\cM}(L) =~ \PA \Bigm| _{\sAt}, \ \forall \cM\leq \cN; \ \ \ J_{\bar{N}}(L) =~ \PA \Bigm| _{\sJt}, \forall \bar{N}=(N_1,...,N_l) \leq \cN.
\end{aligned}
\end{equation} 
\end{thm}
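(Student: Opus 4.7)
The plan is to expand the weighted Lagrangian intersection $\PA$ as a sum indexed by multi-indices $\bi=(i_1,\dots,i_n)$ with $i_j \in \{0,\dots,\cN-1\}$, where each index morally records a "colour level" assigned to the $j$-th strand of the braid $\beta_n$. By the construction, each term is of the form $\langle (\beta_n \cup \mathbb I_{n+2})\FA,\LA\rangle$ multiplied by a monomial in the variables $w^{j}_k, u_j, x_j, y, d$ dictated by the weighting rule of Definition \ref{D:w}. The core strategy is to show that each of the two specializations $\sAt$ and $\sJt$ is engineered so that exactly the "unwanted" terms of this sum are killed, while the surviving subsum collapses onto a previously known topological model for the target quantum invariant.

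For the ADO identity, I would first analyse the effect of $\sAt$ on the weights $w^{j}_k$. The key computation is that the evaluation of these weights at $\sAt$ produces a factor that vanishes precisely when some strand index $i_j$ exceeds $\cM-1$. Consequently the sum truncates to indices $\bi \in \{0,\dots,\cM-1\}^n$, i.e. the multi-index range corresponding to uniformly $\cM$-coloured strands. On the surviving range the specialization $\sAt$ turns the remaining weight monomials and the generic variables $u_j,x_j,y,d$ into exactly the holonomy and $q$-weights used in the author's earlier topological model for coloured Alexander invariants at a fixed level $\cM$ (constructed via graded intersections in configuration spaces as in \cite{Cr3}, \cite{Cr1}). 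Invoking that identification concludes the coloured Alexander case.

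For the coloured Jones identity, the argument proceeds in strict parallel, but now with strand-dependent truncation. Under $\sJt$ the weight $w^{j}_k$ associated with the $j$-th strand vanishes whenever $i_j\geq N_j$, so the sum collapses to $\bi \in \prod_{j}\{0,\dots,N_j-1\}$, which is the multi-index range for a link coloured by $\bar N=(N_1,\dots,N_l)$. On the surviving terms $\sJt$ converts the remaining monomials into the $q$-weights appearing in the semisimple topological model for the multicoloured Jones polynomial $J_{\bar N}(L)$ (as in \cite{Cr1}, \cite{CrI}), from which the equality with $J_{\bar N}(L)$ follows. Note that in both cases the underlying Lagrangian intersection number $\langle(\beta_n\cup\mathbb I_{n+2})\FA,\LA\rangle$ is the same for every $\bi$: only the scalar weights change under the two specializations.

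The main obstacle is the verification that the weighting rule of Definition \ref{D:w} is tuned to make both specializations work simultaneously. Since $\PA$ is a single object in $\LL$, while the ADO and Jones worlds live over genuinely different rings (a multivariable polynomial ring for ADO at a root of unity versus a single-variable Laurent ring for coloured Jones at generic $q$), the weights $w^{j}_k$ must be designed so that one uniform formula yields both the correct "colour-$\cM$ cutoff with holonomy variables $x_j$" under $\sAt$ and the correct "colour-$N_j$ cutoff at generic $q$" under $\sJt$. Once this weight/specialization bookkeeping is pinned down, the remainder is reduction to the previously established level-$\cN$ topological models, and the geometric content of the proof is concentrated in the truncation lemma rather than in new intersection-theoretic input.
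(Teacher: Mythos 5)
Your truncation step is exactly the paper's: under $\sAt$ the weight monomials $w^{C(2)}_{i_1}\cdots w^{C(n)}_{i_{n-1}}$ vanish as soon as some $i_j\geq\cM$ and equal $1$ on the surviving range, and under $\sJt$ the same happens with the strand-dependent cutoff $i_j\geq N^C_{j+1}$ (note the multi-indices have $n-1$ components, attached to strands $2,\dots,n$ through the colouring $C$, not $n$ components as you write --- a cosmetic point). The genuine gap is in your final step, where you assert that the surviving subsum ``collapses onto a previously known topological model'' and that ``the geometric content of the proof is concentrated in the truncation lemma rather than in new intersection-theoretic input.'' This is precisely where the paper needs a further geometric lemma that your proposal omits. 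The earlier non-weighted models from \cite{CrI} for $\Phi^{\cM}(L)$ and $J_{\bar N}(L)$ are \emph{not} stated in the level-$\cN$ setting: they use homology classes $\FAM,\LAM$ (resp. $\FJ,\LJ$) living in a \emph{different} configuration space, with $2+(n-1)(\cM-1)$ points (resp. a number of points depending on $\bar N$) and a different multi-level local system, whereas your truncated sum consists of the pairings $\left\langle(\beta_{n}\cup{\mathbb I}_{n+2})\,\FA,\LA\right\rangle$ taken in the level-$\cN$ space with $2+(n-1)(\cN-1)$ points. Your remark that ``the underlying Lagrangian intersection number is the same for every $\bi$: only the scalar weights change'' conflates the two settings and skips the identification that has to be proved.

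Concretely, the paper's proof hinges on the stability statement (Lemma \ref{pr1}, in both the ADO and Jones cases) that for indices $\bi$ in the truncated range one has
\begin{equation*}
\left\langle(\beta_{n}\cup{\mathbb I}_{n+2})\,\FAM,\ \LAM\right\rangle
=\left\langle(\beta_{n}\cup{\mathbb I}_{n+2})\,\FA,\ \LA\right\rangle
\qquad\text{(and similarly with }\FJ,\LJ\text{),}
\end{equation*}
already at the level of the unspecialised pairings; this is imported from \cite{CrI} in the Alexander case and argued geometrically in the Jones case (the two supports can only differ through particles in the left half of the disc, where they coincide). Without this comparison across configuration spaces of different sizes and different local systems, the truncated specialised sum cannot be matched with the formulas of the earlier models, so the reduction you invoke does not go through as stated. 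Supplying this lemma (or an equivalent stabilisation argument) is the missing intersection-theoretic input; the rest of your bookkeeping of weights and specialisations is correct and parallels the paper.
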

{\subsubsection{Universal rings} 
The next part of the construction is dedicated to the definition of two sequences of nested ideals in $\LL$:
$$\cdots \supseteq \tilde{I}_{\cN} \supseteq \tilde{I}_{\cN+1} \supseteq \cdots \text{ and }
\cdots \supseteq \tilde{I}^J_{\cN} \supseteq \tilde{I}^J_{\cN+1} \supseteq \cdots$$ described explicitely in Proposition \ref{structure}. In this manner, we obtain two sequences quotient rings:
$$ \ \ \ \ \ \ \ \ \cdots \LLN \leftarrow \LLNN \leftarrow \cdots;  \ \ \ \ \ \ \ \ \ \ \ \ \cdots \LLNJ \leftarrow \LLNNJ \leftarrow \cdots  \ \ \ (\text{see formulas } \eqref{rnj}, \eqref{rna}).$$ 
\vspace*{-5mm}
\subsubsection{New invariants at level $\cN$} So far, we have the weighted Lagrangian intersection $\PA\in \LL$ that specialises to all coloured Alexander and coloured Jones polynomials of $L$ with bounded colours, which  is defined via a braid representative. Next we prove that this intersection leads to two sequences of link invariants, in the quotient rings (see Theorem \ref{levN} and Theorem \ref{levNJ}).  
\begin{thm}[\bf \em $\cN^{th}$ Unified Jones link invariant] \label{INV} Let $\PAAJ$ be the image of the intersection form $\PA$ in $\LLNJ$.
 Then, $\PAAJ$ is a well-defined link invariant recovering all coloured Jones polynomials with multi-colours up to level $\cN$:
$$  \PAAJ|_{\smnbJt}=J_{\bar{N}}(L), \ \ \ \forall \bar{N} \leq \cN      \ \ (\text{see } \eqref{rnj}, \ \eqref{u1''}).$$
\end{thm}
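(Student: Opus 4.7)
The strategy is to reduce both halves of the statement — that $\PAAJ$ is a well-defined link invariant and that it recovers every coloured Jones polynomial $J_{\bar{N}}(L)$ with $\bar{N}\leq\cN$ — to the combined input of Theorem~\ref{THEOREMAU} and the algebraic structure of the quotient $\LLNJ=\LL/\tilde{I}^J_{\cN}$ encoded in Proposition~\ref{structure}. A direct Markov-by-Markov verification on the Lagrangian intersection $\PA$ appears very intricate (the number of configuration points, and hence the ambient space, changes under stabilisation); passing through the specialisations $\sJt$ lets us outsource the hard topological invariance to the already-established invariance of the coloured Jones polynomials.

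First I would establish the recovery formula. By the definition of $\smnbJt$ (Definition~\ref{pJ4}) together with the construction of $\tilde{I}^J_{\cN}$, each specialisation $\sJt:\LL\to\mathrm{Image}(\sJt)$ with $\bar{N}\leq\cN$ factors through the projection $\pi:\LL\to\LLNJ$ as $\sJt=\smnbJt\circ\pi$, because $\tilde{I}^J_{\cN}\subseteq\ker\sJt$. Combined with Theorem~\ref{THEOREMAU} this yields
\begin{equation}
\smnbJt(\PAAJ)\;=\;\smnbJt\bigl(\pi(\PA)\bigr)\;=\;\sJt(\PA)\;=\;J_{\bar{N}}(L),
\end{equation}
valid for every braid presentation of $L$ and every $\bar{N}\leq\cN$.

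For braid invariance, let $\beta_n$ and $\beta'_{n'}$ be two braid presentations of the same link $L$ and set $\Delta:=\Gamma^{\cN}(\beta_n)-\Gamma^{\cN}(\beta'_{n'})\in\LL$. Since each $J_{\bar{N}}(L)$ depends only on $L$, Theorem~\ref{THEOREMAU} gives $\sJt(\Delta)=0$ for every $\bar{N}\leq\cN$, so $\Delta\in\bigcap_{\bar{N}\leq\cN}\ker\sJt$. Provided the ideal $\tilde{I}^J_{\cN}$ has been arranged so that it contains this intersection, we conclude $\Delta\in\tilde{I}^J_{\cN}$ and hence $\pi(\Gamma^{\cN}(\beta_n))=\pi(\Gamma^{\cN}(\beta'_{n'}))$ in $\LLNJ$. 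This says exactly that $\PAAJ$ depends only on $L$ and not on the chosen braid representative.

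The main obstacle is thus the purely algebraic step of showing that the assembled evaluation map
\[
\LLNJ\;\hookrightarrow\;\prod_{\bar{N}\leq\cN}\mathrm{Image}(\sJt)
\]
is well-defined and injective, equivalently that $\tilde{I}^J_{\cN}=\bigcap_{\bar{N}\leq\cN}\ker\sJt$. This amounts to an analysis of how the weight variables $w^i_j,\,u_i,\,x_i,\,y,\,d$ generating $\LL$ are simultaneously detected by the family $\{\sJt\}_{\bar{N}\leq\cN}$; it should be handled by choosing sufficiently many multi-colours $\bar{N}$ to separate monomials of controlled degree, and is precisely what Proposition~\ref{structure} is designed to package. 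Once this injectivity is in hand, both the recovery formula and braid invariance follow mechanically from Theorem~\ref{THEOREMAU}, with no further geometric or configuration-space input.
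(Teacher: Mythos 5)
Your proposal is correct and follows essentially the same route as the paper: both the recovery formula and the braid-invariance are reduced to Theorem \ref{THEOREMAU} through the specialisations $\sJt$, with well-definedness in the quotient coming from the fact that the difference of the intersection forms for two braid representatives of $L$ lies in the ideal one quotients by. The only adjustment is that your ``main obstacle'' is vacuous: the paper defines $\tilde{I}^{J}_{\cN}:=\mathrm{Ker}(\sntJ)=\bigcap_{\bar{N}\leq\cN}\ker\sJt$ (Definition \ref{ker}), so the injectivity of $\LLNJ\hookrightarrow\prod_{\bar{N}\leq\cN}\LNJ$ is automatic from the first isomorphism theorem, and no monomial-separation argument or appeal to Proposition \ref{structure} (which merely gives an explicit description of the ideal) is needed.
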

\begin{thm}[\bf \em $\cN^{th}$ Unified ADO link invariant] \label{INVJ}
Let $\PAA$ be the image of $\PA$ in $\LLN$. Then, $\PAA$ is a well-defined link invariant unifying all ADO polynomials up to level $\cN$:
$$  \PAA|_{\smnbt}=\AM, \ \ \ \forall \cM \leq \cN, \ \  (\text{see } \eqref{rna}, \ \eqref{u1'}).$$
\end{thm}
Let us consider the projective limits of these sequences of rings:
$\LLh:= \underset{\longleftarrow}{\mathrm{lim}} \ \LLN; \ \LLhJ:= \underset{\longleftarrow}{\mathrm{lim}} \ \LLNJ.$ 
Then, in Theorem \ref{TH} and \ref{THJ} we show that the $\cN^{th}$ unified ADO / Jones link invariants have good asymptotic behaviour, leading to well defined invariants in these projective limit rings.

\subsection{Geometric encoding of semi-simplicity versus non semi-simplicity}
The construction of non-semisimple quantum invariants from representation theory perspectives uses as building blocks modified quantum dimensions. In Section \ref{ssmodif} we create a dictionary and explain how we codify these essential algebraic tools through our topological lenses. More precisely, our topological tools are the {\em  weighted Lagrangian intersections} which, in turn come from local systems on configuration spaces of the punctured disc. We present how to encode the algebraic origin of the construction given by modified dimensions through the geometric information provided by the {\em monodromies of our local systems}.

{\bf \em Dictionary: geometric variables and quantum tools}

 The intersection $\PA$ is parametrised by intersection points in the configuration space and graded by $5$ types of variables: $\{\bar{x}, \ \ \ \ y, \ \ \ \  d, \ \ \ \ \bar{u}, \ \ \ \ \bar{w}\}.$
We have the following correspondence:
\begin{enumerate}
\item[•]{\color{blue}  \em Variables of the polynomial $\bar{x}$ $\longleftrightarrow$ Winding around the link} 
\item[] {\em $\bar{x}$} encode {linking numbers with the link} via monodromies around $p$-punctures
\item[]{\color{blue}  \em Quantum variable $d$ $\longleftrightarrow$ Relative twisting}
\item[•] {$d$} counts a relative twisting in the configuration space.
\item[•]{\color{carmine}  \em Quantum variable $y$ $\longleftrightarrow$ Modified dimension}
\item[] {$y$} counts the winding number around the {\color{carmine} $s$-puncture}, and globalises modified dimensions
\item[•]{\color{carmine}   \em Quantum variables $\bar{u}$ $\longleftrightarrow$ Pivotal structure}
\item[] {\em $\bar{u}$} capture the difference between {semisimplicity versus non-semisimplicity}, and gets specialised to a power of ${\bar{x}}$, meaning a {power of the linking number with the link}.
\item[•]{\color{carmine}  \em New quantum weights $\bar{w}$ $\longleftrightarrow$ unification of all quantum levels}
\item[] {\em $\bar{w}$} make it possible to unify and see all quantum invariants of colours bounded by our fixed level $\cN$ directly from one topological viewpoint: the weighted intersection $\PA$. 
\end{enumerate}
\vspace{-2mm}
\begin{figure}[H]
$ \hspace{25mm} \small \LL=\Z[w^1_1,...,w^{l}_{\cN-1},u_1^{\pm 1},...,u_l^{\pm 1},x_1^{\pm 1},...,x_l^{\pm 1},y^{\pm 1}, d^{\pm 1}] $
\begin{equation}
\begin{aligned}
& \ \ \ \ \ \ \ \ \ \ \hspace{-20mm} \LLhJ= \underset{\longleftarrow}{\mathrm{lim}} \ \LLNJ & & \hspace{40mm} \LLh= \underset{\longleftarrow}{\mathrm{lim}} \ \LLN { \ \ \ \ \ \ \ \color{gray} Eq. \eqref{rj}, \eqref{ra}} \\
& \ \ \ \ \ \ \ \ \ \ \hspace{-20mm} \LLNJ= \LL /\tilde{I}^{J}_{\cN} & & \hspace{40mm}\LLN= \LL/\tilde{I}_{\cN} { \ \ \ \ \ \ \color{gray} Eq. \eqref{rnj}, \eqref{rna}}.
\end{aligned}
\end{equation}

\begin{center}
\hspace*{-6mm}\begin{tikzpicture}
[x=1.1mm,y=1.1mm]
\node (Al)               at (16,10)    {$ \PAA \in \ \LLN$};
\node (Jl)               at (-72,10)    {$ \LLNJ \ni \PAAJ$};

\node (Jn)               at (-52,-3)    {$J_{\cN,...,\cN}(L)$};
\node (Jn-1)               at (-52.5,-13)    {$ \ \ \ \ J_{\cN_1,...,\cN_l}(L)$};
\node (Jm)               at (-56,-20)    {$ \ \ \ \ \ \ \ \ \ J_{2,...,2}(L)$};

\node (An)               at (-10,-3)    {${\AN}$};
\node (An-1)               at (-10,-13)    {${\Phi^{\cM}}$};
\node (Am)               at (-10,-20)    {${\Phi^2}(L)$};

\node(V1)[draw,rectangle,inner sep=3pt,color=vdarkred,text width=2.84cm,minimum height=0.5cm] at (-30,17) {\footnotesize{Knots vs links \\ Quantum weight $\bar{w}$} };

\node(V2)[draw,rectangle,inner sep=3pt,color=vdarkred,text width=2.84cm,minimum height=0.5cm] at (-30,7) {\footnotesize{(Non) semi-simplicity \\ Quantum variable $\bar{u}$} };

\node(V2)[draw,rectangle,inner sep=3pt,color=vdarkred,text width=2.84cm,minimum height=0.5cm] at (-30,-3) {\footnotesize{Modified dimension \\ Quantum variable $\bar{y}$} };


\node(IA)[draw,rectangle,inner sep=3pt,color=red] at (20,15) {$\cN^{th}$ unified ADO inv.};
\node(IJ)[draw,rectangle,inner sep=3pt,color=red] at (-70,15) {$\cN^{th}$ unified Jones inv.};

\node(UA)[draw,rectangle,inner sep=3pt,color=red] at (0,30) {Universal ADO link invariant};
\node(UJ)[draw,rectangle,inner sep=3pt,color=red] at (-60,30) {Universal Jones link invariant};

\node (J)               at (-52,25)    {$\IJJ \in \LLhJ$};
\node (A)               at (-10,25)    {$\I \in \LLh$};

\draw[->]             (Jl)      to node[right,xshift=2mm,font=\large]{}   (Jn);
\draw[->]             (Jl)      to node[right,xshift=2mm,font=\large]{}   (Jn-1);
\draw[->]             (Jl)      to node[right,xshift=2mm,font=\large]{}   (Jm);

\draw[->]             (J)      to node[right,xshift=2mm,font=\large]{}   (Jn);


\draw[->]             (A)      to node[right,xshift=2mm,font=\large]{}   (An);

\draw[->]             (Al)      to node[right,xshift=2mm,font=\large]{}   (An);
\draw[->]             (Al)      to node[right,xshift=2mm,font=\large]{}   (An-1);
\draw[->]             (Al)      to node[right,xshift=2mm,font=\large]{}   (Am);

\draw[->,dashed, red]             (IJ)      to node[left,xshift=-2mm,font=\large]{$\underset{\longleftarrow}{\mathrm{lim}}$}   (UJ);
\draw[->>,dashed, red]             (IA)      to node[right,xshift=2mm,font=\large]{$\underset{\longleftarrow}{\mathrm{lim}}$}   (UA);

\end{tikzpicture}
\end{center}

\vspace{-4mm}
\caption{Geometrical encoding of non semi-simplicity}\label{comp'}
\end{figure}
\vspace{-8mm}
\subsection{Three different Universal knot invariants} In this part, we focus on the case of knots, where we compare three constructions:
\vspace{-1mm}
\begin{itemize}
\item the Weighted universal Jones invariant from Theorem \ref{THJ} for the case of knots: $\IJJw\in \LLhJ$ 
\item the Non-weighted universal Jones invariant that we previously defined in \cite{Cr1}: $\IJJk\in \LLhJk$
\item Habiro's universal invariant: $\IW\in \LLhW$ (seen in the ring used by Willetts \cite{W}, \cite{H1}, \cite{H2}).
\end{itemize}
\vspace{-1mm}
These invariants come from different perspectives. The two geometric universal Jones invariants are defined as limits of invariants globalising coloured Jones polynomials. Habiro's invariant $\IW$  originates in Lawrence's construction being defined directly in the limit ring. 

\vspace{-9mm}
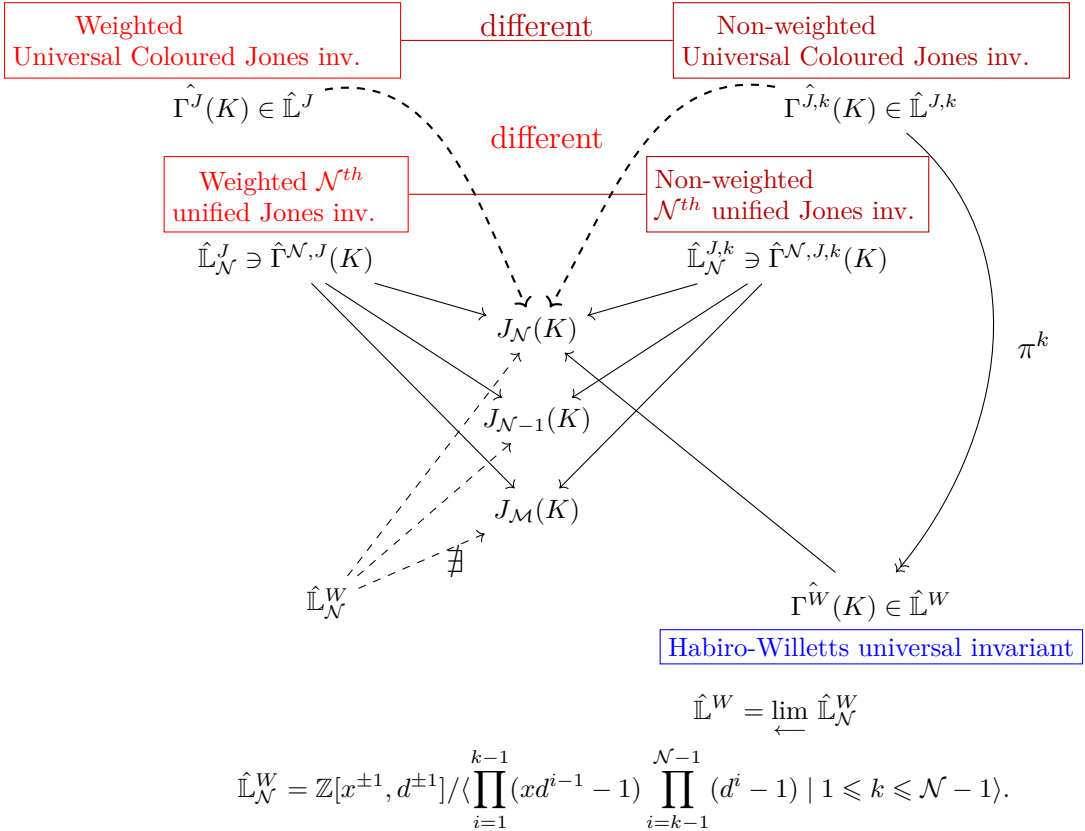
\begin{figure}[H]
\begin{equation*}
\begin{aligned}
&\hspace{2mm} \LLhJ= \underset{\longleftarrow}{\mathrm{lim}} \ \LLNJ \hspace{79mm}\LLhJk= \underset{\longleftarrow}{\mathrm{lim}} \ \LLNJk\\  
&\hspace{-9mm} \LLNJ= \Z[w^1_1,...,w^{1}_{\cN-1},x^{\pm1}, d^{\pm 1}]/  \hspace{58mm}\LLNJk= \Z[x^{\pm1}, d^{\pm 1}] / \\
&\hspace{-9mm} \bigcap_{N \leq \cN} \langle \left. x-d^{1-N}, w^1_j-1 \text{ if } j\leq N-1, \right. \left. w^1_j \ \text{ if } j\geq N, j\in \{1,...,\cN-1\} \right) \rangle\hspace{5mm} \langle \prod_{i=2}^{\cN} (xd^{i-1}-1) \rangle \end{aligned}
\end{equation*}
\begin{center}
\hspace*{-30mm}\begin{tikzpicture}
[x=1mm,y=1mm]
\node (Jl)               at (0,6)    {$ \LLNJk \ni \PAAJk$};
\node (Jlk)               at (-60,6)    {$ \LLNJ \ni \PAAJw$};

\node (Wn)               at (-55,-27)    {$\LLNW$};

\node (Jn)               at (-30,0)    {$J_{\cN}(K)$};
\node (Jn-1)               at (-30,-10)    {$J_{\cN-1}(K)$};
\node (Jm)               at (-30,-20)    {$J_{\cM}(K)$};

\node(IJ)[draw,rectangle,inner sep=3pt,color=vdarkred,text width=3.5cm,minimum height=1cm] at (0,15) {Non-weighted $\cN^{th}$ \\ unified Jones invariant};
\node(IJw)[draw,rectangle,inner sep=3pt,color=red,text width=3.5cm,minimum height=1cm] at (-60,15) { \ \ \ Weighted $\cN^{th}$ \\  unified Jones invariant};
\node(UJ)[draw,rectangle,inner sep=3pt,color=vdarkred,text width=5.5cm,minimum height=1cm] at (10,36) { \ \ \ \ \ \ \ \ \ \ \ \ \ Non-weighted \\ Universal Coloured Jones invariant};
\node(UJw)[draw,rectangle,inner sep=3pt,color=red,text width=5.5cm,minimum height=1cm] at (-70,36) { \ \ \ \ \ \ \ \ \ \ \ \ \ Weighted \\ Universal Coloured Jones invariant};
\node[draw,rectangle,inner sep=3pt,color=blue] at (10,-35) {Habiro's universal invariant};

\node (J)               at (9,27)    {$\IJJk \in \LLhJk$};
\node (Jw)               at (-65,27)    {$\IJJw \in \LLhJ$};
\node (W)               at (10,-27)    {$\IW \in \LLhW$};

\draw[->,dashed]             (Wn)      to node[right,xshift=2mm,font=\large]{}   (Jn);
\draw[->,dashed]             (Wn)      to node[right,xshift=2mm,font=\large]{}   (Jn-1);
\draw[->,dashed]             (Wn)      to node[right,xshift=2mm,font=\large]{$\nexists$}   (Jm);

\draw[->]             (Jl)      to node[right,xshift=2mm,font=\large]{}   (Jn);
\draw[->]             (Jl)      to node[right,xshift=2mm,font=\large]{}   (Jn-1);
\draw[->]             (Jl)      to node[right,xshift=2mm,font=\large]{}   (Jm);
\draw[->]             (Jlk)      to node[right,xshift=2mm,font=\large]{}   (Jn);
\draw[->]             (Jlk)      to node[right,xshift=2mm,font=\large]{}   (Jn-1);
\draw[->]             (Jlk)      to node[right,xshift=2mm,font=\large]{}   (Jm);

\draw[->,,out=-188, in=65,dashed,thick]             (J)      to node[right,xshift=2mm,font=\large]{}   (Jn);
\draw[->,,out=5, in=115,dashed,thick]             (Jw)      to node[right,xshift=2mm,yshift=2mm,font=\large]{\color{red} different}   (Jn);
\draw[->]             (W)      to node[right,xshift=2mm,font=\large]{}   (Jn);



\draw[->>,out=-30,in=40]             (J)      to node[right,xshift=2mm,font=\large]{$\pi^k$}   (W);
\draw[-, vdarkred]             (IJw)      to node[right,xshift=2mm,font=\large]{}   (IJ);
\draw[-, vdarkred]             (UJ)      to node[yshift=4mm,font=\large]{\text{different}}   (UJw);

\end{tikzpicture}
\hspace*{-20mm}\begin{equation}\label{WR}
\LLhW=\underset{\longleftarrow}{ \mathrm{lim }} \ \LLNW;  \ \hspace{5mm}\LLNW= \Z[x^{\pm1}, d^{\pm 1}] / \langle \prod_{i=1}^{k-1} (xd^{i-1}-1) \prod_{i=k-1}^{\cN-1} (d^i-1) \mid 1\leq k \leq \cN-1 \rangle.\end{equation}
\end{center}
\vspace{-4mm}
\caption{Three different universal Jones invariants for knots}\label{re'}
\end{figure}
 \vspace{-8mm}
\subsubsection{Recovering Habiro's universal invariant}  We remark a difference between the intermediary invariants that are used for these constructions. The Non-weighted invariant
$\IJJk$ as well as the Weighted invariant $\PAAJw$ are limits of the (Non-)Weighted Unified Jones invariants $\PAAJk\in \LLNJk$ and $\PAAJk\in \LLNJk$ respectively. Each such level $\cN$ invariant globalises coloured Jones poynomials:
$$\PAAJw \Bigm| _{\color{red}x=d^{\cM-1}}  =~ \PAAJk \Bigm| _{\color{red}x=d^{\cM-1}}  =~ {\color{red} J_{\cM}(K)}, \ \ \ \forall \cM\leq \cN.$$
On the other hand, the $\cN^{th}$ level of Habiro's invariant $\PAAW \in \LLNW$ does not have well-defined specialisations at natural parameters:
\begin{equation}
\PAAW \Bigm| _{\color{blue}{x=d^{\cM-1}}} \text{not well defined}.
\end{equation}
However, at the limit these constructions are related and we defined in \cite{CrI} a map $\pi^k: \LLhJk \rightarrow \  \LLhW$ showing that the universal Jones invariant recovers Habiro's invariant:
 $\pi^k(\IJJk)=\IW$.
\vspace*{-2mm}
\subsubsection{Two different geometric universal Jones invariants for knots } Section \ref{S:knotcase} shows that the weighted construction provides new knot invariants, different than the non-weighted ones.

First we prove in Theorem \ref{r1} that $\PAAJw$ and $\PAAJk$ can be both seen as images of the weighted intersection $\PA \in \LL$. However, the {$\cN^{th}$ Weighted Unified Jones invariant} and the {$\cN^{th}$ Non-weighted Unified Jones invariant} are different: $$\PAAJw \neq\PAAJk.$$
We obtain two different universal Jones invariants for knots: $\IJJw \neq \IJJk$, both given by sequences of invariants that globalise more and more coloured Jones polynomials (Theorem \ref{r2}).
\subsection{Further directions}
\vspace*{-4mm}
\subsubsection{Refined invariants in modules over Habiro type rings} In Subsection \ref{ss:refH} we introduce generalisations of Habiro type rings, which we call the quantised and extended Habiro rings:
 \vspace{-2mm}
 \begin{equation*}
\begin{aligned}
&{\text{\em \color{blue} Quantised Habiro ring \ \ }} \hat{\mathbb L}^{H,q}=\underset{\longleftarrow}{\mathrm{lim}} \ \Z[x_1^{\pm1}, \cdots, x_l^{\pm1}, d^{\pm 1}] \ / \ \langle \prod_{j=2}^{\cN} (x_i d^{j-1}-1),1 \leq i \leq l \rangle\\
&\text{{\em \color{blue} Extended Habiro ring  \ \  \ \ }}\hat{\mathbb L}^{H,e}= \ \underset{\longleftarrow}{\mathrm{lim}} \ \Z[x_1^{\pm1}, \cdots, x_l^{\pm1}, d^{\pm 1}] \ / \ \langle \prod_{\cM=2}^{\cN} (d^{2\cM}-1) \rangle.
\end{aligned}
\end{equation*}
 \vspace{-1mm}

Then we extend our universal rings $\LLhJ$ and $\LLh$ introducing refined universal rings ${\LLhJ}'$ and ${\LLh}'$. These refined rings have a rich structure, being modules over the generalised Habiro rings, and they surject onto our initial universal rings. 
 We conjecture that our universal invariants lift to refined universal invariants, as follows.
\begin{conjecture}[\bf \em Refined universal Jones invariant]\label{l1} The universal Jones invariant $\IJJ$ lifts to a refined invariant $\IRJJ(L)$ in  ${\LLhJ}'$, that is a module over the {quantised Habiro ring} $\hat{\mathbb L}^{H,e}$.
\end{conjecture}
\begin{conjecture}[\bf \em Refined universal ADO invariant]\label{l2} The universal ADO invariant $\I$ lifts to a refined invariant $\IR(L)$ in the ring ${\LLh}'$, that is a module over the {extended Habiro ring} $\hat{\mathbb L}^{H,e}$.
\end{conjecture}

\vspace*{-6mm}
\subsubsection{Quantum Perspectives- Universal quantum group}

From representation theory's perspective, Habiro's invariant comes from Lawrence's universal invariant via actions on Verma modules and quotients through well-chosen rings. We expect that our new weighted model has a counterpart permitting the construction of invariants on the algebraic side. More specifically, our weighted formula should have a representation-theoretic counterpart and we believe that this comes from a weighted version of the universal invariant, defined by weighted actions on Verma modules.

\vspace*{-4mm}
\subsubsection{Both Universal geometrical invariants via the same infinite configuration space}

In Subsection \ref{asym} we discuss how our two universal invariants are asymptotics of the same Lagrangian intersections. We show that both invariants
$\PAA$ and $ \PAAJ$ are both given by graded intersections in the configuration space of $(n-1)(\cN-1)+2$ points in the disc, with good behaviour with respect to the change of levels. This stability phenomenon should be reflected asymptotically. We conjecture that both the universal Jones link invariant $\IJJ$ and the universal ADO link invariant $\I$ can be seen via the same Lagrangian intersections in an infinite configuration space. 

\vspace*{-3mm}
\subsubsection{Sequel-  Unification for non-semisimple $3$-manifold invariants}
Passing to $3$-manifolds, we ask whether there is a unification of the $CGP$ invariants, as a parallel to Habiro's celebrated unification of the WRT invariants (\cite{H3})? As a step towards this, our sequel result uses the models from this paper and describes both $WRT$ and $CGP$ invariants at a fixed level from the same perspective: a set of Lagrangian intersections in a fixed configuration space.

\vspace{-3mm}

{\tableofcontents \vspace{-4mm}}

\section{Geometrical set-up and Notations}
\subsection{Geometry of the universal invariants} \label{asym}
Our two Universal invariants are asymptotics of the same Lagrangian intersections. More specifically, $\I$ and $\IJJ$ are limits of the $\cN^{th}$ Unified Jones invariant $\PAAJ$ and the $\cN^{th}$ Unified Alexander invariant $\PAA$ respectively.    
These level $\cN$ invariants come from the set of graded intersections in the configuration space of $(n-1)(\cN-1)+2$ points in the disc:
$$\PAAJ, \ \PAA \longleftrightarrow \lbrace (\beta_{n} \cup {\mathbb I}_{n+2} ) \ { \color{red} \FA} \cap {\color{dgreen} \LA} \rbrace, \text{ for } {\bar{i} \in \{\bar{0},\dots,\overline{\cN-1}\}}$$
weighted with $\cN-1$ weights: $w^1_1,...,w^{l}_{\cN-1}.$
The change of the level from $\cN$ to $\cN+1$ is reflected explicitly in the geometry of the homology classes: we add $n-1$ arcs to the geometric supports of $\FA$ in order to obtain $\FAnn$. From $\LA$ to $\LAnn$ we add one particle on each oval (Figure \ref{ColouredAlexx}). We remark on a nice property: for a fixed index $\bar{i} \in \{\bar{0},\dots,\overline{\cN-1}\}$ the intersections $\left\langle (\beta_{n} \cup {\mathbb I}_{n+2} ) \ { \color{red} \FA} \cap {\color{dgreen} \LA} \right\rangle$
 and  $\left\langle (\beta_{n} \cup {\mathbb I}_{n+2} ) \ { \color{red} \FAnn} \cap {\color{dgreen} \LAnn} \right\rangle$ coincide. This suggest a geometric stability phenomenon of the link invariants $\PAA$ and $\PAAJ$.

\vspace{-5mm}

\hspace{10mm}\begin{figure}[H]
\begin{equation}
\centering
\begin{split}
\begin{tikzpicture}
[x=1mm,y=1.6mm]
\node (tl) at (95,63) {$\phantom{\LLh}$};
\node (ml) at (95,35) {$\LLNN$};
\node (mr) at (115,35) {$\LNN$};
\node (bl) at (95,-10) {$\LLN$};
\node (br) at (115,-10) {$\LN$};
\node (tlJ) at (75,63) {$\phantom{\LLhJ}$};
\node (mlJ) at (75,35) {$\LLNNJ$};
\node (mrJ) at (50,35) {$\LNNJ$};
\node (blJ) at (75,-10) {$\LLNJ$};
\node (brJ) at (50,-10) {$\LNJ$};
\node (mlei) at (ml.north east) [anchor=south west,color=black] {\phantom{A}\vspace*{20mm}\hspace{-190mm}$ \lbrace (\beta_{n} \cup {\mathbb I}_{n+2} ) \ { \color{red} \FAnn} \cap {\color{dgreen} \LAnn} \rbrace_{{\bar{i} \in \{\bar{0},\dots,\overline{\cN}\}}}$};
\node (blei) at (bl.north east) [anchor=south west,color=black] {\phantom{A}\vspace*{20mm}\hspace{-180mm}$ \lbrace (\beta_{n} \cup {\mathbb I}_{n+2} ) \ { \color{red} \FA} \cap {\color{dgreen} \LA} \rbrace_{{\bar{i} \in \{\bar{0},\dots,\overline{\cN-1}\}}}$};
\node (tle) at (tl.south) [anchor=north,color=red] {$\I$};
\node (mle) at (ml.north) [anchor=south,color=red] {$\PAAN$};
\node (ble) at (bl.north) [anchor=south,color=red] {$\PAA$};
\node (mre) at (mr.north west) [anchor=south west,color=red] {$\Phi^{\cN +1}(L)$};
\node (bre) at (br.north west) [anchor=south west,color=blue] {$\Phi^{\cN}(L)$};
\node (tleJ) at (tlJ.south) [anchor=north,color=blue] {$\IJJ$};
\node (mleJ) at (mlJ.north) [anchor=south,color=blue] {$\PAANJ$};
\node (bleJ) at (blJ.north) [anchor=south,color=blue] {$\PAAJ$};
\node (mreJ) at (mrJ.north west) [anchor=south west,color=blue] {$J_{\overline{N+1}}(L)$};
\node (breJ) at (brJ.north west) [anchor=south west,color=blue] {$J_{\bar{N}}(L)$};
\draw[->] (mle) to node[below,font=\small]{} (mre);
\draw[->,dashed,color=white] (ble) to node[below,font=\small,yshift=8mm]{{\hspace{-110mm}\color{dgreen}Add particles on ovals} \hspace{10mm}  \color{black}Add weights $ \ \ \ w^1_{\cN},\cdots, w^l_{\cN}$} (ml);
\draw[->,dashed,color=white] (ble) to node[below,font=\small,xshift=-45mm,yshift=7mm]{{\hspace{-120mm}\color{carmine}Add arcs}} (ml);
\draw[->,dashed,color=white] (ble) to node[below,font=\small,yshift=17mm]{\hspace{40mm}Level $\cN+1$} (ml);
\draw[->] (ble) to node[below,font=\small]{} (bre);
\draw[->] (tle) to node[above,yshift=5mm,font=\small]{} (mre);
\draw[->] (mleJ) to node[below,font=\small]{} (mreJ);
\draw[->,dashed,color=white] (bleJ) to node[below,font=\small,yshift=17mm]{\hspace{40mm}Level $\cN+1$} (mlJ);
\draw[->] (bleJ) to node[below,font=\small]{} (breJ);
\draw[->] (tleJ) to node[above,yshift=5mm,font=\small]{} (mreJ);
\node[draw,rectangle,inner sep=3pt,color=white] at (-5,59) {\phantom{Limit}};
\node[draw,rectangle,inner sep=3pt,color=red] at (110,65) {Universal ADO link invariant};
\node[draw,rectangle,inner sep=3pt,color=blue] at (50,65) {Universal Jones link invariant};
\node[draw,rectangle,inner sep=3pt,color=blue](A) at (68,0) {${\cN}^{th}$ Unified Jones invariant};
\node[draw,rectangle,inner sep=3pt,color=red] at (115,0) {${\cN}^{th}$ Unified ADO invariant};
\node[draw,rectangle,inner sep=3pt,color=blue](B) at (65,30) {${\cN+1}$ Unified Jones invariant};
\node[draw,rectangle,inner sep=3pt,color=red] at (117,30) {${\cN+1}$ Unified ADO invariant};
\draw[->,color=gray, out=20, in=-20] (A) to node[right,xshift=5mm,yshift=4mm ]{Change the level} (B);
\end{tikzpicture}
\end{split}
\end{equation}

\end{figure}
\vspace{-134.4mm}

\begin{figure}[H]
\centering

{\phantom{A} \ \ \ \ \ \ \ \ \ \ \ \ \ \ \ \ \ \ \ \ \ \ \ \ \ \ \ \ \ \ \ \  \ \ \ \ \ \ \ \ \ } \hspace*{-170mm}\includegraphics[scale=0.35]{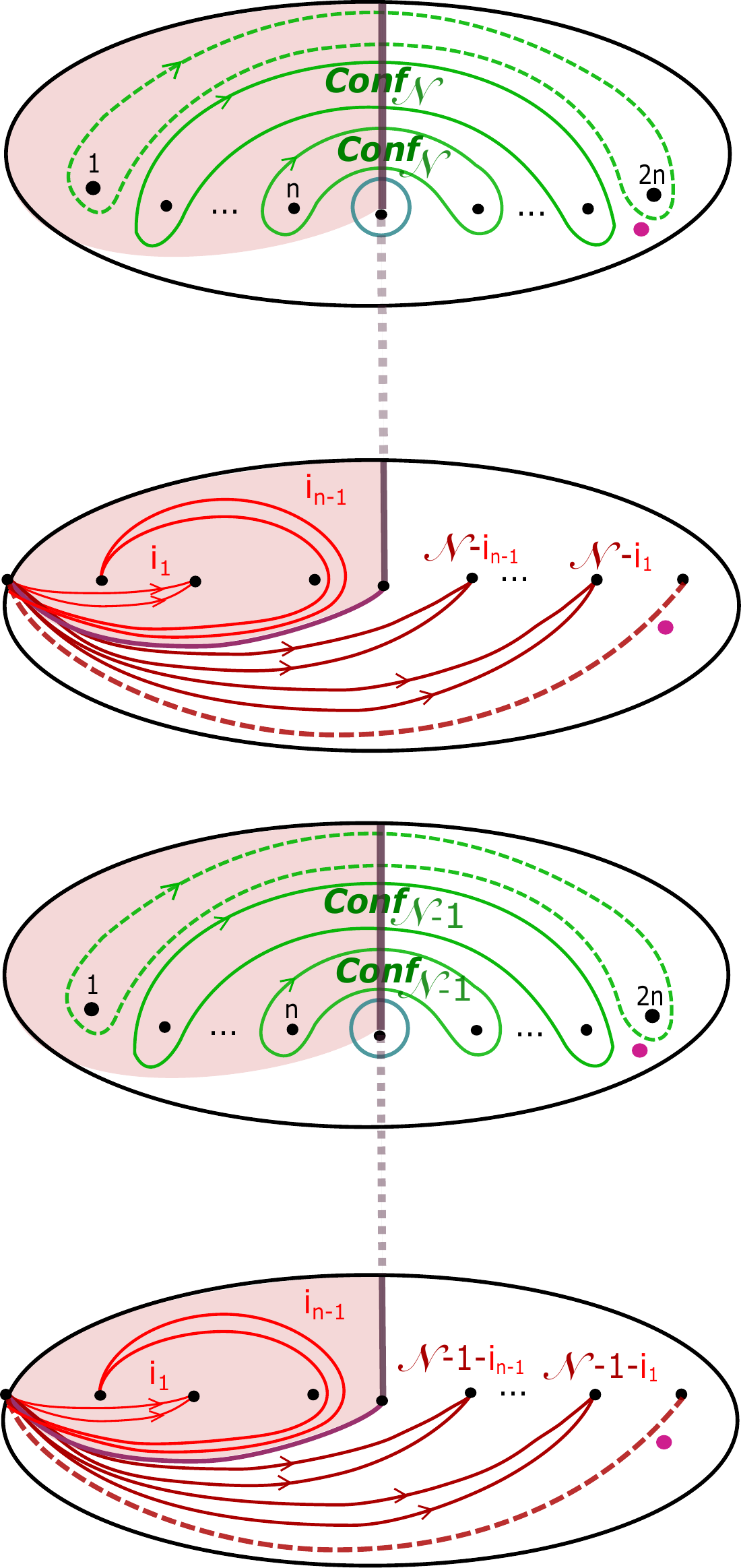}
\vspace{-1mm}
\caption{\normalsize Universal link invariants as limits of level $\cN$ link invariants}
\label{ColouredAlexx}
\end{figure}

\subsection{Geometrical set-up} Let us consider a link $L$ and $\beta_n \in B_n$ a braid such that $L= \widehat{\beta_n}$. We use $\mathbb D_{2n+2}$, the $(2n+2)$-punctured disc and we consider a splitting of the the set of punctures as follows:
\begin{itemize}
\item[•]fix $2n$ punctures on a horizontal line, which we call $p$-punctures (and denote them $\{1,..,2n\}$)
\item[•]fix $1$ puncture which we call $q$-puncture (and label by $\{0\}$)
\item[•]consider also $1$ puncture, called $s$-puncture, as in Figure \ref{ColouredAlex0} and Figure \ref{Localsystt}.
\end{itemize}
Then for $m\in \N$, we denote by $C_{n,m}:=\Conf_{m}(\mathbb D_{2n+2})$ the unordered configuration space of $m$ points in the disc. In \cite{CrI} we constructed a suitable covering space for our geometric context, as below.
We fix $\bar{\cN}=(\cN_1,\cN_2,...,\cN_n)$ which we call ``multi-level'', such that $\cN_1+\cN_2+...+\cN_n=m-1$. 

\begin{defn}[{\bf Covering space at {level $\bar{\cN}$}}]
We construct a well-chosen covering that depends on the choice of the level $\bar{\cN}$ that in turn has the advantage of allowing us to have {\em well-defined lifts} of submanifolds with support on ovals in the disc. Specifically, we define a local system $\bar{\Phi}^{\bar{\cN}}$ on $\Conf_{m}(\mathbb D_{2n+2})$ (Notation \ref{twsyst}), associated to {\em  $\bar{\cN}$}. Our tools are the homologies of the associated covering space, denoted by  $H^{\bar{\cN}}_{n,m}$ and $H^{\bar{\cN},\partial}_{n,m}$ (they are versions of Borel-Moore homology of this covering, see Diagram \ref{diagsl}). 

\end{defn}
\begin{prop}[{Intersection pairing}] There exists a Poincaré-type duality between these homologies: 
$\left\langle ~,~ \right\rangle:  H^{\bar{\cN}}_{n,m} \otimes H^{\bar{\cN},\partial}_{n,m} \rightarrow \C[w^1_1,...,w^{l}_{\cN-1},u_1^{\pm 1},...,u_l^{\pm 1},x_1^{\pm 1},...,x_l^{\pm 1},y{\pm 1}, d^{\pm 1}] $ and additionally a braid group action on $H^{\bar{\cN}}_{n,m}$ (see Proposition \ref{intersection}).
\end{prop}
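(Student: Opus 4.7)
The plan is to construct both structures geometrically via lifts to the covering space $\widetilde{C_{n,m}}$ associated to the local system $\bar{\Phi}^{\bar{\cN}}$ and then invoke a Poincar\'e--Lefschetz type duality on this covering. The two homology groups $H^{\bar{\cN}}_{n,m}$ and $H^{\bar{\cN},\partial}_{n,m}$ are the two natural Borel--Moore flavours on the $2m$-real-dimensional cover (the first with closed supports in the interior, the second allowing ends on the lift of the boundary), so on general grounds one expects a perfect bilinear pairing between them with values in the group ring of the deck group, which is precisely $\C[w^1_1,\ldots,w^{l}_{\cN-1},u_1^{\pm 1},\ldots,x_1^{\pm 1},\ldots,y,d^{\pm 1}]$.

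For the intersection form I would proceed as follows. Given representatives $\mathscr F$ of a class in $H^{\bar{\cN}}_{n,m}$ and $\mathscr L$ of a class in $H^{\bar{\cN},\partial}_{n,m}$, after a small perturbation I put their projections in $C_{n,m}$ into transverse position so that they meet at finitely many points. At each such point the two prescribed lifts in $\widetilde{C_{n,m}}$ differ by a deck transformation, which is a Laurent monomial in the monodromies of $\bar{\Phi}^{\bar{\cN}}$; summing these monomials weighted by local orientation signs gives the value of the pairing. Invariance under isotopy of $\mathscr F$ and $\mathscr L$ reduces to a standard cobordism argument in which boundary contributions cancel at infinity and at $\partial C_{n,m}$ thanks precisely to the Borel--Moore versus boundary-relative distinction. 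For the braid group action on $H^{\bar{\cN}}_{n,m}$, I would use the Artin action of $B_n$ on $\mathbb{D}_{2n+2}$ by diffeomorphisms that permute the $2n$ $p$-punctures while fixing the $q$- and $s$-punctures and the boundary; such a diffeomorphism preserves the three-class partition of punctures and hence preserves the winding-number data defining $\bar{\Phi}^{\bar{\cN}}$, so after fixing a basepoint it lifts to a self-map of $\widetilde{C_{n,m}}$ and induces the required action on homology.

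The main obstacle I anticipate is bookkeeping around the new weights $w^i_j$. In the classical Lawrence--Bigelow set-up the relevant monodromies reduce to the linking and relative-twisting variables $x_i$ and $d$, but here the additional weights $w^i_j$ encode the multi-level $\bar{\cN}$ and appear as monodromies of loops wrapping the specific ovals used to lift the supports $\mathscr L_{\bar{i},\cN}$. Verifying that the geometric intersection count really lands in $\C[w^1_1,\ldots,d^{\pm 1}]$ requires careful choices of basepoints for the lifts along each oval and a check that the monomial weights combine consistently across deck translations---in particular, that transverse isotopies of $\mathscr F$ and $\mathscr L$ alter the contribution by the same monomial factor on both sides of a cobordism. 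Once this is in place, the equivariance of the pairing under the braid action is automatic, since the braid action preserves the local system $\bar{\Phi}^{\bar{\cN}}$ and thus commutes with the deck group weighting.
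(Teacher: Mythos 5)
Your overall mechanism is essentially the one the paper relies on: transverse representatives in the base configuration space, a sign and a deck-transformation monomial at each intersection point, invariance by a cobordism argument, and a mapping-class action lifted to the covering. In fact the paper does not reprove these facts at all: the pairing is imported from \cite[Proposition 7.6]{CrM} (see Proposition \ref{P:3'''}), the braid action from \cite[Section 4.10]{CrI} (see Proposition \ref{intersection}), and what the paper adds is the explicit computational formula $\langle H_1,H_2\rangle=\sum_{x}\alpha_x\,\bar{\Phi}^{\bar{\cN}}(l_x)$ of Proposition \ref{P:3}. However, two concrete points in your argument do not match the construction. First, the weights $w^k_j$ (and the variables $u_i$) are \emph{not} monodromies of the local system and do not arise from loops wrapping the ovals. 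The local system $\Phi^{\bar{\cN}}$ takes values in $\Z^n\oplus\Z\oplus\Z$ with monodromies only in the variables $x_i$, $y$, $d'$; the oval generators $\bar{\sigma}_i$ are sent to $2\bigl(x_i+(\cN_i-1)d'\bigr)$, i.e.\ multiplicatively to $\pm x_{C(i)}^{2}d^{2(\cN_{C(i)}-1)}$, never to a $w$. The deck group is $(2\Z)^n\oplus\Z\oplus 2\Z$, so the group-ring-valued pairing a priori lands in $\C[x_1^{\pm2},\dots,x_n^{\pm2},y^{\pm1},d'^{\pm2}]$ (the paper even checks that the bracket has integer coefficients in $\Z[x_1^{\pm1},\dots,x_l^{\pm1},y^{\pm1},d^{\pm1}]$); the variables $w^k_j$ and $u_i$ enter only through the extension of scalars $\iota$ of Definition \ref{iota} and Remark \ref{homcov}, and through the explicit weighting in Definition \ref{D:w}. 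So the ``main obstacle'' you anticipate does not exist, and your justification that the pairing takes values in the stated ring addresses the wrong mechanism (the correct statement is immediate once the coefficients are extended).

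Second, your argument for the braid action is insufficient as stated. Preserving the partition into $p$-, $q$- and $s$-punctures does not preserve $\bar{\Phi}^{\bar{\cN}}$: the $p$-punctures carry distinct variables $x_{C(i)}$ and distinct levels $\cN_i$, so a braid exchanging two punctures of different colour (or different level) changes the local system and does not lift equivariantly. The action the paper uses is that of $B^{C}_{2n+2}$, the braids which respect the colouring $C$ (and in the intersection form only braids of the shape $\beta_n\cup\mathbb{I}_{n+2}$, with $\beta_n$ compatible with the colouring induced by the link, are acted with). You need to restrict to this subgroup for the lift to the covering, and hence the induced action on $H^{\bar{\cN}}_{n,m}$, to be well defined.
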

From now on we will use the following convention for the punctured disc.
\begin{defn}(Splitting of the punctured disc)  We consider the two halves of the punctured disc, defined as follows: 
\begin{itemize}
\item (Left hand side of the disc) This is given by half of the disc from Figure \ref{ColouredAlex0} that passes though the puncture labeled by $0$ and contains the first $n$ $p$-punctures.
\item (Right hand side of the disc) This will be the defined by the complement of the above, the half of the disc that contains the rest of the $p$-punctures.
\end{itemize}

\end{defn}

\subsection{$\cN^{th}$-weighted Lagrangian intersection}\label{weightedintersection}

Let us fix a level $\cN$ and $L$ an oriented framed link with $l$ components and framings $f_1,...,f_l\in \Z$. Let $\beta_n \in B_n$ such that $L= \widehat{\beta_n}$. For the level $\cN$ weighted intersection form, we make use of the above homological set-up for the following parameters:
\begin{itemize}
\item Number of particles $m(\cN):=2+(n-1)(\cN-1)$
\item Space $C_{n,m_(\cN)}:=\Conf_{2+(n-1)(\cN-1)}\left(\mathbb D_{2n+2}\right)$
\item Multi-level: $\bar{\cN}:=(\cN_1,...,\cN_n)=(1,\cN-1,...,\cN-1)$
\item Local system: $\bar{\Phi}^{\bar{\cN}}$
\item Specialisation of coefficients: $\sJt$ and $\sAt$ (Notation \ref{pA1}, Definition \ref{pJ1}).
\item Indexing set: $\{\bar{0},\dots,\overline{\cN-1}\}= \big\{ \bar{i}=(i_1,...,i_{n-1})\in \N^{n-1} \mid 0\leq i_k \leq \cN-1, \  \forall 1 \leq k \leq n-1 \big\}.$
\end{itemize}
\clearpage
\begin{defn}(Coloured Homology classes) We consider homology classes of lifts of Lagrangian submanifolds from the base space. These submanifolds are prescribed via collections of arcs and ovals in the disc, following Notation \ref{paths}.
Then, for $\bar{i}\in \{\bar{0},\dots,\overline{\cN-1}\}$ we consider two classes:
\vspace{-5mm}
\begin{figure}[H]
\centering
$$\hspace{-28mm} \text{ Classes: \ \ \ \ \ } {\color{red} \mathscr F_{\bar{i},\cN} \in \HAi} \ \ \ \ \ \ \ \ \text{ and }\ \ \ \ \ \ \ \ \  {\color{dgreen} \mathscr L_{\bar{i},\cN}\in \HAdi}$$
\vspace{-3mm}

\includegraphics[scale=0.33]{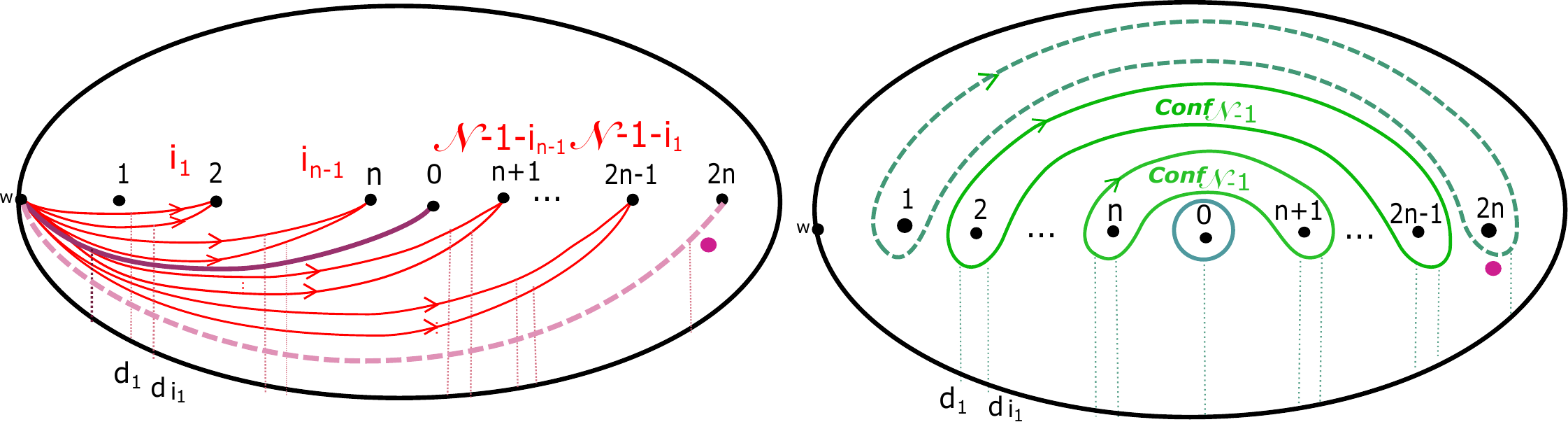}
$$\hspace{-60mm} \text{ Weight: } w^{C(2)}_{i_1}\cdot ... \cdot w^{C(n)}_{i_{n-1}}\hspace{30mm}$$
\vspace{-4mm}
\caption{\normalsize Weighted Lagrangian intersection at level $\cN$}
\label{ColouredAlex0}
\end{figure}
\end{defn}
\vspace{-4mm}
Since the link is the closure of our braid with $n$ strands, we have an induced colouring $C$ of $2n$ points with $l$ colours: $
C:\{1,...,2n\}\rightarrow \{1,...,l\}.$ (as in Definition \ref{br}).
\begin{defn}[{\bf \em Weighted Lagrangian intersection}] \label{D:w} We define the following Lagrangian intersection in ${\Conf_{2+(n-1)(\cN-1)}\left(\mathbb D_{2n+2}\right)}$:
\begin{equation} 
\begin{aligned}
 \PA \in \LL =~&\Z[w^1_1,...,w^{l}_{\cN-1},u_1^{\pm 1},...,u_l^{\pm 1},x_1^{\pm 1},...,x_l^{\pm 1},y^{\pm 1}, d^{\pm 1}]\\
 \PA :=&\prod_{i=1}^l u_{i}^{ \left(f_i-\sum_{j \neq {i}} lk_{i,j} \right)} \cdot \prod_{i=2}^{n} u^{-1}_{C(i)} \cdot \\
 & \cdot \sum_{\bar{i}= \bar{0}}^{\overline{\cN -1}} w^{C(2)}_{i_1}\cdot ... \cdot w^{C(n)}_{i_{n-1}}
 \left\langle(\beta_{n} \cup {\mathbb I}_{n+2} ) \ { \color{red} \FA}, {\color{dgreen} \LA}\right\rangle.
\end{aligned}
\end{equation} 
\end{defn}
The precise definition of this form is presented in Subsection \ref{S:we}.

\subsection{Notations} \label{S:not}
\begin{notation}[Modified dimensions] Let us consider the following quantum numbers:
$$ \{ x \}_q :=q^x-q^{-x} \ \ \ \ [x]_{q}:= \frac{q^x-q^{-x}}{q-q^{-1}}.$$
Also, for $N\in \N$ let $\xi_N=e^{\frac{2\pi i}{2N}}$ and for $\lambda \in \C$ let
$ d[\lambda]_{\xi_N}:= \frac{\{\lambda\}_{\xi_N}}{\{N\lambda\}_{\xi_N}}$,  which is called the modified dimension.
\end{notation}

\begin{defn}$\bullet$ (Multi-indices at bounded multi-levels)

We denote the  following sets of multi-indices:
\begin{equation}\label{mi}
\begin{aligned}
&\{\bar{0},\dots,\overline{\cN-1}\}:= \big\{ \bar{i}=(i_1,...,i_{n-1})\in \N^{n-1} \mid 0\leq i_k \leq \cN-1, \  \forall k\in \{1,...,n-1\} \big\}\\
&\{\bar{\cM},\dots,\overline{\cN-1}\}:= \big\{ \bar{i}=(i_1,...,i_{n-1})\in \N^{n-1} \mid 0\leq i_k \leq \cN-1, \  \forall k\in \{1,...,n-1\} \text{ and }\\
& \hspace{90mm} \exists \ j\in \{1,...,n-1\}, \  \cM\leq i_j \big\}.
\end{aligned}
\end{equation}
$\bullet$ (Bounded colourings) For a fixed  set of colours $N_1,...,N_l \leq \cN$, we denote the multi-index $\bar{N}=(N_1,...,N_l)$ and say that $\bar{N} \leq \cN$ if $N_1,...,N_l \leq \cN$.
\end{defn}
\begin{defn}[Multi-indices]
Let us fix three type of parameters: $l \in \N$, $l\geq1$ and $\alpha_1,..,\alpha_l$, $N_1,...,N_l \in \N$. We consider the multi-indices:
\begin{equation}
\begin{cases}
\bar{\alpha}:=(\alpha_1,..,\alpha_l)\\
\bar{N}:=(N_1,...,N_l).
\end{cases}
\end{equation} 
\end{defn}

\subsection{Specialisations for the homology groups-globalised specialisation}

\begin{defn}[Globalised specialisations] Let $\cN\in \mathbb N$ be fixed and let $t\in \mathbb Z$, and also three multi-indices: $\bar{\alpha}\in \C^{l}$, $\bar{\eta}\in \C[q^{\pm\frac{1}{2}}]$ and $\bar{b}=(b^k_j)\in \N^{l({\cN-1})}, k\in \{1,...,l\}, j\in \{1,...,\cN-1\}$. We denote the specialisation of coefficients:
$$ \psi^{t,\cN,\bar{b}}_{q,\bar{\alpha},\bar{\eta}}: \Z[w^1_1,...,w^{l}_{\cN-1},u_1^{\pm 1},...,u_{l}^{\pm 1},x_1^{\pm 1},...,x_{l}^{\pm 1},y^{\pm 1}, d^{\pm 1}] \rightarrow \C[q^{\pm \frac{1}{2}},q^{\pm \frac{\alpha_1}{2}},...,q^{\pm \frac{\alpha_l}{2}}]$$
\begin{equation}\label{not}
\begin{cases}
&\psi^{t,\cN,\bar{b}}_{q,\bar{\alpha},\bar{\eta}}(u_j)=q^{t\alpha_j}\\
&\psi^{t,\cN,\bar{b}}_{q,\bar{\alpha},\bar{\eta}}(x_j)=q^{\alpha_j}, \ j\in \{1,...,l\}\\
&\psi^{t,\cN,\bar{b}}_{q,\bar{\alpha},\bar{\eta}}(y)=\bar{\eta}\\
&\psi^{t,\cN,\bar{b}}_{q,\bar{\alpha},\bar{\eta}}(d)=q^{-1}\\
&\psi^{t,\cN,\bar{b}}_{q,\bar{\alpha},\bar{\eta}}(w^{k}_j)=1, \ \text{ if } j\leq b^k_j, \\
&\psi^{t,\cN,\bar{b}}_{q,\bar{\alpha},\bar{\eta}}(w^k_j)=0, \ \text{ if } j\geq b^k_j+1, k\in \{1,...,l\}, j\in \{1,...,\cN-1\}.
\end{cases}
\end{equation} 
\end{defn}
\begin{defn}(Colouring maps)\label{spec}\\
For a given colouring $C:\{1,...,n\}\rightarrow \{1,...,l\}$ we denote the specialisation of coefficients as below:
\begin{center}
\begin{tikzpicture}
[x=1.2mm,y=1.4mm]
\node (b1)               at (-37,0)    {$\C[w^1_1,...,w^{l}_{\cN-1},u_1^{\pm 1},...,u_{l}^{\pm 1},x_1^{\pm 1},...,x_n^{\pm 1},y^{\pm1}, d^{\pm 1}]$};
\node (b2)   at (27,-20)   {$\C[w^1_1,...,w^{l}_{\cN-1},u_1^{\pm 1},...,u_{l}^{\pm 1},x_1^{\pm 1},...,x_l^{\pm 1},y^{\pm1}, d^{\pm 1}]$}; 
\node (b3)   at (-37,-50)   {$\C[q^{\pm \frac{1}{2}},q^{\pm \frac{\alpha_1}{2}},...,q^{\pm \frac{\alpha_l}{2}}]$};
,\draw[->]   (b1)      to node[xshift=1mm,yshift=5mm,font=\large]{$f_C$ \eqref{eq:8}}                           (b2);
\draw[->]             (b2)      to node[right,xshift=2mm,font=\large]{$\psi^{t,\cN,\bar{b}}_{q,\bar{\alpha},\bar{\eta}}$\ \eqref{not}}   (b3);
\draw[->,thick,dotted]             (b1)      to node[left,font=\large]{}   (b3);
\end{tikzpicture}
\end{center}
\end{defn}
\begin{notation}\label{not'}
Also, we denote by $N^C_i:=N_{C(i)}$.
\end{notation}

\subsection{Coloured Jones polynomials - generic parameters}
For the case of semi-simple link invariants, we will use the globalised specialisation for the following parameters:
\begin{equation}
\begin{cases}
\bar{N}^{\alpha}:=(N_1-1,...,N_l-1)\\
\overline{[N]}:=([N^C_1]_{q})\\
t=1\\
\bar{N}^b=(n^{k}_{j}), \ n^{k}_{j}=N_{k}-1 .
\end{cases}
\end{equation} 
\begin{notation}[Specialisation for generic $q$]\label{pJ1}
We denote the specialisation:
\begin{equation}
\sJt:=\sJtt.
\end{equation}
\end{notation}

\begin{defn}[Specialisation for coloured Jones polynomials]\label{pJ2}
The associated specialisation of coefficients is given by:
$$ \sJt: \Z[w^1_1,...,w^{l}_{\cN-1},u_1^{\pm 1},...,u_{l}^{\pm 1},x_1^{\pm 1},...,x_{l}^{\pm 1},y^{\pm 1}, d^{\pm 1}]  \rightarrow \Z[d^{\pm 1}]$$
\begin{equation}\label{eq:8''''} 
\begin{cases}
&\sJt(u_i)=\left(\sJt(x_i)\right)^t=d^{1-N_i}\\
&\sJt(x_i)=d^{1-N_i}, \ i\in \{1,...,l\}\\
&\sJt(y)=[N^C_1]_{d^{-1}},\\
&\sJt(w^k_j)=1, \ \text{ if } j\leq N_k-1 ,\\
&\sJt(w^k_j)=0, \ \text{ if } j\geq N_{k},  k\in \{1,...,n-1\}, j\in \{1,...,\cN-1\}.
\end{cases}
\end{equation}
\end{defn}
\subsection{Coloured Alexander polynomials - parameters at roots of unity}

For this case of non-semi simple link invariants, let us fix $\cN$ to be the level associated to the weighted intersection form and let us consider $\cM$ to be the order of the root of unity such that $\cM \leq \cN$. We use the globalised specialisation for the multi-indices:
\begin{equation}
\begin{cases}
\bar{\lambda}:=(\lambda_1,...,\lambda_l) \in \C^l\\
\overline{d(\lambda)}:=(d[\lambda_{C(1)}]_{\xi_{\cM}})\\
t=1-\cM\\
\bar{\cM}^b=(\cM-1,...,\cM-1).
\end{cases}
\end{equation} 

\begin{notation}[Specialisation at roots of unity]\label{pA1}
Let us denote the specialisation associated to the above parameters as:
\begin{equation}
\sAt:=\sAtt.
\end{equation}
\end{notation}
\begin{rmk}[ADO invariant in variables $x_1,...,x_l$]
In the above notations, $\bar{\lambda}$ should be seen as the colours of the ADO link invariant, if these colours are chosen to be generic complex numbers. However, in the literature, it is often used the fact that we can look at the ADO invariant as a polynomial in the variables $\xi_{\cN}^{\lambda_1}$,...,$\xi_{\cN}^{\lambda_l}$, which we denote by $x_1,...,x_l$.

 Via this dictionary, we obtain that the $\cN^{th}$ ADO invariant is a polynomial belonging to the ring $\Q(\xn)(x^{2\cN}_{1}-1,...,x^{2\cN}_{l}-1)^{-1}[x_1^{\pm 1},...,x_{l}^{\pm 1}]$.
\end{rmk}
 In this setting, let us consider the following specialisation of coefficients.
\begin{defn}[Specialisation for coloured Alexander polynomials]\label{pA2}
The associated specialisation of coefficients is given by:
$$ \sAt: \Z[w^1_1,...,w^{l}_{\cN-1},u_1^{\pm 1},...,u_{l}^{\pm 1},x_1^{\pm 1},...,x_{l}^{\pm 1},y^{\pm 1} d^{\pm 1}] \rightarrow \Q(\xi_{\cM})(x^{2\cM}_{1}-1,...,x^{2\cM}_{l}-1)^{-1}[x_1^{\pm 1},...,x_{l}^{\pm 1}]$$
\begin{equation}\label{eq:8''''} 
\begin{cases}
&\sAt(u_j)=x_j^{(1-\cM)}\\
&\sAt(y)=(d[\lambda_{C(1)}]_{\xi_{\cM}}),\\
&\sAt(d)= \xi_{\cM}^{-1}\\
&\sAt(w^k_j)=1, \ \text{ if } j\leq \cM-1,\\
&\sAt(w^k_j)=0, \ \text{ if } j\geq \cM,  k\in \{1,...,l\}, j\in \{1,...,\cN-1\}.
\end{cases}
\end{equation}
\end{defn}
\subsection{Specialisations of coefficients for universal invariants}\label{sumspec}

\begin{defn}[Rings for the universal invariant] Let us denote the following rings:
 
\begin{equation}
 \begin{cases}
 \LL:=\Z[w^1_1,...,w^{l}_{\cN-1},u_1^{\pm 1},...,u_{l}^{\pm 1},x_1^{\pm 1},...,x_{l}^{\pm 1}, y^{\pm 1},d^{\pm 1}]
 =\Z[\bar{w},(\bar{u})^{\pm 1},(\bar{x})^{\pm 1},y^{\pm 1},d^{\pm 1}]\\
 \LN:=\Q(\xn)(x^{2\cN}_{1}-1,...,x^{2\cN}_{l}-1)^{-1}[x_1^{\pm 1},...,x_{l}^{\pm 1}]\\
 \LNJ=\Z[d^{\pm 1}] .
 \end{cases}
 \end{equation}
where we compressed the multi-indices as $$\bar{w}:=(w^1_1,...,w^{l}_{\cN-1}), (\bar{u})^{\pm 1}=(u_1^{\pm 1},...,u_{l}^{\pm 1}), (\bar{x})^{\pm 1}=(x_1^{\pm 1},...,x_{l}^{\pm 1}).$$
\end{defn}
\begin{defn}[Level $\cN$ specialisations] \label{pA3}
We recall the specialisations associated for the generic case and for the case of roots of unity, which we denoted as:
$$ \snJ: \LL=\Z[\bar{w},(\bar{u})^{\pm 1},(\bar{x})^{\pm 1},y^{\pm 1},d^{\pm 1}] \rightarrow \LNJ$$
\begin{equation}\label{u1''} 
\begin{cases}
&\sJt(u_i)=\left(\sJt(x_i)\right)^t=d^{1-N_i}\\
&\sJt(x_i)=d^{1-N_i}, \ i\in \{1,...,l\}\\
&\sJt(y)=[N^C_1]_{d^{-1}},\\
&\sJt(w^k_j)=1, \ \text{ if } j\leq N_k-1 ,\\
&\sJt(w^k_j)=0, \ \text{ if } j\geq N_{k},  k\in \{1,...,l\}, j\in \{1,...,\cN-1\}.
\end{cases}
\end{equation}
$$ \sAt:\LL \rightarrow \LM$$
\begin{equation}\label{u1'}
\begin{cases}
&\sAt(u_j)=x_j^{(1-\cM)}\\
&\sAt(y)=(d[\lambda_{C(1)}]_{\xi_{\cM}}),\\
&\sAt(d)= \xi_{\cM}^{-1}\\
&\sAt(w^k_j)=1, \ \text{ if } j\leq \cM-1,\\
&\sAt(w^k_j)=0, \ \text{ if } j\geq \cM,  k\in \{1,...,l\}, j\in \{1,...,\cN-1\}.
\end{cases}
\end{equation}

\end{defn}

\subsection{Formulas for the universal rings}\label{formex}
In section \ref{S:ringJones} we compute the precise form of the quotient rings and universal rings where the universal Jones and Alexander invariants belong to. As a summary, we present their formulas below.

\begin{prop}[\bf {Ring for the $\cN^{th}$ unified Jones invariant}] The $\cN^{th}$ unified Jones invariant $\PAAJ$ belongs to the quotient ring $\LLNJ$ that has the following form:
\begin{equation} \label{rnj}
\begin{aligned}
\LLNJ= \LL / & \langle u_i-x_i,\bigcap_{\bar{N} \leq \cN} \left(y\left(d-d^{-1}\right)-(d^{N^C_1}-d^{-N^C_1})\right.,\\
& \ \ \ \ \ \ \ \ \ \ \ \ \ \ \  \ \  \left. x_i-d^{1-N_i}, w^k_j-1 \text{ if } j\leq N_k-1 \right.,\\
& \ \ \ \ \ \ \ \ \ \ \ \ \ \ \ \ \ \left. w^k_j \ \text{ if } j\geq N_{k},   i,k\in \{1,...,l\}, j\in \{1,...,\cN-1\} \right) \rangle 
\end{aligned}
\end{equation}
where $\LL=\Z[w^1_1,...,w^{l}_{\cN-1},u_1^{\pm 1},...,u_l^{\pm 1},x_1^{\pm 1},...,x_l^{\pm 1},y^{\pm 1}, d^{\pm 1}]$, as in Proposition \ref{f10}.
\end{prop}

\begin{prop}[\bf {Universal ring for universal Jones invariant}] \label{uj} The universal Jones invariant $\IJJ$ belongs to the universal ring $\LLhJ$, which is the projective limit of this sequence of rings as in equation \eqref{eq10:limit}:
\begin{equation}\label{rj}
\LLhJ:= \underset{\longleftarrow}{\mathrm{lim}} \ \LLNJ.
\end{equation}
Also, we have a well-defined induced universal specialisation map, which we denote:
\begin{equation}
\usnJ: \LLhJ \rightarrow \LNJ.
  \end{equation}
\end{prop}

\begin{prop}[\bf {Ring for the $\cN^{th}$ unified ADO invariant}] The $\cN^{th}$ unified ADO invariant $\PAA$ belongs to the quotient ring $\LLN$ that has the following description:

\begin{equation}\label{rna}
\begin{aligned}
\LL=&\Z[w^1_1,...,w^{l}_{\cN-1},u_1^{\pm 1},...,u_l^{\pm 1},x_1^{\pm 1},...,x_l^{\pm 1},y^{\pm 1}, d^{\pm 1}]\\
\LLN= & \LL /  \bigcap_{\cM \leq \cN} \langle  (u_i-x_i^{1-\cM}), y\left(x^{\cM}_{C(1)}-x^{-\cM}_{C(1)}\right)-(x_{C(1)}-x^{-1}_{C(1)}), \ \varphi_{2\cM}(d)\\
& \left. \ \ \ \ \ \ \ \ \ \ \ w^k_j-1, \ \text{ if } j\leq \cM-1\right.,\\
& \ \ \ \ \ \ \ \ \ \  \  w^k_j \ \text{ if } j\geq \cM,  k,i\in \{1,...,l\}, j\in \{1,...,\cN-1\}  \rangle,
\end{aligned}
\end{equation}
where $\LL=\Z[w^1_1,...,w^{l}_{\cN-1},u_1^{\pm 1},...,u_l^{\pm 1},x_1^{\pm 1},...,x_l^{\pm 1},y^{\pm 1}, d^{\pm 1}]$.
\end{prop}

\begin{prop}[\bf {Universal ring for universal ADO invariant}] \label{ua}
The universal ADO invariant $\I$ belongs to the universal ring $\LL$, which is the projective limit of the above sequence of rings as in equation \eqref{eq10:limit}:
\begin{equation}\label{ra}
\LLh:= \underset{\longleftarrow}{\mathrm{lim}} \ \LLN.
\end{equation}
Then, we have also a well-defined induced universal specialisation map, which we denote:
\begin{equation}
\usn: \LLh \rightarrow \LN.
  \end{equation}
\end{prop}
\subsection{Summary: Diagram with all the specialisations of coefficients for link invariants}
We will use the specialisations of coefficients and homology groups, as in Figure \ref{diagsl}.
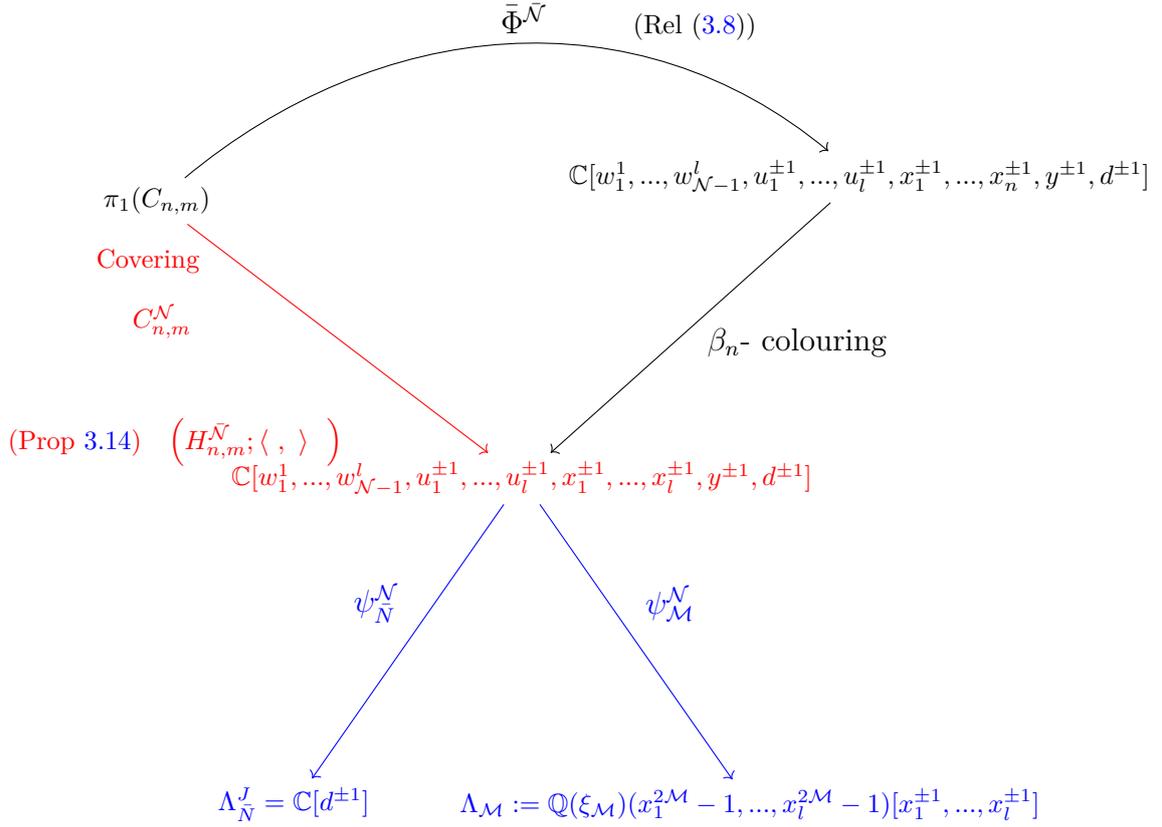
\begin{figure}[H]
\begin{center}
\begin{tikzpicture}
[x=1.2mm,y=1.6mm]


\node (b1)               at (5,50)    {$\pi_1(C_{n,m})$};
\node (b20) [color=red] at (4,45)   {Covering};
\node (b200) [color=red] at (8,40)   {$C_{n,m}^{\cN} \ \ \ \ \ $};
\node (b3) [color=red]   at (45,27)   {$\C[w^1_1,...,w^{l}_{\cN-1},u_1^{\pm 1},...,u_l^{\pm 1},x_1^{\pm 1},...,x_l^{\pm 1},y^{\pm 1}, d^{\pm 1}]$};
\node (b33) [color=red] at (7,30)   {(Prop \ref{D:4}) \ \ $\left( H^{\bar{\cN}}_{n,m}; \left\langle ~,~ \right\rangle \ \ \right)$};
\node (b33') [color=red]   at (96,-3)   {};
\node (b4) [color=blue]  at (20,0)   {$\LNJ=\C[d^{\pm 1}]$};
\node (b44) [color=blue]  at (70,0)   {$ \LM:=\Q(\xi_{\cM})(x^{2\cM}_{1}-1,...,x^{2\cM}_{l}-1)^{-1}[x_1^{\pm 1},...,x_{l}^{\pm 1}]
$};
\node (b1'') [color=black]  at (82,52)   {$\C[w^1_1,...,w^{l}_{\cN-1},u_1^{\pm 1},...,u_l^{\pm 1},x_1^{\pm 1},...,x_n^{\pm 1},y^{\pm 1}, d^{\pm 1}]$};

\draw[->,color=red]   (b1)      to node[yshift=-3mm,xshift=11mm,font=\large]{}                           (b3);
\draw[->,color=blue,yshift=5mm]             (b3)      to node[left,font=\large,yshift=5mm]{$\sJt$}   (b4);
\draw[->,color=blue,yshift=5mm]             (b3)      to node[right,font=\large,yshift=5mm]{$\sAt$}   (b44);

\draw[->,color=black]   (b1)      to [out=40,in=140] node[right,xshift=-2mm,yshift=3mm,font=\large]{$\bar{\Phi}^{\bar{\cN}} \hspace{10mm}$ {\normalsize{(Rel \eqref{sor})}}}                        (b1'');
\draw[->,color=black]   (b1'')      to node[right,yshift=-2mm,xshift=1mm,font=\large]{$\beta_n$- \text{colouring}}                        (b3);
\end{tikzpicture}
\end{center}
\vspace{-3mm}
\caption{\normalsize Specialisations of coefficients: Weighted Lagrangian intersection}\label{diagsl}
\end{figure}

\section{Homological set-up} \label{S3}
Let us consider $n \in \N$. 
We denote by $\mathbb D_{2n+2}$ the $(2n+2)$-punctured disc, and fix a splitting of its punctures as below:

\begin{itemize}
\setlength\itemsep{-0.2em}
\item[•]$2n$ horizontal punctures, called $p$-punctures (and we label them $\{1,..,2n\}$)
\item[•]$1$ puncture called $q$-puncture (which we denote $\{0\}$)
\item[•]$1$ punctures placed as in Figure \ref{Localsystt}, which we call the $s$-puncture.
\end{itemize}
\subsection{Configuration space of the punctured disc}

The first part of our homological construction involves the homology of coverings of configuration spaces and it is precisely the one from \cite[Section 4]{CrI}. We refer to this article for all the details, and in the following part we present a summary of the construction.

For $m\in \N$ we consider the unordered configuration space of $m$ points in the punctured disc $\mathbb D_{2n+2}$:
 $$\Clm:=\Conf_{m}(\mathbb D_{2n+2}).$$ 
We also consider a fixed  base point of this configuration space, defined by a set of points on the boundary of the disc $d_1,..d_m \in \partial \hspace{0.5mm} \mathbb D_{2n+2}$. Let ${\bf d}=(d_1,...,d_m)$ be the associated point in the configuration space. Now we define a local system on the space $\Clm$. 

We suppose that $m \geq 2$. We use the abelianisation to the first homology group of the configuration space, which has the following form.
\begin{prop}[Abelianisation map]
 Let us denote the abelianisation map $[ \ ]: \pi_1(\Clm) \rightarrow H_1\left( \Clm\right)$ for the fundamental group of our configuration space. Its homology has the following structure:
\begin{equation*}
\begin{aligned}
H_1\left( \Clm \right)  \simeq \ \ \ \  \ & \Z^{n+1} \ \ \ \ \oplus \ \ \ \  \Z^{n} \ \ \ \ \ \oplus \ \ \ \ \Z \ \ \ \  \oplus \ \ \ \ \Z\\
&\langle [\sigma_i] \rangle \ \ \ \ \ \ \ \ \ \langle [\bar{\sigma}_{i'}] \rangle \ \ \ \ \ \ \ \ \ \langle [\gamma] \rangle \ \ \ \ \ \ \ \ \langle [\delta]\rangle,  \ \ \ \ {i\in \{0,...,n\}}\\
& \hspace{69mm} {i'\in \{1,...,n\}}.
\end{aligned}
\end{equation*}
The five types of generators are presented in Figure \ref{Localsystt}. 
\begin{figure}[H]
\centering
\includegraphics[scale=0.26]{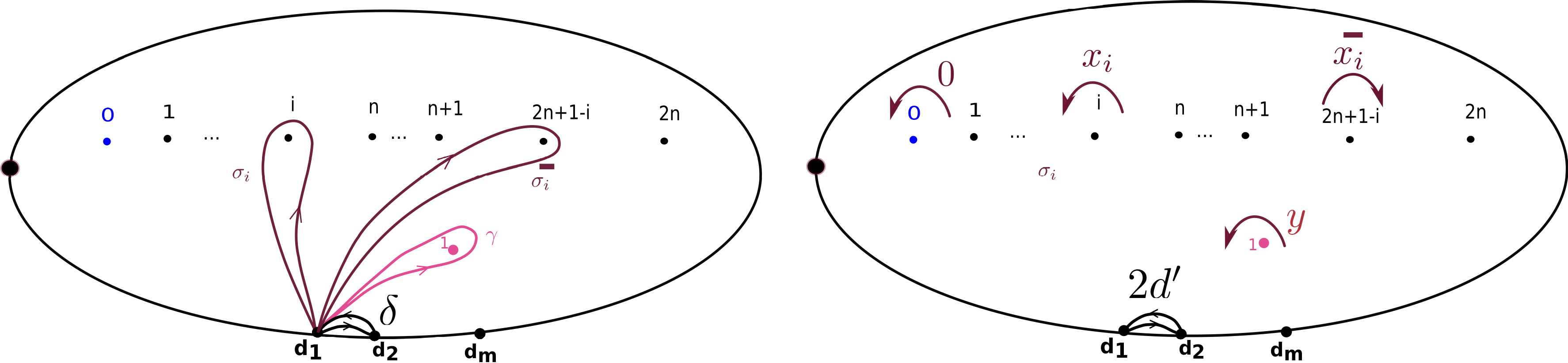}
\caption{\normalsize Local system}
\label{Localsystt}
\end{figure}
\end{prop}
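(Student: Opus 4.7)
The plan is to compute $H_1(\Clm)$ by recognising $\pi_1(\Clm)$ as the surface braid group of $\mathbb D_{2n+2}$ on $m$ strands, and then abelianising via a system of explicit winding invariants. Rather than manipulate a presentation, I would dualise: construct $2n+3$ independent homomorphisms $H_1(\Clm) \to \Z$, and exhibit $2n+3$ geometric loops pairwise dual to them, which forces $H_1$ to be free of rank exactly $2n+3 = (n+1) + n + 1 + 1$.

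For the lower bound, for each puncture $P$ of $\mathbb D_{2n+2}$ I would define a winding map $w_P \colon \Clm \to S^1$ by the symmetric formula $(z_1,\ldots,z_m) \mapsto \prod_{j=1}^m (z_j - P)/|z_j - P|$, which is well defined on the unordered configuration space because the product is invariant under the $S_m$-action. Taking $\pi_1$ yields $2n+2$ homomorphisms $H_1(\Clm) \to \Z$ that measure the total winding of the configuration around each puncture. A $(2n+3)$rd homomorphism is obtained as the composition $\pi_1(\Clm) \to S_m \to \{\pm 1\} \to \Z$, namely the sign of the induced permutation, which reads off the class $[\delta]$ associated to a half-twist swap.

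Next I would identify the geometric generators drawn in Figure \ref{Localsystt}: the classes $[\sigma_0], [\sigma_1], \ldots, [\sigma_n]$ are loops in which a single particle near the boundary encircles the $q$-puncture and the $n$ left-hand $p$-punctures; the classes $[\bar\sigma_{1}], \ldots, [\bar\sigma_n]$ are analogous loops around the $n$ right-hand $p$-punctures; $[\gamma]$ is a loop around the $s$-puncture; and $[\delta]$ is the standard half-twist of two adjacent particles. Evaluating the $2n+3$ functionals above on these $2n+3$ loops produces the identity matrix, proving $\Z$-linear independence and showing that no torsion is hidden by the pairing.

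For the upper bound, since $\mathbb D_{2n+2}$ deformation retracts onto a wedge of $2n+2$ circles, the Fadell-Neuwirth fibration $F_m(\mathbb D_{2n+2}) \to F_{m-1}(\mathbb D_{2n+2})$ together with induction on $m$ shows that $\pi_1(F_m(\mathbb D_{2n+2}))$ is generated by the $2n+2$ loops of a single strand around the punctures together with the pure-braid generators. Passing to the unordered quotient introduces the half-twists, and abelianising collapses all pure-braid generators and all half-twists of adjacent strands to a single $\Z$-class (using $m \geq 2$). Thus $H_1(\Clm)$ is generated by at most $2n+3$ classes, matching the lower bound. The main obstacle — though not a deep one — is matching the explicit geometric generators of Figure \ref{Localsystt} with this abstract output, in particular checking that the loop $[\gamma]$ around the $s$-puncture (placed off the horizontal axis) is genuinely independent of the horizontal generators, and that the split of $p$-puncture loops into left and right halves is consistent with the labelling; the winding-invariant dualisation resolves both points simultaneously.
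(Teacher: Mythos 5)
Your overall route---$\Z$-valued winding functionals to pin down the lower bound, plus a Fadell--Neuwirth/induction argument for generation---is the standard proof of this standard structural fact; the paper itself offers no argument for the proposition, deferring to \cite[Section 4]{CrI}, so a self-contained argument of this shape is exactly what is needed. The puncture-winding maps $w_P$ are fine: they are invariant under the $S_m$-action, hence descend to $\Clm$, and they evaluate as you claim on tight loops $\sigma_0,\dots,\sigma_n,\bar{\sigma}_1,\dots,\bar{\sigma}_n,\gamma$.

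The genuine gap is your $(2n+3)$rd functional. The sign of the induced permutation is a homomorphism $\pi_1(\Clm)\to\{\pm 1\}\cong\Z/2$, and since $\Z$ is torsion-free there is no nontrivial homomorphism $\Z/2\to\Z$; so the composite you describe is not a $\Z$-valued functional, only a $\Z/2$-valued one. It therefore detects $[\delta]$ only modulo $2$: combined with your generation bound it would still allow the cyclic subgroup generated by $[\delta]$ to be finite of even order, so it does not establish the free summand $\Z\langle[\delta]\rangle$ nor the asserted isomorphism $H_1(\Clm)\cong\Z^{2n+3}$. The repair is the usual one: replace the sign by the total pairwise winding map $(z_1,\dots,z_m)\mapsto\prod_{i<j}\bigl((z_i-z_j)/|z_i-z_j|\bigr)^{2}$, which is honestly $S_m$-invariant and induces a homomorphism $H_1(\Clm)\to\Z$ sending the half-twist $\delta$ to $1$ and each tight puncture loop to $0$ (equivalently, push forward along $\Conf_m(\mathbb D_{2n+2})\hookrightarrow\Conf_m(\C)$ and use that $H_1(\Conf_m(\C))\cong\Z$ is generated by the half-twist). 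With that substitution your evaluation matrix is genuinely the identity and the argument closes; in the upper bound you should also state explicitly that loops of different strands around a fixed puncture, and half-twists of different adjacent pairs, are conjugate in $\pi_1(\Clm)$ and hence equal in $H_1$, which is what cuts the generating set down to the $2n+3$ classes of Figure \ref{Localsystt}.
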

\subsection{Local system and covering space at level $\bar{\cN}$}
Our local system will depend on a choice of a sequence of ``levels''. This will be used in an essential manner in order to make sure that our submanifolds, which have geometric supports encoded by configuration spaces on ovals in the disc, lift to the associated covering space.

\begin{defn}[Multi-level]
We start with a fixed sequence of levels $\cN_1,...,\cN_n \in \N$ and call a ``multi-level'' the following collection:
\begin{equation}
\bar{\cN}:=(\cN_1,...,\cN_n).
\end{equation}
\end{defn}

\begin{defn}[Augmentation map] After this first step, we consider the following augmentation map $$\nu: H_1\left(\Clm\right)\rightarrow \Z^n \oplus \Z \oplus \Z$$ 
$$ \hspace{28mm} \langle x_i \rangle \ \ \langle y\rangle \ \ \langle d' \rangle$$ defined by the formulas:
\begin{equation}
\begin{cases}
&\nu(\sigma_0)=0\\
&\nu(\sigma_i)=2x_i,\\ 
&\nu(\bar{\sigma}_i)=2(x_i+(\cN_i-1)d'), i\in \{1,...,n\}\\
&\nu(\gamma)=y\\
&\nu(\delta)=2d'.
\end{cases}
\end{equation}
\end{defn}
\begin{defn}(Local system)\label{localsystem0}
Let us consider the local system that is given by the following composition of the above maps:
\begin{equation}
\begin{aligned}
&\Phi^{\bar{\cN}}: \pi_1(\Clm) \rightarrow \Z^n \oplus \Z \oplus \Z\\
&\hspace{24mm} \langle x_i \rangle \ \ \langle y\rangle \ \ \langle d' \rangle, \ i \in \{1,...,n\},\\
&\Phi^{\bar{\cN}}= \nu \circ [ \ ]. \ \ \ \ \ \ \ \ \ \ \ \ \ \ \ \ \ \ \ \ 
\end{aligned}
\end{equation}
\end{defn}

\begin{defn}[Level $\bar{\cN}$ covering space]\label{localsystemc}
We consider the covering of the configuration space $\Clm$ which is associated to the level $\bar{\cN}$ local system $\Phi^{\bar{\cN}}$, and denote it by $\ClmN$.
\end{defn}
\begin{notation}[Base point]

\

For the next steps we also fix a base point $\tilde{{\bf d}}$ which belongs to the fiber over ${\bf d}$ in $\ClmN$.
\end{notation}

\subsection{Group rings} Our tools involve the homologies of this level $\bar{\cN}$ covering space. Then, the next step will be to use a Poincaré-Lefschetz duality between these two homology groups.  

The group of deck transformations of $\ClmN$ is given by:
$$\Imm\left(\Phi^{\bar{\cN}}\right)=(2\Z)^{n}\oplus \Z\oplus (2\Z) \subseteq \Z^{n}\oplus \Z\oplus \Z.$$
This means that the homology of this covering space $\ClmN$ is a module over the associated group ring:
$$\Z[x_1^{\pm 2},...,x_n^{\pm 2},y^{\pm1}, d'^{\pm 2}].$$ 
\begin{defn}[Inclusion of group rings] \label{iota}Let us denote the following inclusion map:
$$\iota: \C[x_1^{\pm 2},...,x_n^{\pm 2},y^{\pm1}, d'^{\pm 2}] \subseteq \C[w^1_1,…,w^{n-1}_{\cN-1},u_1^{\pm 1},...,u_{l}^{\pm 1},x_1^{\pm 1},...,x_{n}^{\pm 1}, y^{\pm 1},d^{\pm 1}]
.$$
\end{defn}
where we denote a new variable by $d:=\xi_1d'$ 
where $\xi_1=i=e^{\frac{2\pi i}{2}}$.

We consider the homology of the level $\bar{\cN}$ covering, which we tensor over $\iota$ with the following group ring: $$\C[w^1_1,…,w^{l}_{\cN-1},u_1^{\pm 1},...,u_{l}^{\pm 1},x_1^{\pm 1},...,x_{n}^{\pm 1}, y^{\pm 1},d^{\pm 1}]
.$$
\begin{rmk}[Structure of the homology of the level $\cN$ covering space]\label{homcov}
Using this change of coefficients, we have homology groups which become modules over: 
\begin{equation}
\C[w^1_1,…,w^{l}_{\cN-1},u_1^{\pm 1},...,u_{l}^{\pm 1},x_1^{\pm 1},...,x_{n}^{\pm 1}, y^{\pm 1},d^{\pm 1}]
.
\end{equation}
\end{rmk}

\subsection{Level $\cN$ homology groups}\label{hlgy}
For the next part of our set-up,  we consider the relative middle dimensional homology of the level $\bar{\cN}$ covering space. 

More specifically, we are going to use two homology groups which are relative to a certain splitting of the boundary of the configuration space.

\begin{notation}
a) We denote by $S^{-}\subseteq \partial \mathbb D_{2n+2}$ be the semicircle on the boundary of the disc given by points with negative $x$-coordinate. Let us fix also a point on the boundary of the disc, which we  denote:
 $$w \in S^{-} \subseteq \partial \mathbb D_{2n+2}.$$
b) Let $C^{-}$ be the subspace of the boundary of the configuration space $\Clm$  given by configurations where at least one particle belongs to  $S^{-}$. 

c) Then, we denote by  $P^{-} \subseteq \partial \ClmN$ part of the boundary of level $\bar{\cN}$-covering which is the fiber over $C^{-}$.

\end{notation}

\begin{defn}
\label{T2}
The precise splitting of the infinity part of the configuration space that we are going to use is constructed in \cite[Remark 7.5]{CrM}. 
Using this splitting, we have two homology groups.
\begin{equation*}
\begin{aligned}
& \bullet  H^{\text{lf},\infty,-}_m(\ClmN,P^{-}; \Z) \text{ the homology relative to part of the open boundary of } \ClmN\\
& \text{ given by configurations that project to a point containing a puncture}\\ 
&\text {and also relative to the boundary } P^{-}.\\
&\bullet H^{lf, \Delta}_{m}(\ClmN, \partial; \Z) \text{ the homolgy relative to }\\
& \text{ part of the the boundary of the covering hich is not in } P^{-} \text{and Borel-Moore}\\
& \text{ with respect to collisions of points in the configuration space}.
\end{aligned}
\end{equation*}
\end{defn}
\begin{defn}[Homology of the level $\bar{\cN}$ covering]
We consider the submodules in these Borel-Moore homologies of the covering space $\ClmN$ that are the image of twisted Borel-Moore homology of the base space $\Clm$ (twisted by the local system $\Phi^{\bar{\cN}}$), as in \cite{CrI} and [\cite{CrM}, Theorem E]:
\begin{enumerate}
 \item[$\bullet$]  $\mathscr H^{\bar{\cN}}_{n,m}\subseteq H^{\text{lf},\infty,-}_m(\ClmN, P^{-1};\Z)$ and 
 \item[$\bullet$]  $\mathscr H^{\bar{\cN},\partial}_{n,m} \subseteq H^{\text{lf},\Delta}_m(\ClmN,\partial;\Z)$. 
\end{enumerate}
As we have seen, these homologies are modules over $$\C[w^1_1,…,w^{l}_{\cN-1},u_1^{\pm 1},...,u_{l}^{\pm 1},x_1^{\pm 1},...,x_{n}^{\pm 1}, y^{\pm 1},d^{\pm 1}]
.$$
\end{defn}

\subsection{Specialisations given by colorings}
Up to this moment, in the definition of the homology groups we did not use any information coming from braid representatives of our links. Now we continue our homological set-up for the case where we have a link with $l$ components and braid representative $\beta_n$ with $n$-strands that gives our link by  braid closure. This will induce a colouring, as follows.

\begin{defn}[Colouring the punctures $C$]\label{br}
Let $C:\{1,...,2n\}\rightarrow \{1,...,l\}$ be the associated coloring of the $2n$ $p$-punctures of the disc $\{1,...,2n\}$ with $l$ colours, induced by $\beta_{n} \cup {\mathbb I}_{n}$.
\end{defn}

\begin{defn}[Change of coefficients $f_{C}$]\label{fc}
This induces a change the variables corelated to the punctures of the punctured disc, defined as:\begin{equation}
\begin{aligned}
f_C: &~\C[w^1_1,…,w^{l}_{\cN-1},u_1^{\pm 1},...,u_{l}^{\pm 1},x_1^{\pm 1},...,x_{n}^{\pm 1}, y^{\pm 1},d^{\pm 1}]
 \rightarrow \\
 & \ \C[w^1_1,...,w^{l}_{\cN-1},u_1^{\pm 1},...,u_{l}^{\pm 1},x_1^{\pm 1},...,x_l^{\pm 1},y^{\pm1}, d^{\pm 1}]
 \end{aligned}
 \end{equation}

\begin{equation}\label{eq:8} 
f_C(x_i)=x_{C(i)}, \ i\in \{1,...,n\}.
\end{equation}
\end{defn}
Now, we look at this change of coefficients at the level of the homology groups, via the function $f_C$.
\begin{defn}(Homology groups)\label{D:4} We consider the two homologies defined above, specialised over the ring associated to this new change of coefficients:
\begin{enumerate}
 \item[$\bullet$]  $H^{\bar{\cN}}_{n,m}:=\mathscr H^{\bar{\cN}}_{n,m}|_{f_C}$ 
 \item[$\bullet$]  $H^{\bar{\cN},\partial}_{n,m}:=\mathscr H^{\bar{\cN},\partial}_{n,m}|_{f_C}.$
\end{enumerate}
These homology groups are $\C[w^1_1,...,w^{l}_{\cN-1},u_1^{\pm 1},...,u_{l}^{\pm 1},x_1^{\pm 1},...,x_l^{\pm 1},y^{\pm1}, d^{\pm 1}]$-modules.
\end{defn}

We will use a geometric intersection pairing that is a  Poincaré-Lefschetz type duality for twisted homology (see \cite[Proposition 3.2]{CrM}] and also \cite[Lemma 3.3]{CrM}).
\begin{prop}(\cite[Proposition 7.6]{CrM})\label{P:3'''}
There exists a well-defined topological intersection pairing between these homology groups:
$$\langle ~,~ \rangle: H^{\bar{\cN}}_{n,m} \otimes \mathscr H^{\bar{\cN},\partial}_{n,m} \rightarrow \C[w^1_1,…,w^{l}_{\cN-1},u_1^{\pm 1},...,u_{l}^{\pm 1},x_1^{\pm 1},...,x_{l}^{\pm 1}, y^{\pm 1},d^{\pm 1}].$$
\end{prop}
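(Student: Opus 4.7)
The plan is to build the pairing by choosing geometric representatives, intersecting them after lifting to the covering $\ClmN$, and weighting intersection points by the deck transformation that carries one lift to the other. The first step is to exploit the fact (used throughout the paper and in \cite{CrI,CrM}) that classes in $\mathscr H^{\bar{\cN}}_{n,m}$ and $\mathscr H^{\bar{\cN},\partial}_{n,m}$ are represented by lifts of immersed Lagrangian submanifolds in $\Clm$ whose geometric supports are products of arcs and ovals in $\mathbb D_{2n+2}$. The choice of multi-level $\bar{\cN}$ was made precisely so that the monodromy of $\Phi^{\bar{\cN}}$ around a full oval is trivial (up to the group $(2\Z)^{n}\oplus\Z\oplus 2\Z$), which is exactly what guarantees that oval-supported submanifolds lift to $\ClmN$.

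Second, I would arrange the two representatives in transverse position in the base $\Clm$ and lift them to the covering. The crucial point is that the boundary conditions of the two homology groups are complementary: a representative $\cF$ of a class in $\mathscr H^{\bar{\cN}}_{n,m}$ has its ends in $P^{-}$ and in the locus of configurations containing a puncture, whereas a representative $\cL$ of a class in $\mathscr H^{\bar{\cN},\partial}_{n,m}$ has its ends in $\partial \minus{\ClmN}{P^{-}}$ together with allowed behaviour at the collision strata. Because these two pieces of $\partial \ClmN$ are disjoint, after a small perturbation the intersection $\cF \cap \cL$ is compact and contained in the interior, hence is a finite set of transverse points in $\ClmN$.

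Third, for each such intersection point $\tilde{x}$ I would assign the Laurent monomial
\begin{equation*}
\mathrm{sgn}(\tilde{x}) \cdot g(\tilde{x}) \in \Z[x_1^{\pm 2},\ldots,x_n^{\pm 2}, y^{\pm 1}, d'^{\pm 2}],
\end{equation*}
where $g(\tilde{x})$ is the deck transformation identifying the lift of $\cF$ through $\tilde{x}$ with the reference lift determined by $\tilde{{\bf d}}$, and then sum over all intersection points in a fundamental domain. Applying the inclusion $\iota$ of Definition \ref{iota} followed by the coefficient change $f_C$ of Definition \ref{fc} then lands the result in the target ring
\begin{equation*}
\C[w^1_1,\ldots,w^{l}_{\cN-1}, u_1^{\pm 1},\ldots,u_{l}^{\pm 1}, x_1^{\pm 1},\ldots,x_{l}^{\pm 1}, y^{\pm 1}, d^{\pm 1}],
\end{equation*}
which gives the pairing on $H^{\bar{\cN}}_{n,m} \otimes \mathscr H^{\bar{\cN},\partial}_{n,m}$.

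The main obstacle, and the step that requires genuine work, is to verify that this count descends to homology, i.e.\ that $\langle \partial \tilde{c}, \cL\rangle = 0$ when $\tilde{c}$ is a relative chain in the first group (and symmetrically on the right). This is a Poincaré--Lefschetz duality style argument whose essential input is once again the disjointness of the two boundary conditions: the boundary of any chain representing a class in $\mathscr H^{\bar{\cN}}_{n,m}$ is forced to lie in $P^{-}$ or in the singular/puncture strata, while a representative of a class in $\mathscr H^{\bar{\cN},\partial}_{n,m}$ avoids exactly this locus, so the would-be intersection points escape to infinity and cancel in pairs. The cleanest way to package this is to realise the pairing as the composite of cap product with the twisted fundamental class and the evaluation provided by the local system $\Phi^{\bar{\cN}}$, exactly as in \cite[Proposition~7.6]{CrM}; the statement above then follows by applying the coefficient extension $\iota$ and the colouring specialisation $f_C$, both of which are ring homomorphisms and so commute with the pairing.
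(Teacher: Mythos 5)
Your proposal is correct and follows essentially the same route as the paper, which does not reprove this statement but imports it from \cite[Proposition 7.6]{CrM}: the pairing is exactly the count of transverse base-space intersection points of the lifted Lagrangian supports, each weighted by a sign and by the deck transformation (equivalently the loop $l_x$ evaluated under the local system), and then pushed into the larger coefficient ring via the inclusion $\iota$ of Definition \ref{iota} and the colouring map $f_C$ of Definition \ref{fc}, just as recalled in Proposition \ref{P:3}. The only cosmetic slip is the phrase about summing ``in a fundamental domain''; the finiteness really comes from compactness of the intersection in the base, with each base point contributing one monomial, but this does not affect the argument.
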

\subsection{Computation of the geometric intersection pairing}\label{comp}
This intersection form has the nice feature that even if it is defined at the level of the covering space, it is encoded by geometric intersections on the base spase, graded by the local system. We refer to \cite[Section 7]{CrM} for the complete construction, and we present the main steps for the computations below.

\begin{notation}[Twisted local system $\bar{\Phi}^{\bar{\cN}}$]\phantom{A} \label{twsyst}\\ 
Let $\tilde{\Phi}^{\bar{\cN}}$ be the morphism induced by the level $\bar{\cN}$ local system $\Phi^{\bar{\cN}}$, that takes values in the group ring of $\Z^n \oplus \Z \oplus \Z$:
\begin{equation}\label{sgr}
\tilde{\Phi}^{\bar{\cN}}: \pi_1(\Clm) \rightarrow \C[x_1^{\pm 1},...,x_{n}^{\pm 1}, y^{\pm 1},{d'}^{\pm 1}].
\end{equation}
Then, using the change of variables $ \iota$ from Definition \ref{iota} and the change of coefficients $f_C$ from Definition \ref{fc} we consider:
\begin{equation}\label{sor}
\begin{aligned}
&\bar{\Phi}^{\bar{\cN}}: \pi_1(\Clm) \rightarrow \C[w^1_1,…,w^{l}_{\cN-1},u_1^{\pm 1},...,u_{l}^{\pm 1},x_1^{\pm 1},...,x_{l}^{\pm 1}, y^{\pm 1},d^{\pm 1}]\\
&\bar{\Phi}^{\bar{\cN}}=f_C \circ \iota \circ \tilde{\Phi}^{\bar{\cN}}. \ \ \ \ \ \ \ \ \ \ \ \ \ \ \ \ \ \ \ \ 
\end{aligned}
\end{equation}
This means that the monodromies of the associated local system are given by the following expression:
\begin{equation}
\begin{cases}
&\bar{\Phi}^{\bar{\cN}}(\sigma_0)=0\\
&\bar{\Phi}^{\bar{\cN}}(\sigma_i)=x^2_{C(i)},\\ 
&\bar{\Phi}^{\bar{\cN}}(\bar{\sigma}_i)=(-1)^{(\cN_{C(i)}-1)} x^2_{C(i)} \cdot d^{2(\cN_{C(i)}-1)}, i\in \{1,...,n\}\\
&\bar{\Phi}^{\bar{\cN}}(\gamma)=y\\
&\bar{\Phi}^{\bar{\cN}}(\delta)=-d^{2}.
\end{cases}
\end{equation}
 
\end{notation}
In the following part we describe the explicit formula for the intersection pairing, that will use the monodromies introduced in the above definition. 

Let us fix two homology classes $H_1 \in H^{\bar{\cN}}_{n,m}$ and $H_2 \in H^{\bar{\cN},\partial}_{n,m}$. We suppose that these classes are given by two submanifolds $\tilde{X}_1, \tilde{X}_2$ in the covering, which are lifts of immersed submanifolds $X_1,X_2 \subseteq \Clm$. Moreover, we assume that $X_1$ and $X_2$ have a transversal intersection, in a finite number of points. 
  
The intersection pairing is encoded by the geometric intersections between these submanifolds in the base configuration space, graded in specific manner using the local system, as below. 

{\bf 1) Loop associated to an intersection point} The first step is to associate to each intersection point $x \in X_1 \cap X_2$ a loop in the configuration space, denoted by $l_x \subseteq \Clm$. Then, we will grade this loop using the local system $\bar{\Phi}^{\bar{\cN}}$.
  
\begin{defn}[Loop $l_x$]
Let $x \in X_1 \cap X_2$. We suppose that we have two paths $\gamma_{X_1}, \gamma_{X_2}$ which start in $\bf d$ and end on $X_1$ and $X_2$ respectively such that
$\tilde{\gamma}_{X_1}(1) \in \tilde{X}_1$ and $ \tilde{\gamma}_{X_2}(1) \in \tilde{X}_2$. Here, $\tilde{\gamma}_{X_1}, \tilde{\gamma}_{X_2}$ are the lifts of ${\gamma}_{X_1}, {\gamma}_{X_2}$ through the base point of the covering  $\bf \tilde{d}$.

For the next step we choose $\nu_{X_1}, \nu_{X_2}:[0,1]\rightarrow \Clm$ two paths such that:
\begin{equation}
\begin{cases}
\nu_{X_1}(0)=\gamma_{X_1}(1);  \nu_{X_1}(1)=x; Im(\nu_{X_1})\subseteq X_1\\
\nu_{X_2}(0)=\gamma_{X_2}(1);  \nu_{x_2}(1)=x; Im(\nu_{X_2})\subseteq X_2.
\end{cases}
\end{equation}
Our loop is given by the concatenation of these four paths, as below:
$$l_x=\gamma_{X_1}\circ\nu_{X_1}\circ \nu_{X_2}^{-1}\circ \gamma_{X_2}^{-1}.$$
\end{defn}
{\bf 2) Grade the family of loops using the local system}
\begin{prop}[Intersection pairing via geometric intersections in the base space]\label{P:3}
The intersection pairing between the homology classes can be obtained from the set of loops $l_x$ and graded by the local system, as below:
\begin{equation}\label{eq:1}  
\langle H_1,H_2 \rangle= \sum_{x \in X_1 \cap X_2}  \alpha_x \cdot \bar{\Phi}^{\bar{\cN}}(l_x) \in \C[w^1_1,…,w^{l}_{\cN-1},u_1^{\pm 1},...,u_{l}^{\pm 1},x_1^{\pm 1},...,x_{l}^{\pm 1}, y^{\pm 1},d^{\pm 1}]
\end{equation}
where $\alpha_x$ is a sign given by the product of local orientations in the disc around each component of the intersection point $x$.
\end{prop}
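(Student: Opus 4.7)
The plan is to reduce the Poincaré--Lefschetz intersection pairing on the covering space $\ClmN$ to a signed count of intersection points in the base $\Clm$, weighted by monodromies of $\bar{\Phi}^{\bar{\cN}}$. First I would unpack the definition of $\langle H_1,H_2\rangle$ from Proposition \ref{P:3'''}: it is the sum of local intersection indices between the lift $\tilde{X}_1$ and the deck-translates $g\cdot \tilde{X}_2$, as $g$ ranges over the image of $\bar{\Phi}^{\bar{\cN}}$, with each translate recorded by multiplication by $g$ in the group ring. Since $X_1,X_2\subseteq \Clm$ meet transversely in finitely many points, the lifts $\tilde{X}_1$ and $g\cdot \tilde{X}_2$ also meet transversely, and the intersection points in the covering are in bijection with pairs $(x,g)$ where $x\in X_1\cap X_2$ and $g$ is the unique deck transformation such that the chosen lift-point over $x$ in $g\cdot \tilde{X}_2$ coincides with the chosen lift-point over $x$ in $\tilde{X}_1$.

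The second step is to identify, for each $x\in X_1\cap X_2$, this distinguished deck transformation with $\bar{\Phi}^{\bar{\cN}}(l_x)$. For this, I would start from the base point $\tilde{\bf d}\in \ClmN$ used to fix the lifts $\tilde{X}_1$ and $\tilde{X}_2$. Lifting the four legs $\gamma_{X_1},\nu_{X_1},\nu_{X_2}^{-1},\gamma_{X_2}^{-1}$ that concatenate to $l_x$ produces a path in $\ClmN$ that begins at $\tilde{\bf d}$ and ends at $g_x\cdot \tilde{\bf d}$, where $g_x$ is precisely the deck transformation measuring the discrepancy between the two paths by which $x$ is reached in the covering. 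By definition of the covering and of the local system, the endpoint $g_x\cdot \tilde{\bf d}$ is obtained from $\tilde{\bf d}$ by acting by $\bar{\Phi}^{\bar{\cN}}([l_x])$, so $g_x=\bar{\Phi}^{\bar{\cN}}(l_x)$.

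Third, I would identify the sign contribution: at each $x\in X_1\cap X_2$ the local intersection number on the covering equals the local intersection number on the base, because the covering projection is a local diffeomorphism and restricts to diffeomorphisms $\tilde{X}_1\to X_1$ and $g_x\cdot \tilde{X}_2\to X_2$ near $x$. This gives exactly the sign $\alpha_x$ described in the statement, which indeed factors as the product of local orientation signs at the components of the configuration $x$. Collecting the contributions, the total intersection is
\begin{equation*}
\langle H_1,H_2\rangle \;=\; \sum_{x\in X_1\cap X_2} \alpha_x \cdot g_x \;=\; \sum_{x\in X_1\cap X_2} \alpha_x\cdot \bar{\Phi}^{\bar{\cN}}(l_x),
\end{equation*}
which is the claimed formula after applying the inclusion $\iota$ and the colouring substitution $f_C$ used to define $\bar{\Phi}^{\bar{\cN}}$ in \eqref{sor}.

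The main obstacle I expect is bookkeeping at the boundary of the configuration space: one must check that when translating between the Borel--Moore type homology groups $H^{\bar{\cN}}_{n,m}$ and $H^{\bar{\cN},\partial}_{n,m}$, the chosen representatives $X_1, X_2$ behave correctly relative to the splitting of $\partial\ClmN$ into $P^{-}$ and its complement, so that the intersection pairing of Proposition \ref{P:3'''} is well defined and independent of the representatives. This is where the Poincaré--Lefschetz setup from \cite{CrM} is essential, and one would verify that the chosen Lagrangians $\FA$ and $\LA$ are indeed admissible representatives in the two halves of the disc so that the above geometric count, and not merely a formal pairing, computes $\langle H_1,H_2\rangle$.
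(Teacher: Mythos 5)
Your argument is correct and is essentially the standard covering-space computation: identify intersection points upstairs with pairs $(x,g_x)$, recognise the deck transformation $g_x$ as the monodromy $\bar{\Phi}^{\bar{\cN}}(l_x)$ of the loop built from the connecting paths, and transport the local signs through the covering projection. Note that the paper itself does not prove Proposition \ref{P:3}; it quotes the formula and delegates the justification to \cite[Section 7]{CrM}, so your write-up supplies exactly the argument that the citation stands for, and you correctly flag the only delicate point (admissibility of the representatives relative to the splitting of $\partial \ClmN$ into $P^{-}$ and its complement, and the passage through $\iota$ and $f_C$), which is where the Poincaré--Lefschetz setup of \cite{CrM} is genuinely used.
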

\section{Weighted Lagrangian intersection at level $\cN$}\label{S:we}
 In this part, we are going to define the weighted Lagrangian intersection that will be the principal tool for the construction of our universal link invariants. We will use the homological ingredients introduced in the previous sections, associated to the following parameters. 

{\bf Context} Let us fix a level $\cN \in \mathbb N$ and $L$ an oriented link that is the closure of a braid $\beta_{n} \in B_n$.
 For the level $\cN$, we are going to use the following multi-level:
\begin{equation}
\bar{\cN}:=(1,\cN-1,...,\cN-1).
\end{equation}

We then consider the configuration space of $$2+(n-1) (\cN-1)$$ points on the $(2n+2)$-punctured disc and the local system $\Phi^{\bar{\cN}}$ associated to the  parameters:
$$ n \rightarrow n; \ \ \ m\rightarrow 2+(n-1) (\cN-1); \ \ \ \ \bar{\cN}.$$ 
We obtain the two associated homology groups, introduced in the above section, which we denote as:
$$ \HA \ \ \ \ \ \ \ \ \ \ \ \ \ \ \ \text{ and }\ \ \ \ \ \ \ \ \ \ \ \ \ \ \HAd.$$

\subsection{Homology classes}
Once we have all the set up given by the two homology groups and their intersection pairing, we are ready to introduce the main ingredients for our topological model, which are specific homology classes. For their construction, we will use the following procedure.

\begin{notation}[Homology classes from geometric supports]\label{paths}

\

We use a dictionary that encodes homology classes the covering of the configuration space by the following data in the base configuration space:

\begin{itemize}
\item[•] A {\em geometric support}, that is a fixed {\em set of arcs in the punctured disc} or {\em ovals in the punctured disc}. We look at the unordered configurations of a prescribed number of particles on each such set of arcs or ovals. The image of the product of these configurations on all arcs or configurations on all ovals gives a submanifold $F$ in the configuration space (which has half of the dimension of the configuration space). 
\item[•] A set of {\em connecting paths to the base point}, that start in the base points from the punctured disc and end on these curves or ovals. The set of all these paths leads to a path in the configuration space,  that starts in $\bf d$ and ends on the submanifold $F$. 
\end{itemize}

\

Now, we assume that we are in a situation where the submanifold $F$ has a well defined lift to the covering space.
First, we lift the path to a path in the covering space, that starts in $\tilde{\bf{d}}$. The second step is to lift the submanifold $F$ through the end point of this path. The precise construction of such homology classes using this dictionary is presented in \cite[Section 5]{Crsym}. 
\end{notation}

In the sequel we define the specific homology classes that we use for the weighted intersection model at level $\cN$. An important feature of the construction of the local system at level $\cN$, which comes from \cite{CrI}, is that it leads to a covering space at level $\cN$ where we have well-defined lifts of submanifolds that we want to work with.
Let us introduce the following classes. 

\begin{defn} (Level $\cN$ Homology classes)\\
Let $\bar{i}=(i_1,...,i_{n}) \in \{\bar{0},\dots,\overline{\cN-1}\}$ be a fixed multi-index. We consider the homology classes associated to the geometric supports from Figure \ref{Picture0'} (which depend on the components of the multi-index $\bar{i}$):
 \begin{figure}[H]
\centering
$${\color{red} \FA \in \HA} \ \ \ \ \ \ \ \ \ \ \text{ and } \ \ \ \ \ \ \ \ \ \ \ \ \ {\color{dgreen} \LA \in \HAd} .$$
$$\hspace{5mm}\downarrow \text{ lifts }$$
\vspace{-4mm}

\includegraphics[scale=0.4]{ColouredClassesADO.pdf}
\vspace{2mm}

\caption{\normalsize Lagrangians for the level $\cN$ weighted intersection}
\label{Picture0'}
\end{figure}
\end{defn}
In \cite{CrI}, we have shown that the geometric support from Figure \ref{Picture0'} leads to a well-defined homology class in the level $\cN$ covering, which we denote by $\LA \in \HAd$.  So, the above homology classes are well defined and they are the main objects that are used for the weighted intersection, as follows.

\begin{prop}[Intersection pairing]\label{intersection} Let us recall that we have an intersection pairing between these homology groups, following Proposition \ref{P:3'''}: 
$$\langle ~,~ \rangle: \HA \otimes \HAd \rightarrow \C[w^1_1,…,w^{l}_{\cN-1},u_1^{\pm 1},...,u_{l}^{\pm 1},x_1^{\pm 1},...,x_{l}^{\pm 1}, y^{\pm 1},d^{\pm 1}].$$
Also, following \cite[Section 4.10]{CrI}, we have a well-defined braid group action of $B^{C}_{2n+2}$ on the homology $\HA$, where $B^{C}_{2n+2}$ are the braids which respect the colouring $C$.
\end{prop}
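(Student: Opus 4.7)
The plan has two parts, matching the two claims in the statement.

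For the intersection pairing, the starting point is Proposition \ref{P:3'''}, which gives a well-defined geometric pairing
\[
\langle -,- \rangle\colon H^{\bar{\cN}}_{n,m}\otimes \mathscr H^{\bar{\cN},\partial}_{n,m}\to \C[w^1_1,\dots,w^{l}_{\cN-1},u_1^{\pm 1},\dots,u_{l}^{\pm 1},x_1^{\pm 1},\dots,x_{n}^{\pm 1},y^{\pm 1},d^{\pm 1}]
\]
for the pre-coloured homologies (for $m = 2+(n-1)(\cN-1)$ in our setting). The plan is to transport this pairing along the change of coefficients $f_C$ from Definition \ref{fc}, i.e. to tensor everything over $f_C$ with the coloured ring $\C[w^1_1,\dots,w^{l}_{\cN-1},u_1^{\pm 1},\dots,u_{l}^{\pm 1},x_1^{\pm 1},\dots,x_{l}^{\pm 1},y^{\pm 1},d^{\pm 1}]$. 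Since by Definition \ref{D:4} both $\HA$ and $\HAd$ are by construction the images of the twisted Borel--Moore submodules $\mathscr H^{\bar{\cN}}_{n,m}$ and $\mathscr H^{\bar{\cN},\partial}_{n,m}$ after applying $f_C$, the pairing descends automatically by functoriality of base change, and the grading by loops described in Proposition \ref{P:3} survives because $f_C$ is precisely the ring map that identifies the variables $x_{C(i)}$ appearing in the monodromies of $\bar\Phi^{\bar\cN}$.

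For the braid group action, the strategy is the standard one for twisted homologies of configuration spaces. The mapping class group of the $(2n+2)$-punctured disc rel boundary is the braid group $B_{2n+2}$; it acts on $\Clm=\Conf_m(\mathbb D_{2n+2})$ and on its boundary splitting, hence on $H^{\text{lf},\infty,-}_m(\Clm;\Z)$ and on $H^{\text{lf},\Delta}_m(\Clm,\partial;\Z)$. To lift this action to the level-$\bar\cN$ covering, one needs the local system $\Phi^{\bar\cN}$ to be preserved on $\pi_1$; equivalently, the induced action on the generators $\sigma_i,\bar\sigma_i,\gamma,\delta$ of $H_1(\Clm)$ must preserve $\nu$. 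Here is where the colouring restriction enters: after passing to the coloured system $\bar\Phi^{\bar\cN}$ whose monodromy around the $i$-th $p$-puncture is $2x_{C(i)}$ (and analogously with $\cN_{C(i)}$ on the $\bar\sigma_i$-loop), a braid $\beta \in B_{2n+2}^{C}$ permutes the $p$-punctures only within classes of the same colour, so the monodromies $x_{C(i)}$ and the levels $\cN_{C(i)}$ are preserved. Therefore $\beta$ lifts to an equivariant self-homeomorphism of the covering $\ClmN$ (up to the choice of a lift of the base point, fixed by the convention $\tilde{\bf d}$), which in turn gives a well-defined endomorphism of $\mathscr H^{\bar{\cN}}_{n,m}$ commuting with deck transformations, hence of $\HA$ after the specialisation $f_C$.

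The main obstacle is checking the compatibility of the lift with the boundary splitting defining $\mathscr H^{\bar{\cN}}_{n,m}$, i.e. that the semicircle $S^-$ and the associated subspace $P^-$ (Notation before Definition \ref{T2}) are preserved. This is where the hypothesis that $\beta_n$ is a braid (as opposed to a general mapping class) and that one uses the augmentation by the colouring-compatible subgroup $B^{C}_{2n+2}$ is essential: braids acting trivially on the boundary preserve both the base point $w\in S^-$ and the distinguished subset $C^-\subset \partial \Clm$, and the colouring condition guarantees that the lift to $\ClmN$ sends $P^-$ into itself. Once this is verified, the relative homology pairs functorially and the action on $\HA$ is well defined, completing the proof.
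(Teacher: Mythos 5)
Your proposal is correct and is in substance the same argument the paper relies on: the paper gives no independent proof of this proposition, merely recalling the pairing of Proposition \ref{P:3'''} (i.e.\ \cite[Proposition 7.6]{CrM}) transported along the colouring map $f_C$ of Definition \ref{fc}, and the braid action from \cite[Section 4.10]{CrI}, which rests on exactly the local-system-preservation argument you spell out (a colour-respecting braid fixes the monodromies $2x_{C(i)}$ and the levels $\cN_{C(i)}$, hence lifts to the level-$\bar{\cN}$ covering, fixes $\tilde{\bf d}$ and the boundary pieces $S^-$, $C^-$, $P^-$, and so acts on $\HA$). Your write-up simply fills in the standard details behind those citations.
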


\begin{defn}[{\bf \em Weighted Lagrangian intersection}]\label{omJo} Let us consider the weighted Lagrangian intersection in $\Conf_{2+(n-1)(\cN-1)}\left(\mathbb D_{2n+2}\right)$, with {\color{red} weights} given by the variables ${\color{red}w^1_1,...,w^{l}_{\cN-1}}$:
\begin{equation} 
\begin{aligned}
 \PA \in \ & \ \LL = \C[w^1_1,...,w^{l}_{\cN-1},u_1^{\pm 1},...,u_l^{\pm 1},x_1^{\pm 1},...,x_l^{\pm 1},y^{\pm1}, d^{\pm 1}]\\
 \PA :=&\prod_{i=1}^l u_{i}^{ \left(f_i-\sum_{j \neq {i}} lk_{i,j} \right)} \cdot \prod_{i=2}^{n} u^{-1}_{C(i)} \cdot \\
 & \cdot \sum_{\bar{i}= \bar{0}}^{\overline{\cN -1}} w^{C(2)}_{i_1}\cdot ... \cdot w^{C(n)}_{i_{n-1}}
 \left\langle(\beta_{n} \cup {\mathbb I}_{n+2} ) \ { \color{red} \FA}, {\color{dgreen} \LA}\right\rangle. 
   \end{aligned}
\end{equation} 
\end{defn}

\section{Unifying all Coloured Jones and ADO link invariants with colours bounded by $\cN$}\label{SAu}

In this part we put together the topological tools and we will prove that for a fixed $\cN$, the weighted intersection at level $\cN$ recovers all coloured Jones polynomials and all coloured Alexander polynomials of levels less than $\cN$, as presented in Theorem \ref{THEOREMAU} which we remind below.

\begin{thm}[{\bf \em Unifying coloured Alexander and coloured Jones polynomials of bounded level}]\label{THEOREMAU'}
Let us fix $\cN\in \N$. Then, $\PA$ recovers all coloured Alexander and Jones polynomials of bounded colours, as below:
\begin{equation}
\begin{aligned}
\Phi^{\cM}(L)& =~ \PA \Bigm| _{\sAt}, \ \ \ \forall \cM\leq \cN\\
J_{\bar{N}}(L)& =~ \PA \Bigm| _{\sJt}, \ \ \ \forall \bar{N} \leq \cN.
\end{aligned}
\end{equation} 
\end{thm}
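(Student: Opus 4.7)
The plan is to split the statement into the Jones case and the ADO case, and in each case to exploit the fact that the weights $w^k_j$ specialise to either $0$ or $1$, effectively truncating the sum indexed by $\bar{i}\in \{\bar{0},\dots,\overline{\cN-1}\}$ to a smaller cube of multi-indices. Once truncated, each surviving term should be matched against a previously-established topological model for the appropriate (level $\cM$) coloured Alexander or (colour $\bar{N}$) coloured Jones invariant.

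The first step is the \textbf{truncation lemma}. For the Jones specialisation $\sJt$ from Definition \ref{pJ2}, the weight $w^{C(2)}_{i_1}\cdots w^{C(n)}_{i_{n-1}}$ is sent to $1$ precisely when $i_{k-1}\leq N_{C(k)}-1$ for every $k\in\{2,\dots,n\}$ and is sent to $0$ otherwise. Therefore
\[
\PA\bigm|_{\sJt}= \Bigl(\prod_i u_i^{f_i-\sum_{j\neq i}lk_{i,j}}\prod_{i=2}^{n}u_{C(i)}^{-1}\Bigr)\bigm|_{\sJt}\cdot\!\!\!\sum_{\bar{i}\leq \overline{N-1}^{C}}\!\!\!\langle(\beta_n\cup\mathbb{I}_{n+2})\FA,\LA\rangle\bigm|_{\sJt},
\]
where $\overline{N-1}^{C}$ denotes the colour-dependent upper bound $(N_{C(2)}-1,\dots,N_{C(n)}-1)$. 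The analogous statement holds for the ADO specialisation $\sAt$, where the cube shrinks uniformly to $\{\bar{0},\dots,\overline{\cM-1}\}$.

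The second step is the \textbf{comparison of homological models at different levels}. Here one must compare the geometric supports of $\FA$ and $\LA$ (which live in the level-$\cN$ configuration space with multi-level $\bar{\cN}=(1,\cN-1,\dots,\cN-1)$) with the corresponding supports used in the topological models for $J_{\bar{N}}(L)$ and $\Phi^{\cM}(L)$. The observation is that when $\bar{i}\leq\overline{N-1}^{C}$ (resp.\ $\bar{i}\leq\overline{\cM-1}$), the number of points sitting on each oval and arc in the geometric support is at most $N_{C(k)}-1$ (resp.\ $\cM-1$), so the ``extra'' particles contributing to the level $\cN$ configuration space but not to a level $\bar{N}$ or $\cM$ picture are precisely the ones killed by the weight truncation. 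Moreover, the monodromy formulas in Notation \ref{twsyst} for $\bar{\Phi}^{\bar{\cN}}$, specialised via $\sJt$ or $\sAt$, reproduce the monodromies of the local system used in the author's earlier topological models (references \cite{CrI}, \cite{Cr1}, \cite{Cr2}): the pivotal variable $u_i$ is sent to a power of the linking monodromy, the variable $y$ to the modified dimension, and $d$ to the appropriate root of unity (or $q^{-1}$). Thus on each surviving term the intersection pairing from Proposition \ref{intersection}, computed via the geometric formula in Proposition \ref{P:3}, specialises to the same geometric count that appears in the known topological model at lower level.

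The third step is to \textbf{invoke the existing topological descriptions} of $J_{\bar{N}}(L)$ and $\Phi^{\cM}(L)$ as weighted graded intersections in configuration spaces on arcs and ovals, to conclude that the truncated sum equals the corresponding quantum invariant after the prefactor accounting for framings and writhes is absorbed. The framing factor $\prod_{i} u_i^{f_i-\sum_{j\neq i}lk_{i,j}}\prod_{i=2}^{n}u_{C(i)}^{-1}$ is designed precisely to match the normalisation appearing in those previous descriptions once $u_i$ is specialised.

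The \textbf{main obstacle} is the second step: although it is plausible that the level-$\cN$ intersection restricted to the truncated cube of multi-indices matches the level-$\cM$ (resp.\ level-$\bar{N}$) intersection on the nose, one must carefully check that the difference between the covering $\ClmN$ at level $\bar{\cN}=(1,\cN-1,\dots,\cN-1)$ and the covering at the corresponding lower multi-level does not alter the intersection count for the surviving terms. This requires showing that the image of the specialised intersection pairing coincides with the one used in the lower-level model, which in turn reduces to a monodromy calculation around each oval, using that the only monodromies that matter for a configuration with $i_{k-1}\leq N_{C(k)}-1$ (resp.\ $\cM-1$) points on the $k$-th oval are exactly those captured by $\bar{\Phi}^{\bar{\cN}}\bigm|_{\sJt}$ (resp.\ $\bar{\Phi}^{\bar{\cN}}\bigm|_{\sAt}$).
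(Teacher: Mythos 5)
Your proposal follows essentially the same route as the paper: truncate the weighted sum using the $0/1$ specialisation of the weights $w^k_j$, identify the surviving terms with the non-weighted intersections of the lower-level model (the paper handles your ``main obstacle'' by citing a lemma from \cite{CrI} that the pairings of classes with indices bounded by $\cM$, resp.\ $\bar N$, coincide on the nose, with a short argument that the geometric supports agree on the left-hand side of the disc), and then invoke the non-weighted topological models for $\Phi^{\cM}(L)$ and $J_{\bar N}(L)$ from \cite{CrI}. The argument is correct and matches the paper's proof in structure and substance.
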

We will split the proof of this statement in two main steps. First, we prove that we recover the coloured Alexander polynomials from our weighted intersection. Secondly, we turn our attention to the multi-colour case for the coloured Jones polynomials and show that we recover all these invariants with multicolours bounded by $\cN$ from our level $\cN$ weighted intersection. 

\subsection{First case--unifying the non semi-simple ADO link invariants}
\begin{thm}[{\bf \em Recovering coloured Alexander polynomials of bounded levels}]\label{THEOREMA}
The graded intersection $\PA$ recovers the $\cM^{th}$ coloured Alexander polynomial of $L$ as below:
\begin{equation}
\begin{aligned}
&\Phi^{\cM}(L) =~ \PA \Bigm| _{\sAt}  \forall \cM \leq \cN.
\end{aligned}
\end{equation} 
\end{thm}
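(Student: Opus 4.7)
The plan is to show that the specialisation $\sAt$ truncates the level-$\cN$ weighted intersection to a level-$\cM$ graded intersection that has previously been shown to recover $\Phi^\cM(L)$. The argument has three main steps.

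First I would exploit the weights. Under $\sAt$, the variables $w^k_j$ satisfy $\sAt(w^k_j)=1$ for $j\leq \cM-1$ and $\sAt(w^k_j)=0$ for $j\geq \cM$. Consequently, in the sum
\begin{equation*}
\sum_{\bar{i}=\bar{0}}^{\overline{\cN-1}} w^{C(2)}_{i_1}\cdots w^{C(n)}_{i_{n-1}}\left\langle(\beta_n\cup\mathbb{I}_{n+2})\mathscr{F}_{\bar{i},\cN},\,\mathscr{L}_{\bar{i},\cN}\right\rangle
\end{equation*}
only multi-indices $\bar{i}\in\{\bar{0},\dots,\overline{\cM-1}\}$ survive after applying $\sAt$. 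So the problem reduces to comparing $\sum_{\bar{i}\leq\overline{\cM-1}}\langle\beta_n\mathscr{F}_{\bar{i},\cN},\mathscr{L}_{\bar{i},\cN}\rangle$ (specialised) with the analogous level-$\cM$ intersection sum $\sum_{\bar{i}\leq\overline{\cM-1}}\langle\beta_n\mathscr{F}_{\bar{i},\cM},\mathscr{L}_{\bar{i},\cM}\rangle$.

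Second, I would establish the geometric reduction: for any $\bar{i}$ with entries in $\{0,\dots,\cM-1\}$, the pair $(\mathscr{F}_{\bar{i},\cN},\mathscr{L}_{\bar{i},\cN})$ and the pair $(\mathscr{F}_{\bar{i},\cM},\mathscr{L}_{\bar{i},\cM})$ give the same intersection value after specialising coefficients through $\sAt$. Concretely, $\mathscr{F}_{\bar{i},\cN}$ lives in $\Conf_{2+(n-1)(\cN-1)}(\mathbb{D}_{2n+2})$ with multi-level $(1,\cN-1,\dots,\cN-1)$, while $\mathscr{F}_{\bar{i},\cM}$ lives in $\Conf_{2+(n-1)(\cM-1)}(\mathbb{D}_{2n+2})$ with multi-level $(1,\cM-1,\dots,\cM-1)$. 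The level-$\cN$ geometric supports have extra arcs/ovals attached carrying the additional $(n-1)(\cN-\cM)$ particles. The key claim is that, because the indices $i_k\leq\cM-1$ force the particles counted by these extra arcs to sit in a region where they contribute only through monodromies of the form $\bar{\Phi}^{\bar{\cN}}(\bar{\sigma}_i)=(-1)^{\cN-2}x^2_{C(i)}d^{2(\cN-2)}$, the specialisation $\sAt(d)=\xi_\cM^{-1}$ combined with the weight-truncation absorbs these factors. This reduces the level-$\cN$ monodromy data to the level-$\cM$ monodromy data term by term.

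Third, I would invoke the level-$\cM$ topological model for $\Phi^\cM(L)$ (established in the prior works \cite{Cr2}, \cite{Crsym}, \cite{CrI} on which this paper builds) to conclude that the reduced sum recovers $\Phi^\cM(L)$, after accounting for the normalisation prefactor $\prod_{i=1}^l u_i^{f_i-\sum_{j\neq i}lk_{i,j}}\prod_{i=2}^n u^{-1}_{C(i)}$, which under $\sAt$ becomes a product of framing/writhe corrections in the variables $x_i$ of the ADO polynomial. The main obstacle I anticipate is the second step: one must verify carefully that the lifts of the geometric supports to the level-$\cN$ covering project, after specialisation, onto the lifts one would have constructed directly in the level-$\cM$ covering, and that no spurious intersection points from the extra arcs/ovals contribute non-trivially. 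This is a bookkeeping argument in the spirit of the monodromy calculation in Notation \ref{twsyst}, but it must be performed uniformly in $\bar{i}$ and $\cM$. Once this matching of monodromies and intersection points is made precise, the theorem follows by summing over $\bar{i}\in\{\bar{0},\dots,\overline{\cM-1}\}$ and identifying the result with the known level-$\cM$ model.
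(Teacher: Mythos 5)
Your overall route coincides with the paper's: truncate the sum over $\bar{i}$ using the weights (only $\bar{i}\in\{\bar{0},\dots,\overline{\cM-1}\}$ survive $\sAt$, with coefficient $1$), compare the surviving sum with the non-weighted level-$\cM$ intersection $\PAo$, and then invoke the earlier topological model for ADO invariants (Theorem \ref{omADO}, from \cite{CrI}) together with the framing prefactor. Steps 1 and 3 of your plan are exactly what the paper does.

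The gap is in your step 2, which is the crux. The mechanism you propose --- that the extra $(n-1)(\cN-\cM)$ particles on the level-$\cN$ supports contribute monodromy factors of the shape $(-1)^{\cN-2}x_{C(i)}^{2}d^{2(\cN-2)}$ which are then ``absorbed'' by $\sAt(d)=\xi_{\cM}^{-1}$ combined with the weight truncation --- does not work as stated. Under $\sAt$ such a factor becomes $(-1)^{\cN-2}x_{C(i)}^{2}\,\xi_{\cM}^{-2(\cN-2)}$, which is not $1$ in general, and the weights have already been spent in step 1, so nothing is left to cancel it; if those factors genuinely appeared, the specialised level-$\cN$ sum would differ from the level-$\cM$ one and the theorem would not follow. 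What the paper actually uses (its Lemma \ref{pr1}, imported from \cite{CrI}) is a stronger and structurally different statement: for every $\bar{i}\in\{\bar{0},\dots,\overline{\cM-1}\}$ one has
\begin{equation*}
\left\langle(\beta_{n}\cup\mathbb{I}_{n+2})\,\FA,\ \LA\right\rangle=\left\langle(\beta_{n}\cup\mathbb{I}_{n+2})\,\FAM,\ \LAM\right\rangle
\end{equation*}
as elements of the coefficient ring, \emph{before} any specialisation, because for such bounded indices the two geometric pictures coincide where intersection points occur (the only potential discrepancy is on the left-hand side of the disc, where the supports are identical), so no extra intersection contributions or extra monodromy factors arise at all. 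In short, the correct intermediate claim is an equality on the nose coming from the geometry of the supports, not a cancellation induced by the specialisation; as written, your justification of step 2 would not establish it, although the statement you need is the one you identified and is supplied by \cite{CrI}.
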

\begin{proof}
This property relies on the topological model for coloured Alexander polynomials for coloured links that we have constructed in \cite{CrI}.

\begin{thm}[{Non-weighted topological model for coloured Alexander polynomials}]\label{omADO}
Let us define the following non-weighted Lagrangian intersection:
\begin{equation} 
\begin{aligned}
& \PAo \in \Z[u_1^{\pm 1},...,u_l^{\pm 1},x_1^{\pm 1},...,x_l^{\pm 1},y^{\pm 1}, d^{\pm 1}]\\
& \PAo:=\prod_{i=1}^l u_{i}^{ \left(f_i-\sum_{j \neq {i}} lk_{i,j} \right)} \cdot \prod_{i=2}^{n} u^{-1}_{C(i)} \cdot  \sum_{\bar{i}=\bar{0}}^{\overline{\cM -1}}  \left\langle(\beta_{n} \cup {\mathbb I}_{n+2} ) \ { \color{red} \FAM}, {\color{dgreen} \LAM}\right\rangle. 
   \end{aligned}
\end{equation}
Also, we consider the change of coefficients given by the formula:
$$ \sAto: \Z[u_1^{\pm 1},...,u_{l}^{\pm 1},x_1^{\pm 1},...,x_{l}^{\pm 1}, y^{\pm 1},d^{\pm 1}] \rightarrow \Q(\xi_{\cM})(x^{2\cM}_{1}-1,...,x^{2\cM}_{l}-1)^{-1}[x_1^{\pm 1},...,x_{l}^{\pm 1}]$$
\begin{equation}
\begin{cases}
&\sAto(u_j)=x_j^{(1-\cM)}\\
&\sAto(y)=(d[\lambda_{C(1)}]_{\xi_{\cM}}),\\
&\sAto(d)= \xi_{\cM}^{-1}.
\end{cases}
\end{equation}
Then $\PAo$ recovers the $\cM^{th}$ coloured Alexander polynomials 
\begin{equation}
\begin{aligned}
&\Phi^{\cM}(L) =~ \PAo \Bigm| _{\sAto}.
\end{aligned}
\end{equation} 
\end{thm}

Now let us recall the definition of the weighted intersection form at level $\cN$:
\begin{equation}\label{f1-form}
\begin{aligned}
& \PA:=\prod_{i=1}^l u_{i}^{ \left(f_i-\sum_{j \neq {i}} lk_{i,j} \right)} \cdot \prod_{i=2}^{n} u^{-1}_{C(i)} \cdot \\
 & \ \ \ \ \ \ \ \ \ \ \ \ \ \ \ \ \ \ \ \sum_{\bar{i}\in\{\bar{0},\dots,\overline{\cN-1}\}}w^{C(2)}_{i_1}\cdot ... \cdot w^{C(n)}_{i_{n-1}}
   \left\langle(\beta_{n} \cup {\mathbb I}_{n+2} ) \ { \color{red} \FA}, {\color{dgreen} \LA}\right\rangle. 
 \end{aligned}
\end{equation} 

Also, the specialisation of coefficients that we defined for $\PA$ in the case of coloured Alexander polynomials is given by (see Definition \ref{pA2}):
$$ \sAt: \Z[w^1_1,...,w^{l}_{\cN-1},u_1^{\pm 1},...,u_{l}^{\pm 1},x_1^{\pm 1},...,x_{l}^{\pm 1},y^{\pm 1}, d^{\pm 1}] \rightarrow \Q(\xi_{\cM})(x^{2\cM}_{1}-1,...,x^{2\cM}_{l}-1)^{-1}[x_1^{\pm 1},...,x_{l}^{\pm 1}]$$
\begin{equation}
\begin{cases}
&\sAt(u_j)=x_j^{(1-\cM)}\\
&\sAt(y)=(d[\lambda_{C(1)}]_{\xi_{\cM}}),\\
&\sAt(d)= \xi_{\cM}^{-1}\\
&\sAt(w^k_j)=1, \ \text{ if } j\leq \cM-1,\\
&\sAt(w^k_j)=0, \ \text{ if } j\geq \cM,  k\in \{1,...,l\}, j\in \{1,...,\cN-1\}.
\end{cases}
\end{equation}

In the following part we will prove that the intersection forms $\PAo$ and $\PA$ are related. 
\begin{lem}[Intersection forms become equal once non semi-simply specialised at lower levels]\label{wnwA}
The state sums of Lagrangian intersections $\PA$ and $\PAo$ become equal when specialised through $\sAt$ and $\sAto$ respectively:
\begin{equation}\label{f3}
\begin{aligned}
\left(\PA \right)\Bigm| _{\sAt} =~ \left(\PAo \right)\Bigm| _{\sAto}, \forall \cM \leq \cN.
\end{aligned}
\end{equation}

\end{lem}
\begin{proof}

Following \eqref{f1-form}, the specialisation of the weighted intersection form is given by:
\begin{equation}
\begin{aligned}
& \PA\Bigm| _{\sAt}=\left( \prod_{i=1}^l u_{i}^{ \left(f_i-\sum_{j \neq {i}} lk_{i,j} \right)} \cdot \prod_{i=2}^{n} u^{-1}_{C(i)} \cdot \right. \\
 & \ \ \ \ \ \ \ \ \ \ \ \ \ \ \ \ \ \ \ \left. \sum_{\bar{i}\in\{\bar{0},\dots,\overline{\cN-1}\}}w^{C(2)}_{i_1}\cdot ... \cdot w^{C(n)}_{i_{n-1}}
   \left\langle(\beta_{n} \cup {\mathbb I}_{n+2} ) \ { \color{red} \FA}, {\color{dgreen} \LA}\right\rangle\right) \Bigm| _{\sAt}. 
 \end{aligned}
\end{equation} 

We will seprate the above sum into two parts, associated to multi-indices bounded by $\cM$ and the rest of the multi-indices, bounded just by $\cN$, as follows:
\begin{equation}
\begin{aligned}
& \PA\Bigm| _{\sAt}= \left( \prod_{i=1}^l u_{i}^{ \left(f_i-\sum_{j \neq {i}} lk_{i,j} \right)} \cdot \prod_{i=2}^{n} u^{-1}_{C(i)} \cdot \right. \\
 & \ \ \ \ \ \ \ \ \ \ \ \ \ \ \ \ \ \ \ \left. \sum_{\bar{i}\in\{\bar{0},\dots,\overline{\cM-1}\}}w^{C(2)}_{i_1}\cdot ... \cdot w^{C(n)}_{i_{n-1}}
   \left\langle(\beta_{n} \cup {\mathbb I}_{n+2} ) \ { \color{red} \FA}, {\color{dgreen} \LA}\right\rangle \right) \Bigm| _{\sAt}+\\
& \ \ \ \ \ \ \ \ \ \ \ \ \ \ \ + \left( \prod_{i=1}^l u_{i}^{ \left(f_i-\sum_{j \neq {i}} lk_{i,j} \right)} \cdot \prod_{i=2}^{n} u^{-1}_{C(i)} \cdot \right. \\
 & \ \ \ \ \ \ \ \ \ \ \ \ \ \ \ \ \ \ \ \left. \sum_{\bar{i}\in\{\overline{\cM},\dots,\overline{\cN-1}\}}w^{C(2)}_{i_1}\cdot ... \cdot w^{C(n)}_{i_{n-1}}
   \left\langle(\beta_{n} \cup {\mathbb I}_{n+2} ) \ { \color{red} \FA}, {\color{dgreen} \LA}\right\rangle \right) \Bigm| _{\sAt}.  
 \end{aligned}
\end{equation} 
(see \eqref{mi} for the definition of sets of multi-indices).

We remark that if we have an index such that $\bar{i}\in \{\overline{\cM},\dots,\overline{\cN-1}\}$, then  there exists $j \in \{1,...,n-1\}$ such that $i_j \geq \cM$. In turn, using the definition of the specialisation $\sAt$, this means that the coefficient $w^{C(2)}_{i_1}\cdot ... \cdot w^{C(n)}_{i_{n-1}}
$ will vanish through this specialisation:

\begin{equation}
\sAt\left(w^{C(2)}_{i_1}\cdot ... \cdot w^{C(n)}_{i_{n-1}}
\right)=0, \ \forall \ \bar{i}\in\{\bar{\cM},\dots,\overline{\cN-1}\}.
\end{equation}

From this we conclude that:
\begin{equation}
\begin{aligned}
& \ \ \ \ \ \ \ \ \ \ \ \ \ \ \  \left( \prod_{i=1}^l u_{i}^{ \left(f_i-\sum_{j \neq {i}} lk_{i,j} \right)} \cdot \prod_{i=2}^{n} u^{-1}_{C(i)} \cdot \right. \\
 & \ \ \ \ \ \ \ \ \ \ \ \ \ \ \ \ \ \ \ \left. \sum_{\bar{i}\in\{\overline{\cM},\dots,\overline{\cN-1}\}}w^{C(2)}_{i_1}\cdot ... \cdot w^{C(n)}_{i_{n-1}}
   \left\langle(\beta_{n} \cup {\mathbb I}_{n+2} ) \ { \color{red} \FA}, {\color{dgreen} \LA}\right\rangle \right) \Bigm| _{\sAt}=0. 
 \end{aligned}
\end{equation} 
This property shows the fact that the weighted intersection sees just the classes associated to indices that are bounded by $\cM$, once we do the specialisation at level $\cM$, and we have the formula:

\begin{equation}
\begin{aligned}
& \PA\Bigm| _{\sAt}= \left( \prod_{i=1}^l u_{i}^{ \left(f_i-\sum_{j \neq {i}} lk_{i,j} \right)} \cdot \prod_{i=2}^{n} u^{-1}_{C(i)} \cdot \right. \\
 & \ \ \ \ \ \ \ \ \ \ \ \ \ \ \ \ \ \ \ \left. \sum_{\bar{i}\in\{\bar{0},\dots,\overline{\cM-1}\}}w^{C(2)}_{i_1}\cdot ... \cdot w^{C(n)}_{i_{n-1}}
   \left\langle(\beta_{n} \cup {\mathbb I}_{n+2} ) \ { \color{red} \FA}, {\color{dgreen} \LA}\right\rangle \right) \Bigm| _{\sAt}.
 \end{aligned}
\end{equation} 

Secondly, we remark that:
\begin{equation}
\sAt(w^{C(2)}_{i_1}\cdot ... \cdot w^{C(n)}_{i_{n-1}}
)=1, \forall \bar{i}\in\{\bar{0},\dots,\overline{\cM-1}\}.
\end{equation}
So, our intersection becomes:
\begin{equation}
\begin{aligned}
& \PA\Bigm| _{\sAt}= \left( \prod_{i=1}^l u_{i}^{ \left(f_i-\sum_{j \neq {i}} lk_{i,j} \right)} \cdot \prod_{i=2}^{n} u^{-1}_{C(i)} \cdot \right. \\
 & \ \ \ \ \ \ \ \ \ \ \ \ \ \ \ \ \ \ \ \left. \sum_{\bar{i}\in\{\bar{0},\dots,\overline{\cM-1}\}} \left\langle(\beta_{n} \cup {\mathbb I}_{n+2} ) \ { \color{red} \FA}, {\color{dgreen} \LA}\right\rangle \right) \Bigm| _{\sAt}.
 \end{aligned}
\end{equation}

This formula is close to the formula for the non-weighted intersection $ \PAo$, the only difference is that the non-weighted sum uses the classes $$\FAM \text{   and   } \LAM$$ and the weighted intersection $\PA$ is given by the classes

$$\FA \text{ and } \LA.$$ 

Even so, through the intersection pairing, these classes lead to the same result, if we know that the index is bounded by $\cM$. This comes following a geometric result which we proved in \cite[Lemma 7.2]{CrI} and we present below.
\begin{lem}[Intersections between classes associated to indices less than $\cM$ \cite{CrI}]\label{pr1}

The intersection pairings between classes associated to indices $\bar{i}$ bounded by $\bar{\cM}$ give the same result, even before applying the specialisation of coefficients:
\begin{equation}
\begin{aligned}
 \left\langle(\beta_{n} \cup {\mathbb I}_{n+2} ) \ { \color{red} \FAM}, {\color{dgreen} \LAM}\right \rangle= \left\langle(\beta_{n} \cup {\mathbb I}_{n+2} ) \ { \color{red} \FA}, {\color{dgreen} \LA}\right\rangle , \forall \ \bar{i} \in\{\bar{0},\dots,\overline{\cM-1}\}.  
 \end{aligned}
\end{equation}
\end{lem}
Using this property, we obtain that the weighted intersection has the formula:
\begin{equation}
\begin{aligned}
& \PA\Bigm| _{\sAt}= \left( \prod_{i=1}^l u_{i}^{ \left(f_i-\sum_{j \neq {i}} lk_{i,j} \right)} \cdot \prod_{i=2}^{n} u^{-1}_{C(i)} \cdot \right. \\
 & \ \ \ \ \ \ \ \ \ \ \ \ \ \ \ \ \ \ \ \left. \sum_{\bar{i}\in\{\bar{0},\dots,\overline{\cM-1}\}} \left\langle(\beta_{n} \cup {\mathbb I}_{n+2} ) \ { \color{red} \FAM}, {\color{dgreen} \LAM}\right\rangle \right) \Bigm| _{\sAt}.
 \end{aligned}
\end{equation} 
This shows that we recover the non-weighted intersection, once we apply this specialisation of coefficients:
\begin{equation}
\begin{aligned}
\left(\PA \right)\Bigm| _{\sAt} =~ \left(\PAo \right)\Bigm| _{\sAto}, \forall \cM \leq \cN.
\end{aligned}
\end{equation} 

This concludes the proof of Lemma \ref{wnwA}.

\end{proof}
On the other hand, we know that the non-weighted intersection recovers the $\cM^{th}$ ADO invariant, following Theorem \ref{omADO}:
\begin{equation}
\begin{aligned}
&\Phi^{\cM}(L) =~ \PAo \Bigm| _{\sAto}.
\end{aligned}
\end{equation} 

Putting everything together, we conclude that the weighted intersection recovers all the ADO invariants at levels bounded by $\cN$:
\begin{equation}
\begin{aligned}
&\Phi^{\cM}(L) =~ \PA \Bigm| _{\sAt}, \forall \cM \leq \cN.
\end{aligned}
\end{equation}
This concludes the proof of the globalising Theorem for all ADO link invariants.

\end{proof}
\subsection{Second case -- unifying the semi-simple link invariants}
Let us consider a fixed level $\cN$ and let $N_1,...,N_l\in \N$ be a set of colours for our link which are all less or equal than $\cN$. We denote $\bar{N}:=(N_1,...,N_l).$

In this part we will show that for the fixed level $\cN$, the weighted intersection at level $\cN$ recovers all coloured Jones polynomials at bounded colours, as presented in Theorem \ref{THEOREMAU}, which we remind below.

\begin{thm}[{\bf \em Unifying coloured Jones polynomials of bounded multi-colours}]\label{THEOREMJ}
For a fixed $\cN\in \N$, $\PA$ recovers all coloured Jones polynomials of links with multi-colours bounded by $\cN$:
\begin{equation}
J_{\bar{N}}(L) =~ \PA \Bigm| _{\sJt}, \ \ \ \forall \ \bar{N} \leq \cN.
\end{equation} 
\end{thm}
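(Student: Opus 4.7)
The plan is to mirror the proof of Theorem \ref{THEOREMA} (the ADO case) that has just been completed, replacing the specialisation $\sAt$ with $\sJt$ and the ADO non-weighted model of Theorem \ref{omADO} with its semi-simple analogue for coloured Jones polynomials (which should be available from \cite{CrI}). The structure is dictated by the weighted sum: we split the state sum over $\bar{i}\in\{\bar{0},\dots,\overline{\cN-1}\}$ into those multi-indices that are coordinate-wise bounded by the multicolour $\bar{N}$ (thought of as the cut-off $i_j \leq N_{C(j+1)}-1$) and the complementary set of multi-indices where at least one component satisfies $i_j \geq N_{C(j+1)}$.

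First I would state the non-weighted topological model for $J_{\bar{N}}(L)$: there is an intersection $\Gamma^{\bar{N}}(\beta_n)$, built from classes $\mathscr F_{\bar{i},\bar{N}}$ and $\mathscr L_{\bar{i},\bar{N}}$ in a configuration space at multi-level determined by $\bar{N}$, whose state sum indexed by $\bar{i}$ bounded by $\bar{N}$ recovers $J_{\bar{N}}(L)$ after applying a specialisation $\psi^{J,o}_{\bar{N}}$ analogous to $\sAto$. Then the decomposition
\begin{equation*}
\PA\Bigm|_{\sJt}
= \left(\sum_{\bar{i}\leq \overline{N-1}} w^{C(2)}_{i_1}\cdots w^{C(n)}_{i_{n-1}} \langle \cdots \rangle\right)\Bigm|_{\sJt}
+ \left(\sum_{\substack{\bar{i}\in\{\bar{0},\dots,\overline{\cN-1}\}\\ \exists j,\, i_j\geq N_{C(j+1)}}} w^{C(2)}_{i_1}\cdots w^{C(n)}_{i_{n-1}} \langle \cdots \rangle\right)\Bigm|_{\sJt}
\end{equation*}
(together with the uniform pre-factor $\prod u_i^{f_i-\sum lk_{i,j}}\prod u_{C(i)}^{-1}$) kills the second sum, because by Definition \ref{pJ2}, $\sJt(w^{k}_{j})=0$ as soon as $j\geq N_{k}$, so for any offending $\bar{i}$ at least one weight specialises to zero.

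On the remaining sum, for every $\bar{i}$ with $i_j\leq N_{C(j+1)}-1$ all the weights satisfy $\sJt(w^{C(j+1)}_{i_j})=1$, so the weighted intersection collapses to the plain state sum
\[
\left(\prod u_i^{f_i-\sum lk_{i,j}}\prod u_{C(i)}^{-1}\cdot \sum_{\bar{i}\leq \overline{N-1}}\langle (\beta_n\cup \mathbb I_{n+2}) \FA, \LA\rangle\right)\Bigm|_{\sJt}.
\]
The key remaining step is the analogue of Lemma \ref{pr1}: for every $\bar{i}$ bounded by $\bar{N}$, the intersection pairing $\langle (\beta_n\cup \mathbb I_{n+2})\FA, \LA\rangle$ in the level-$\cN$ configuration space equals the corresponding pairing $\langle (\beta_n\cup \mathbb I_{n+2})\FJ, \LJ\rangle$ in the level-$\bar{N}$ configuration space (up to coefficient identification absorbed by $\sJt$). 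Geometrically, when each $i_j$ is below the colour $N_{C(j+1)}$, the ``extra'' arcs and oval particles present in the level-$\cN$ geometric supports but absent in the level-$\bar{N}$ ones do not contribute to any new transverse intersection points with the dual Lagrangian; the monodromy contributions on the remaining intersection points are identified through $\sJt$ by the same kind of computation as in Lemma \ref{pr1}. This is the main technical obstacle — unlike the monochromatic ADO case of \cite{CrI}, one must verify the stability of the intersection simultaneously across several colours $N_1,\ldots,N_l$, which amounts to a component-wise version of the argument. Once this stability is established, combining it with the non-weighted Jones model yields
\[
\PA\Bigm|_{\sJt} \;=\; \Gamma^{\bar{N}}(\beta_n)\Bigm|_{\psi^{J,o}_{\bar{N}}} \;=\; J_{\bar{N}}(L),
\]
which is the claimed identity for every $\bar{N}\leq \cN$.
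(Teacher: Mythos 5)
Your proposal follows the paper's own proof essentially verbatim: the same splitting of the weighted state sum into indices in $C(\bar{N})$ versus the complement, the same vanishing of the weights under $\sJt$ for offending indices and collapse to $1$ on the rest, the same invocation of the stability lemma identifying $\left\langle(\beta_{n} \cup {\mathbb I}_{n+2} )\FA, \LA\right\rangle$ with $\left\langle(\beta_{n} \cup {\mathbb I}_{n+2} )\FJ, \LJ\right\rangle$ for $\bar{i}\in C(\bar{N})$ (justified, as in the paper, by the supports agreeing on the left-hand side of the disc, quoting \cite{CrI}), and the same conclusion via the non-weighted Jones model. This matches the paper's argument, so no further comparison is needed.
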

The proof of this theorem will make use of a non-weighted topological model for coloured Jones polynomials for coloured links, which we previously constructed in \cite{CrI}. First, we present a summary of the construction of this model. Then, we will prove that the new weighted Lagrangian intersection model recovers the non-weighted model once we do the appropriate change of coefficients. 

\subsubsection{Non-weighted topological model for coloured Jones polynomials}
 As before, we choose a braid representative for our link. Using the colouring $C$ induced by this braid, we denote $\ccN_i:=N_{C(i)}.$ 
 
{\bf \em Context for the non-weighted topological model for coloured Jones polynomials} 

In this case, we make use of the homological set-up associated to the following parameters (which depend on the individual colours $N_1,...,N_l$):
\begin{itemize}
\item Configuration space: $C_{n,m_J(\bar{N})}:=\Conf_{2+\sum_{i=2}^{n} N^C_i}\left(\mathbb D_{2n+2}\right)$ 
\item Number of particles: $m_J(\bar{N}):=2+\sum_{i=2}^{n} N^C_i$
\item Multi-level: $\bar{\cN}(\bar{N}):=(1,\ccN_2-1,...,\ccN_n-1)$
\item Local system: $\bar{\Phi}^{\bar{\cN}(\bar{N})}$ ({\bf \em  depends on the colours})
\item Specialisation of coefficients: $\sJt$ (Notation \ref{pJ1}, Definition \ref{pJ2}).
\end{itemize}
\begin{rmk}(The non-weighted model for coloured Jones polynomials (\cite{CrI}) depends on the colours)
We would like to emphasise that the number of particles $m_J(\bar{N})$, the multi-level $\bar{\cN}(\bar{N})$ and local system $\bar{\Phi}^{\bar{\cN}(\bar{N})}$ depend on the choice of colours $\bar{N}$. So, when we vary the colouring, the whole topological context for the non-weighted intersection changes. The advantage of the weighted topological model introduced in this paper is that once we fix a level $\cN$, the topology of the weighted  intersection at level $\N$ will capture all the phenomena at colourings bounded by $\cN$.
\end{rmk}
Let us denote the following set of multi-indices:
\begin{equation}\label{stJi}
C(\bar{N}):= \big\{ \bar{i}=(i_1,...,i_{n-1})\in \N^{n-1} \mid 0\leq i_k \leq \ccN_{k+1}-1, \  \forall k \in \{1,...,n-1\} \big\}.
\end{equation}
\begin{defn}(Homology classes for the non-weighted model for coloured Jones polynomials)
For $\bar{i}\in C(\bar{N})$ consider the two homology classes given by the geometric supports from Figure \ref{Picture0i}:
\vspace*{-8mm}
 \begin{figure}[H]
\centering
$${\color{red} \FJ \in \HJi} \ \ \ \ \ \ \ \ \ \ \text{ and } \ \ \ \ \ \ \ \ \ \ \ \ \ {\color{dgreen} \LJ \in \HJdi} .$$
\vspace{-2mm}
\includegraphics[scale=0.4]{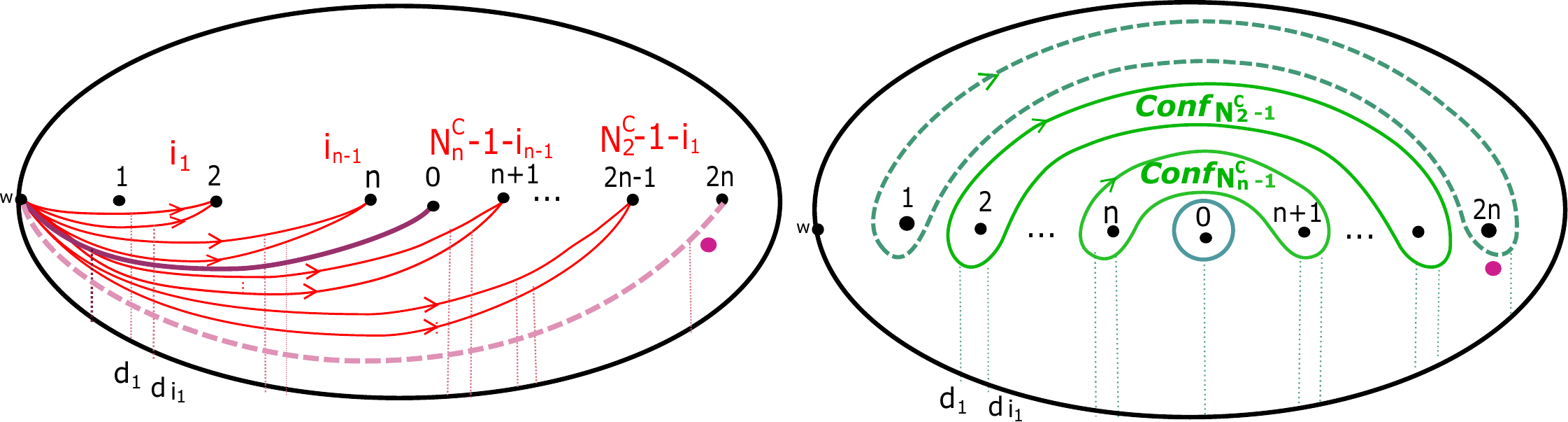}
\caption{ \normalsize Lagrangians for the non-weighted intersection}
\label{Picture0i}
\end{figure}
\end{defn}
\begin{thm}[{Non-weighted topological model for coloured Jones polynomials}]\label{omJones}
Let us define the following Lagrangian intersection:
\begin{equation} 
\begin{aligned}
& \PJ \in \Z[u_1^{\pm 1},...,u_l^{\pm 1},x_1^{\pm 1},...,x_l^{\pm 1},y^{\pm 1}, d^{\pm 1}]\\
& \PJ:=\prod_{i=1}^l u_{i}^{ \left(f_i-\sum_{j \neq {i}} lk_{i,j} \right)} \cdot \prod_{i=2}^{n} u^{-1}_{C(i)} \cdot \\
 & \ \ \ \ \ \ \ \ \ \ \ \ \ \ \ \ \ \ \ \sum_{\bar{i}\in C(\bar{N})} \left\langle(\beta_{n} \cup {\mathbb I}_{n+2} ) \ { \color{red} \FJ}, {\color{dgreen} \LJ}\right\rangle. 
 \end{aligned}
\end{equation}
Also, we consider the change of coefficients given by the formula:
$$ \sJto: \Z[u_1^{\pm 1},...,u_{l}^{\pm 1},x_1^{\pm 1},...,x_{l}^{\pm 1},y^{\pm 1}, d^{\pm 1}] \rightarrow \Z[d^{\pm 1}]$$
\begin{equation}
\begin{cases}
&\sJto(u_j)=\left(\sJt(x_j)\right)^t=d^{1-N_i}\\
&\sJto(x_i)=d^{1-N_i}, \ i\in \{1,...,l\}\\
&\sJto(y)=[N^C_1]_{d^{-1}},\\
\end{cases}
\end{equation}

Then $\PJ$ recovers the coloured Jones polynomials coloured with multicolours $\bar{N}$: 
\begin{equation}
\begin{aligned}
&J_{\bar{N}}(L) =~ \PJ \Bigm| _{\sJto}.
\end{aligned}
\end{equation} 
\end{thm}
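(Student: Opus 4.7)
The plan is to mirror the structure of the ADO proof (Theorem \ref{THEOREMA}), adapting it to the Jones specialisation $\sJt$ from Definition \ref{pJ2}. Writing out $\PA$ as a sum over multi-indices $\bar{i}\in\{\bar{0},\dots,\overline{\cN-1}\}$ with weight $w^{C(2)}_{i_1}\cdots w^{C(n)}_{i_{n-1}}$, I would first split this sum according to whether $\bar{i}$ lies in the Jones indexing set $C(\bar{N})$ defined in \eqref{stJi} or outside it.

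Recall that $\sJt(w^k_j)=1$ for $j\leq N_k-1$ and $\sJt(w^k_j)=0$ for $j\geq N_k$. Therefore, if some component $i_k$ satisfies $i_k\geq \ccN_{k+1}=N_{C(k+1)}$, the corresponding factor $w^{C(k+1)}_{i_k}$ specialises to $0$, so the whole weighted term vanishes. Exactly the multi-indices with $i_k\leq \ccN_{k+1}-1$ for all $k$ — that is, $\bar{i}\in C(\bar{N})$ — survive, and on these the product of weights specialises to $1$. This reduces $\PA|_{\sJt}$ to the sum $\prod_i u_i^{f_i-\sum_{j\neq i} lk_{i,j}}\cdot\prod_{i=2}^{n}u_{C(i)}^{-1}\cdot\sum_{\bar{i}\in C(\bar{N})}\langle(\beta_n\cup\mathbb I_{n+2})\FA,\LA\rangle|_{\sJt}$.

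The second step is to prove a Jones-side analogue of Lemma \ref{pr1}: for every $\bar{i}\in C(\bar{N})$,
\begin{equation*}
\left\langle(\beta_n\cup\mathbb I_{n+2})\ \FA,\LA\right\rangle\bigm|_{\sJt}
\;=\;\left\langle(\beta_n\cup\mathbb I_{n+2})\ \FJ,\LJ\right\rangle\bigm|_{\sJto}.
\end{equation*}
The level-$\cN$ configuration space uses $2+(n-1)(\cN-1)$ particles with multi-level $(1,\cN-1,\dots,\cN-1)$, while the Jones configuration space uses only $2+\sum_{i=2}^{n}(\ccN_i-1)$ particles with multi-level $(1,\ccN_2-1,\dots,\ccN_n-1)$. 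Since $\bar{i}\in C(\bar{N})$ means each $i_k$ already respects the Jones bound, the ``excess'' arcs of $\FA$ and the ``excess'' oval-particles of $\LA$ relative to $\FJ$ and $\LJ$ should not contribute new intersection points on the geometric side; one checks that any putative additional intersections carry monodromies of the form $d^{2(\cN-\ccN_{k+1})}$ or similar, which either get absorbed into the surviving classes or vanish after applying $\sJt$. This reduction should follow from the same combinatorial/geometric argument used in \cite{CrI} to set up the non-weighted Jones model — one matches each class $\FA,\LA$ for $\bar{i}\in C(\bar{N})$ with the corresponding Jones class $\FJ,\LJ$ after specialisation. Once this identification is established, Theorem \ref{omJones} ($J_{\bar{N}}(L)=\PJ|_{\sJto}$) finishes the proof.

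The main obstacle will be establishing this second lemma. Unlike the ADO case, where the specialisation sends all quantum parameters to the same root of unity $\xi_{\cM}^{-1}$ and the ``cut-off'' at $\cM$ acts uniformly across strands, here the Jones specialisation $\sJt$ produces a \emph{heterogeneous} cut-off — each strand is truncated at its own colour $N_{C(i)}$ — so the cancellation of excess arcs/ovals must be performed strand by strand, keeping careful track of how the local system $\bar{\Phi}^{\bar{\cN}}$ assigns monodromies $x_{C(i)}^2 d^{2(\cN_{C(i)}-1)}$ around each $\bar{\sigma}_i$. The key check is that for $\bar{i}\in C(\bar{N})$ these ``extra'' monodromies contribute factors that, under $\sJt$, collapse to the Jones-side monodromies for multi-level $\bar{\cN}(\bar{N})$, allowing the level-$\cN$ pairing to reproduce the Jones pairing intersection-point by intersection-point.
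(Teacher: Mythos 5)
Your proposal does not prove the statement it is aimed at. Theorem \ref{omJones} is a claim purely about the \emph{non-weighted} model: that the intersection $\PJ$, built from the classes ${\color{red}\FJ}$ and ${\color{dgreen}\LJ}$ in the configuration space of $2+\sum_{i=2}^{n}(N^C_i-1)$ points with multi-level $\bar{\cN}(\bar N)$, recovers $J_{\bar N}(L)$ after the specialisation $\sJto$. What you outline instead is the reduction of the \emph{weighted} intersection to the non-weighted one — splitting the sum over $\bar i$ according to $C(\bar N)$, noting that the weights specialise to $0$ or $1$ under $\sJt$, and matching the pairings $\left\langle(\beta_n\cup\mathbb I_{n+2})\,\FA,\LA\right\rangle$ with $\left\langle(\beta_n\cup\mathbb I_{n+2})\,\FJ,\LJ\right\rangle$ for $\bar i\in C(\bar N)$. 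That is precisely the content of the paper's proof of Theorem \ref{THEOREMJ} (its Lemma analogous to Lemma \ref{pr1}), not of Theorem \ref{omJones}. Your final sentence, ``Once this identification is established, Theorem \ref{omJones} finishes the proof,'' invokes the very statement you were asked to prove, so the argument is circular with respect to the target and establishes nothing new about $\PJ$ itself.

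For comparison: in the paper Theorem \ref{omJones} is not derived from the weighted set-up at all; it is the non-weighted topological model for coloured Jones polynomials of links constructed in \cite{CrI}, and the paper simply recalls it (exactly as Theorem \ref{omADO} is recalled on the ADO side before being fed into Theorem \ref{THEOREMA}). A genuine proof of Theorem \ref{omJones} would have to show that the graded intersection $\sum_{\bar i\in C(\bar N)}\left\langle(\beta_n\cup\mathbb I_{n+2})\,\FJ,\LJ\right\rangle$, together with the framing and linking-number prefactor in $u$ and the specialisation $\sJto$, coincides with the Reshetikhin--Turaev construction of $J_{\bar N}(L)$ — for instance by identifying the homological braid group action on $\HJi$ with the quantum $U_q(sl(2))$ representation on the relevant weight spaces and the duality pairing with the evaluation structure, as done in \cite{CrI}. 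None of that content appears in your proposal, so as a proof of Theorem \ref{omJones} it has an essential gap, even though it would serve as a reasonable sketch of the surrounding unification argument.
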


\subsubsection{Proof of Theorem \ref{THEOREMJ} in the semi-simple case}
\begin{proof}
We are going to prove that the two intersections: the weighted Lagrangian intersection and the non-weighted Lagrangian intersection become equal once we specialise their coefficients at a multi-level $\bar{N} \leq \cN$, as below.

We recall that in our context, for the weighted Lagrangian intersection $\PA$, the specialisation of coefficients for coloured Jones polynomials from Definition \ref{pJ2} has the following expression:

$$ \sJt: \Z[w^1_1,...,w^{l}_{\cN-1},u_1^{\pm 1},...,u_{l}^{\pm 1},x_1^{\pm 1},...,x_{l}^{\pm 1},y_1^{\pm 1},...,y_{l}^{\pm 1}, d^{\pm 1}]  \rightarrow \Z[d^{\pm 1}]$$
\begin{equation}
\begin{cases}
&\sJt(u_i)=\left(\sJt(x_i)\right)^t=d^{1-N_i}\\
&\sJt(x_i)=d^{1-N_i}, \ i\in \{1,...,l\}\\
&\sJt(y)=[N^C_1]_{d^{-1}},\\
&\sJt(w^k_j)=1, \ \text{ if } j\leq N_k-1 ,\\
&\sJt(w^k_j)=0, \ \text{ if } j\geq N_{k},  k\in \{1,...,l\}, j\in \{1,...,\cN-1\}.
\end{cases}
\end{equation}

\begin{lem}[Intersection forms become equal once semi-simply specialised at lower levels]\label{wnwJ} \phantom{A}\\
Let us fix a level $\cN \in \N$. Then for any multi-level $\bar{N} \leq \cN$ that gives a colouring of our link, the level $\cN$ weighted Lagrangian intersection $\PA$ and the multi-level $\bar{N}$ non-weighted Lagrangian intersection $\PJ$ become equal when specialised through $\sJt$ and $\sJto$ respectively:
\begin{equation}\label{f3}
\begin{aligned}
\left(\PA \right)\Bigm| _{\sJt} =~ \left(\PJ \right)\Bigm| _{\sJto}, \ \  \forall \bar{\cN }\leq \cN.
\end{aligned}
\end{equation}

\end{lem}

The specialisation of the weighted intersection form has the following expression (following \eqref{f1-form}):
\begin{equation}
\begin{aligned}
& \PA\Bigm| _{\sJt}=\left( \prod_{i=1}^l u_{i}^{ \left(f_i-\sum_{j \neq {i}} lk_{i,j} \right)} \cdot \prod_{i=2}^{n} u^{-1}_{C(i)} \cdot \right. \\
 & \ \ \ \ \ \ \ \ \ \ \ \ \ \ \ \ \ \ \ \left. \sum_{\bar{i}\in\{\bar{0},\dots,\overline{\cN-1}\}}w^{C(2)}_{i_1}\cdot ... \cdot w^{C(n)}_{i_{n-1}}
   \left\langle(\beta_{n} \cup {\mathbb I}_{n+2} ) \ { \color{red} \FA}, {\color{dgreen} \LA}\right\rangle\right) \Bigm| _{\sJt}. 
 \end{aligned}
\end{equation} 

Separating this formula into two parts, given by multi-indices bounded by $C(\bar{N})$ and the other multi-indices bounded just by $\cN$, we obtain:
\begin{equation}
\begin{aligned}
& \PA\Bigm| _{\sJt}= \left( \prod_{i=1}^l u_{i}^{ \left(f_i-\sum_{j \neq {i}} lk_{i,j} \right)} \cdot \prod_{i=2}^{n} u^{-1}_{C(i)} \cdot \right. \\
 & \ \ \ \ \ \ \ \ \ \ \ \ \ \ \ \ \ \ \ \left. \sum_{\bar{i}\in C(\bar{N})}w^{C(2)}_{i_1}\cdot ... \cdot w^{C(n)}_{i_{n-1}}
   \left\langle(\beta_{n} \cup {\mathbb I}_{n+2} ) \ { \color{red} \FA}, {\color{dgreen} \LA}\right\rangle \right) \Bigm| _{\sJt}+\\
& \ \ \ \ \ \ \ \ \ \ \ \ \ \ \ + \left( \prod_{i=1}^l u_{i}^{ \left(f_i-\sum_{j \neq {i}} lk_{i,j} \right)} \cdot \prod_{i=2}^{n} u^{-1}_{C(i)} \cdot \right. \\
 & \ \ \ \ \ \ \ \ \ \ \ \ \ \ \ \ \ \ \ \left. \sum_{\bar{i}\in\{\overline{0},\dots,\overline{\cN-1}\}, \bar{i}\notin C(\bar{N})}w^{C(2)}_{i_1}\cdot ... \cdot w^{C(n)}_{i_{n-1}}
   \left\langle(\beta_{n} \cup {\mathbb I}_{n+2} ) \ { \color{red} \FA}, {\color{dgreen} \LA}\right\rangle \right) \Bigm| _{\sJt}.  
 \end{aligned}
\end{equation}

If  $\bar{i}\notin C(\bar{N})$, this means that there exists a component $j \in \{1,...,n-1\}$ such that $i_j \geq N^C_{j+1}$. Then, one of the main properties of the specialisation map shows that, the coefficient $w^{C(2)}_{i_1}\cdot ... \cdot w^{C(n)}_{i_{n-1}}
$ vanishes through the specialisation $\sJt$:

\begin{equation}
\sJt(w^{C(2)}_{i_1}\cdot ... \cdot w^{C(n)}_{i_{n-1}}
)=0, \ \forall \bar{i} \notin C(\bar{N}).
\end{equation}

This means that:
\begin{equation}
\begin{aligned}
& \ \ \ \ \ \ \ \ \ \ \ \ \ \ \  \left( \prod_{i=1}^l u_{i}^{ \left(f_i-\sum_{j \neq {i}} lk_{i,j} \right)} \cdot \prod_{i=2}^{n} u^{-1}_{C(i)} \cdot \right. \\
 & \ \ \ \ \ \ \ \ \ \ \ \ \ \ \ \ \ \ \ \left. \sum_{\bar{i}\in\{\overline{0},\dots,\overline{\cN-1}\}, \ \bar{i}\notin C(\bar{N})}w^{C(2)}_{i_1}\cdot ... \cdot w^{C(n)}_{i_{n-1}}
   \left\langle(\beta_{n} \cup {\mathbb I}_{n+2} ) \ { \color{red} \FA}, {\color{dgreen} \LA}\right\rangle \right) \Bigm| _{\sJt}=0 
 \end{aligned}
\end{equation} 
This shows us that the weighted intersection detects just the classes associated to indices that are bounded by $\bar{N}$ once we do the specialisation, $\sJt$ and we have the formula:

\begin{equation}
\begin{aligned}
& \PA\Bigm| _{\sJt}= \left( \prod_{i=1}^l u_{i}^{ \left(f_i-\sum_{j \neq {i}} lk_{i,j} \right)} \cdot \prod_{i=2}^{n} u^{-1}_{C(i)} \cdot \right. \\
 & \ \ \ \ \ \ \ \ \ \ \ \ \ \ \ \ \ \ \ \left. \sum_{\bar{i}\in C(\bar{N})}w^{C(2)}_{i_1}\cdot ... \cdot w^{C(n)}_{i_{n-1}}
   \left\langle(\beta_{n} \cup {\mathbb I}_{n+2} ) \ { \color{red} \FA}, {\color{dgreen} \LA}\right\rangle \right) \Bigm| _{\sJt}.
 \end{aligned}
\end{equation} 

Also, we notice that:
\begin{equation}
\sJt(w^{C(2)}_{i_1}\cdot ... \cdot w^{C(n)}_{i_{n-1}})=1, \ \forall \bar{i}\in C(\bar{N}).
\end{equation}
So, our intersection has the following expression:
\begin{equation}
\begin{aligned}
& \PA\Bigm| _{\sJt}= \left( \prod_{i=1}^l u_{i}^{ \left(f_i-\sum_{j \neq {i}} lk_{i,j} \right)} \cdot \prod_{i=2}^{n} u^{-1}_{C(i)} \cdot \right. \\
 & \ \ \ \ \ \ \ \ \ \ \ \ \ \ \ \ \ \ \ \left. \sum_{\bar{i}\in C(\bar{N})} \left\langle(\beta_{n} \cup {\mathbb I}_{n+2} ) \ { \color{red} \FA}, {\color{dgreen} \LA}\right\rangle \right) \Bigm| _{\sJt}.
 \end{aligned}
\end{equation}

We have in mind the construction of the non-weighted intersection $ \PJ$. We conclude that the only difference between our intersection and the non weighted version  is that the non-weighted sum uses the classes $$\FJ \text{ and } \LJ$$ and the weighted intersection is given by the classes

$$\FA \text{ and } \LA.$$ 

Now we will show that these classes lead to the same result  through the intersection pairings, as below.  
\begin{lem}[Intersections between classes associated to indices less than $\bar{N}$ \cite{CrI}]\label{pr1}

The intersection of the homology classes associated to indices $\bar{i}$ bounded by $\bar{N}$ give the same result:
\begin{equation}
\begin{aligned}
 \left\langle(\beta_{n} \cup {\mathbb I}_{n+2} ) \ { \color{red} \FJ}, {\color{dgreen} \LJ}\right \rangle= \left\langle(\beta_{n} \cup {\mathbb I}_{n+2} ) \ { \color{red} \FA}, {\color{dgreen} \LA}\right\rangle , \forall \bar{i}\in C(\bar{N}).  
 \end{aligned}
\end{equation}
\end{lem}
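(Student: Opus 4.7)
The plan is to follow the same strategy used for the analogous ADO statement (Lemma \ref{pr1} above in the ADO subsection), adapted to the coloured Jones setting. First I would give a precise side-by-side description of the geometric supports of the four classes using the dictionary of Notation \ref{paths}: both $\FA,\LA$ and $\FJ,\LJ$ are built from the same underlying arrangement of arcs and ovals around the braid $\beta_n\cup\mathbb I_{n+2}$, with a prescribed number of particles on the $j$th arc (resp.\ oval) depending on the multi-index $\bar i=(i_1,\dots,i_{n-1})$. When $\bar i\in C(\bar N)$, the components $i_j$ lie in $\{0,\dots,\ccN_{j+1}-1\}$ which is a subset of $\{0,\dots,\cN-1\}$, so the ``essential'' particles—the ones indexed by $\bar i$—are placed identically in both pictures. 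The only difference is that the level $\cN$ classes $\FA,\LA$ carry additional ``filler'' particles and arcs sitting on fixed positions that bring the total up to $2+(n-1)(\cN-1)$, whereas $\FJ,\LJ$ carry only $2+\sum_{i=2}^n(\ccN_i-1)$ particles in the smaller multi-level covering.

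Next I would invoke the explicit formula from Proposition \ref{P:3} expressing each intersection pairing as a signed sum over the geometric transverse intersection points in the base configuration space, graded by the local system monodromies $\bar\Phi^{\bar{\cN}}$ and $\bar\Phi^{\bar{\cN}(\bar N)}$ respectively. The core of the argument is to set up a bijection between the sets of intersection points contributing to the two sides: each intersection point $x$ in the level $\cN$ picture decomposes uniquely as $x=x_{\mathrm{ess}}\sqcup x_{\mathrm{fill}}$, where $x_{\mathrm{ess}}$ is an intersection point of the essential particles (matching an intersection point of the level $\bar N$ picture) and $x_{\mathrm{fill}}$ is the forced pairing of the filler particles on their prescribed positions on the ovals and arcs, which are disjoint from the braid region. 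Under this bijection the geometric signs $\alpha_x$ coincide.

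The main obstacle is to show that the local system weights match. The monodromies differ through their dependence on $\cN$ versus $\ccN_{k+1}$—compare $\bar\Phi^{\bar\cN}(\bar\sigma_i)=(-1)^{\cN-1}x^2_{C(i)}d^{2(\cN-1)}$ to $\bar\Phi^{\bar\cN(\bar N)}(\bar\sigma_i)=(-1)^{\ccN_{C(i)}-1}x^2_{C(i)}d^{2(\ccN_{C(i)}-1)}$. The key observation is that the loops $l_{x_{\mathrm{fill}}}$ associated to the filler particles stay inside a contractible neighbourhood of the fixed arcs and ovals (they never cross the braid region), so the contribution of the filler particles to $\bar\Phi^{\bar\cN}(l_x)$ cancels between the $\FA$-side and the $\LA$-side; in particular, the extra factors involving $\cN$ drop out. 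What remains is a product of monodromies coming exclusively from the loops of the essential particles, whose geometric supports stay below multi-level $\bar N$, hence these loops only pair with generators whose monodromy values agree in both local systems. This reduces the level $\cN$ pairing to the level $\bar N$ pairing term by term and yields the claimed equality.
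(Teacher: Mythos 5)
Your proposal is correct and follows essentially the same route as the paper: the paper's own proof is a brief reduction to the analogous argument of \cite{CrI}, observing that the extra data at level $\cN$ only affects the right-hand side of the disc, where the pairings contribute trivially, while on the left-hand side (the braid region) the two geometric supports coincide for $\bar{i}\in C(\bar{N})$. Your essential/filler decomposition of intersection points, with matching signs and the check that the level-dependent monodromies (the $\bar{\sigma}_i$-type generators) never enter the grading of the relevant loops, is just a more explicit spelling-out of that same stability argument.
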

\begin{proof}

This follows through an analog argument as the proof of Lemma $7.2$ proved in \cite{CrI}. The key point is that even if we work in different configuration spaces, since we act with the braid $\beta_n$ on the left hand side of the disc and by identity on the right hand side, the only potential difference between intersections would originate from points belonging to the left hand side of the disc. On the other hand, we remark that the figures of the two geometrical supports  of the homology classes
$$(\beta_{n} \cup {\mathbb I}_{n+2} ) \ { \color{red} \FJ} \text {   and   } (\beta_{n} \cup {\mathbb I}_{n+2} ) \ { \color{red} \FA}$$ are the same in the left hand side of the disc. So overall we get the same intersections. 

 \end{proof}

So, we see that the weighted intersection is given by the following expression:
\begin{equation}
\begin{aligned}
& \PA\Bigm| _{\sJt}= \left( \prod_{i=1}^l u_{i}^{ \left(f_i-\sum_{j \neq {i}} lk_{i,j} \right)} \cdot \prod_{i=2}^{n} u^{-1}_{C(i)} \cdot \right. \\
 & \ \ \ \ \ \ \ \ \ \ \ \ \ \ \ \ \ \ \ \left. \sum_{\bar{i}\in C(\bar{N})} \left\langle(\beta_{n} \cup {\mathbb I}_{n+2} ) \ { \color{red} \FAM}, {\color{dgreen} \LAM}\right\rangle \right) \Bigm| _{\sJt}.
 \end{aligned}
\end{equation} 
This menas that we recover the non-weighted intersection, once we apply this specialisation of coefficients, as below:
\begin{equation}
\begin{aligned}
\left(\PA \right)\Bigm| _{\sJt} =~ \left(\PJ \right)\Bigm| _{\sJto}, \forall \bar{N} \leq \cN.
\end{aligned}
\end{equation} 

This concludes the proof of the Lemma that relates our two intersection pairings.

On the other hand, the non-weighted intersection recovers the $\bar{N}$ multi-coloured Jones invariant, following Theorem \ref{omJones}:
\begin{equation}
\begin{aligned}
&J_{\bar{N}}(L) =~ \PJ \Bigm| _{\sJto}.
\end{aligned}
\end{equation} 

We conclude that the weighted intersection recovers all the multi-coloured Jones invariants at levels bounded by $\cN$:
\begin{equation}
\begin{aligned}
&J_{\bar{N}}(L) =~ \PA \Bigm| _{\sJt}, \forall \bar{N} \leq \cN.
\end{aligned}
\end{equation}
This finishes the proof of the globalising Theorem for all multi-coloured Jones invariants for links at levels bounded by $\cN$.

\end{proof}

\subsection{Graded intersection in the ring with integer coefficients} In this part we investigate the ring of coefficients for the weighted Lagrangian intersection.  
Based on the construction of our homology groups, the homology classes $\FA$ and $\LA$ belong to
$ \HA \text{ and } \HAd $ that are modules over  $\C[w^1_1,...,w^{l}_{\cN-1},x_1^{\pm 1},...,x_l^{\pm 1},y^{\pm1}, d^{\pm 1}].$ So a priori we have that:  
$$\PA \in \C[w^1_1,...,w^{l}_{\cN-1},u_1^{\pm 1},...,u_l^{\pm 1},x_1^{\pm 1},...,x_l^{\pm 1},y^{\pm 1}, d^{\pm 1}].$$
However, a nice property for our homology classes is that actually their intersection pairing has all coefficients that are integers, as follows.

 \begin{lem}
 The graded weighted intersection $\PA$ takes values in the Laurent polynomial ring with integer coefficients:
\begin{equation}\label{integ}
\PA \in \Z[w^1_1,...,w^{l}_{\cN-1},u_1^{\pm 1},...,u_l^{\pm 1},x_1^{\pm 1},...,x_l^{\pm 1},y^{\pm 1}, d^{\pm 1}].
\end{equation}

\begin{proof}
The weighted intersection form is given by:
\begin{equation}
\begin{aligned}
\PA&:=\prod_{i=1}^l u_{i}^{ \left(f_i-\sum_{j \neq {i}} lk_{i,j} \right)} \cdot \prod_{i=2}^{n} u^{-1}_{C(i)} \cdot \\
 & \ \sum_{\bar{i}\in\{\bar{0},\dots,\overline{\cN-1}\}} w^{C(2)}_{i_1}\cdot ... \cdot w^{C(n)}_{i_{n-1}}
  \left\langle(\beta_{n} \cup {\mathbb I}_{n+2} ) \ { \color{red} \FA}, {\color{dgreen} \LA}\right\rangle. 
 \end{aligned}
\end{equation} 
We will look at the specific geometric support of our homology classes, and prove based on that that in fact their intersection has all coefficients that are integers:

\begin{equation}
\left\langle(\beta_{n} \cup {\mathbb I}_{n+2} ) \ { \color{red} \FA}, {\color{dgreen} \LA}\right\rangle \in \Z[x_1^{\pm 1},...,x_l^{\pm 1},y^{\pm 1}, d^{\pm 1}].
\end{equation}

As we have discussed, the intersection pairing between two homology classes  is encoded by the set of intersection points between their geometric supports, graded by the local system. We remark that the only contribution of the local system that could potentially give complex coefficients would use loops that have non-trivial winding number around the set of $p$-punctures from the right hand side of the disc. On the other hand, the intersection points between our geometric supports have associated loops that do not wind around punctures from the right hand side of the disc. So, overall relation \eqref{integ} holds and so we see that indeed our intersection form has integer coefficients: 
$$\PA \in \Z[w^1_1,...,w^{l}_{\cN-1},u_1^{\pm 1},...,u_l^{\pm 1},x_1^{\pm 1},...,x_l^{\pm 1},y^{\pm 1}, d^{\pm 1}].$$
\end{proof}

 \end{lem}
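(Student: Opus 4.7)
The plan is to reduce the integrality of the coefficients of $\PA$ to a direct verification at the level of the monodromies of the twisted local system $\bar{\Phi}^{\bar{\cN}}$. First, I would observe that the scalar prefactor $\prod_{i=1}^l u_i^{f_i - \sum_{j \neq i} lk_{i,j}} \cdot \prod_{i=2}^n u^{-1}_{C(i)}$ is a Laurent monomial in the $u_i$'s with integer exponents, since framings and linking numbers are integers, and each weight $w^{C(2)}_{i_1} \cdots w^{C(n)}_{i_{n-1}}$ is a monomial in the formal variables of the target ring. So the integrality claim reduces to showing that each pairing
$$\bigl\langle (\beta_n \cup \mathbb{I}_{n+2}) \ \FA, \ \LA \bigr\rangle$$
already lies in $\Z[x_1^{\pm 1}, \ldots, x_l^{\pm 1}, y^{\pm 1}, d^{\pm 1}]$ rather than merely in its complexification.

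Second, I would invoke Proposition~\ref{P:3}, which expresses this pairing as a finite signed sum over geometric intersection points in the base configuration space:
$$\bigl\langle (\beta_n \cup \mathbb{I}_{n+2}) \ \FA, \ \LA \bigr\rangle \ = \ \sum_x \alpha_x \cdot \bar{\Phi}^{\bar{\cN}}(l_x),$$
where each $\alpha_x \in \{\pm 1\}$ is an orientation sign and $l_x$ is the loop attached to $x$ by the connecting paths of Notation~\ref{paths}. Since the $\alpha_x$ are integers, the problem reduces further to verifying that every value $\bar{\Phi}^{\bar{\cN}}(l_x)$ is an integer Laurent monomial in $x_1, \ldots, x_l, y, d$. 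For this I would appeal to the explicit table in Notation~\ref{twsyst}: under $\bar{\Phi}^{\bar{\cN}}$ each standard generator $\sigma_0, \sigma_i, \bar{\sigma}_i, \gamma, \delta$ of $\pi_1$ is sent to a Laurent monomial with coefficient $\pm 1$ in the variables $x_j, y, d$. Since $l_x$ is homotopic to a word in these generators, $\bar{\Phi}^{\bar{\cN}}(l_x)$ is a product of such monomials and hence itself an integer Laurent monomial.

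The point at which care is needed is the passage from the natural covering variable $d'$ to the new variable $d = \xi_1 d'$ with $\xi_1 = i$: only even powers of $d'$ appear in the image of the local system, so $d'^{2k} = (-1)^k d^{2k}$, converting the imaginary unit $\xi_1$ into the integer sign $(-1)^k$, which is precisely what produces the $(-1)^{\cN_{C(i)}-1}$ factor in $\bar{\Phi}^{\bar{\cN}}(\bar{\sigma}_i)$. The main thing to confirm is therefore that the loops $l_x$ really correspond to elements of the deck transformation subgroup $(2\Z)^n \oplus \Z \oplus (2\Z)$ and never to half-generators outside it, so that no odd power of $\xi_1$ survives. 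This is exactly the role played by the geometric hypothesis that the supports of $\FA$ and $\LA$ in Figure~\ref{Picture0'} lift to the covering $\ClmN$: the loops obtained from their intersections close in the covering, which forces even winding numbers around the relevant punctures. The author's remark that no loop $l_x$ winds non-trivially around the right-hand $p$-punctures is then the geometric shortcut that makes this verification immediate, and I would formalize the argument by inspecting the supports in Figure~\ref{Picture0'} after application of $(\beta_n \cup \mathbb{I}_{n+2})$ to confirm this picture.
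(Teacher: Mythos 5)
Your proposal is correct and follows essentially the same route as the paper: factor out the integer-exponent monomial in the $u_i$'s and the weight monomials, then use Proposition \ref{P:3} to write each pairing $\left\langle(\beta_{n} \cup {\mathbb I}_{n+2} )\,\FA, \LA\right\rangle$ as a signed sum of values of the local system on the loops $l_x$ and check that each such value is an integer Laurent monomial. The only difference is one of presentation: you justify this last step algebraically, via the explicit monodromy table of Notation \ref{twsyst} and the observation that only even powers of $d'$ (hence signs, not powers of $\xi_1$) can occur, whereas the paper phrases the same verification geometrically, by noting that the loops $l_x$ do not wind around the $p$-punctures in the right-hand side of the disc; both come down to the same inspection of the monodromies realized by the intersection points.
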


\section{$\cN^{th}$ Unified Jones invariant and $\cN^{th}$ unified Alexander invariant} \label{S:UN} Theorem \ref{THEOREMA} tells us that the level $\cN$ weighted Lagrangian intersection $\PA$ contains all coloured Jones polynomials and all coloured Alexander polynomials for links with colours bounded by $\cN$.  In this section our aim is to construct link invariants out of this intersection, which is defined using braid representatives. More specifically, we will define two link invariants: $$\PAAJ \text{     and     }  \PAA$$ starting from this weighted intersection $\PA$ in the configuration space $\Conf_{2+(n-1)(\cN-1)}\left(\mathbb D_{2n+2}\right).$ 
They come from the same geometric set-up, and $\PAAJ$ will unify all coloured Jones polynomials up to level $\cN$ and $\PAA$ will unify coloured Alexander polynomials up to level $\cN$.

The first part for this construction is dedicated to the definition of two  ring of coefficients where these invariants will be defined. Secondly, we will show that in these rings, we have indeed well-defined link invariants with the desired globalisation property. 

\subsection{Set-up and notations}

\begin{defn}[Rings for the universal invariant] Let us denote the following rings:
 
\begin{equation}
 \begin{cases}
 \LL:=\Z[w^1_1,...,w^{l}_{\cN-1},u_1^{\pm 1},...,u_{l}^{\pm 1},x_1^{\pm 1},...,x_{l}^{\pm 1}, y^{\pm 1},d^{\pm 1}]
 =\Z[\bar{w},(\bar{u})^{\pm 1},(\bar{x})^{\pm 1},y^{\pm 1},d^{\pm 1}]\\
 \LN:=\Q(\xn)(x^{2\cN}_{1}-1,...,x^{2\cN}_{l}-1)^{-1}[x_1^{\pm 1},...,x_{l}^{\pm 1}]\\
 \LNJ=\Z[d^{\pm 1}]
 \end{cases}
 \end{equation}
where $\bar{w}:=(w^1_1,...,w^{l}_{\cN-1}), (\bar{u})^{\pm 1}=(u_1^{\pm 1},...,u_{l}^{\pm 1}), (\bar{x})^{\pm 1}=(x_1^{\pm 1},...,x_{l}^{\pm 1}).$
\end{defn}

\subsection{Product up to a finite level} 

\begin{defn}[Level $\cN$ specialisations] 
We recall the specialisations associated for the generic case and for the case of roots of unity, presented in Subsection \ref{sumspec} which we denoted as:
$$ \snJ: \LL=\Z[\bar{w},(\bar{u})^{\pm 1},(\bar{x})^{\pm 1},y^{\pm 1},d^{\pm 1}] \rightarrow \LNJ$$
\begin{equation}\label{u1} 
\begin{cases}
&\sJt(u_i)=\left(\sJt(x_i)\right)^t=d^{1-N_i}\\
&\sJt(x_i)=d^{1-N_i}, \ i\in \{1,...,l\}\\
&\sJt(y)=[N^C_1]_{d^{-1}},\\
&\sJt(w^k_j)=1, \ \text{ if } j\leq N_k-1 ,\\
&\sJt(w^k_j)=0, \ \text{ if } j\geq N_{k},  k\in \{1,...,l\}, j\in \{1,...,\cN-1\}.
\end{cases}
\end{equation}
$$ \sAt:\LL \rightarrow \LN$$
\begin{equation}
\begin{cases}
&\sAt(u_j)=x_j^{(1-\cM)}\\
&\sAt(y)=(d[\lambda_{C(1)}]_{\xi_{\cM}}),\\
&\sAt(d)= \xi_{\cM}^{-1}\\
&\sAt(w^k_j)=1, \ \text{ if } j\leq \cM-1,\\
&\sAt(w^k_j)=0, \ \text{ if } j\geq \cM,  k\in \{1,...,n-1\}, j\in \{1,...,\cN-1\}.
\end{cases}
\end{equation}
\end{defn}

\begin{defn}[Product up to level $\cN$]\label{prN} For a fixed level $\cN$, let us define the product of the rings for smaller levels as:
 \begin{equation}
\begin{aligned}
& \LNt:=\prod_{\cM \leq \cN}\LM \ \ \ \ \ \ \ \ \ \  \LNtJ:=\prod_{\bar{N} \leq \cN}\LNJ.
 \end{aligned}
 \end{equation}
 and $$\PMN:\LNt \rightarrow \LM , \ \ \ \ \ \ \ \ \PMNJ:\LNt \rightarrow \LNJ$$ the projections onto the corresponding components.
 
 Also, let us denote by $$\snt: \LL \rightarrow \LNt, \ \ \ \ \ \sntJ: \LL \rightarrow \LNtJ$$ the product of specialisations:
\begin{equation}
 \snt:=\prod_{\cM \leq \cN}\sm \ \ \ \ \ \ \ \  \sntJ:=\prod_{\bar{N} \leq \cN}\snJ.
 \end{equation} 
\end{defn}
\begin{defn}[Coloured invariants up to a fixed level]
We denote the product of the invariants up to level $\cN$ as below:
 \begin{equation}
 \begin{aligned}
& \ANt:=\prod_{\cM \leq \cN}\AM \in \LNt\\
& \ANtJ:=\prod_{\bar{N} \leq \cN}J_{\bar{N}}(L) \in \LNtJ. 
\end{aligned}
 \end{equation}

\end{defn}

We will construct our universal rings using these two sequences of morphisms $$\{ \snt, \sntJ \mid \cN \in \N\}.$$ More specifically, we will use the sequence of kernels associated to these maps, as follows. 
\begin{defn}[Kernels and quotient rings]\label{ker} Let us denote by:
\begin{equation}
\begin{aligned}
&\tilde{I}_{\cN}:=\text{Ker}\left(\snt \right) \subseteq \LL\\
&\tilde{I}^{J}_{\cN}:=\text{Ker}\left(\sntJ \right) \subseteq \LL.
\end{aligned}
\end{equation}
Then, let us  denote the quotient rings associated to these ideals and denote them as below::
\begin{equation}
\LLN:=\LL / \tilde{I}_{\cN} \ \ \ \ \ \ \ \ \LLNJ:=\LL / \tilde{I}^{J}_{\cN}.
\end{equation}
\end{defn}
\begin{rmk}(Nested sequences of ideals given kernels of specialisation maps)\label{nested}

Following the definition of the product rings $\LNt$ and $\LNtJ$ and product specialisation maps $\snt$ and $\sntJ$, we remark that they lead to two sequence of nested ideals:
\begin{equation}
\begin{aligned}
&\cdots \supseteq \tilde{I}_{\cN} \supseteq \tilde{I}_{\cN+1} \supseteq \cdots \\
&\cdots \supseteq \tilde{I}^{J}_{\cN} \supseteq \tilde{I}^{J}_{\cN+1} \supseteq \cdots 
\end{aligned}
\end{equation}
\end{rmk}

We recall that we have the product of the rings for smaller levels:
 \begin{equation}
 \LNt:=\prod_{\cM \leq \cN}\LM \ \ \ \ \ \ \ \ \ \  \LNtJ:=\prod_{\bar{N} \leq \cN}\LNJ.
 \end{equation}
 \begin{defn}[Projection maps on quotient rings]\label{L:speciald}
Let us consider the associated projection maps, which we define as $g_{\cN}, g^J_{\cN}$ and $\snbb, \snbbJ$ respectively, as shown in the diagram from Figure \ref{diagqu}.
\begin{figure}[H] \label{L:special}
\begin{center}
\begin{tikzpicture}
[x=1mm,y=1mm]

\node (b1)               at (-20,0)    {$\LL$};
\node (b2) [color=dgreen] at (20,0)   {$\LNt$};
\node (b3) [color=red] at (0,-20)   {$\LLN$};
\node (b4) [color=red] at (25,-20)   {$\LLh$};
\node (a1)               at (-32,0)    {$\PA \in$};
\node (a2)               at (-13,-20)    {${\color{red} \PAA} \in$};
\node (c1) [color=dgreen] at (15,20)   {$\LN$};

\draw[->,color=black]   (b1)      to node[left,xshift=-3mm,yshift=0mm,font=\large]{            $g_{\cN}$} (b3);
\draw[->,color=black]   (b1)      to  node[right,xshift=-2mm,yshift=3mm,font=\large]{$\snt$}                        (b2);
\draw[->,color=black]   (b3)      to node[right,yshift=-2mm,xshift=1mm,font=\large]{$\snbb$}                        (b2);
\draw[->,color=black, dashed]   (b4)      to  node[right,xshift=-2mm,yshift=-3mm,font=\large]{}                        (b3);
\draw[->,color=black,dashed]   (b4)      to  node[right,xshift=0mm,yshift=0mm,font=\large]{$\snb$}                        (b2);
\draw[->,color=black,dashed]   (b2)      to  node[right,xshift=0mm,yshift=0mm,font=\large]{$\PNN$}                        (c1);
\draw[->,color=black,dashed]   (b4)   [out=30,in=20]   to  node[right,xshift=0mm,yshift=0mm,font=\large] {$\usn$}                        (c1);

\node (b2) [color=dgreen] at (-80,0)   {$\LNtJ$};
\node (b3) [color=red] at (-40,-20)   {$\LLNJ$};
\node (b4) [color=red] at (-85,-20)   {$\LLhJ$};
\node (c1) [color=dgreen] at (-75,20)   {$\LNJ$};
\node (a2')               at (-53,-20)    {${\color{red} \PAAJ} \in$};

\draw[->,color=black]   (b1)      to node[left,xshift=-3mm,yshift=0mm,font=\large]{            $g^J_{\cN}$} (b3);
\draw[->,color=black]   (a1)      to  node[right,xshift=-2mm,yshift=3mm,font=\large]{$\sntJ$}                        (b2);
\draw[->,color=black]   (b3)      to node[left,yshift=-2mm,xshift=1mm,font=\large]{$\snbbJ$}                        (b2);
\draw[->,color=black, dashed]   (b4)      to  node[right,xshift=-2mm,yshift=-3mm,font=\large]{}                        (a2');
\draw[->,color=black,dashed]   (b4)      to  node[right,xshift=0mm,yshift=0mm,font=\large]{$\snbJ$}                        (b2);
\draw[->,color=black,dashed]   (b2)      to  node[right,xshift=0mm,yshift=0mm,font=\large]{$\PNNJ$}                        (c1);
\draw[->,color=black,dashed]   (b4)   [out=150,in=130]   to  node[left,xshift=0mm,yshift=0mm,font=\large] {$\usnJ$}                        (c1);
\end{tikzpicture}
\end{center}
\vspace{-3mm}
\caption{\normalsize Two quotients of the weighted Lagrangian intersection}\label{diagqu}
\end{figure}
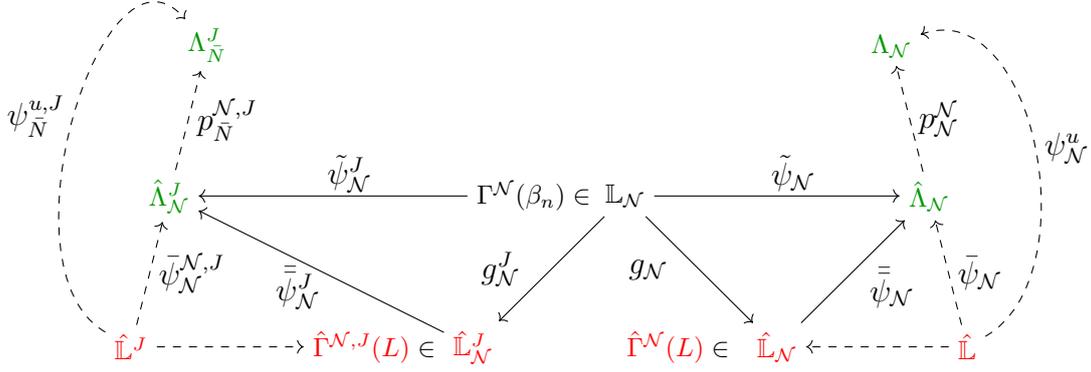

We also define the projection maps
$$ \snbJt: \LLNJ \rightarrow \LMJ,  \ \ \ \ \ \snbt: \LLN \rightarrow \LM $$
which are given by the formulas:
\begin{equation}\label{quotmn}
\smnbJt:= \PMNJ \circ \snbbJ, \ \ \ \ \ \ \smnbt:= \PMN \circ \snbb.
\end{equation}
 \end{defn}
\begin{defn}[Sequence of quotient rings]
In this manner, following Remark \ref{nested}, we obtain two sequences of  quotient rings, with maps between them:
\begin{equation}
\begin{aligned}
 l_{\cN} \hspace{10mm} l_{\cN+1}  \\
\cdots \LLN \leftarrow \LLNN \leftarrow \cdots \\
 l^{J}_{\cN} \hspace{10mm} l^{J}_{\cN+1}  \\
\cdots \LLNJ \leftarrow \LLNNJ \leftarrow \cdots 
\end{aligned}
\end{equation}
\end{defn}
\begin{defn}[Universal limit rings]\label{eq10:limit} We define the projective limits of these sequences of rings and denote them as follows:
\begin{equation}
\LLh:= \underset{\longleftarrow}{\mathrm{lim}} \ \LLN, \ \ \ \ \ \ \ \ \ \ \ \ \LLhJ:= \underset{\longleftarrow}{\mathrm{lim}} \ \LLNJ.
\end{equation}
\end{defn}
\begin{rmk}
We obtain two maps given by projections onto the coefficient rings as below:
 \begin{equation}
\snb: \LLh \rightarrow \LNt, \ \ \ \ \ \ \ \ \ \  \ \ \ \snbJ: \LLhJ \rightarrow \LNtJ.
  \end{equation}
Further, composing with the projection onto the $\cN^{th}$ component and $\bar{N}^{th}$ component respectively, we obtain the following maps:
\begin{equation}
\usn: \LLh \rightarrow \LN, \ \ \ \ \ \ \ \ \ \ \ \ \usnJ: \LLhJ \rightarrow \LNJ,
  \end{equation}
given by the expressions $$ \ \ \usn:=\PNN \circ \snb, \ \ \ \ \ \ \ \ \ \usnJ:=\PNNJ \circ \snbJ .$$
\end{rmk}
\begin{defn}[Projection maps for the ADO and coloured Jones coefficient rings] We obtain also projection maps which relate the coefficient rings at consecutive levels, as below:
 \begin{equation}\label{eq10:2}
\PPN: \LNNt \rightarrow \LNt \ \ \ \ \ \ \text{ and } \ \ \ \ \ \ \PPNJ: \LNNtJ \rightarrow \LNtJ.
  \end{equation}
 \end{defn}
Now we are going to define step by step the two universal invariants, semi-simple and non semi-simple, coming from the same initial data given by our intersection pairing.

\subsection{$\cN^{th}$ Unified Alexander link invariant}
 First, we define a set of link invariants that will be used to build the globalised coloured Alexander invariant. We recall that we have the graded intersection:
 $$\PA \in \LL$$
 and from Theorem \ref{THEOREMA} it recovers the coloured Alexander invariants, as below:
 \begin{equation}\label{rem1}
 \sAt\left( \PA\right)=\AM.
 \end{equation}
Looking at the products component-wise and using the product specialisation maps from Definition \ref{prN} we remark that:
\begin{equation}\label{eq:10.1A}
 \snt\left( \PA\right)=\ANt.
 \end{equation}
\begin{defn}[Level $\cN$ ADO-quotient] 
Let us consider the intersection form obtained from $\PA$ by taking the quotient through the projection $g_{\cN}$ (as in Figure \ref{diagqu}):
\begin{equation}
 \PAA \in \LLN.
 \end{equation}
\end{defn}  
Now we will prove Theorem \ref{INV} which we remind below.
\begin{thm}[$\cN^{th}$ unified Alexander link invariant]\label{levN}
The intersection $\PAA \in \LLN$ is a well-defined link invariant unifying all coloured Alexander polynomials up to level $\cN$:
$$  \PAA|_{\smnbt}=\AM, \ \ \  \forall \cM \leq \cN.$$
\end{thm}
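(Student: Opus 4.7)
The plan is to deduce both claims directly from Theorem \ref{THEOREMA} together with the definition $\LLN = \LL/\tilde{I}_{\cN}$ where $\tilde{I}_{\cN} = \ker(\snt)$ and $\snt = \prod_{\cM \leq \cN}\sm$ (Definition \ref{ker}). The core idea is that passing to the quotient $\LLN$ kills precisely the information that depends on the braid representative rather than on the underlying link, so the recovery property and invariance become two sides of the same coin; in particular, no Markov-move analysis should be required.

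For the recovery property, I would do a formal diagram chase on Figure \ref{diagqu}. By construction of $\LLN$ we have $\snbb \circ g_{\cN} = \snt$, and by \eqref{quotmn} the specialisation decomposes as $\smnbt = \PMN \circ \snbb$. Combined with $\PAA = g_{\cN}(\PA)$ and equation \eqref{eq:10.1}, this gives
\begin{equation*}
\smnbt(\PAA) \;=\; \PMN\bigl(\snbb(g_{\cN}(\PA))\bigr) \;=\; \PMN(\snt(\PA)) \;=\; \PMN(\ANt) \;=\; \AM,
\end{equation*}
since $\PMN$ projects the product $\ANt = \prod_{\cM' \leq \cN}\Phi^{\cM'}(L)$ onto its $\cM$-th component.

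For well-definedness, I would take two braid representatives $\beta_n \in B_n$ and $\beta'_{n'} \in B_{n'}$ with $\widehat{\beta_n} = \widehat{\beta'_{n'}} = L$, and consider the corresponding weighted Lagrangian intersections in $\LL$. A key preliminary observation is that, although these intersections are built inside different configuration spaces, the ambient ring $\LL$ depends only on the number of link components $l$ and the level $\cN$, not on the number of strands, so both intersections live in the same $\LL$. By Theorem \ref{THEOREMA}, each of them maps to the same $\AM$ under $\sm$ for every $\cM \leq \cN$; packaging these identities componentwise, their difference lies in $\ker(\snt) = \tilde{I}_{\cN}$, hence their images in $\LLN$ coincide. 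Thus $\PAA$ descends to a well-defined element of $\LLN$ associated to $L$.

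The main (modest) obstacle I anticipate is purely organisational: keeping careful track of the various specialisation maps $\sm, \snt, \snbb, \smnbt$ and verifying that Figure \ref{diagqu} genuinely commutes as stated, so that $\snt$ really factors as $\snbb \circ g_{\cN}$. Once this is accepted, the theorem is almost immediate from Theorem \ref{THEOREMA}: having ``enough'' link-invariant specialisations of $\PA$ (namely all $\AM$ for $\cM \leq \cN$) through which we quotient by their joint kernel is exactly what it takes to force $\PAA$ to be simultaneously a link invariant and to retain all ADO polynomials below level $\cN$.
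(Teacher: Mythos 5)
Your proposal is correct and follows essentially the same route as the paper: the recovery identity is the same diagram chase $\smnbt(\PAA)=\PMN\circ\snbb\circ g_{\cN}(\PA)=\PMN(\ANt)=\AM$, and your well-definedness argument (the difference of the intersection forms for two braid representatives lies in $\ker(\snt)=\tilde{I}_{\cN}$) is logically equivalent to the paper's phrasing via injectivity of $\snbb$ together with the fact that $\ANt$ is a link invariant. The only addition is your explicit remark that $\LL$ depends only on $l$ and $\cN$, so intersections coming from braids with different numbers of strands live in the same ring, a point the paper leaves implicit.
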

\begin{proof}
Following relation \eqref{eq:10.1A}, we have that:
$$ \snt\left( \PA\right)=\ANt$$
where $L$ is the link obtained from the closure of the braid $\beta_n$. On the other hand, the projection $g_{\cN}$ is defined via the kernel of the map $\snt$ and we have that 
\begin{equation}
 \PAA=g_{\cN}(\PA).
 \end{equation}
 
Putting these properties together, it follows that 
$$ \snbb\left( \PAA\right)=\ANt.$$
On the other hand, from Definition \ref{ker} of the quotient rings, we know that the map $\snbb$ is injective. Moreover, the element that we reach through this map, $\ANt$ is given by the product of all coloured Alexander invariants up to the fixed level $\cN$, so it is a link invariant. 

From the last two properties, we conclude that $\PAA$ is a well-defined link invariant.
 
In order to prove the globalisation property, namely that this link invariant recovers all the ADO invariants up to level $\cN$, we put together relation \eqref{rem1} and the definition of the quotient morphisms from relation \eqref{quotmn}.

This concludes the proof of this theorem, and so we have a well defined globalisation of all coloured Alexander invariants, in a unique link invariant at level $\cN$, which we defined as $\PAA$. 

\end{proof}

\subsection{$\cN^{th}$ Unified Jones invariant}
 In this part we will see that we can use the same geometric set-up as the one from the previous section (concerning unifications of coloured Alexander invariants), from which if we look in a different ring we obtain a set of link invariants that unify the coloured Jones polynomials. 
 
 We start from the intersection $\PA \in \LL$ and we know from Theorem \ref{THEOREMJ} that it recovers the coloured Jones polynomials:
 \begin{equation}\label{rem2}
 \snJ\left( \PA\right)=\ANJ.
 \end{equation}
Then, using the products of the above relations together with the product specialisation maps from Definition \ref{prN}, we obtain that:
\begin{equation}\label{eq:10.1}
 \sntJ\left( \PA\right)=\ANtJ.
 \end{equation}
\begin{defn}[Level $\cN$ Jones-quotient] We define the intersection form obtained from $\PA$ through the quotient via the projection $g^J_{\cN}$ (as in Figure \ref{diagqu}):
\begin{equation}
\begin{aligned}
& \PAAJ \in \LLNJ\\
& \PAAJ=g^J_{\cN}(\PA).
 \end{aligned}
 \end{equation}
\end{defn}  
Now we are ready to prove Theorem \ref{INVJ} which we remind below.
\begin{thm}[$\cN^{th}$ Unified Jones invariant for links] \label{levNJ} For any level $\cN$, the weighted Lagrangian intersection $\PAAJ\in \LLNJ$ is a well-defined link invariant recovering all coloured Jones polynomials with multicolours $\bar{N}$ up to level $\cN$:
$$  \PAAJ|_{\smnbJt}=J_{\cM}(L), \ \ \ \ \ \forall \bar{N} \leq \cN.$$
\end{thm}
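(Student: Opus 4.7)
The plan is to replicate, verbatim in structure, the argument just given for Theorem~\ref{levN} (the ADO case), replacing the non-semi-simple specialisation with its semi-simple counterpart. The only place where the distinction enters is at the level of the input globalisation: one uses Theorem~\ref{THEOREMJ} in place of Theorem~\ref{THEOREMA}. Everything downstream is uniform ring-theoretic bookkeeping that has already been set up in Definition~\ref{ker} and Figure~\ref{diagqu}.

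Concretely, from Theorem~\ref{THEOREMJ} we have $\snJ(\PA) = J_{\bar{N}}(L)$ for every $\bar{N} \leq \cN$. Taking the product of these specialisations component-wise over all multi-colours bounded by $\cN$ yields $\sntJ(\PA) = \ANtJ \in \LNtJ$, an element each of whose components is an honest link invariant of $L = \widehat{\beta_n}$. By Definition~\ref{ker} the ideal $\tilde{I}^{J}_{\cN}$ is defined as $\mathrm{Ker}(\sntJ)$, so by the first isomorphism theorem the factored map $\snbbJ : \LLNJ \to \LNtJ$ of Figure~\ref{diagqu} is injective. From $\PAAJ = g^{J}_{\cN}(\PA)$ and commutativity of the diagram one deduces $\snbbJ(\PAAJ) = \ANtJ$. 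If $\beta_n$ and $\beta'_{n'}$ are two braid representatives of the same link $L$, the two associated images in $\LNtJ$ coincide (both equal $\ANtJ$), and injectivity of $\snbbJ$ forces the two corresponding classes in $\LLNJ$ to be equal as well; thus $\PAAJ$ is a well-defined link invariant.

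For the globalisation identity, I would invoke \eqref{quotmn}, which expresses $\smnbJt$ as the composition $\PMNJ \circ \snbbJ$. Applied to $\PAAJ$ this gives
\begin{equation*}
\PAAJ \bigm|_{\smnbJt} \;=\; \PMNJ\bigl(\snbbJ(\PAAJ)\bigr) \;=\; \PMNJ(\ANtJ) \;=\; J_{\bar{N}}(L),
\end{equation*}
since $\PMNJ$ is by construction the projection of $\LNtJ$ onto its $\bar{N}$-component. The only conceptual point that might look delicate is the injectivity of $\snbbJ$, but this is tautological from Definition~\ref{ker}; consequently there is no real obstacle, and the argument reduces to a short diagram chase that mirrors the ADO proof line for line, with the topological work concentrated upstream in Theorem~\ref{THEOREMJ}.
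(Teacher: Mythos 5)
Your proposal is correct and follows essentially the same route as the paper's own proof: the paper likewise reduces Theorem~\ref{levNJ} to the unification statement $\sntJ(\PA)=\ANtJ$, the injectivity of the factored map $\snbbJ$ coming from quotienting by $\tilde{I}^{J}_{\cN}=\mathrm{Ker}(\sntJ)$, and the factorisation $\smnbJt=\PMNJ\circ\snbbJ$ of \eqref{quotmn}. Your write-up only makes slightly more explicit the comparison of two braid representatives and the final projection computation, which the paper leaves implicit.
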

\begin{proof}
On one hand we have:
$ \sntJ\left( \PA\right)=\ANtJ$
for $L=\hat{\beta_n}$. 

On the other hand, the projection $g^J_{\cN}$ is defined from the map $\sntJ$ and so:
\begin{equation}
 \PAAJ=g_{\cN}(\PA).
 \end{equation}
Putting these properties together, we obtain the following relation:
$$ \snbbJ\left( \PAAJ\right)=\ANtJ.$$
Now, following Definition \ref{ker} of the quotient rings, we have that the map $\snbbJ$ is injective. Also, by construction, $\ANtJ$ is given by the product of all coloured Jones invariants up to level $\bar{N}$, so it is a link invariant. 

This shows that $\PAAJ$ corresponds to a link invariant through the injective function $\snbb$, so 
$\PAAJ$ is a link invariant.

Following relation \eqref{rem2} and also the definition of the quotient morphisms from \eqref{quotmn}  we conclude that this link invariant recovers all coloured Jones polynomials for links up to level $\cN$.

\end{proof}

\section{Universal Coloured Alexander Invariant} \label{S:U}
In this part our aim is to show that we can unify and define a universal invariant out of the sequence of graded intersections $$\{\PAA\mid \cN \in \N\}.$$ First we recall the Definition \ref{eq10:limit} which contains the formula for the appropiate ring of coefficients where this universal invariant will be defined
$$\LLh:= \underset{\longleftarrow}{\mathrm{lim}} \ \LLN.$$

We will prove that in such a universal ring, we can construct a well defined link invariant which is a universal coloured Alexander invariant, recovering all $\AN$ for all colours $\cN \in \N$.

 \subsection{Definition of the universal ring and invariants} Our universal invariant will be build from the sequence of intersections up to level $\cN$.
 \begin{defn}[Unification of all coloured Alexander link invariants] We consider the projective limit of the graded intersection pairings $\PAA$ over all levels, and denote it as follows:
\begin{equation}
\I:= \underset{\longleftarrow}{\mathrm{lim}} \ \PAA \in \LLh.
\end{equation}
\end{defn}

  \begin{defn}[Limit ring of coefficients and limit invariant]
 Now, let us consider the product of all the rings where the coloured Alexander invariants belong to (where we put no condition about the level)
  \begin{equation}
 \La:=\prod_{\cM \in \N}\LM
 \end{equation}
and the product of all ADO invariants
 \begin{equation}
 \Aa:=\prod_{\cM \leq \cN}\AM \in \La. 
 \end{equation}
Using the above definition, let us denote the associated projection map:
 \begin{equation}
\PhM: \La \rightarrow \LM.
  \end{equation}
We remark that this means that projecting onto the $\cM^{th}$ component we obtain the $\cM^{th}$ coloured Alexander invariant:
\begin{equation}\label{eq10:5}
\PhM(\Aa)=\AM.
\end{equation}

\end{defn}
Putting together all these tools now we are ready to prove Theorem \ref{TH}, which we recall as follows.

\begin{thm}[Universal ADO link invariant] \label{proofADO}The limit of  graded intersections $\I$ is a well-defined link invariant that recovers all coloured Alexander invariants:
\begin{equation}\label{eq10:4}
\AN=  \I|_{\usn}.
\end{equation}
\end{thm}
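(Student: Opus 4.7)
The plan is to establish three facts in sequence: (i) the family $\{\PAA\}_{\cN \in \N}$ is coherent with respect to the projection maps $l_{\cN}: \LLNN \to \LLN$ from Figure \ref{diagqu}, so that $\I := \underset{\longleftarrow}{\mathrm{lim}}\, \PAA$ is a well-defined element of $\LLh$; (ii) $\I$ depends only on the link $L$; and (iii) $\usn(\I) = \AN$ for every $\cN$.

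For coherence, the key algebraic input is that the map $\snbb: \LLN \to \LNt$ from Definition \ref{ker} is injective, since its kernel is precisely the ideal $\tilde{I}_{\cN}$ by which we quotiented. Theorem \ref{levN} gives $\snbb(\PAA) = \ANt$ and $\snnbb(\PAAN) = \ANNt$, and by construction of the product rings the projection $\PPN: \LNNt \to \LNt$ of \eqref{eq10:2} sends $\ANNt$ to $\ANt$. Moreover, the square
\[
\snbb \circ l_{\cN} \;=\; \PPN \circ \snnbb
\]
commutes tautologically (both sides describe the same assembly of specializations to $\LNt$). Hence $\snbb\bigl(l_{\cN}(\PAAN)\bigr) = \PPN(\ANNt) = \ANt = \snbb(\PAA)$, and injectivity of $\snbb$ forces $l_{\cN}(\PAAN) = \PAA$, which is coherence.

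With coherence, $\I \in \LLh$ is well-defined and, being the limit of a sequence of link invariants (Theorem \ref{levN}), is itself a link invariant independent of the chosen braid representative $\beta_n$. For the specialization property, the universal property of the inverse limit assembles the compatible family $\snbb(\PAA)=\ANt$ into a map $\snb: \LLh \to \LNt$ with $\snb(\I)=\ANt$; composing with the component projection $\PNN: \LNt \to \LN$ and using the factorization $\usn = \PNN \circ \snb$ together with $\PNN(\ANt) = \AN$ from \eqref{eq10:5} gives
\[
\usn(\I) \;=\; \PNN\bigl(\snb(\I)\bigr) \;=\; \PNN(\ANt) \;=\; \AN.
\]

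The main obstacle is the coherence step. Geometrically it reflects the stability phenomenon described in Section \ref{asym}, namely that for fixed $\bar{i} \in \{\bar{0},\dots,\overline{\cN-1}\}$ the intersection numbers $\langle(\beta_n \cup \mathbb I_{n+2})\,\FA,\, \LA\rangle$ and $\langle(\beta_n \cup \mathbb I_{n+2})\,\FAnn,\, \LAnn\rangle$ coincide; however the cleanest argument avoids a direct geometric comparison and instead exploits injectivity of $\snbb$ together with the commuting square of specialization maps. A subtle bookkeeping point is that the ambient ring $\LL$ itself depends on $\cN$ (it contains weights $w^k_j$ up to $j=\cN-1$), so the square must be set up using the natural inclusion of the smaller $\LL$ into the larger one and the fact that the weights $w^k_\cN$ introduced at level $\cN+1$ map to $1$ under every $\sm$ with $\cM\leq \cN$, hence are invisible after projecting via $\PPN$.
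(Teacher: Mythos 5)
Your proposal is correct and follows essentially the same route as the paper: coherence $l_{\cN}(\PAAN)=\PAA$ is deduced from injectivity of $\snbb$, the identity $\snbb(\PAA)=\ANt$ (the paper's Lemma \ref{lemma2}, itself a repackaging of Theorem \ref{THEOREMAU}/Theorem \ref{levN}), the compatibility $\PPN(\ANNt)=\ANt$, and the commuting square, after which the specialisation property is obtained by the same factorisation $\usn=\PNN\circ\snb$. Your closing remark on the $\cN$-dependence of the ambient ring $\LL$ is a useful bookkeeping observation but does not change the argument.
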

\begin{proof}

\begin{figure}[H]
\begin{equation}
\centering
\begin{split}
\begin{tikzpicture}
[x=1mm,y=1.6mm]
\node (ll) at (-30,15) {$\LL$};
\node (tl) at (0,62) {$\LLh$};
\node (tr) at (60,62) {$\La$};
\node (ml) at (0,30) {$\LLNN$};
\node (mr) at (60,30) {$\LNNt$};
\node (bl) at (0,0) {$\LLN$};
\node (br) at (60,0) {$\LNt$};
\node (rr) at (90,15) {$\Lambda_{\cM}$};
\node (lle) at (ll.south) [anchor=north,color=red] {\rotatebox{90}{$\subseteq$}};
\node (llee) at (lle.south) [anchor=north,color=red] {$\{\PA\}$};
\node (tle) at (tl.south) [anchor=north,color=red] {$\I$};
\node (tre) at (tr.south) [anchor=north,color=red] {$\Aa$};
\node (mle) at (ml.south east) [anchor=north west,color=red] {$\PAAN$};
\node (ble) at (bl.north east) [anchor=south west,color=red] {$\PAA$};
\node (mre) at (mr.south west) [anchor=north east,color=blue] {$\widehat{\Phi}^{\cN +1}(L)$};
\node (bre) at (br.north west) [anchor=south east,color=blue] {$\widehat{\Phi}^{\cN}(L)$};
\node (rre) at (rr.north) [anchor=south,color=blue] {$\AM$};
\node (trd) at (62,60.5) [anchor=west] {$\coloneqq \displaystyle\prod_{\cM \in \N} \LM$};
\draw[->] (tl) to node[above,font=\small,color=gray]{$\su$} (tr);
\draw[->,densely dashed] (ll) to node[above,font=\small]{$g_{\cN +1}$} (ml);
\draw[->,densely dashed] (ll) to node[below,font=\small]{$g_{\cN}$} (bl);
\draw[->] (mr) to node[below,font=\small]{$\PMNN$} (rr);
\draw[->](br) to node[below,font=\small]{$\PMN$} (rr);
\inclusion{above}{$\snnbb$}{(ml)}{(mr)}
\inclusion{below}{$\snbb$}{(bl)}{(br)}
\draw[->] (ml) to node[left,font=\small]{$\ell_{\cN}$} (bl);
\draw[->] (mr) to node[right,font=\small]{$\PPN$} (br);
\draw[->] (ll) to[out=-71,in=210] node[below,font=\small]{$\snt$} (br);
\draw[->] (ll) to[out=70,in=150] node[above,font=\small]{$\snnt$} (mr);
\draw[|->,color=red] (13,22) to node[right,font=\small,circle,draw,inner sep=1pt]{3} (13,8);
\draw[|->,color=blue] (47,22) to node[left,font=\small,circle,draw,inner sep=1pt]{1} (47,8);
\draw[|->,color=gray] (mle) to node[above,font=\small,circle,draw,inner sep=1pt]{2} (mre);
\draw[|->,color=gray] (ble) to node[below,font=\small,circle,draw,inner sep=1pt]{2} (bre);
\node [color=dgreen] at (28,15) {$\equiv$};
\node [color=dgreen,font=\small,circle,draw,inner sep=1pt] at (32,15) {4};
\draw[|->,densely dashed,color=red] (tle) to (tre);
\draw[|->,color=red] (tle.south east) to node[above,font=\small]{$\usm$} (rre);
\draw[|->,color=gray] (tre) to node[right,font=\small]{$\hat{p}_{\cM}$} (rre);
\draw[->,dotted,color=gray] (tle) to (ml);
\draw[->,dotted,color=gray] (tre) to (mr);
\draw[->,color=blue] (tle) to node[left,font=\small]{$\snbbb$} (mr);
\node[draw,rectangle,inner sep=3pt] at (-25,62) {Limit};
\node[draw,rectangle,inner sep=3pt,color=red] at (-25,57) {Universal ADO};
\end{tikzpicture}
\end{split}
\end{equation}

\caption{The universal ADO link invariant as a limit of the $\cN^{th}$ unified Alexander invariants}\label{limd}
\end{figure}

In order to have a well-defined limit, we should prove that the sequence of level $\cN$ ADO invariants is compatible with the induced quotient maps given by the sequence $\{l_{\cN}\}_{\cN \in \N}$:
\begin{equation}
l_{\cN}\left(\PAAN \right)=\PAA.
\end{equation}
We start with a remark related to the sequence of product type invariants, as follows. 
\begin{rmk}\label{rk1} Following the definition of the projection maps for the coefficient rings from relation \eqref{eq10:2}, we have that the following property:
\begin{equation}
\PPN \left(\ANNt \right)=\ANt.
\end{equation}
\end{rmk}
Next, we prove that the level $\cN$ invariants are related to the above product invariants.
\begin{lem}\label{lemma2}
The link invariant at level $\cN$ recovers the product invariant of all coloured Alexander invariants at levels smaller than $\cN$:
\begin{equation}
\snbb \left(\PAA \right)=\ANt.
\end{equation}
 \end{lem}
\begin{proof}
This property will follow from Theorem \ref{THEOREMAU} together with the definition of the quotient maps, as we will see below. We notice that it is enough to prove that this relation holds when composed with the set of projections $\PMN$ for all $\cM \leq \cN$.
So, we want to show:
\begin{equation}
\PMN \circ \snbb \left(\PAA \right)=\PMN \circ \ANt \left( =\AN \right),  \ \ \ \forall \cM \leq \cN.
\end{equation}
By construction we have that:
\begin{equation}
\PAA =g_{\cN} \left(\PA \right).
\end{equation}
This means that we want to show:
\begin{equation}\label{eq10:3}
\PMN \circ \snbb \circ g_{\cN} \left(\PA \right) =\AN,  \ \ \ \forall \cM \leq \cN.
\end{equation}
On the other hand, we have:
$\snbb \circ g_{\cN}=\snt$ and $\PMN \circ \snt=\sm$. This means that relation \eqref{eq10:3} is equivalent to:
\begin{equation}
\sm \left(\PA \right) =\AN,  \ \ \ \forall \cM \leq \cN.
\end{equation}
This is precisely the statement of the unification result up to level $\cN$ from the Unification Theorem \ref{THEOREMAU}, which concludes the proof.
\end{proof}
Now we will use these properties and conclude the well-behaviour of the invariants with respect to the quotient maps, as follows.
\begin{lem}[Well behaviour of the intersection pairings when changing the level]\label{lemma3} When changing the level from $\cN$ to $\cN+1$, the intersection pairings recover one another through the induced map $l_{\cN}$ as below:
\begin{equation}
l_{\cN} \left(\PAAN \right)=\PAA.
\end{equation}
 \end{lem}
\begin{proof}
We recall that the construction of our quotient rings uses the kernels of specialisation maps:
$$\LLN:=\LL / \text{Ker}\left(\snt \right)$$
and $\snbb$ is the map induced by $\snt$ on this quotient ring, following Definition \ref{L:speciald}. This shows that the maps $\snbb$ and $\snnbb$ are injective. 

Now, following Lemma \ref{lemma2}, we have the property that:
$$\snbb \left(\PAA \right)=\ANt.$$
This together with the injectivity of the map $\snbb$ means that our statement is equivalent to: $$\PPN \left(\ANNt \right)=\ANt.$$
This is precisely the property from Remark \ref{rk1}, and so the statement holds.
\end{proof}
As a conclusion after this discussion, we have that:
\begin{equation}
l_{\cN} \left(\PAAN \right)=\PAA
\end{equation}
and so the sequence $\{\PAA\}_{\cN \in N}$ leads to a well-defined link invariant in the projective limit ring:
$$\I \in \LLh.$$

Next, we want to prove that this invariant recovers all ADO invariants through specialisations of coefficients. For this, we use the following commutation properties. 
\begin{lem}[Commutation of the squares from the diagram]
All squares associated to consecutive levels $\cN$ and $\cN+1$ commute.
\end{lem}
\begin{proof}
This comes from the commutativity when we project onto each factor and by the definition of the quotient maps.   
\end{proof}
The commutation property shows that there exists a well-defined ring homomorphism between the limits, which we denote as below:
  \begin{equation}
\su: \LLh \rightarrow \La.
  \end{equation}
We want to prove that the universal invariant $\I$ recoveres all ADO invariants, for any level, as stated in relation \eqref{eq10:4}:
\begin{equation}
\AN=  \I|_{\usn}.
\end{equation}

Following the commutativity of the squares at all levels, we have that:

\begin{equation}
\su \left( \I \right)=\Aa.
\end{equation}
Also, using the commutativity of the diagrams from Figure \ref{limd} together with Definition \ref{eq10:limit} we obtain that:
\begin{equation}
\PhN \circ \su=\usn.
\end{equation}
Now, following relation \eqref{eq10:5}, we have that:
\begin{equation}
\PhN(\Aa)=\AN.
\end{equation}
Using the previous three relations, we conclude that:
\begin{equation}
\begin{aligned}
\I|_{\usn}&=\usn \left(\I \right)=\\
&=\PhN \circ \su\left(\I \right)=\\
&=\PhN(\Aa)=\\
&=\AN, \ \ \ \ \forall \cN \in \N, \ \cN\geq 2.
\end{aligned}
\end{equation}
This concludes our construction and shows that we have a universal ADO invariant for links $\I$, constructed from graded intersections in configuration spaces, which recovers the coloured Alexander invariants at all levels, through specialisation of coefficients.
\end{proof}
\section{Universal Coloured Jones link invariant} \label{S:JU}
In this part our aim is to unify and show that we can define a second universal link invariant out of the sequence of graded intersections $$\{\PAAJ\mid \cN \in \N\}.$$

We start from Definition \ref{eq10:limit}, where we defined the second universal ring: 
$$\LLhJ:= \underset{\longleftarrow}{\mathrm{lim}} \ \LLNJ.$$

Now we will prove that dually, in this universal ring we have a well-defined link invariant which is a universal coloured Jones invariant, recovering all $J_{\cN}(L)$ for all colours $\cN \in \N$.
This will be constructed from the sequence of intersections up to level $\cN$.
 \begin{defn}[Unification of all coloured Jones link invariants] Let us consider the projective limit of the graded intersection pairings $\PAAJ$ over all levels:
\begin{equation}
\IJJ:= \underset{\longleftarrow}{\mathrm{lim}} \ \PAAJ \in \LLhJ.
\end{equation}
\end{defn}

This construction leads to the statement of Theorem \ref{THJ}, as follows.
\begin{thm}[Universal coloured Jones Invariant] The limit of the level $\cN$ link invariants, $\IJJ$, is a well-defined link invariant recovering all coloured Jones polynomials:
\begin{equation}
J_{\bar{N}}(L)=  \IJJ|_{\usnJ}.
\end{equation}
\end{thm}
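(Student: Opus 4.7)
The plan is to mirror the proof of Theorem \ref{proofADO}, replacing the ADO specialisations by the Jones specialisations throughout. The structure is the same: establish that the sequence $\{\PAAJ\}_{\cN}$ is compatible with the projection maps $\ell^J_{\cN}: \LLNNJ \to \LLNJ$, conclude that the inverse limit element $\IJJ \in \LLhJ$ is well defined, and then use commutativity of the tower to read off the coloured Jones polynomial under the universal specialisation $\usnJ$.

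First, I would check the level-$\cN$ analog of Lemma \ref{lemma2}: for any level $\cN$,
\begin{equation*}
\snbbJ\bigl(\PAAJ\bigr) = \ANtJ = \prod_{\bar{N} \leq \cN} J_{\bar{N}}(L).
\end{equation*}
Because $\snbbJ$ is injective (by the very definition of $\LLNJ = \LL/\tilde{I}^J_{\cN}$), it suffices to check this after composing with each projection $\PMNJ$, reducing the claim to $\smJ(\PA) = J_{\bar{N}}(L)$ for $\bar{N} \leq \cN$, which is exactly the unification Theorem \ref{THEOREMJ}. Second, I would establish the change-of-level compatibility
\begin{equation*}
\ell^J_{\cN}\bigl(\PAANJ\bigr) = \PAAJ,
\end{equation*}
by applying the injective map $\snbbJ$ to both sides and using the previous step, which turns the claim into the identity $\PPNJ(\ANNtJ) = \ANtJ$. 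This last identity is immediate from the definition of $\PPNJ$ as a factor-wise projection on the product ring $\LNNtJ = \prod_{\bar{N}\leq \cN+1}\LNJ$.

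Third, having the compatibility, the squares
\begin{equation*}
\begin{aligned}
\LLNNJ \ \xrightarrow{\snnbbJ}\  \LNNtJ, \qquad \LLNJ \ \xrightarrow{\snbbJ}\  \LNtJ
\end{aligned}
\end{equation*}
together with the two vertical maps $\ell^J_{\cN}$ and $\PPNJ$ commute (they commute at the level of $\LL$ by factoring through $\sntJ$ and $\snntJ$, and the injectivity of $\snbbJ$ propagates this down). Passing to the inverse limit therefore produces a well-defined ring homomorphism $\su^J : \LLhJ \to \LaJ := \prod_{\bar{N}\in \N^l} \LNJ$ and a well-defined invariant element $\IJJ \in \LLhJ$ with $\su^J(\IJJ) = \prod_{\bar{N}} J_{\bar{N}}(L)$. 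Fourth, the specialisation property follows by the same computation as in Theorem \ref{proofADO}: by construction $\PhNJ \circ \su^J = \PNNJ \circ \snbJ = \usnJ$, and since $\PhNJ$ picks off the $\bar{N}$-th factor of the infinite product, we obtain
\begin{equation*}
\IJJ\bigm|_{\usnJ} \ = \ \PhNJ\bigl(\su^J(\IJJ)\bigr) \ = \ J_{\bar{N}}(L), \qquad \forall\, \bar{N} \in \N^l.
\end{equation*}

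The main obstacle is really the first step, verifying $\snbbJ(\PAAJ) = \ANtJ$, since it is the place where one must combine two ingredients of different nature: the algebraic quotient structure of $\LLNJ$ (which guarantees injectivity of $\snbbJ$) and the topological content of Theorem \ref{THEOREMJ} (which identifies the weighted intersection with each coloured Jones polynomial at every $\bar{N}\leq\cN$ simultaneously). The remaining arguments about compatibility with $\ell^J_{\cN}$, commutativity of the squares, and passage to the limit are then formal and follow exactly the pattern already set out in the ADO case.
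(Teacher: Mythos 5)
Your proposal is correct and follows essentially the same route as the paper: the paper's proof simply declares the argument analogous to the ADO case of Theorem \ref{proofADO}, with the key point being the level-change compatibility $l^J_{\cN}(\PAANJ)=\PAAJ$, which it derives from the unification Theorem \ref{THEOREMAU} applied to the Jones specialisations — exactly the chain (injectivity of $\snbbJ$, reduction to $\sJt(\PA)=J_{\bar N}(L)$, factor-wise projection identity, commuting squares, passage to the limit) that you spell out. Your write-up is in fact more detailed than the paper's, but contains no new idea or deviation.
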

\begin{proof}
The proof of this statement follows in an analog manner as the one presented in the above section for the model of the universal ADO invariant, as stated in Theorem \ref{proofADO}.
The key part will be played by the following result.
\begin{lem}[Well behaviour of the Jones intersection pairings when changing the level]\label{lemma3'} When we pass from the level $\cN$ to $\cN+1$, the intersection pairings recover one another through the induced map $l^J_{\cN}$ as follows:
\begin{equation}
l^J_{\cN} \left(\PAANJ \right)=\PAAJ.
\end{equation}
 \end{lem}
This lemma, in turn, follows from the unification result which we proved in  Theorem \ref{THEOREMAU}, this time for the second sequence of specialisations, namely $\{ {\sAtuJ} \}$.
\end{proof}
\section{Structure of the two Universal rings}\label{S:ringJones}

In this part, we discuss the structure of the universal rings defined for our universal Jones invariant and universal Alexander invariant for links. In particular we will describe the formulas for the quotient rings at each level.
\subsection{Structure of the ring for Universal weighted Jones invariant}\label{S:ringJones}
Let us look at the structure of the universal ring that we construct for our universal Jones invariant. This ring, $\LLhJ$, is the limit of the sequence of rings $\LLNJ$,  which are quotients of $\LL$ through the ideals $\tilde{I}^{J}_{\cN}$. 

We recall the specialisations maps that we have used for the construction of this universal ring.
\begin{defn}[Level $\cN$ specialisations] 
The specialisations for the generic case (as in Subsection \ref{sumspec}) are given by:
$$ \snJ: \LL=\Z[\bar{w},(\bar{u})^{\pm 1},(\bar{x})^{\pm 1},y^{\pm 1},d^{\pm 1}] \rightarrow \LNJ$$
\begin{equation}
\begin{cases}
&\sJt(u_i)=\left(\sJt(x_i)\right)^t=d^{1-N_i}\\
&\sJt(x_i)=d^{1-N_i}, \ i\in \{1,...,l\}\\
&\sJt(y)=[N^C_1]_{d^{-1}},\\
&\sJt(w^k_j)=1, \ \text{ if } j\leq N_k-1 ,\\
&\sJt(w^k_j)=0, \ \text{ if } j\geq N_{k},  k\in \{1,...,l\}, j\in \{1,...,\cN-1\}.
\end{cases}
\end{equation}
\end{defn}

\begin{defn}[Product up to level $\cN$] For a fixed level $\cN$, we have the product of the rings for smaller levels:$$\LNtJ:=\prod_{\bar{N} \leq \cN}\LNJ$$ and then $\sntJ$ the associated product of specialisations:
\begin{equation}
\begin{aligned}
 \ \ \   \sntJ:=\prod_{\bar{N} \leq \cN}\snJ.
\end{aligned}
 \end{equation} 
\end{defn}
Then, as we have seen in the construction of the universal invariant, we use the quotient by the kernels of these product specialisations.  More precisely, in Definition \ref{ker}, we considered the ideals
\begin{equation}
\tilde{I}^{J}_{\cN}:=\text{Ker}\left(\sntJ \right) \subseteq \LL
\end{equation}
and then the quotient rings associated to these ideals $\LLNJ=\LL / \tilde{I}^{J}_{\cN}.$

\begin{lem}[\bf \em Structure the rings and ideals used for the universal Jones invariant]\label{ringJ}

\

a) {\bf  Semi-simple ideals}

 For each $\cN$, the ideal $\tilde{I}^{J}_{\cN}$ is given by the following formula: 

\begin{equation} \label{iss}
\begin{aligned}
\tilde{I}^J_{\cN}&=  \langle u_i-x_i,\bigcap_{\bar{N} \leq \cN} \left(y\left(d-d^{-1}\right)-(d^{N^C_1}-d^{-N^C_1})\right.,\\
& \ \ \ \ \ \ \ \ \ \ \ \ \ \ \  \ \  \left. x_i-d^{1-N_i}, w^k_j-1 \text{ if } j\leq N_k-1 \right.,\\
& \ \ \ \ \ \ \ \ \ \ \ \ \ \ \ \ \ \left. w^k_j \ \text{ if } j\geq N_{k},   i,k\in \{1,...,l\}, j\in \{1,...,\cN-1\} \right) \rangle.\\
\end{aligned}
\end{equation}

b) {\bf Semi-simple quotient rings}

Correspondingly, the ${\cN}^{th}$ quotient ring $\LLNJ$ has the formula:
\begin{equation}
\begin{aligned}\label{rss}
\LLNJ= \LL / & \langle u_i-x_i,\bigcap_{\bar{N} \leq \cN} \left(y\left(d-d^{-1}\right)-(d^{N^C_1}-d^{-N^C_1})\right.,\\
& \ \ \ \ \ \ \ \ \ \ \ \ \ \ \  \ \  \left. x_i-d^{1-N_i}, w^k_j-1 \text{ if } j\leq N_k-1 \right.,\\
& \ \ \ \ \ \ \ \ \ \ \ \ \ \ \ \ \ \left. w^k_j \ \text{ if } j\geq N_{k},   i,k\in \{1,...,l\}, j\in \{1,...,\cN-1\} \right) \rangle.
\end{aligned}
\end{equation}
\end{lem}

\begin{proof}
Looking at the structure of these quotients, we notice that:
\begin{equation}
\begin{aligned}
\LLNJ&=\LL / \tilde{I}^{J}_{\cN}=\\
&=\LL / \text{Ker} \langle \prod_{\bar{N} \leq \cN}\snJ \rangle =\\
&=\LL / \bigcap_{\bar{N} \leq \cN}\text{Ker}(\snJ).
\end{aligned}
\end{equation}
\begin{rmk}(Individual kernels)
For each fixed colouring $\bar{N}$, we have:

\begin{equation}
\begin{aligned}
\text{Ker}(\snJ)&=\langle u_i-x_i,y\left(d-d^{-1}\right)-(d^{N^C_1}-d^{-N^C_1}), x_i-d^{1-N_i},\\
& \left( w^k_j-1, \ \text{ if } j\leq N_k-1 ,\right. \\
& \ \ \ \ \ \ \ \ \left. w^k_j \ \text{ if } j\geq N_{k},  k\in \{1,...,l\}, j\in \{1,...,\cN-1\}\right)
\\
& \hspace{80mm}\mid 1 \leq i \leq l \rangle \subseteq \LL.
\end{aligned}
\end{equation}
\end{rmk}
Here, we used our convention that for a set of colours we have $\bar{N} \leq \cN$ if $N_1,...,N_l \leq \cN$.
Then, when we intersect we obtain:
\begin{equation}
\begin{aligned}
&\tilde{I}^J_{\cN}=\bigcap_{\bar{N} \leq \cN}\text{Ker}(\snJ)=\\
&=\langle u_i-x_i,\bigcap_{\bar{N} \leq \cN} \left(y\left(d-d^{-1}\right)-(d^{N^C_1}-d^{-N^C_1})\right.,\\
& \ \ \ \ \ \ \ \ \ \left. x_i-d^{1-N_i}, w^k_j-1, \ \text{ if } j\leq N_k-1 \right.,\\
& \ \ \ \ \ \ \ \ \left. w^k_j \ \text{ if } j\geq N_{k},  k\in \{1,...,l\}, j\in \{1,...,\cN-1\} \right)
\\
& \ \ \ \ \ \ \ \ \ \ \ \ \ \ \ \ \ \ \ \ \ \ \ \ \ \ \ \ \ \ \ \ \ \ \ \ \ \ \ \ \ \ \ \ \ \ \ \ \ \ \ \ \ \ \ \ \ \ \ \ \ \ \ \ \  \mid 1 \leq i \leq l \rangle.
\end{aligned}
\end{equation}
This shows the formula from equation \eqref{iss} and then when considering the quotient shows relation \eqref{rss}.

\end{proof}

\subsection{Refined universal Jones invariant}\label{ssrJ}
In the next part we will define a sequence of larger rings, which we call refined rings in the semi-simple case, and the associated projective limit ring. We will see that the universal Jones invariant has a lift in this refined universal ring, which is a braid invariant called ``Refined universal Jones  invariant''. Then, we will end with a conjecture stating that this refined version is a well-defined link invariant.

\begin{defn}(Refined ideals: semi-simple case)\label{ssrJ0}
Let us consider the ideal:
\begin{equation}
\begin{aligned}
{\tilde{I}^{J}_{\cN}}'=&\langle u_i-x_i,\prod_{ 2 \leq j \leq \cN}(y\left(d-d^{-1}\right)-(d^{j}-d^{-j})), \prod_{ 1 \leq j \leq \cN-1} (x_i-d^{1-j}),\\
& \ \ \ \ \ w^k_1-1,w^k_{j}(w^k_{j}-1) \mid 1 \leq k,i \leq l, 2 \leq j \leq \cN-1 \rangle
\end{aligned}
\end{equation}
and the associated quotient ring:
\begin{equation}
\begin{aligned}
{\LLNJ}'= \LL/ {\tilde{I}^{J}_{\cN}}'.
\end{aligned}
\end{equation}
\end{defn}

\begin{lem}There is a surjective map:
\begin{equation}
r^J_{\cN}: {\LLNJ}' \rightarrow \LLNJ.
\end{equation}
\end{lem}
\begin{proof}
Following the description of ${\tilde{I}^{J}_{\cN}}$, let us look what happens with a fixed variable $w^k_j$. This counts for the relation $$r_{k,j}:=w^k_j-1$$ for those indices $\bar{N}$ such that $j\leq N_k-1 $ and $$\bar{r}_{k,j}:=w^k_j$$ for indices $\bar{N}$ such that $j\geq N_{k}$. On the other hand, since we consider all specialisations, we count all possible $\bar{N}$ bounded by $\cN$. So, for any fixed $j>1$, there exists a colouring $\bar{N}$ such that $j\leq N_k-1 $ and another colouring for which $j\geq N_{k}$. The only exception is $w^k_1$ which gets specialised always to $1$, so it gets counted always with relation $r_{k,1}$.
Overall, the relations that we have in the intersection are:
$$\{ r_{k,1},r_{k,j}\bar{r}_{k,j}\mid 1 \leq k \leq l, 2 \leq j \leq \cN-1 \}.$$
This means that the following relations hold in our quotient:
$$\{ r_{k,1},r_{k,j}\cdot \bar{r}_{k,j}\mid 1 \leq k \leq l, 2 \leq j \leq \cN-1 \}=$$
$$=\{ w^k_1-1,w^k_{j}(w^k_{j}-1)\mid 1 \leq k \leq l, 2 \leq j \leq \cN-1 \}.$$

Related to the variables $x$, for each $i$ the relation $x_i-d^{1-N_i}$ appears associated to a fixed $\bar{N}$. Since we intersect over all colourings bounded by the level, we will obtain the set of relations:
$$\left\lbrace \prod_{ 2 \leq j \leq \cN} (x_i-d^{1-j})\mid 1\leq i \leq l \right\rbrace.$$
Similarly, looking at the variable $y$, we have the relation $$y\left(d-d^{-1}\right)-(d^{N^C_1}-d^{-N^C_1}).$$
Using all the colourings, we obtain the product:
$$\lbrace \prod_{ 2 \leq j \leq \cN} y(d-d^{-1})-(d^{j}-d^{-j}) \rbrace.$$

Overall, we obtain that:
\begin{equation}
\begin{aligned}
\bigcap_{\bar{N} \leq \cN} \text{Ker}(\snJ)& \supseteq \langle u_i-x_i,\prod_{ 2 \leq j \leq \cN}(y\left(d-d^{-1}\right)-(d^{j}-d^{-j})), \prod_{ 1 \leq j \leq \cN}( x_i-d^{1-j}),\\
& \ \ \ \ \ w^k_1-1,w^k_{j}(w^k_{j}-1) \mid 1 \leq k \leq n-1, 1 \leq i \leq l, 2 \leq j \leq \cN-1 \rangle.
\end{aligned}
\end{equation}
This shows that we have:
\begin{equation}
\begin{aligned}
{\tilde{I}^{J}_{\cN}}'&=\langle u_i-x_i,\prod_{ 2 \leq j \leq \cN}(y\left(d-d^{-1}\right)-(d^{j}-d^{-j})), \prod_{ 1 \leq j \leq \cN-1} (x_i-d^{1-j}),\\
& \ \ \ \ \ w^k_1-1,w^k_{j}(w^k_{j}-1) \mid 1 \leq k \leq n-1, 1 \leq i \leq l, 2 \leq j \leq \cN-1 \rangle\\
& \subseteq \text{Ker}(\sntJ)= {\tilde{I}^{J}_{\cN}}.
\end{aligned}
\end{equation}
Then, this leads to the surjection between the associated quotient rings and concludes the proof of the lemma.  
\end{proof}

\begin{defn}(Refined ring and refined weighted intersections: semi-simple case)\phantom{A}\\
Let us consider $$\PAAJr(\beta_n) \in {\LLNJ}'$$ to be the image of the intersection $\PA$ in the quotient ring ${\LLNJ}'$ and call it the ``Refined weighted intersection in the semi-simple case''.

We also consider the following refined universal ring, called ``Refined universal ring in the semi-simple case'':
\begin{equation}
{\LLhJ}'= \underset{\longleftarrow}{\mathrm{lim}} \ {\LLNJ}'.
\end{equation}
\end{defn}
\begin{prop}
We have a well-defined surjective map between the limit rings:
$$r^J: {\LLhJ}' \rightarrow \LLhJ.$$
\end{prop}
\begin{proof}
The surjectivity of this map follows from the property that at each level $r_{\cN}^J$ is surjective and surjectivity is preserved when taking projective limits of surjective maps.
\end{proof}
\begin{lem}(Refined semi-simple braid invariant)
The sequence $\PAAJr(\beta_n) \in {\LLNJ}'$ leads to a well-defined braid invariant $\IRJJ(\beta_n) \in {\LLNJ}'$. This braid invariant recovers all coloured Jones invariants with multi-colours less than $\cN$, through specialisation of coefficients.
\end{lem}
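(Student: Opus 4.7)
The plan is to establish three things in order: first, that the sequence $\{\PAAJr(\beta_n)\}_\cN$ is compatible under the transition maps of the projective system defining ${\LLhJ}'$, so that it assembles into a well-defined element $\IRJJ(\beta_n)\in{\LLhJ}'$; second, that this element depends only on the braid $\beta_n$; third, that its appropriate specialisations recover the coloured Jones invariants for all multi-colours $\bar{N}\leq\cN$.

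For the compatibility, I would first verify the nesting of refined ideals ${\tilde{I}^{J}_{\cN+1}}'\supseteq{\tilde{I}^{J}_{\cN}}'$, accounting for the growth of the ambient ring $\LL$ by the fresh variables $w^k_\cN$ at level $\cN+1$. Each level-$\cN$ generator — the products $\prod_{2\leq j\leq\cN}(y(d-d^{-1})-(d^{j}-d^{-j}))$ and $\prod_{1\leq j\leq\cN}(x_i-d^{1-j})$, together with the $w$-relations $w^k_1-1$ and $w^k_j(w^k_j-1)$ — either divides or coincides with a generator of ${\tilde{I}^{J}_{\cN+1}}'$, inducing a ring surjection $\LL/{\tilde{I}^{J}_{\cN+1}}'\twoheadrightarrow\LL/{\tilde{I}^{J}_{\cN}}'$. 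Compatibility of the refined intersections under this surjection then reduces to the analogous compatibility for the non-refined invariants, established in Lemma \ref{lemma3'}, together with the stability of $\PA$ under increasing the level described in Subsection \ref{asym} (adding $n-1$ arcs to $\FA$ and one particle to each oval of $\LA$).

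Braid invariance is essentially automatic: the intersection $\PA\in\LL$ of Definition \ref{omJo} is built entirely from the braid group action on $\FA$ provided by Proposition \ref{intersection}, the intersection pairing with the canonical class $\LA$, and weights depending only on the colouring induced by the braid, so no auxiliary choices enter; its image in any quotient ring is therefore also a braid invariant. The recovery of coloured Jones polynomials then follows from the composition
\begin{equation*}
{\LLNJ}'\ \xrightarrow{\ r^J_\cN\ }\ \LLNJ\ \xrightarrow{\ \smnbJt\ }\ \LNJ,
\end{equation*}
which sends $\PAAJr(\beta_n)\mapsto\PAAJ\mapsto J_{\bar{N}}(L)$, where the first arrow is the surjection of the preceding proposition, and the final equality is Theorem \ref{levNJ}.

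The principal obstacle is the careful verification of the refined-ideal nesting and of the level-change compatibility of the refined intersections, because $\LL$ itself varies with $\cN$ and the explicit generators of ${\tilde{I}^{J}_{\cN}}'$ are considerably more rigid than those of $\tilde{I}^{J}_{\cN}$; one must track, generator by generator, how each refined relation at level $\cN$ is inherited from its level-$(\cN+1)$ counterpart after extending the ring. Once this is settled, both the assembly into the limit and the specialisation formula follow by factoring through the already-established non-refined picture.
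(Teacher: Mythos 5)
Your proposal follows essentially the same route as the paper, whose entire proof is a one-line appeal to Theorem \ref{THEOREMAU}: the weighted intersection $\PA$ is built from the braid alone, so its image in any quotient of $\LL$ is tautologically a braid invariant, and since the refined ideal ${\tilde{I}^{J}_{\cN}}'$ lies inside the kernel of every specialisation $\sJt$ with $\bar{N}\leq\cN$, these specialisations factor through ${\LLNJ}'$ and return $J_{\bar{N}}(L)$. Your factorisation ${\LLNJ}'\xrightarrow{\ r^J_{\cN}\ }\LLNJ\xrightarrow{\ \smnbJt\ }\LNJ$ combined with Theorem \ref{levNJ} is an equivalent packaging of exactly this.

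Two remarks on the compatibility step, which the paper does not spell out at all. First, the containment you state, ${\tilde{I}^{J}_{\cN+1}}'\supseteq{\tilde{I}^{J}_{\cN}}'$, is written backwards: your own divisibility observation shows that (after a ring map handling the fresh variables $w^k_{\cN}$) the level-$(\cN+1)$ generators land in ${\tilde{I}^{J}_{\cN}}'$, i.e.\ morally ${\tilde{I}^{J}_{\cN+1}}'\subseteq{\tilde{I}^{J}_{\cN}}'$, mirroring the paper's nesting $\tilde{I}^{J}_{\cN}\supseteq\tilde{I}^{J}_{\cN+1}$; it is this inclusion that produces the surjection $\LL/{\tilde{I}^{J}_{\cN+1}}'\twoheadrightarrow\LL/{\tilde{I}^{J}_{\cN}}'$ you correctly aim for. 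Second, and more substantively, the reduction of the refined compatibility to Lemma \ref{lemma3'} is not automatic: the proof of that lemma rests on the injectivity of $\snbbJ$, which holds precisely because $\LLNJ$ is the quotient by the exact kernel of the product specialisation; the refined ideal is strictly smaller, so the corresponding map out of ${\LLNJ}'$ need not be injective and that argument does not transfer verbatim. The compatibility has to be checked directly on the intersection form, using the stability of the pairings $\left\langle(\beta_{n}\cup{\mathbb I}_{n+2})\,\FA,\LA\right\rangle$ under the change of level recalled in Subsection \ref{asym} (the ingredient you also cite), together with an explicit choice of transition map on the extra weights $w^k_{\cN}$ that kills the new summands. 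None of this affects the two assertions the paper actually proves here — braid invariance and recovery of the coloured Jones polynomials — which your argument handles correctly.
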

\begin{proof}
This follows from Theorem \ref{THEOREMAU} which tells us that the weighted intersection form $\PA$ recovers all coloured Jones invariants at levels bounded by $\cN$. 
\end{proof}
\begin{conjecture}[{\bf \em Refined universal Jones invariant}] The universal refined  intersections $\IRJJ(\beta_n) \in \LLh'$ are link invariants and lead to a well-defined {``Refined universal Jones invariant''} $\IRJJ(L)$. This invariant recovers the universal geometrical Jones invariant that we constructed, as below:
\begin{equation}
\begin{aligned}
&r^J: {\LLhJ}' \rightarrow \LLhJ\\
&r^J\left(\IRJJ(L)\right)=\IJJ.
\end{aligned}
\end{equation}

\end{conjecture}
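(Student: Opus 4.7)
The plan is to mirror the proof of Theorem~\ref{levNJ} and Theorem~\ref{THJ}, but now in the refined quotient ring ${\LLNJ}'$ where fewer relations are imposed. Concretely, I would split the task into three parts: (i) verify compatibility of $\PAAJr(\beta_n)$ across levels, (ii) upgrade the braid invariance of $\PAAJr(\beta_n)$ to full Markov invariance, and (iii) match the resulting limit invariant with $\IJJ$ under $r^J$.

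First I would establish the projective system property $l^{J,'}_{\cN}\bigl(\PAAJr(\beta_n)_{\cN+1}\bigr) = \PAAJr(\beta_n)_{\cN}$, where $l^{J,'}_{\cN} : {\LLNNJ}' \to {\LLNJ}'$ is induced by the inclusion ${\tilde{I}^{J}_{\cN+1}}' \subseteq {\tilde{I}^{J}_{\cN}}'$. The crucial geometric input is the stability property already used in Lemma~\ref{lemma3'}: for a fixed multi-index $\bar{i} \in \{\bar{0},\dots,\overline{\cN-1}\}$ the intersections $\langle (\beta_n \cup \mathbb{I}_{n+2})\FA, \LA \rangle$ and $\langle (\beta_n \cup \mathbb{I}_{n+2})\FAnn, \LAnn \rangle$ coincide; hence $\PA \in \LL$ itself is compatible at the $\LL$-level, and the compatibility descends to any compatible tower of quotients, including the refined one. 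This step should be essentially formal once the refined ideals are set up as in Lemma~\ref{r1J}.

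The main obstacle is step (ii), Markov invariance. The braid-level intersection $\PAAJr(\beta_n)$ is known to be invariant under conjugation (Markov I) by the same argument as in the non-refined case, since conjugation only permutes the Lagrangian supports through the braid group action on $\HA$. The hard part is the stabilization move (Markov II): going from $\beta_n \in B_n$ to $\beta_n \sigma_n^{\pm 1} \in B_{n+1}$ changes the number of particles in the configuration space, introduces new variables $w^k_j$ for the new strand, and modifies the writhe factor. In the ideal ${\tilde{I}^{J}_{\cN}}$ the redundancy $w^k_j(w^k_j-1) \equiv 0$ is sharpened to the idempotent relations $w^k_j - 1 \equiv 0$ or $w^k_j \equiv 0$ selectively along each specialisation $\snJ$, which is precisely what forces the stabilisation discrepancy to vanish in $\LLNJ$. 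In the refined quotient ${\LLNJ}'$ only the weaker relation $w^k_j(w^k_j-1) \equiv 0$ holds, so the stabilisation discrepancy need not vanish a priori. The strategy I would pursue is to show that the difference $\PAAJr(\beta_n \sigma_n^{\pm 1}) - \PAAJr(\beta_n)$ already lies in ${\tilde{I}^{J}_{\cN}}'$, by geometrically interpreting the extra homology contributions associated with the new strand as a product of factors of the form $(y(d-d^{-1}) - (d^j - d^{-j}))$ and $(x_i - d^{1-j})$ summed against partial idempotent weights, matching the generators of ${\tilde{I}^{J}_{\cN}}'$. Equivalently, one wants a refined topological stabilisation lemma: the geometric intersection produced by a single extra crossing, before any specialisation of $w^k_j$, decomposes explicitly into terms annihilated by the generators of ${\tilde{I}^{J}_{\cN}}'$. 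This is where the real work lies and where a direct computation on the level of Lagrangian supports in $\Conf_{2+n(\cN-1)}(\mathbb{D}_{2n+4})$ is needed.

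Once (i) and (ii) are in place, the projective limit $\IRJJ(L) := \varprojlim \PAAJr(L) \in {\LLhJ}'$ is a well-defined link invariant by the same formalism as in the proof of Theorem~\ref{THJ}. For step (iii), naturality of the quotient maps $r^J_{\cN} : {\LLNJ}' \to \LLNJ$ gives a commutative diagram between the two projective systems, inducing $r^J : {\LLhJ}' \to \LLhJ$; and since $r^J_{\cN}(\PAAJr(L)) = \PAAJ(L)$ at each finite level by construction (both are images of the same $\PA \in \LL$), taking inverse limits yields $r^J(\IRJJ(L)) = \IJJ$ automatically. Thus the whole conjecture reduces to the refined Markov~II identity, which I identify as the genuine technical hurdle.
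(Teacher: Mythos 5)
You are proposing a proof of a statement that the paper itself leaves open: it is stated as a conjecture, and the paper proves only the surrounding scaffolding — the refined ideal ${\tilde{I}^{J}_{\cN}}'\subseteq \tilde{I}^{J}_{\cN}$ of Lemma \ref{r1J}, the surjections $r^J_{\cN}\colon {\LLNJ}'\to\LLNJ$ and $r^J\colon {\LLhJ}'\to\LLhJ$, and the fact that the refined intersections $\PAAJr(\beta_n)$ assemble into a well-defined \emph{braid} invariant recovering all coloured Jones polynomials of bounded colour. The assertion that these classes are invariant under the Markov moves, and hence give a link invariant $\IRJJ(L)$ with $r^J(\IRJJ(L))=\IJJ$, is exactly the open content. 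Your proposal does not close it: steps (i) and (iii) of your plan are the easy bookkeeping the paper already has (compatibility of the tower of refined quotients, and $r^J_{\cN}(\PAAJr(\beta_n))=\PAAJ$ at each finite level, since both are images of the same $\PA\in\LL$), while step (ii), the refined stabilisation identity, is only described as a strategy — you yourself flag it as ``where the real work lies'' — so the net effect is a reduction of the conjecture to itself.

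One further point deserves emphasis, because it undercuts the part of step (ii) you treat as free. Even Markov I (conjugation) invariance is not automatic in the refined setting. The paper's proofs of link invariance for $\PAAJ$ (Theorem \ref{levNJ}) and for the limit $\IJJ$ (Theorem \ref{THJ}) are purely algebraic: they use that the quotient is by the \emph{full} kernel of the product specialisation, so that $\snbbJ\colon \LLNJ\to\LNtJ$ is injective and $\PAAJ$ is identified with the tuple of coloured Jones polynomials $\ANtJ$, which is a link invariant. In ${\LLNJ}'$ the corresponding map is no longer injective, precisely because ${\tilde{I}^{J}_{\cN}}'$ is strictly smaller than the kernel; so neither conjugation nor stabilisation invariance can be inherited by specialisation, and your appeal to ``the same argument as in the non-refined case'' for Markov I would need an independent, genuinely geometric argument (invariance of the weighted class in $\LL$ modulo ${\tilde{I}^{J}_{\cN}}'$ under both moves). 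That geometric computation — for conjugation as well as for stabilisation, before specialising the weights $w^k_j$ — is the actual substance of the conjecture and is missing from the proposal.
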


\subsection{Structure of the ring for the universal ADO invariant}

Now we will investigate the non semi-simple case and look at the structure of the universal ring that we construct for our universal ADO invariant. We start by recalling the specialisations of coefficients.
\begin{defn}[Level $\cN$ specialisations] \label{pA3A}
The specialisations that we use for the root of unity case (as in subsection \ref{sumspec}) are given by:
$$ \sAt: \LL=\Z[\bar{w},(\bar{u})^{\pm 1},(\bar{x})^{\pm 1},y^{\pm 1},d^{\pm 1}] \rightarrow \LM$$
\begin{equation}
\begin{cases}
&\sAt(u_j)=x_j^{(1-\cM)}\\
&\sAt(y)=(d[\lambda_{C(1)}]_{\xi_{\cM}})=\frac{x_{C(1)}-x_{C(1)}^{-1}}{x^{\cM}_{C(1)}-x_{C(1)}^{-\cM}},\\
&\sAt(d)= \xi_{\cM}^{-1}\\
&\sAt(w^k_j)=1, \ \text{ if } j\leq \cM-1,\\
&\sAt(w^k_j)=0, \ \text{ if } j\geq \cM,  k\in \{1,...,l\}, j\in \{1,...,\cN-1\}.
\end{cases}
\end{equation}

\end{defn}

\begin{defn}[Product up to level $\cN$] For a fixed level $\cN$, we have the product of the rings for smaller levels, as:$$\LNt:=\prod_{\cM \leq \cN}\LM$$ and then $\snt$ the associated product of specialisations:
\begin{equation}
\begin{aligned}
 \ \ \   \snt:=\prod_{\cM \leq \cN}\sAt.
\end{aligned}
 \end{equation} 
\end{defn}
As we have discussed, we use the sequence of quotients by the kernels of these product specialisations.  More precisely, in Definition \ref{ker}, we considered the ideals
\begin{equation}
\tilde{I}_{\cN}:=\text{Ker}\left(\snt \right) \subseteq \LL
\end{equation}
and then the quotient rings associated to these ideals $\LLN=\LL / \tilde{I}_{\cN}.$
\begin{notation}
For $n \in \mathbb N$, let $\varphi_{n}(x)\in \Z[x]$ be the $n^{th}$ cyclotomic polynomial in $x$. 
\end{notation}
\begin{lem}[\bf \em Structure of the rings and ideals for the universal ADO invariant]\label{ringA}

\

a) {\bf  Non semi-simple ideals}

For each $\cN$, the quotient ideals in the non-semi simple case have the following formula:
\begin{equation}\label{inss}
\begin{aligned}
\tilde{I}_{\cN}&= \bigcap_{\cM \leq \cN} \langle  (u_i-x_i^{1-\cM}),  y\left(x^{\cM}_{C(1)}-x^{-\cM}_{C(1)}\right)-(x_{C(1)}-x^{-1}_{C(1)}), \ \varphi_{2\cM}(d),\\
& \left. \ \ \ \ \ \ \ \ \ \ \ w^k_j-1, \ \text{ if } j\leq \cM-1\right.,\\
& \ \ \ \ \ \ \ \ \ \  \ \left. w^k_j \ \text{ if } j\geq \cM,  k,i\in \{1,...,l\}, j\in \{1,...,\cN-1\} \right) \rangle.
\end{aligned}
\end{equation}

a) {\bf  Non semi-simple quotient rings}

Then, the quotient rings $\LLN$ have the following structure: 
\begin{equation}\label{rnss}
\begin{aligned}
\LLN= \ & \LL /  \bigcap_{\cM \leq \cN} \langle  (u_i-x_i^{1-\cM}),  y\left(x^{\cM}_{C(1)}-x^{-\cM}_{C(1)}\right)-(x_{C(1)}-x^{-1}_{C(1)}), \ \varphi_{2\cM}(d),\\
& \left. \ \ \ \ \ \ \ \ \ \ \ w^k_j-1, \ \text{ if } j\leq \cM-1\right.,\\
& \ \ \ \ \ \ \ \ \ \  \  w^k_j \ \text{ if } j\geq \cM,  k,i\in \{1,...,l\}, j\in \{1,...,\cN-1\}  \rangle.
\end{aligned}
\end{equation}
\end{lem}

\begin{proof}
Looking at the structure of the quotient rings, we notice that:
\begin{equation}
\begin{aligned}
\LLN&=\LL / \tilde{I}_{\cN}=\\
&=\LL / \text{Ker} \langle \prod_{\cM \leq \cN}\sAt \rangle =\\
&=\LL / \bigcap_{\cM \leq \cN}\text{Ker}(\sAt).
\end{aligned}
\end{equation}
\begin{rmk}(Individual kernels)
If we fix a colouring $\cM$, we have:

\begin{equation}
\begin{aligned}
\text{Ker}(\sAt)&=\langle u_i-x_i^{1-\cM}, y\left(x^{\cM}_{C(1)}-x^{-\cM}_{C(1)}\right)-(x_{C(1)}-x^{-1}_{C(1)}), \varphi_{2\cM}(d),\\
& \ \ \ \ \  \left( w^k_j-1, \ \text{ if } j\leq \cM-1,\right. \\
& \ \ \ \ \ \left. w^k_j, \ \text{ if } j\geq \cM,  k\in \{1,...,l\}, j\in \{1,...,\cN-1\}\right)
\\
& \hspace{80mm}\mid 1\leq i \leq l \rangle \subseteq \LL
\end{aligned}
\end{equation}
\end{rmk}
Then the intersection gives the following formula:
\begin{equation}
\begin{aligned}
&\tilde{I}_{\cN}=\bigcap_{\cM \leq \cN}\text{Ker}(\sAt)=\\
&= \bigcap_{\cM \leq \cN} \langle  (u_i-x_i^{1-\cM}),  y\left(x^{\cM}_{C(1)}-x^{-\cM}_{C(1)}\right)-(x_{C(1)}-x^{-1}_{C(1)}), \ \varphi_{2\cM}(d),\\
& \left. \ \ \ \ \ \ \ \ \ \ \ w^k_j-1, \ \text{ if } j\leq \cM-1\right.,\\
& \ \ \ \ \ \ \ \ \ \  \  w^k_j \ \text{ if } j\geq \cM,  k,i\in \{1,...,l\}, j\in \{1,...,\cN-1\} \rangle.
\end{aligned}
\end{equation}
This proves the formula from equation \eqref{inss} and then when considering the quotient we obtain relation \eqref{rnss}.
\end{proof}

\subsection{Refined universal ADO  invariant}\label{ssrA}

In the next part, we define a sequence of refined rings, and we will investigate the projective limit of these refined rings in this non semi-simple context. We will prove that the universal ADO invariant has a lift in this refined ring, which is a braid invariant called ``Refined universal ADO invariant''. We will end with a conjecture where we state that this refined version is a well-defined link invariant.

\begin{defn}(Refined ideals: non semi-simple case) Let us consider the ideal:\label{r1A}
\begin{equation}
\begin{aligned}
\tilde{I}_{\cN}'& = \bigcap_{\cM \leq \cN} \langle  (u_i-x_i^{1-\cM}),  y\left(x^{\cM}_{C(1)}-x^{-\cM}_{C(1)}\right)-(x_{C(1)}-x^{-1}_{C(1)}), \ d^{2\cM}-1\\
& \left. \ \ \ \ \ \ \ \ \ \ \ w^k_j-1, \ \text{ if } j\leq \cM-1\right.,\\
& \ \ \ \ \ \ \ \ \ \  \ \left. w^k_j \ \text{ if } j\geq \cM,  k,i\in \{1,...,l\}, j\in \{1,...,\cN-1\} \right) \rangle
\end{aligned}
\end{equation}
and the associated ring:
$$\LLN'=\LL/\tilde{I}_{\cN}'.$$
\end{defn}
\begin{lem}
We have $\tilde{I}_{\cN}' \subseteq \tilde{I}_{\cN}$ and an associated  surjective map:
\begin{equation}
r_{\cN}:\LLN' \rightarrow \LLN.
\end{equation}
\end{lem}

\begin{proof}
We will use that we have the following divisibility relation:
$$\varphi_{2\cM}(d) \ /  \left( d^{2\cM}-1 \right).$$

This shows that we have:
\begin{equation}
\begin{aligned}
\tilde{I}_{\cN}'& =\bigcap_{\cM \leq \cN} \langle  (u_i-x_i^{1-\cM}),  y\left(x^{\cM}_{C(1)}-x^{-\cM}_{C(1)}\right)-(x_{C(1)}-x^{-1}_{C(1)}), \ d^{2\cM}-1\\
& \left. \ \ \ \ \ \ \ \ \ \ \ w^k_j-1, \ \text{ if } j\leq \cM-1\right.,\\
& \ \ \ \ \ \ \ \ \ \  \  w^k_j \ \text{ if } j\geq \cM,  k,i\in \{1,...,l\}, j\in \{1,...,\cN-1\}  \rangle\\
& \subseteq \text{Ker}(\snt)= {\tilde{I}_{\cN}}.
\end{aligned}
\end{equation}
This leads to the well-defined surjective map:
\begin{equation}
r_{\cN}:\LLN' \rightarrow \LLN.
\end{equation}
\end{proof}
Now, we consider the projective limit of these larger rings.
\begin{defn}(Refined ring and refined weighted intersections: non semi-simple case)

Let $$\PAAR \in \LLN'$$ be the image of the intersection $\PA$ in the quotient ring $\LLN'$. We call it the ``Refined weighted intersection in the non semi-simple case''.

Also, we define the following ring, which we call ``Refined universal ring in the non semi-simple case'':
\begin{equation}
\LLh'= \underset{\longleftarrow}{\mathrm{lim}} \ \LLN'.
\end{equation}

\end{defn}
\begin{prop}
We have a well-defined surjective map between the limit rings:
$$r: {\LLh}' \rightarrow \LLh.$$
\end{prop}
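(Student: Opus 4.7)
The plan is to mirror the proof of the analogous statement in the semi-simple case (the proposition about $r^J$ in Section~\ref{ssrJ}). The first step is to verify that the refined ideals form a nested chain $\tilde{I}_{\cN+1}' \subseteq \tilde{I}_\cN'$, so that the refined rings $\LLN'$ themselves assemble into an inverse system compatible with the system $\{\LLN\}$. This follows by inspecting the generators listed in Lemma~\ref{r1A}: each product $\prod_{\cM=2}^{\cN}(u_i - x_i^{1-\cM})$ divides the corresponding product at level $\cN+1$, and the same is true for the product in $y$ and for $\prod_{\cM=2}^{2\cN}(d^\cM-1)$; the quadratic relations on the $w^k_j$ only grow as $\cN$ increases. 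Thus each generator of $\tilde{I}_{\cN+1}'$ lies in $\tilde{I}_\cN'$, and one obtains well-defined transition maps $\LLNN' \to \LLN'$ compatible with $\LLNN \to \LLN$.

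Next I would check that the surjections $r_\cN: \LLN' \to \LLN$ from Lemma~\ref{r1A} are compatible with the transition maps of both inverse systems. Since $r_\cN$, $r_{\cN+1}$, and all transitions are induced by the identity on $\LL$ followed by a further quotient, the relevant squares commute tautologically. Consequently the maps $r_\cN$ define a morphism of inverse systems, and passing to projective limits produces a well-defined ring homomorphism $r = \underset{\longleftarrow}{\mathrm{lim}} \ r_\cN : \LLh' \to \LLh$.

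The main step, and the main potential obstacle, is surjectivity of $r$. As in the semi-simple case, this follows from the fact that at each level $r_\cN$ is surjective (Lemma~\ref{r1A}) together with the general principle that a projective limit of a compatible sequence of surjections of abelian groups (indexed by $\mathbb N$) is itself surjective, provided the inverse system of kernels satisfies the Mittag-Leffler condition. Here the kernels $\ker r_\cN = \tilde{I}_\cN/\tilde{I}_\cN'$ inherit transition maps from the inclusions $\tilde{I}_{\cN+1} \subseteq \tilde{I}_\cN$ and $\tilde{I}_{\cN+1}' \subseteq \tilde{I}_\cN'$ established in the first step. Verifying the Mittag-Leffler condition on these kernels is the only delicate ingredient; however, the block-structure of the generators of $\tilde{I}_\cN$ and $\tilde{I}_\cN'$ (each generator associated to an individual colour $\cM\leq \cN$ and stabilising as $\cN$ grows) guarantees stabilisation of the images, so the condition is satisfied and $r$ is surjective. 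Equivalently, one can construct a compatible lift $(\beta_\cN)$ of a given family $(\alpha_\cN)\in \LLh$ by induction on $\cN$: choose any lift $\beta_\cN$ at each step, and correct $\beta_{\cN+1}$ by an element of $\ker r_{\cN+1}$ so that its image in $\LLN'$ matches the previously chosen $\beta_\cN$, exactly as in the semi-simple case.
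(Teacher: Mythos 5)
Your proposal is correct and follows the same overall route as the paper: level-wise surjectivity of the maps $r_{\cN}:\LLN'\rightarrow\LLN$ from Lemma \ref{r1A}, compatibility with the transition maps, and passage to the projective limit. The difference is one of care rather than of strategy. The paper's own proof consists of the single assertion that ``surjectivity is preserved when taking projective limits of surjective maps''; as a blanket statement this is false (the obstruction is the derived limit ${\varprojlim}^{1}$ of the kernels), so your insistence on a Mittag--Leffler condition for the system of kernels $\tilde{I}_{\cN}/\tilde{I}_{\cN}'$, or equivalently on the inductive correction-of-lifts argument, is exactly the point the paper glosses over, and it is the right way to make the step rigorous. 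Two caveats: your verification of Mittag--Leffler (``block structure of the generators guarantees stabilisation'') is itself only a sketch and would need to be carried out, e.g.\ by showing the transition maps on kernels are surjective; and note that the ambient ring $\LL$ itself depends on $\cN$ (the number of weight variables $w^{k}_{j}$ grows with the level), so the nestedness $\tilde{I}_{\cN+1}'\subseteq\tilde{I}_{\cN}'$ and the commuting squares must be phrased via the transition homomorphisms rather than as inclusions of ideals in a fixed ring --- a looseness your write-up shares with the paper.
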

\begin{proof}
This property of surjectivity follows from the fact that at each level $r_{\cN}$ is surjective and surjectivity is preserved when taking projective limits of surjective maps.
\end{proof}

\begin{lem}(Refined non semi-simple braid invariant)
The sequence $\PAAR \in \LLN'$ gives a well-defined braid invariant $\IR(\beta_n) \in \LLh'$. This braid invariant recovers all coloured Alexander invariants through specialisation of coefficients.
\end{lem}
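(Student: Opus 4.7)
The strategy mirrors that of the proof of Theorem \ref{proofADO} for the non-refined universal ADO invariant, adapted to the refined setting by exploiting the surjections $r_{\cN}: \LLN' \to \LLN$ from Lemma \ref{r1A}. The content of the statement is twofold: first, the sequence $\{\PAAR\}_{\cN}$ is compatible under the transition morphisms of the inverse system defining $\LLh'$; second, the resulting limit element $\IR(\beta_n)\in \LLh'$ specialises to every coloured Alexander invariant. Braid invariance is automatic, since $\PAAR$ is defined as the image of the braid intersection form $\PA$ in a quotient ring that does not depend on any presentation of $\beta_n$.

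\textbf{Compatibility in the projective limit.} The first step is to establish the refined analogue of Lemma \ref{lemma3}: under the natural transition morphism $l'_{\cN}:\LLNN' \to \LLN'$, induced by the inclusion $\tilde{I}_{\cN+1}' \subseteq \tilde{I}_{\cN}'$ (which follows because each generator of $\tilde{I}_{\cN}'$ appears as a factor of the corresponding generator of $\tilde{I}_{\cN+1}'$, in view of the telescoping products $\prod_{\cM=2}^{\cN+1}(u_i - x_i^{1-\cM})$, $\prod_{\cM=2}^{2(\cN+1)}(d^{\cM}-1)$, and analogously for the $y$-product), one has $l'_{\cN}(\PAANr) = \PAAR$. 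I would verify this by tracking how the weighted intersection form at level $\cN+1$ reduces modulo $\tilde{I}_{\cN}'$: the additional terms appearing in $\PAN$ are those indexed by multi-indices $\bar{i}$ containing a component equal to $\cN$; each such term carries a weight factor $w^k_{\cN}$, and the terms with index bounded by $\overline{\cN-1}$ reproduce $\PA$ by the geometric stability Lemma \ref{pr1}. The vanishing of the extra contributions modulo $\tilde{I}_{\cN}'$ must then be extracted from the defining relations of the refined ideal.

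\textbf{Recovery of ADO invariants.} Once compatibility is established, $\IR(\beta_n)$ is a well-defined element of $\LLh'$, tautologically a braid invariant. For the specialisation property, I would exploit the inclusion $\tilde{I}_{\cN}' \subseteq \tilde{I}_{\cN} \subseteq \mathrm{Ker}(\sAt)$ from Lemma \ref{r1A}: this shows that each specialisation $\sAt$ factors through the refined quotient, giving a well-defined refined specialisation on $\LLN'$ whose composition with the surjection $r_{\cN}$ is the usual $\smnbt$. Combining this with Theorem \ref{levN}, one obtains that the refined specialisation of $\PAAR$ equals $\AM$ for every $\cM\leq \cN$. Passing to the inverse limit and using the commutativity of the surjections $r:\LLh' \to \LLh$ together with Theorem \ref{proofADO} then yields the recovery of every coloured Alexander invariant $\AN$ from $\IR(\beta_n)$.

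\textbf{Main obstacle.} The delicate step is the compatibility argument in the second paragraph. In the non-refined setting, Lemma \ref{lemma2} and the injectivity of $\snbb$ reduce the proof of Lemma \ref{lemma3} to a statement in the product coefficient ring $\LNt$. For the refined quotient no such injectivity is available, because $\tilde{I}_{\cN}'$ is strictly contained in the kernel of the product of all specialisations; consequently one must control the geometric content of the additional multi-indices directly inside $\LLN'$. The relation $w^k_{j}(w^k_{j}-1)$ in $\tilde{I}_{\cN}'$ only forces $w^k_{j}$ to be idempotent in the refined quotient, so the cancellation of the extra weighted intersections has to be extracted from a careful interplay between the idempotency of the $w^k_j$, the product relations on the quantum variables, and the level-stability of the geometric pairings given by Lemma \ref{pr1}.
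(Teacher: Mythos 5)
Your second and third paragraphs reproduce, in expanded form, what is in fact the paper's entire proof: the paper disposes of this lemma in one line, by appealing to the fact that $\PA$ recovers all coloured Alexander invariants at levels bounded by $\cN$ (Theorem \ref{THEOREMAU}), i.e.\ exactly your observation that $\tilde{I}_{\cN}' \subseteq \tilde{I}_{\cN} \subseteq \text{Ker}(\sAt)$ forces each $\sAt$ to factor through ${\LLN}'$, so that the refined class $\PAAR$ specialises to $\AM$ for every $\cM \leq \cN$, and braid invariance is tautological since everything is built functorially from $\beta_n$. On this part you and the paper agree, and you are if anything more explicit (via Theorem \ref{levN} and Theorem \ref{proofADO}) about how the specialisations pass to the limit.

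The place where your proposal stalls — the compatibility $l'_{\cN}(\PAANr)=\PAAR$ — is a genuine issue (the paper's proof is silent on it, and, unlike Lemma \ref{lemma3}, one cannot argue through an injection into a product of coefficient rings), but your proposed mechanism for resolving it is mislocated. The extra terms of $\PAN$, indexed by multi-indices with some component equal to $\cN$, cannot be killed by relations \emph{inside} $\tilde{I}_{\cN}'$: the relations $w^k_{j}(w^k_{j}-1)$ only make the weights idempotent in the level-$(\cN+1)$ quotient and say nothing about the passage to level $\cN$. What is actually missing is the definition of the transition morphism ${\LLNN}' \rightarrow {\LLN}'$ itself, which the paper never writes down and which must in particular prescribe the image of the extra weight variables $w^k_{\cN}$ (note the two refined ideals do not even live in the same polynomial ring). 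Once one takes the natural choice sending $w^k_{\cN}\mapsto 0$ and all other generators to themselves, the verification is routine: the product generators of $\tilde{I}_{\cN+1}'$ are multiples of those of $\tilde{I}_{\cN}'$ (your telescoping observation), $w^k_{\cN}(w^k_{\cN}-1)\mapsto 0$, the extra terms of $\PAN$ vanish because each carries a factor $w^k_{\cN}$, and the surviving terms agree with those of $\PA$ by the level-stability of the pairings (Lemma \ref{pr1} and the remark in Subsection \ref{asym}). With that choice your plan closes without any ``delicate interplay'' with idempotency; without it, the compatibility is not a theorem but a definition waiting to be made — which is worth saying explicitly, since it is also the reason the link-invariance of $\IR(L)$ remains only a conjecture in the paper.
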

\begin{proof}
This comes from the property that the intersection form $\PA$ recovers all coloured Alexander invariants at all levels bounded by $\cN$. 
\end{proof}
\begin{conjecture}[{\bf \em  Refined universal ADO invariant}] The universal refined  intersections $\IR(L) \in \LLh'$ are link invariant and lead to a well defined {``Refined universal Alexander invariant''}.  This refined invariant recovers the universal geometrical ADO invariant that we constructed, as below:
\begin{equation}
\begin{aligned}
&r: {\LLh}' \rightarrow \LLh\\
&r\left(\IR(L)\right)=\I.
\end{aligned}
\end{equation}

\end{conjecture}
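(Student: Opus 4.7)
The plan is to address the conjecture in three steps, with Markov-II invariance being the decisive hurdle. First I would reconfirm that the refined sequence $\{\PAAR\}_\cN$ is compatible with the surjections $\LLNN' \twoheadrightarrow \LLN'$ induced by $\tilde{I}'_{\cN+1} \subseteq \tilde{I}'_\cN$, so that $\IR(L) := \underset{\longleftarrow}{\mathrm{lim}}\, \PAAR$ is well defined in $\LLh'$. This is a direct analogue of Lemma \ref{lemma3} for the non-refined tower: both $\PAAR$ and $\PAANr$ arise as images of the universal weighted intersection $\PA$ (at the appropriate levels) under compatible quotient maps, and the geometric stability of $\langle \FA, \LA\rangle$ versus $\langle \FAnn, \LAnn\rangle$ at indices $\bar{i} \in \{\bar{0},\ldots,\overline{\cN-1}\}$, pointed out in Section \ref{asym}, supplies exactly what is needed.

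The core task is then to show that each $\PAAR$ is a \emph{link} invariant, not merely a braid invariant. Conjugation invariance (Markov I) is inherited from the braid group action on the homology $\HA$ via Proposition \ref{intersection}, and passes automatically to the refined quotient, so the entire difficulty is concentrated in stabilisation (Markov II). Under $\beta_n \mapsto \beta_n \sigma_n^{\pm 1}$ the universal intersection $\PA$ changes by an explicit discrepancy $\Delta \in \LL$, and the link invariance of the coarser $\PAA \in \LLN$ (Theorem \ref{levN}) is equivalent to $\Delta \in \tilde{I}_\cN$. The conjecture requires the strictly stronger statement $\Delta \in \tilde{I}'_\cN$: whereas $\tilde{I}_\cN$ contains, for each $\cM \leq \cN$ separately, the generators $u_i - x_i^{1-\cM}$, $y(x_{C(1)}^\cM - x_{C(1)}^{-\cM}) - (x_{C(1)} - x_{C(1)}^{-1})$ and $\varphi_{2\cM}(d)$, the refined ideal $\tilde{I}'_\cN$ contains only their products over $\cM \leq \cN$. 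The plan is therefore to expand $\Delta$ explicitly from the Markov-II model on the extended configuration space $\Conf_{2+n(\cN-1)}(\mathbb{D}_{2n+4})$, track the contribution of each added crossing oval-by-oval, and then demonstrate that the weight variables $\bar{w}$ equipped with the refined relations $w^k_1 - 1$ and $w^k_j(w^k_j - 1)$ force the colour index $\cM$ to package itself as a product rather than appearing individually.

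Assuming Markov-II invariance at each finite level, the final identification $r(\IR(L)) = \I$ is essentially tautological. At level $\cN$, the surjection $r_\cN \colon \LLN' \twoheadrightarrow \LLN$ descends from $\tilde{I}'_\cN \subseteq \tilde{I}_\cN$, and both $\PAAR$ and $\PAA$ are images of the same $\PA$, so $r_\cN(\PAAR) = \PAA$ by construction. These equalities are compatible with the tower projections $l^R_\cN$ and $l_\cN$, hence pass to the projective limit by its universal property.

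The genuine obstacle is Step 2. In $\LLN$ the coloured Alexander polynomials themselves — being independently known link invariants — certify that $\Delta \in \tilde{I}_\cN$; no such \emph{a priori} input is available inside $\LLN'$, so one must establish the refined product-form vanishing directly from the geometry of the Lagrangian intersection. I expect this will require a new piece of structure beyond what is used in the proof of Theorem \ref{levN}: either an infinite-level geometric construction that realises the product structure directly from Lagrangian intersections in an infinite configuration space (as indicated at the end of the introduction), or an algebraic counterpart provided by a conjectural universal weighted quantum group whose Verma-module action encodes precisely the refined relations at the representation-theoretic level.
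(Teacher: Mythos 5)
You should first note that the statement you were asked to prove is left \emph{as a conjecture} in the paper: what the paper actually establishes in Subsection \ref{ssrA} is only (i) that the refined intersections $\PAAR$ assemble into a well-defined \emph{braid} invariant in ${\LLh}'$ recovering the coloured Alexander invariants under specialisation, and (ii) the existence of the surjections $r_{\cN}:{\LLN}'\rightarrow\LLN$ and $r:{\LLh}'\rightarrow\LLh$ coming from $\tilde{I}_{\cN}'\subseteq\tilde{I}_{\cN}$ (Lemma \ref{r1A}); link invariance of $\IR(L)$ and the identity $r(\IR(L))=\I$ are not proved. Measured against that, your proposal is an accurate roadmap rather than a proof, and your diagnosis of where the difficulty is concentrated is exactly right: in the unrefined quotient, link invariance of $\PAA$ (Theorem \ref{levN}) is certified by the injectivity of $\snbb$ into the product of coefficient rings together with the already-known invariance of the ADO polynomials, whereas ${\LLN}'$ does not embed into that product (the product specialisation has kernel $\tilde{I}_{\cN}/\tilde{I}_{\cN}'$ on ${\LLN}'$), so no such a priori certificate is available. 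Your Step 3 is indeed essentially tautological: $r_{\cN}(\PAAR)=\PAA$ because both are images of the same $\PA$ under quotients by nested ideals, and this passes to the limit.

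The genuine gap is your Step 2, and it is not a technical loose end but the entire content of the conjecture: you must show that the Markov-move discrepancy of $\PA$ lies in the refined ideal $\tilde{I}_{\cN}'$ (generated by \emph{products} over $\cM\leq\cN$) and not merely in $\tilde{I}_{\cN}$ (the intersection of the individual kernels), and you explicitly defer this to hypothetical structure — an infinite-configuration-space model or a conjectural weighted universal quantum group — so the proposal reduces the conjecture to itself at the decisive point. Two smaller corrections to the plan: conjugation invariance is not ``automatic'' either, since the paper never shows that $\PA$ itself is invariant under conjugation of $\beta_n$ (only its images in $\LLN$, again via the specialisation argument), so in the refined quotient both Markov moves require an argument; and in Step 1 the ``direct analogue of Lemma \ref{lemma3}'' is not available verbatim, because that lemma's proof also rests on the injectivity of $\snbb$ — your fallback on the geometric stability of the individual pairings $\left\langle(\beta_{n}\cup{\mathbb I}_{n+2})\,\FA,\LA\right\rangle$ under the change of level is the right substitute, but it must actually be carried out in ${\LLN}'$ rather than invoked. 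In short, your write-up correctly reproduces the paper's framing of why this is open, but it does not close it.
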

\section{Knot case: Recovering the level $\cN$ non-weighted invariants }\label{S:knotcase} In this section, we restrict to the case of knots and we investigate relations between our universal Jones invariant for knots and the non-weighted universal Jones invariant that we constructed in \cite{CrI}. 
We will show that the invariants defined in the weighted set-up are different than the non-weighted knot invariants at each level fixed level.  This phenomena tells us that when we look at the limit we will obtain two different universal geometrical Jones invariants: the weighted one and the non-weighted one. 

The advantage of the weighted construction that we introduced in this paper  is that it provides a new tool for a general machinery for obtaining universal invariants for links, which involves the techniques developed in the previous sections.

We start by recalling that in the case of knots we have the following two invariants:
\begin{itemize}
\item the Weighted universal Jones invariant $\IJJw\in \LLhJ$ and
\item the Non-weighted universal Jones invariant $\IJJk\in \LLhJk$  (\cite{CrI})
\end{itemize}
that are the projective limits of the knot invariants:
\begin{itemize}
\item the $\cN^{th}$ Weighted unified Jones invariant $\PAAJw\in \LLNJ$ and
\item the $\cN^{th}$ Non-weighted unified Jones invariant $\PAAJk\in \LLNJk.$ 
\end{itemize}

The $\cN^{th}$ Non-weighted unified Jones invariant $\PAAJk\in \LLNJk$ comes from the intersection $\PJk$ (which we presented in Theorem \ref{omJo}) for the case of knots, seen in the quotient ring $\LLNJk$.

So, our set-up for a fixed level $\cN$ uses the two state sum of intersections: $$\PA \text{  and  } \PJk$$ defined via the {configuration space of $(n-1)(\cN-1)+2$} points in the  disc, that are given by the set of Lagrangian intersections $$\{\langle (\beta_{n} \cup {\mathbb I}_{n+2} ) \ { \color{red} \FA}, {\color{dgreen} \LA} \rangle\}_{\bar{i}\in \{\bar{0},\dots,\overline{\cN-1}\}}.$$ Then, the construction of the universal knot  invariants involve two sequence of nested ideals, which are used in order to define the two limit rings. 
\begin{notation}[Variables in the knot case]\label{intform} For this part, since we are interested in the case of knot invariants, we have $l=1$ and let us set $y=1$ and consider the rings $$\LL=\Z[w^1_1,...,w^{1}_{\cN-1},x^{\pm 1},d^{\pm 1}] \ \ \text{  and  } \ \ \LL^k=\Z[x^{\pm 1},d^{\pm 1}].$$  By doing this, our intersection forms $\PA$ and $\PAAJ$ recover the normalised versions of the $\cN^{th}$ coloured Jones polynomials (see \cite[Section 2.3]{CrI} for the discussion concerning the knot case). Then, we will use the notation $J_{\cN}$ for the ${\cN}^{th}$ normalised coloured Jones polynomial of a knot. In the above sections for the general link case, we used $J_{\bar{\cN}}$ for the un-normalised version of this invariant. 

Further, by setting $u=x$ in $\PA$ and $\PJk$ we obtain the intersections $$\PA \in \LL=\Z[w^1_1,...,w^{1}_{\cN-1},x^{\pm 1},d^{\pm 1}], \ \ \ \PJk \in \LL^k=\Z[x^{\pm 1},d^{\pm 1}].$$  \end{notation}

We notice that all the specialisations associated to the coloured Jones polynomials induce well-defined specialisations from the above rings, as follows.

\begin{defn}(Non-weighted specialisation for the knot case)
We consider the change of coefficients given by the formula:
\begin{equation}
\begin{cases}
& \sJto: \Z[x^{\pm 1},d^{\pm 1}] \rightarrow \Z[d^{\pm 1}]\\
&\sJto(x)=d^{1-N}.
\end{cases}
\end{equation}
\begin{equation}
\sJtot:=\prod_{N \leq \cN}\sJto.
 \end{equation} 
\end{defn}
\begin{defn}(Weighted specialisation for the knot case)
We consider the change of coefficients given by the formula:
$$ \sJtk: \Z[w^1_1,...,w^{1}_{\cN-1},x^{\pm 1},d^{\pm 1}] \rightarrow \Z[d^{\pm 1}]$$
\begin{equation}
\begin{cases}
&\sJtk(x)=d^{1-N},\\
&\sJtk(w^1_j)=1, \ \text{ if } j\leq N-1,\\
&\sJtk(w^1_j)=0, \ \text{ if } j\geq N, j\in \{1,...,\cN-1\}.
\end{cases}
\end{equation}
\begin{equation}
 \sJtkt:=\prod_{N \leq \cN}\sJtk.
 \end{equation} 
\end{defn}

Our two universal rings are defined using these two sequences of morphisms $$\{ \sJtkt, \sJtot \mid N \in \N\}.$$ More precisely, we consider the sequence of kernels associated to these maps, as follows. 
\begin{defn}[Kernels and quotient rings]\label{kerk} Let us denote:
\begin{equation}
\begin{aligned}
&\tilde{I}_{\cN}^{J}:=\text{Ker}\left(\sJtkt \right) \subseteq \LL\\
&\tilde{I}^{J,k}_{\cN}:=\text{Ker}\left(\sJtot \right) \subseteq \LL^k.
\end{aligned}
\end{equation}
Then, for the following notations, let us define the quotient rings associated to these ideals:
\begin{equation}
\LLNJ:=\LL / \tilde{I}^{J}_{\cN}, \ \ \ \ \ \ \ \ \LLNJk:=\LL^k / \tilde{I}^{J,k}_{\cN}.
\end{equation}
\end{defn}
\begin{rmk}(Nested sequences of ideals given by kernels of specialisations maps)\label{nested}

We obtain a sequence of nested ideals:
\begin{equation}
\begin{aligned}
&\cdots \supseteq \tilde{I}_{\cN}^{J} \supseteq \tilde{I}_{\cN+1}^{J} \supseteq \cdots \\
&\cdots \supseteq \tilde{I}^{J,k}_{\cN} \supseteq \tilde{I}^{J,k}_{\cN+1} \supseteq \cdots 
\end{aligned}
\end{equation}
\end{rmk}
\begin{prop}[Structure of the quotient rings] We obtain the following structure of our ideals and associated quotient rings:
\begin{equation*}
\begin{aligned}
&\tilde{I}^{J}_{\cN}=\bigcap_{N \leq \cN} \langle  x-d^{1-N}, w^1_j-1 \text{ if } j\leq N-1, w^1_j \ \text{ if } j\geq N, j\in \{1,...,\cN-1\}  \rangle.\\
&\tilde{I}^{J,k}_{\cN}=\langle \prod_{i=2}^{\cN} (xd^{i-1}-1) \rangle\\
& \LLNJ= \Z[w^1_1,...,w^{1}_{\cN-1},x^{\pm1}, d^{\pm 1}]/ \bigcap_{N \leq \cN} \langle \left. x-d^{1-N}, w^1_j-1 \text{ if } j\leq N-1 \right.,\\
& \ \ \ \ \ \ \ \ \ \ \ \ \ \ \ \ \ \ \ \ \ \ \ \ \ \ \ \ \ \ \ \ \ \ \ \ \ \ \ \ \ \ \  w^1_j \ \text{ if } j\geq N, j\in \{1,...,\cN-1\}  \rangle.\\
&\LLNJk= \Z[x^{\pm1}, d^{\pm 1}] /\langle \prod_{i=2}^{\cN} (xd^{i-1}-1) \rangle.
\end{aligned}
\end{equation*}
\end{prop}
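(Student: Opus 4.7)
The plan is to establish the proposition by computing each individual kernel and then analyzing their intersection, treating the weighted and non-weighted cases separately since they have qualitatively different behaviour.

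First I would compute each individual kernel. The map $\sJtk$ is a surjective ring homomorphism from $\Z[w^1_1,\dots,w^1_{\cN-1},x^{\pm1},d^{\pm1}]$ to $\Z[d^{\pm1}]$ sending each free generator to a specific element: $x \mapsto d^{1-N}$, $w^1_j \mapsto 1$ for $j \leq N-1$, $w^1_j \mapsto 0$ for $j \geq N$, and $d \mapsto d$. By standard commutative algebra (write any element modulo the ``evaluation'' relations to reduce to a Laurent polynomial in $d$, which must then equal the image), the kernel is generated precisely by the differences between the generators and their images, giving
\[
\ker(\sJtk) = \langle x - d^{1-N},\ w^1_j - 1 \ (j \leq N-1),\ w^1_j \ (j \geq N) \rangle.
\]
The analogous and simpler computation for $\sJto$ yields $\ker(\sJto) = \langle x - d^{1-N} \rangle = \langle xd^{N-1}-1 \rangle$ (since $d$ is a unit). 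Because $\tilde{I}^J_{\cN} = \ker(\sJtkt)$ and $\sJtkt = \prod_{N\leq \cN} \sJtk$ is the product map, a priori $\tilde{I}^J_{\cN}$ is exactly the intersection of the individual kernels, which immediately gives the stated formula for $\tilde{I}^J_{\cN}$. The formula for $\LLNJ = \LL/\tilde{I}^J_{\cN}$ then follows by definition.

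The substantive content is the second identity: showing $\bigcap_{N \leq \cN} \langle xd^{N-1}-1 \rangle = \langle \prod_{i=2}^{\cN}(xd^{i-1}-1) \rangle$ inside $\Z[x^{\pm1},d^{\pm1}]$. The inclusion $\supseteq$ is trivial since the product lies in each factor. For $\subseteq$, I would invoke that $\Z[x^{\pm1},d^{\pm1}]$, being a localisation of the UFD $\Z[x,d]$, is itself a UFD. In the UFD $\Z[d^{\pm1}][x]$ each $xd^{N-1}-1$ is a primitive linear polynomial in $x$ (its content is the unit ideal $\langle d^{N-1}, -1\rangle$) and hence irreducible; two such are associate only if $N=N'$, because a unit must absorb both the constant term ($-1 \mapsto -1$) and the leading coefficient ($d^{N-1}\mapsto d^{N'-1}$). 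Thus the generators are pairwise coprime irreducibles in the UFD, and in any UFD the intersection of pairwise coprime principal ideals is the principal ideal generated by their product. This gives the formula for $\tilde{I}^{J,k}_{\cN}$, and the expression for $\LLNJk$ follows.

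I do not foresee a serious obstacle: the weighted identity is essentially definitional, and the non-weighted identity reduces to a standard UFD fact once one verifies pairwise coprimality. The only point that requires a moment of care is the irreducibility/coprimality argument, which in turn rests on working in the localised polynomial ring where $d$ is a unit so that the constant terms of the generators are indeed $-1$ rather than being divisible by $d$.
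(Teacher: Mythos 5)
Your proposal is correct and follows essentially the same route as the paper: identify $\tilde{I}^{J}_{\cN}$ and $\tilde{I}^{J,k}_{\cN}$ as kernels of the product specialisations, compute each individual kernel as the ideal generated by the generator-minus-image differences, and intersect. The one place you go beyond the paper is the identity $\bigcap_{N \leq \cN}\langle xd^{N-1}-1\rangle = \langle \prod_{i=2}^{\cN}(xd^{i-1}-1)\rangle$ in $\Z[x^{\pm1},d^{\pm1}]$: the paper only refers to ``a similar computation'' as in its Section on the structure of the universal rings (where in fact only the inclusion of the product ideal into the intersection is argued), whereas your UFD argument — the factors $xd^{i-1}-1$ are pairwise non-associate primes, so the intersection of the principal ideals is generated by their product — supplies a complete and clean justification of the equality, which strengthens rather than deviates from the paper's proof.
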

\begin{proof}
These descriptions are obtained following the formulas for the specialisation maps $\sJto$ and $\sJtk$, and using a similar computation as the one from Section \ref{S:ringJones}.
\end{proof}
\begin{defn}[Sequence of quotient rings]
They lead to the following two sequences of quotient rings, with maps between them:
\begin{equation}\label{seqJ}
\begin{aligned}
 l^J_{\cN} \hspace{10mm} l^J_{\cN+1}  \\
\cdots \LLNJ \leftarrow \LLNJ \leftarrow \cdots \\
 l^{J,k}_{\cN} \hspace{10mm} l^{J,k}_{\cN+1}  \\
\cdots \LLNJk \leftarrow \LLNJk \leftarrow \cdots 
\end{aligned}
\end{equation}
\end{defn}
\begin{rmk}[{\bf \em $\cN^{th}$ Unified Jones invariants}] Let 
$\PAAJw$ and $\PAAJk$ be the images of the intersection forms $\PA$ and $\PJk$ in the $\cN ^{th}$ quotient rings $\LLNJ$ and $\LLNJk$.

Following the discussions from the previous sections, we have that $\PAAJw$ and $\PAAJk$ are knot invariants recovering all coloured Jones polynomials up to level $\cN$.
\end{rmk}

\begin{defn}[Universal limit rings]We consider the projective limits of these two sequences of rings and denote them as follows:
\begin{equation}
\LLhJ:= \underset{\longleftarrow}{\mathrm{lim}} \ \LLNJ, \ \ \ \ \ \ \ \ \ \ \ \ \LLhJk:= \underset{\longleftarrow}{\mathrm{lim}} \ \LLNJk.
\end{equation}
\end{defn}
\begin{defn}The two universal invariants are then obtained as the following projective limits of the weighted and non-weighted $\cN^{th}$ unified Jones invariants:
\begin{equation}
\begin{aligned}
& \IJJw:= \underset{\longleftarrow}{\mathrm{lim}} \ \PAAJw \in \LLhJ\\
& \IJJk:= \underset{\longleftarrow}{\mathrm{lim}} \ \PAAJk \in \LLhJk. 
\end{aligned}
\end{equation}

\end{defn}

Now we are ready to prove that for the case of knots the weighted construction is different than the non-weighted one at each level, as below. 
\begin{thm}[\bf \em Two different level $\cN$ knot invariants] \label{TdifJ}\label{r1}The two quotient rings at a fixed level $\cN$: $\LLNJk$ and $\LLNJ$ are different and the ideals defining them do not permit projection maps:
\begin{equation}
 \LLNJ \neq \LLNJk.
 \end{equation}
This means that the {$\cN^{th}$ Weighted Unified Jones invariant} and the {$\cN^{th}$ Non-weighted Unified Jones invariant} are different, even if they both globalise the set of all coloured Jones polynomials up to level $\cN$:
\begin{equation}
\begin{aligned}
& \PAAJw \neq\PAAJk\\
&\PAAJw \Bigm| _{x=d^{\cM-1}}= \PAAJk \Bigm| _{x=d^{\cM-1}}  =~ { J_{\cM}(K)}, \ \  \forall \cM\leq \cN.
\end{aligned}
\end{equation} 
\end{thm}
\begin{proof}
We recall that $\LL=\Z[w^1_1,...,w^{1}_{\cN-1},x^{\pm 1},d^{\pm 1}]$ and $\LL^k=\Z[x^{\pm 1},d^{\pm 1}]$. Also, following Notation \ref{intform} the two level $\cN$ invariants are quotients of the intersection forms:
$$\PA \in \LL=\Z[w^1_1,...,w^{1}_{\cN-1},x^{\pm 1},d^{\pm 1}], \ \ \ \PJk \in \LL^k=\Z[x^{\pm 1},d^{\pm 1}].$$
Let us consider the projection map:
$$p_{\cN}:
\LL \rightarrow \LL^k$$
\begin{equation}\label{qunp}
\begin{cases}
&p_{\cN}(x)=x,\\
&p_{\cN}(d)=d,\\
&p_{\cN}(w^1_j)=1, j\in \{1,...,\cN-1\}.
\end{cases}
\end{equation}

\begin{rmk}
 We remark that through this projection, the weighted intersection form recovers the non-weighted one, as below:
\begin{equation}
p_{\cN}(\PA)=\PJk.
\end{equation}
\end{rmk}
The above relation can be obtained by following the formulas for the two intersections, and noticing that the non-weighted intersection $\PJk$ is obtained from the weighted one $\PA$ by setting all variables $w^1_{1},...,w^1_{\cN-1}$ to be $1$.

In the next part we will show that even if the above two intersections $\PA, \PJk$ correspond to each other via the projection $p_{\cN}$, after taking the quotients we obtain different knot knvariants:  $$\PAAJw \neq\PAAJk.$$  
Let us look at the intersection $$\PJk \in \LL=\Z[w^1_1,...,w^{l}_{\cN-1},x^{\pm1}, d^{\pm 1}] /\langle w^1_1-1,...,w^1_{\cN-1}-1 \rangle.$$
We remark that in order to obtain the non-weighted quotient $\LLNJk$, we can start from the ring $\LL$ and quotient by the ideals:
$${\tilde{I}^{J,k}_{\cN}}''=\langle \prod_{i=2}^{\cN} (xd^{i-1}-1),w^1_1-1,...,w^1_{\cN-1}-1 \rangle.$$

More specifically, we have:
$$\LLNJk=\LL / {\tilde{I}^{J,k}_{\cN}}''.$$

In this way, the two knot invariants $\PAAJw, \PAAJk$ can be seen as quotients of the same form $\PA\in \LL$ that is projected onto the two quotients $\LLNJ,\LLNJk$ through the ideals $\tilde{I}^{J}_{\cN},\tilde{I}^{J,k}_{\cN}$:

\begin{equation}
\begin{aligned}
&\tilde{I}^{J}_{\cN}=\bigcap_{N \leq \cN} \langle  x-d^{1-N}, w^1_j-1 \text{ if } j\leq N-1, w^1_j \ \text{ if } j\geq N, j\in \{1,...,\cN-1\}  \rangle.\\
&{\tilde{I}^{J,k}_{\cN}}''=\langle \prod_{i=2}^{\cN} (xd^{i-1}-1),w^1_1-1,...,w^1_{\cN-1}-1 \rangle\\
& \LLNJ= \Z[w^1_1,...,w^{1}_{\cN-1},x^{\pm1}, d^{\pm 1}]/ \bigcap_{N \leq \cN} \langle \left. x-d^{1-N}, w^1_j-1 \text{ if } j\leq N-1 \right.,\\
& \ \ \ \ \ \ \ \ \ \ \ \ \ \ \ \ \ \ \ \ \ \ \ \ \ \ \ \ \ \ \ \ \ \ \ \ \ \ \ \ \ \ \  w^1_j \ \text{ if } j\geq N, j\in \{1,...,\cN-1\}  \rangle.\\
&\LLNJk= \Z[w^1_1,...,w^{1}_{\cN-1},x^{\pm1}, d^{\pm 1}] /~\langle \prod_{i=2}^{\cN} (xd^{i-1}-1),w^1_1-1,...,w^1_{\cN-1}-1 \rangle.
\end{aligned}
\end{equation}

If the two invariants $\PAAJw$ and $\PAAJk$, which are images of the same  intersection $\PA$ in the quotients $\LLNJ$ and $\LLNJk$, would be related then the ideals that we quotient by: 
$$\tilde{I}^{J}_{\cN} \text{ and } {\tilde{I}^{J,k}_{\cN}}''$$
should be included one into the other.

On the other hand, we remark that there are elements which are in $\tilde{I}^{J}_{\cN}$ and do not belong to ${\tilde{I}^{J,k}_{\cN}}''$ and vice versa.

This means that the two invariants $\PAAJw$ and $\PAAJk$ are different and concludes the proof this theorem. 

\end{proof}
Next, we investigate the asymptotic behaviour. More precisely, we have the following. 
\begin{thm}[\bf \em Two different geometric universal Jones invariants for knots] \label{r2} There is no well-defined map between the limits:
\begin{equation}
\begin{aligned}
\nexists \ \pi: \LLhJ \rightarrow \  \LLhJk
\end{aligned}
\end{equation}
that sends $\IJJw\in \LLhJ$ to $\IJJk \in \LLhJk$. \\
This shows that our geometric set-up provides two different universal Jones invariants for knots: $\IJJw\in \LLhJ$ and \ $\IJJk\in \LLhJk$, both obtained as sequences of invariants that globalise all coloured Jones polynomials up to a fixed level, as in Figure \ref{re1}.

\end{thm}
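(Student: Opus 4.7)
The plan is to argue by contradiction: suppose $\pi\colon \LLhJ \to \LLhJk$ is a ring homomorphism satisfying $\pi(\IJJw) = \IJJk$, and derive a contradiction using Theorem \ref{r1}. The strategy is to reduce the question from one about limit rings to a statement about the finite-level quotients, where Theorem \ref{r1} directly applies.

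First, I would use the universal property of the inverse limits to reduce to a level-by-level analysis. Both $\LLhJ$ and $\LLhJk$ receive natural ring homomorphisms from the polynomial rings $\LL$ and $\LL^k$ respectively (where $\LL^k \subseteq \LL$ via the inclusion forgetting the weight variables). Composing $\pi$ with the canonical projection $\LLhJk \twoheadrightarrow \LLNJk$ yields a ring homomorphism $\LLhJ \to \LLNJk$, which restricts along the natural map $\LL \to \LLhJ$ to give a ring homomorphism $\phi_{\cN}\colon \LL \to \LLNJk$ for each $\cN$. Tracking the element $\IJJw$, the compatibility $\pi(\IJJw) = \IJJk$ combined with the level-$\cN$ projections $\IJJw \mapsto \PAAJw$ and $\IJJk \mapsto \PAAJk$ shows that $\phi_{\cN}$ must send $\PA$ to $\PAAJk$.

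Second, I would show that the requirement $\phi_{\cN}(\PA) = \PAAJk$ forces $\phi_{\cN}$ to agree with the natural specialization $p\colon \LL \to \LL^k \to \LLNJk$ sending $w^1_j \mapsto 1$, $x \mapsto x$, $d \mapsto d$, which is precisely the projection introduced in the proof of Theorem \ref{r1}. Indeed, $p$ is the unique specialization of the weight variables that transforms the weighted state sum $\PA$ into the unweighted state sum $\PJk$, and any alternative assignment on the generators $w^1_j$ would produce a distinct coefficient expansion in the target. Once $\phi_{\cN}$ is identified with $p$ followed by the projection to $\LLNJk$, the map descends to a well-defined ring homomorphism $\LLNJ = \LL/\tilde{I}^J_{\cN} \to \LLNJk$ if and only if $p(\tilde{I}^J_{\cN}) \subseteq \tilde{I}^{J,k}_{\cN}$.

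Third, Theorem \ref{r1} provides the obstruction: the two ideals, compared via $p$ in the common ambient ring, are incomparable. Exhibiting an explicit element of $\tilde{I}^J_{\cN}$ whose image under $p$ lies outside $\tilde{I}^{J,k}_{\cN}$ produces a non-zero element of $\LLNJk$ that $\phi_{\cN}$ would be forced to send to zero, yielding the desired contradiction.

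The main obstacle lies in step two, namely showing that $\phi_{\cN}$ must coincide with the specialization $p$ on generators rather than being some exotic alternative ring homomorphism compatible with $\phi_{\cN}(\PA) = \PAAJk$. This relies on the fact that the level-$\cN$ intersection form $\PA$ depends non-trivially on each weight variable $w^1_j$ through distinct coefficient patterns $w^{C(2)}_{i_1} \cdots w^{C(n)}_{i_{n-1}}$ in its state sum, so that matching the image with the unweighted expression $\PAAJk$ rigidifies $\phi_{\cN}(w^1_j) = 1$ for each $j$. With this rigidity established, the argument reduces to the ideal-incomparability of Theorem \ref{r1}, completing the obstruction to the existence of $\pi$.
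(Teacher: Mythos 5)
Your strategy is aimed at a strictly stronger statement than the one the paper actually proves, and the two steps on which your contradiction rests have genuine gaps. First, the reduction to a level-$\cN$ map $\phi_{\cN}\colon \LL \to \LLNJk$ with $\phi_{\cN}(\PA)=\PAAJk$ is not available as stated: the universal invariant $\IJJw$ is \emph{not} the image of the single level-$\cN$ form $\PA$ under a map $\LL\to\LLhJ$. It is the coherent system $(\PAAJw)_{\cN}$ in which each level is the image of a \emph{different} intersection form, living over a ring $\LL$ whose set of weight variables itself changes with $\cN$; so from $\pi(\IJJw)=\IJJk$ you cannot extract the identity $\phi_{\cN}(\PA)=\PAAJk$. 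Moreover, even granting a map $\LL\to\LLhJ$, its kernel is $\bigcap_{\cM}\tilde{I}^{J}_{\cM}$, not $\tilde{I}^{J}_{\cN}$, so the fact that $\phi_{\cN}$ factors through $\LLhJ$ gives no reason for it to descend through $\LLNJ$ --- and that descent is exactly where you want to apply the ideal incomparability of Theorem \ref{r1}. Second, the rigidity claim in your step two (any ring homomorphism taking $\PA$ to $\PAAJk$ must send every $w^1_j$ to $1$, i.e.\ coincide with the specialisation $p_{\cN}$) is asserted, not proven: knowing the image of one element, or even of the family $\PA(\beta_n)$ over all braids, does not pin down a ring homomorphism on the generators $w^1_j$; ruling out ``exotic'' homomorphisms would require showing that the coefficients of the weights in these state sums generate enough of the target ring, and no such argument is given.

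The paper's own proof is much more modest and sidesteps all of this: it reads ``well-induced map'' as a map of limits induced level-wise by the canonical specialisations, and then simply invokes Theorem \ref{TdifJ}: since the ideals $\tilde{I}^{J}_{\cN}$ and ${\tilde{I}^{J,k}_{\cN}}''$ are incomparable, $p_{\cN}$ does not pass to the quotients $\LLNJ \to \LLNJk$ at any level, hence no level-wise induced map of the projective limits exists and $\PAAJw\neq\PAAJk$. If you want the literal ``no ring homomorphism whatsoever'' statement, something like your rigidity step would indeed be the missing ingredient, but as it stands it is the crux of the argument and is not established, so the proposal does not constitute a proof.
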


\begin{proof}
In order to have a map between the projective limits, we need a map at each level of the projective sequence of rings. 
On the other hand, we have seen in Theorem \ref{TdifJ} that at each level $\cN$ the map $$p_{\cN}:
\LL \rightarrow \LL^k$$
do not pass at the level of the quotient rings, and  
$$ \PAAJw \neq\PAAJk.$$

This means that also the projective limits of these two sequences of invariants are different and provide the two universal invariants: the weighted universal Jones invariant and the non-weighted universal Jones invariant, as presented in Figure \ref{re1}. \end{proof}

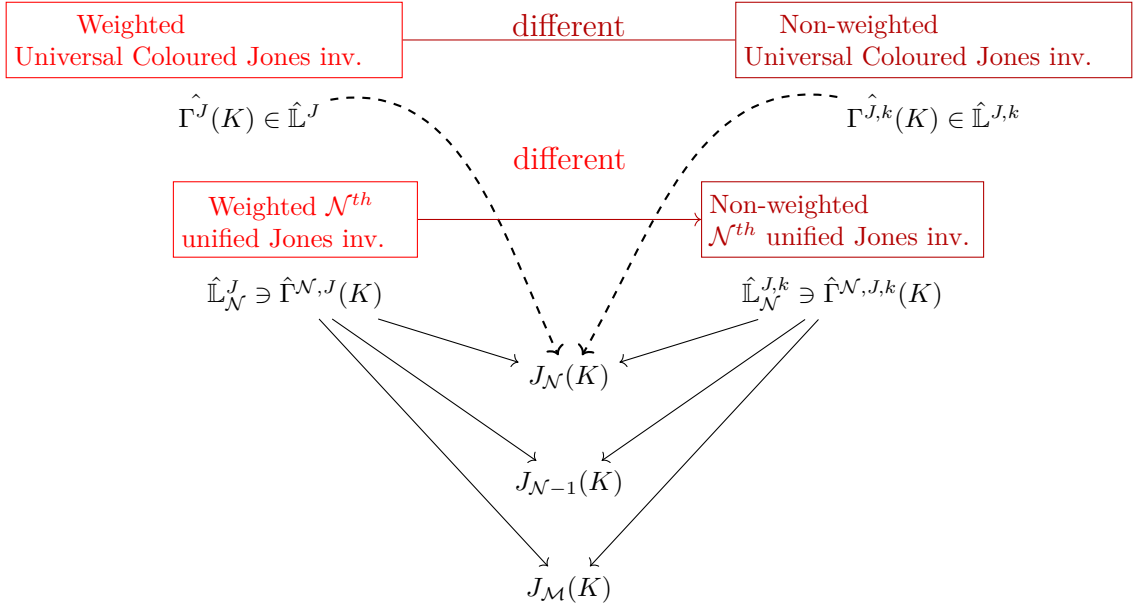
\begin{figure}[H]
\begin{equation*}
\begin{aligned}
&\hspace{2mm} \LLhJ= \underset{\longleftarrow}{\mathrm{lim}} \ \LLNJ \hspace{79mm}\LLhJk= \underset{\longleftarrow}{\mathrm{lim}} \ \LLNJk\\  
&\hspace{-9mm} \LLNJ= \Z[w^1_1,...,w^{1}_{\cN-1},x^{\pm1}, d^{\pm 1}]/  \hspace{58mm}\LLNJk= \Z[x^{\pm1}, d^{\pm 1}] / \\
&\hspace{-9mm} \bigcap_{N \leq \cN} \langle \left. x-d^{1-N}, w^1_j-1 \text{ if } j\leq N-1, \right. w^1_j \ \text{ if } j\geq N, j\in \{1,...,\cN-1\}  \rangle\hspace{5mm} \langle \prod_{i=2}^{\cN} (xd^{i-1}-1) \rangle \end{aligned}
\end{equation*}
\begin{center}
\hspace*{-10mm}\begin{tikzpicture}
[x=1.2mm,y=1.4mm]
\node (Jl)               at (0,8)    {$ \LLNJk \ni \PAAJk$};
\node (Jlk)               at (-60,8)    {$ \LLNJ \ni \PAAJw$};

\node (Jn)               at (-30,0)    {$J_{\cN}(K)$};
\node (Jn-1)               at (-30,-10)    {$J_{\cN-1}(K)$};
\node (Jm)               at (-30,-20)    {$J_{\cM}(K)$};


\node(IJ)[draw,rectangle,inner sep=3pt,color=vdarkred,text width=3.5cm,minimum height=1cm] at (0,15) {Non-weighted \\ $\cN^{th}$ unified Jones inv.};
\node(IJw)[draw,rectangle,inner sep=3pt,color=red,text width=3cm,minimum height=1cm] at (-60,15) { \ \ \ Weighted $\cN^{th}$ \\  unified Jones inv.};
\node(UJ)[draw,rectangle,inner sep=3pt,color=vdarkred,text width=5cm,minimum height=1cm] at (10,32) { \ \ \ \ Non-weighted \\ Universal Coloured Jones inv.};
\node(UJw)[draw,rectangle,inner sep=3pt,color=red,text width=5cm,minimum height=1cm] at (-70,32) { \ \ \ \ \ \ \ Weighted \\ Universal Coloured Jones inv.};

\node (J)               at (10,25)    {$\IJJk \in \LLhJk$};
\node (Jw)               at (-65,25)    {$\IJJw \in \LLhJ$};

\draw[->]             (Jl)      to node[right,xshift=2mm,font=\large]{}   (Jn);
\draw[->]             (Jl)      to node[right,xshift=2mm,font=\large]{}   (Jn-1);
\draw[->]             (Jl)      to node[right,xshift=2mm,font=\large]{}   (Jm);
\draw[->]             (Jlk)      to node[right,xshift=2mm,font=\large]{}   (Jn);
\draw[->]             (Jlk)      to node[right,xshift=2mm,font=\large]{}   (Jn-1);
\draw[->]             (Jlk)      to node[right,xshift=2mm,font=\large]{}   (Jm);

\draw[->,,out=-190, in=60,dashed,thick]             (J)      to node[right,xshift=2mm,font=\large]{}   (Jn);
\draw[->,,out=10, in=115,dashed,thick]             (Jw)      to node[right,xshift=4mm,yshift=2mm,font=\large]{\color{red}different}   (Jn);



\draw[->, vdarkred]             (IJw)      to node[right,xshift=2mm,font=\large]{}   (IJ);
\draw[-, vdarkred]             (UJ)      to node[yshift=2mm,font=\large]{\text{different}}   (UJw);

\end{tikzpicture}
\end{center}
\caption{The weighted and non-weighted universal Jones invariants for knots}\label{re1}
\end{figure}

\section{Refined invariants in modules over Habiro type rings} \label{S:refined} In this part we introduce two Habiro type rings, which we call the quantised and extended Habiro rings. They are two multi-variable extensions of Habiro's famous rings from \cite{H3} and \cite{H2}. In this manner, we will present the two conjectures concerning lifts of our two invariants towards refined universal link invariants in modules over the quantised Habiro ring and extended Habiro ring.
\subsection{Formulas of the two universal rings}

We recall that the construction of our two universal invariants $\I$ and $\IJJ$ starts from the sequences of weighted Lagrangian intersections 
$$ \PA \in \LL=\Z[w^1_1,...,w^{l}_{\cN-1},u_1^{\pm 1},...,u_l^{\pm 1},x_1^{\pm 1},...,x_l^{\pm 1},y^{\pm 1}, d^{\pm 1}].$$
As we have seen, the next step is dedicated to the definition of two sequence of nested ideals in the ring $\LL$, which we denoted by:
$$\cdots \supseteq \tilde{I}_{\cN} \supseteq \tilde{I}_{\cN+1} \supseteq \cdots$$ 
$$\cdots \supseteq \tilde{I}^J_{\cN} \supseteq \tilde{I}^J_{\cN+1} \supseteq \cdots.$$ 
In this manner, we obtain two sequences of associated quotient rings, with maps between them:
\begin{equation}
\begin{aligned}
& \hspace{10mm} l_{\cN} \hspace{10mm} l_{\cN+1}  \\
&\cdots \LLN \leftarrow \LLNN \leftarrow \cdots \ \ \ \ \ \ \ \ \ \ \ \ \ \ \LLh\\
&\hspace{10mm} l^J_{\cN} \hspace{10mm} l^J_{\cN+1}\\
&\cdots \LLNJ \leftarrow \LLNNJ \leftarrow \cdots \ \ \ \ \ \ \ \ \ \ \ \ \ \ \LLhJ.
\end{aligned}
\end{equation}

These two sequences of rings have a very precise description, as we have seen in Lemma \ref{ringA} and Lemma \ref{ringJ}. We recall their formulas below. 
\begin{prop}[Structure of the quotient rings] \label{structure}The sequence of nested ideals in $\LL$ and associated quotient rings that we used for the two universal invariants have the following form:

{\bf $\bullet$ Semi-simple ideals}
\begin{equation} \label{f10}
\begin{aligned}
\tilde{I}^J_{\cN}&=  \langle u_i-x_i,\bigcap_{\bar{N} \leq \cN} \left(y\left(d-d^{-1}\right)-(d^{N^C_1}-d^{-N^C_1})\right.,\\
& \ \ \ \ \ \ \ \ \ \ \ \ \ \ \  \ \  \left. x_i-d^{1-N_i}, w^k_j-1 \text{ if } j\leq N_k-1 \right.,\\
& \ \ \ \ \ \ \ \ \ \ \ \ \ \ \ \ \ \left. w^k_j \ \text{ if } j\geq N_{k},   i,k\in \{1,...,l\}, j\in \{1,...,\cN-1\} \right) \rangle.\\
\end{aligned}
\end{equation}

{\bf $\bullet$ Non semi-simple ideals}
\begin{equation}\label{f20}
\begin{aligned}
\tilde{I}_{\cN}&= \bigcap_{\cM \leq \cN} \langle  (u_i-x_i^{1-\cM}), y\left(x^{\cM}_{C(1)}-x^{-\cM}_{C(1)}\right)-(x_{C(1)}-x^{-1}_{C(1)}) \ ,\varphi_{2\cM}(d)\\
& \left. \ \ \ \ \ \ \ \ \ \ \ w^k_j-1, \ \text{ if } j\leq \cM-1\right.,\\
& \ \ \ \ \ \ \ \ \ \  \  w^k_j \ \text{ if } j\geq \cM,  k,i\in \{1,...,l\}, j\in \{1,...,\cN-1\} \rangle.\\
\end{aligned}
\end{equation}

{\bf $\bullet$ Semi-simple quotients}
\begin{equation} \label{f1}
\begin{aligned}
\LLNJ= \LL / & \langle u_i-x_i,\bigcap_{\bar{N} \leq \cN} \left(y\left(d-d^{-1}\right)-(d^{N^C_1}-d^{-N^C_1})\right.,\\
& \ \ \ \ \ \ \ \ \ \ \ \ \ \ \  \ \  \left. x_i-d^{1-N_i}, w^k_j-1 \text{ if } j\leq N_k-1 \right.,\\
& \ \ \ \ \ \ \ \ \ \ \ \ \ \ \ \ \ \left. w^k_j \ \text{ if } j\geq N_{k},   i,k\in \{1,...,l\}, j\in \{1,...,\cN-1\} \right) \rangle.
\end{aligned}
\end{equation}

{\bf $\bullet$ Non semi-simple quotients}
\begin{equation}\label{f2}
\begin{aligned}
\LLN= & \LL /  \bigcap_{\cM \leq \cN} \langle  (u_i-x_i^{1-\cM}), y\left(x^{\cM}_{C(1)}-x^{-\cM}_{C(1)}\right)-(x_{C(1)}-x^{-1}_{C(1)}), \ \varphi_{2\cM}(d)\\
& \left. \ \ \ \ \ \ \ \ \ \ \ w^k_j-1, \ \text{ if } j\leq \cM-1\right.,\\
& \ \ \ \ \ \ \ \ \ \  \  w^k_j \ \text{ if } j\geq \cM,  k,i\in \{1,...,l\}, j\in \{1,...,\cN-1\}  \rangle.
\end{aligned}
\end{equation}

\end{prop}

Then, the {Universal Jones link invariant} and {Universal ADO link invariant} belong to the projective limits of the above quotient rings: \begin{equation}
\begin{aligned}
&\hspace{-20mm} \IJJ\in \LLhJ= \underset{\longleftarrow}{\mathrm{lim}} \ \LLNJ & & \hspace{20mm} \I\in \LLh= \underset{\longleftarrow}{\mathrm{lim}} \ \LLN.
\end{aligned}
\end{equation}

\subsection{Refined universal link invariants in modules over the Habiro ring}\phantom{A}\label{ss:refH}\\

In Subsection \ref{ssrJ} and Subsection \ref{ssrA} we constructed larger universal rings (that surject onto $\LLhJ$ and $\LLh$), which we call refined universal rings. We introduced them in Definition \ref{ssrJ0} and \ref{r1A} and denoted them by ${\LLhJ}'$ and ${\LLh}'$. We recall their expressions below.
\begin{defn}[Refined quotient ideals and refined rings]\phantom{A}\\\label{extrg}
{\bf $\bullet$ Semi-simple refined quotients} 

\begin{equation}
\begin{aligned}
{\LLNJ}'& = \LL/ \langle u_i-x_i,\prod_{ j=2}^{\cN}(y\left(d-d^{-1}\right)-(d^{j}-d^{-j})), \prod_{ j=1}^{\cN-1} (x_i-d^{1-j}),\\
& \ \ \ \ \ w^k_1-1,w^k_{j}(w^k_{j}-1) \mid 1 \leq k,i \leq l, 2 \leq j \leq \cN-1 \rangle\\
&{\LLhJ}'= \underset{\longleftarrow}{\mathrm{lim}} \ {\LLNJ}'.
\end{aligned}
\end{equation}

\

{\bf $\bullet$ Non semi-simple refined quotients}

\begin{equation}
\begin{aligned}
\LLN'& =\LL/\bigcap_{\cM \leq \cN} \langle  (u_i-x_i^{1-\cM}), y\left(x^{\cM}_{C(1)}-x^{-\cM}_{C(1)}\right)-(x_{C(1)}-x^{-1}_{C(1)}), \ d^{2\cM}-1\\
& \left. \ \ \ \ \ \ \ \ \ \ \ w^k_j-1, \ \text{ if } j\leq \cM-1\right.,\\
& \ \ \ \ \ \ \ \ \ \  \  w^k_j \ \text{ if } j\geq \cM,  k,i\in \{1,...,l\}, j\in \{1,...,\cN-1\} \rangle.
\\
&{\LLh}'= \underset{\longleftarrow}{\mathrm{lim}} \ {\LLN}'.
\end{aligned}
\end{equation}
\end{defn}
Now we will introduce two extensions of Habiro's rings, as follows.
\begin{defn}[Quantised and Extended Habiro ring]\label{qrg}\phantom{!}

a) Let $\hat{\mathbb L}^{H,q}$ be the limit of the rings:
\begin{equation}
\mathbb L^{H,q}_{\cN}= \Z[x_1^{\pm1}, \cdots, x_l^{\pm1}, d^{\pm 1}] /\langle \prod_{j=2}^{\cN} (x_i d^{j-1}-1),1 \leq i \leq l \rangle
\end{equation}
and we call this the ``{\em quantised Habiro ring}''.

b) Let $\hat{\mathbb L}^{H,e}$ be the limit of the rings:
\begin{equation}
\mathbb L^{H,e}_{\cN}= \Z[x_1^{\pm1}, \cdots, x_l^{\pm1}, d^{\pm 1}] /\langle \prod_{\cM=2}^{\cN} (d^{2\cM}-1) \rangle
\end{equation}
and we call this the ``{\em extended Habiro ring}''.
 
\end{defn}
\begin{rmk}[Modules structure over the Habiro rings]\phantom{!}\\
a) The {semi-simple refined universal ring} ${\LLhJ}'$ is a module over the {quantised Habiro ring} $\hat{\mathbb L}^{H,q}$. 
\begin{equation}
\hat{\mathbb L}^{H,q} \curvearrowright {\LLhJ}'.
\end{equation}
b) Dually, the {non semi-simple refined universal ring} ${\LLh}'$ is a module over the {extended Habiro ring} $\hat{\mathbb L}^{H,e}$, as below:
\begin{equation}
\hat{\mathbb L}^{H,e} \curvearrowright {\LLh}'.
\end{equation}
\end{rmk}
With this set-up, we obtain Conjecture \ref{l1} and Conjecture \ref{l2} which state the following (see also Figure \ref{conj}).
\begin{conjecture}[\em \bf Universal Jones invariant lifts over the quantised Habiro ring] \phantom{A}
The universal Jones link invariant $\IJJ\in \LLhJ$ lifts to the {\color{blue} Refined universal Jones link invariant} $\IRJJ(L)\in {\LLhJ}'$, which belongs to a ring that is a module over the {\em \color{blue} quantised Habiro ring} $\hat{\mathbb L}^{H,q}$.
\end{conjecture}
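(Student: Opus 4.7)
The plan is to establish Markov II invariance for the refined weighted intersection $\PAAJr \in {\LLNJ}'$ at every level $\cN$. Since Markov I (conjugation invariance) is already supplied by the braid invariance proved in Subsection \ref{ssrJ}, stabilisation is the only remaining content. Once this is in place, the sequence $\{\PAAJr(\beta_n)\}_{\cN}$ descends to a link invariant $\IRJJ(L) \in {\LLhJ}'$, and the compatibility $r^J(\IRJJ(L)) = \IJJ$ follows automatically since $r^J(\PAAJr) = \PAAJ$ at every finite level by construction, and $\PAAJ$ is already known to be a link invariant by Theorem \ref{INV}.

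The first step is to analyse geometrically how the classes $\FA$ and $\LA$ transform under the positive and negative stabilisations $\beta_n \mapsto \beta_{n+1} \sigma_n^{\pm 1}$. Passing to $n+1$ strands enlarges the punctured disc by two new $p$-punctures and requires $\cN - 1$ extra particles to be placed on a new arc and a new oval near the added strand. Using the explicit monodromy formulas from Notation \ref{twsyst}, one can express the stabilised intersection as a sum of the original intersection multiplied by localised contributions coming from the new geometric data, naturally graded by the new weight variables $w^{C(n+1)}_j$ and by an appropriate power of $x_{C(n+1)}$.

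The difference $\PAAJr(\beta_{n+1}\sigma_n^{\pm 1}) - \PAAJr(\beta_n)$ certainly lies in the full ideal $\tilde{I}^{J}_{\cN} = \bigcap_{\bar{N}\leq\cN}\mathrm{Ker}(\snJ)$, because Theorem \ref{THEOREMAU} guarantees Markov II invariance after every specialisation $\snJ$. The refinement step, which is the main obstacle, is to upgrade this intersection of specialisation kernels to the smaller product-type ideal ${\tilde{I}^{J}_{\cN}}'$ generated by $\prod_{j=1}^{\cN-1}(x_i - d^{1-j})$, $\prod_{j=2}^{\cN}(y(d-d^{-1}) - (d^j - d^{-j}))$, $w^k_1 - 1$ and $w^k_j(w^k_j - 1)$. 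The strategy is to exploit the geometric stability described in Subsection \ref{asym}: for fixed $\bar{i} \in \{\bar{0},\ldots,\overline{\cN-1}\}$, the intersections $\langle(\beta_n \cup \mathbb I_{n+2})\FA, \LA\rangle$ at level $\cN$ and $\cN+1$ agree verbatim, which forces the stabilisation difference to be expressible directly in terms of the product-type generators rather than merely in the intersection of specialisation kernels.

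The hard part is exactly this refinement. A natural attack is double induction on $\cN$ and on the colouring $\bar{N}$, combined with Lagrangian skein-type identities implicit in the geometric stability, together with the explicit behaviour of the weight variables $w^k_j$ under stabilisation; the idempotent-type relations $w^k_j(w^k_j - 1)$ are precisely what one expects to arise when the new particle on an oval is either present or absent, and the factorised relations in $x_i$ and $y$ are what one expects from the layered structure of the new arc. Finally, the module structure over the quantised Habiro ring $\hat{\mathbb L}^{H,q}$ follows from direct inspection of the explicit generators of ${\tilde{I}^{J}_{\cN}}'$: the relations $\prod_{j=1}^{\cN-1}(x_i - d^{1-j})$ coincide, up to units and an index shift, with the defining relations $\prod_{j=2}^{\cN}(x_i d^{j-1} - 1)$ of $\mathbb L^{H,q}_{\cN}$, so the natural map sending $x_i, d$ in $\mathbb L^{H,q}_{\cN}$ to the corresponding classes in ${\LLNJ}'$ is well defined, compatible with the projective system, and descends to the required action of $\hat{\mathbb L}^{H,q}$ on ${\LLhJ}'$.
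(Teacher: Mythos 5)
You should first note that the paper itself does not prove this statement: it is stated (twice) as a \emph{conjecture}, and the surrounding text only supplies the scaffolding — the refined ideals ${\tilde{I}^{J}_{\cN}}'\subseteq \tilde{I}^{J}_{\cN}$ with the surjections $r^J_{\cN}\colon {\LLNJ}'\to\LLNJ$ (Lemma \ref{r1J}), the fact that $\PAAJr(\beta_n)\in{\LLNJ}'$ assembles into a well-defined \emph{braid} invariant recovering the coloured Jones polynomials under specialisation, and the observation that ${\LLhJ}'$ is a module over $\hat{\mathbb L}^{H,q}$. The open content is precisely that the refined class is a \emph{link} invariant, i.e.\ that the differences produced by Markov moves vanish already in ${\LLNJ}'$ and not merely in $\LLNJ$.

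Your proposal does not close this gap; it reproduces the scaffolding and leaves the essential step as a declared ``hard part''. Concretely: the claim that the stabilisation difference lies in $\tilde{I}^{J}_{\cN}=\bigcap_{\bar N\leq\cN}\mathrm{Ker}(\snJ)$ is correct (it follows from Theorem \ref{THEOREMAU}), but upgrading this to membership in the much smaller product-type ideal ${\tilde{I}^{J}_{\cN}}'$ is exactly what the conjecture asserts, and you only gesture at it via ``double induction'' and ``Lagrangian skein-type identities implicit in the geometric stability'' without exhibiting any identity or estimate that would place the difference in the ideal generated by $\prod_{j}(x_i-d^{1-j})$, $\prod_j\bigl(y(d-d^{-1})-(d^j-d^{-j})\bigr)$, $w^k_1-1$, $w^k_j(w^k_j-1)$. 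Moreover, the stability invoked from Subsection \ref{asym} concerns the change of level $\cN\to\cN+1$ for a \emph{fixed} braid, not the Markov stabilisation $\beta_n\mapsto\beta_{n+1}\sigma_n^{\pm1}$, so it does not ``force'' the refinement as claimed. You also assume Markov~I is already settled in ${\LLNJ}'$, but the paper's lemma only produces a well-defined invariant of the braid $\beta_n$, not invariance of the closure under conjugation, so that too would need an argument at the refined level. The parts of your write-up that are sound — the compatibility $r^J(\PAAJr)=\PAAJ$ at each level and the identification, up to units $d^{j-1}$, of $\prod_{j=2}^{\cN}(x_id^{j-1}-1)$ with the $x$-generators of ${\tilde{I}^{J}_{\cN}}'$ giving the $\hat{\mathbb L}^{H,q}$-module structure — are already in the paper and do not constitute progress on the conjecture.
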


\begin{conjecture}[\em \bf Universal ADO invariant lifts over the extended Habiro ring] \phantom{A}
The universal ADO link invariant $\I\in \LLh$ lifts to the {\color{blue} Refined universal ADO link invariant} $\IR(L)\in {\LLh}'$, which belongs to a ring that is a module over the {\em \color{blue} extended Habiro ring} $\hat{\mathbb L}^{H,e}$.
\end{conjecture}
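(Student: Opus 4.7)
The plan is to establish $\IR(L)$ as a link invariant in ${\LLh}'$ by lifting the already-proven link invariance of $\PAA \in \LLN$ (Theorem \ref{levN}) to the refined quotient $\LLN'$, level by level, and then passing to the projective limit. Recall from Subsection \ref{ssrA} that $\PAAR \in \LLN'$ is already a braid invariant and that $r_{\cN}(\PAAR) = \PAA$; since surjectivity persists under inverse limits, compatibility with $r \colon {\LLh}' \to \LLh$ and the globalisation $\usn(\IR(L)) = \AN$ will be automatic once the Markov invariance of each $\PAAR$ is verified.

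Concretely, for a Markov equivalence $\beta_n \sim \beta_n'$, I would compute the difference $\PA(\beta_n) - \PA(\beta_n')$ as an element of $\LL$, and show that this difference already belongs to the smaller ideal $\tilde{I}_{\cN}'$, and not merely to $\tilde{I}_{\cN}$. Conjugation invariance should be essentially formal, coming from the $B^{C}_{2n+2}$-action on $\HA$ recalled in Proposition \ref{intersection} together with the cyclic symmetry of the intersection form. The substantive step is Markov stabilisation: adding a crossing introduces an additional strand, which increases both the number of particles and the number of arcs, and reorganises the state sum over $\bar i \in \{\bar 0,\dots,\overline{\cN-1}\}$. I would analyse this change via the stability property sketched in Subsection \ref{asym}, together with the local stabilisation computations for Lagrangian intersections from \cite{CrI}.

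The hard part is precisely to show that the Markov difference lies in the refined ideal $\tilde{I}_{\cN}'$ rather than only in the intersection $\bigcap_{\cM \le \cN} \ker(\sAt)$. The distinction is that the refined ideal consists of \emph{products} of level-by-level obstructions $(u_i - x_i^{1-\cM})$, $(y(x^{\cM}_{C(1)}-x^{-\cM}_{C(1)})-(x_{C(1)}-x^{-1}_{C(1)}))$, and the cyclotomic factor $\prod_{\cM=2}^{2\cN}(d^{\cM}-1)$, whereas $\tilde{I}_{\cN}$ allows intersections over $\cM$. The weighted state-sum structure, with the weights $w^{C(2)}_{i_1} \cdots w^{C(n)}_{i_{n-1}}$ selecting the relevant level at each summand, should enforce this factorised form automatically: terms surviving the $\bar i$-summation after a Markov move should each carry a factor annihilated by the corresponding $\cM$-specialisation, and the sum of all such terms should reassemble into the product. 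A careful bookkeeping of the monodromies $\bar\Phi^{\bar{\cN}}(\sigma_i)$, $\bar\Phi^{\bar{\cN}}(\bar\sigma_i)$, $\bar\Phi^{\bar{\cN}}(\gamma)$, $\bar\Phi^{\bar{\cN}}(\delta)$ from Notation \ref{twsyst}, combined with the cyclotomic decomposition of the $d$-dependence of the pairing, should provide the required factorisations.

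Finally, the module structure of ${\LLh}'$ over the extended Habiro ring $\hat{\mathbb L}^{H,e}$ is immediate from the definitions: the defining relation $\prod_{\cM=2}^{2\cN}(d^{\cM}-1)$ of $\mathbb L^{H,e}_{\cN}$ appears verbatim among the generators of $\tilde{I}_{\cN}'$, so the natural inclusion $\Z[x_1^{\pm 1},\dots,x_l^{\pm 1},d^{\pm 1}] \hookrightarrow \LL$ descends to a ring map $\mathbb L^{H,e}_{\cN} \to \LLN'$ and, by passage to the inverse limit, to a ring map $\hat{\mathbb L}^{H,e} \to {\LLh}'$ giving ${\LLh}'$ the required module structure over the extended Habiro ring.
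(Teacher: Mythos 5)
The statement you are proving is not proved in the paper at all: it is stated (twice) as a conjecture, and Section \ref{ssrA} supplies only the supporting structure --- Lemma \ref{r1A} showing $\tilde{I}_{\cN}' \subseteq \tilde{I}_{\cN}$ and giving the surjections $r_{\cN}$ and $r$, the observation that ${\LLh}'$ is a module over $\hat{\mathbb L}^{H,e}$ (which, as you say, is immediate because $\prod_{\cM=2}^{2\cN}(d^{\cM}-1)$ is among the generators of $\tilde{I}_{\cN}'$), and a lemma that $\IR(\beta_n)$ is a \emph{braid} invariant. The open content of the conjecture is exactly the step you flag as "the hard part" and then dispose of with "should enforce this factorised form automatically": that the change of $\PA$ under Markov moves lies in the refined ideal $\tilde{I}_{\cN}'$ and not merely in $\tilde{I}_{\cN}=\bigcap_{\cM\leq\cN}\ker(\sAt)$. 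No bookkeeping of monodromies is actually carried out, no local computation for stabilisation is given, and no mechanism is exhibited by which the weighted state sum produces the \emph{product} relations $\prod_{\cM=2}^{\cN}(u_i-x_i^{1-\cM})$, etc., rather than just membership in each individual kernel. As written, your text is a research plan for attacking the conjecture, not a proof of it.

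Two further points where the sketch would need repair even as a plan. First, conjugation invariance is not "essentially formal" in the refined quotient. The paper's proof of invariance of $\PAA$ at level $\cN$ (Theorem \ref{levN}) never shows directly that the Markov difference of $\PA$ vanishes geometrically; it uses that $\snbb:\LLN\rightarrow\LNt$ is \emph{injective} (being induced by quotienting precisely by the kernel) and that the image $\ANt$ is a product of known link invariants. That trick is unavailable for $\LLN'=\LL/\tilde{I}_{\cN}'$: the induced map $\LLN'\rightarrow\LNt$ has kernel $\tilde{I}_{\cN}/\tilde{I}_{\cN}'$, which is nonzero in general, so invariance in $\LLN'$ (for conjugation as much as for stabilisation) requires a genuinely new argument --- this is why the paper can only state a conjecture. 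Second, your remark that "surjectivity persists under inverse limits" and hence compatibility is "automatic" glosses over the fact that you must also check that the putative refined invariants are compatible with the transition maps $\LLNN'\rightarrow\LLN'$ before a limit element $\IR(L)\in{\LLh}'$ exists; the paper verifies the analogous compatibility for $\PAA$ via Lemma \ref{lemma2} and Remark \ref{rk1}, and again that route passes through the injectivity that fails here.
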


\begin{figure}[H]
\begin{center}
\begin{equation*}
\begin{aligned}
&\hspace{-16mm} \mathbb L^{H,q}_{\cN}= \Z[x_1^{\pm1}, \cdots, x_l^{\pm1}, d^{\pm 1}] / \hspace{25mm}
\mathbb L^{H,e}_{\cN}= \Z[x_1^{\pm1}, \cdots, x_l^{\pm1}, d^{\pm 1}] / \\
&\hspace{-4mm}\langle \prod_{j=2}^{\cN} (x_i d^{j-1}-1),1 \leq i \leq l \rangle \hspace{28mm} \langle \prod_{\cM=2}^{\cN} (d^{2\cM}-1) \rangle\\
&\hspace{-19mm}\text{Quantised Habiro ring          } \ \  \hat{\mathbb L}^{H,q} \curvearrowright \hspace{34mm} \curvearrowleft \hat{\mathbb L}^{H,e} \ \text{     Extended Habiro ring}
\end{aligned}
\vspace*{-3mm}
\end{equation*}
\phantom{A}
\hspace*{-25mm}\begin{tikzpicture}
[x=1.1mm,y=1.1mm]

\node(UAR)[draw,rectangle,inner sep=3pt,color=vdarkred,text width=6cm,minimum height=0.5cm] at (0,50) {\hspace{15mm}Refined \\ \ \ \ \ \  Universal ADO link invariant\\ \ \  over {extended Habiro Ring} $\mathbb L^{H,e}_{\cN}$};
\node(UJR)[draw,rectangle,inner sep=3pt,color=vdarkred,text width=6cm,minimum height=0.5cm] at (-60,50) { \hspace{15mm} Refined \\  \ \ \ \ \ Universal Jones link invariant \\ \ \  over {quantised Habiro Ring} $\mathbb L^{H,q}_{\cN}$ };

\node(UA)[draw,rectangle,inner sep=3pt,color=red] at (0,30) {Universal ADO link invariant};
\node(UJ)[draw,rectangle,inner sep=3pt,color=red] at (-60,30) {Universal Jones link invariant};
\node (J')               at (-48,60)    {$\IRJJ(L) \in {\LLhJ}'$};
\node (A')               at (-15,60)    {$\IR(L) \in \LLh'$};

\node (J)               at (-52,25)    {$\IJJ \in \LLhJ$};
\node (A)               at (-12,25)    {$\I \in \LLh$};

\draw[->>,dashed, red]             (UJ)      to node[left,xshift=-2mm,font=\large]{${\small \text{Conjecture \ref{l1}}}$}   (UJR);
\draw[->>,dashed, red]             (UA)      to node[right,xshift=2mm,font=\large]{${\small \text{Conjecture \ref{l2}}}$}   (UAR);

\end{tikzpicture}
\caption{Refined universal link invariants in modules over the Habiro ring}
\label{conj}\end{center}

\end{figure}
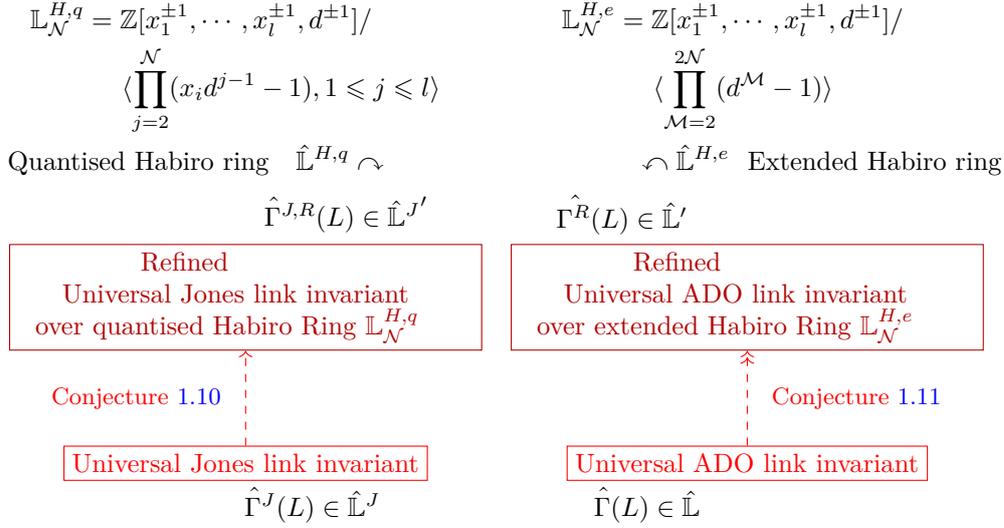

\section{Geometric encoding of semi-simplicity vs non semi-simplicity}\label{ssmodif}
 
From the perspective of representation theory, the extension from quantum invariants of knots to the general link invariant case requires a subtle procedure which involves extra algebraic data.  It originates in the representation theory of the quantum group that defined initially these invariants. More specifically, the underlying algebraic tools needed for the construction of quantum link invariants differ from the semi-simple case to the non-semisimple one, as follows.

\subsection{Representation theoretic origin of non semi-simplicity}

The construction of invariants for links in the semi-simple case involves quantum traces and quantum dimensions (\cite{J}, \cite{RT}). On the other hand, the core of the construction of non semi-simple quantum invariants uses as building blocks modified traces and modified quantum dimensions, following \cite{GPT}, \cite{ADO}. 

In this manner the set of coloured Jones polynomials for links come from the  representation theory of the quantum group $U_q(sl(2))$ at generic $q$ via quantum dimensions, whereas coloured Alexander polynomials for links come from the  representation theory of the same quantum group at roots of unity via modified quantum dimensions. 

\subsection{Pivotal structures and Modified dimensions via the local system}

In the next part we will create a dictionary and explain how we codify these essential algebraic tools provided by modified dimensions through topological lenses. As discussed, our construction uses the topological set-up provided by weighted Lagrangian intersections. These intersections are based on local systems on the configuration space of the punctured disc.

 We will see how we use the geometric data provided by the {monodromies of our local system} and the {variables of our Lagrangian intersection} in order to encode {modified dimensions}. Secondly, we will investigate which variables of our local system capture the {\em difference} between the semi-simplicity of the universal Jones invariant and the non semi-simplicity of the universal ADO invariant.

Let us look at the structure of the quotient rings from Proposition \ref{structure} that lead to our two universal rings. We remark that for the case of the universal coloured Jones invariant, the variables $\bar{u}$ play no special role, since they are specialised in the same way as $\bar{x}$ in all quotients. However, for the roots of unity case, $\bar{u}$ capture deep information which is related to the colour of our non semi-simple invariant. Following the proof of our topological model for the ADO invariants, we remark that the underlying algebraic tool that encodes this relation is the pivotal structure coming from representation theory of the quantum group at roots of unity. This is the reason that led us to consider non-trivial relations between the variables $\bar{x}$ and $\bar{u}$ in the quotient rings. 

Moreover, following the dictionary to the algebraic side, we codified the modified dimension by the variable $y$, which is another strata of the non semi-simple origin of invariants at roots of unity. Let us summarise this in Figure \ref{par}. 

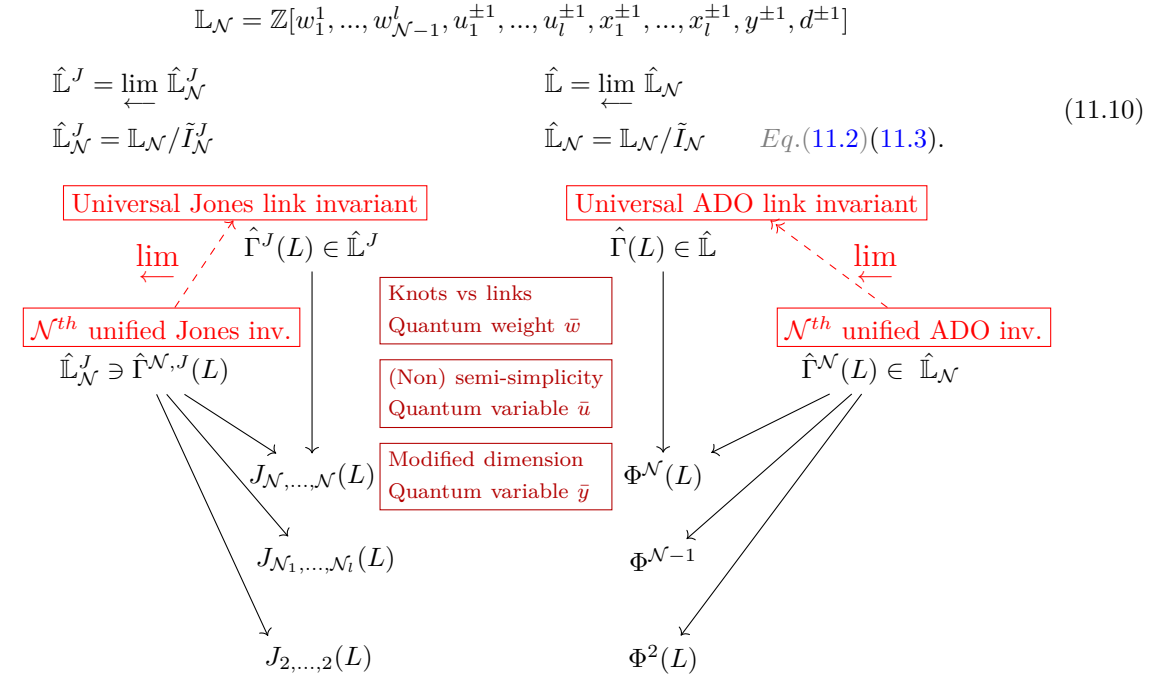
\begin{figure}[H]
$ \hspace{25mm} \small \LL=\Z[w^1_1,...,w^{l}_{\cN-1},u_1^{\pm 1},...,u_l^{\pm 1},x_1^{\pm 1},...,x_l^{\pm 1},y^{\pm 1}, d^{\pm 1}] $
\begin{equation}
\begin{aligned}
&\hspace{-20mm} \LLhJ= \underset{\longleftarrow}{\mathrm{lim}} \ \LLNJ & & \hspace{40mm} \LLh= \underset{\longleftarrow}{\mathrm{lim}} \ \LLN \\
&\hspace{-20mm} \LLNJ= \LL /\tilde{I}^{J}_{\cN} & & \hspace{40mm}\LLN= \LL/\tilde{I}_{\cN} { \ \ \ \ \ \ \color{gray} Eq. \eqref{f1}} \eqref{f2}.\\
\end{aligned}
\end{equation}
\begin{center}
\hspace*{-10mm}\begin{tikzpicture}
[x=1.1mm,y=1.1mm]
\node (Al)               at (16,10)    {$ \PAA \in \ \LLN$};
\node (Jl)               at (-72,10)    {$ \LLNJ \ni \PAAJ$};

\node (Jn)               at (-52,-3)    {$J_{\cN,...,\cN}(L)$};
\node (Jn-1)               at (-52.5,-13)    {$ \ \ \ \ J_{\cN_1,...,\cN_l}(L)$};
\node (Jm)               at (-56,-25)    {$ \ \ \ \ \ \ \ \ \ J_{2,...,2}(L)$};

\node (An)               at (-10,-3)    {${\AN}$};
\node (An-1)               at (-10,-13)    {${\Phi^{\cN-1}}$};
\node (Am)               at (-10,-25)    {${\Phi^2}(L)$};

\node(V1)[draw,rectangle,inner sep=3pt,color=vdarkred,text width=2.84cm,minimum height=0.5cm] at (-30,17) {\footnotesize{Knots vs links \\ Quantum weight $\bar{w}$} };

\node(V2)[draw,rectangle,inner sep=3pt,color=vdarkred,text width=2.84cm,minimum height=0.5cm] at (-30,7) {\footnotesize{(Non) semi-simplicity \\ Quantum variable $\bar{u}$} };

\node(V2)[draw,rectangle,inner sep=3pt,color=vdarkred,text width=2.84cm,minimum height=0.5cm] at (-30,-3) {\footnotesize{Modified dimension \\ Quantum variable $\bar{y}$} };


\node(IA)[draw,rectangle,inner sep=3pt,color=red] at (20,15) {$\cN^{th}$ unified ADO inv.};
\node(IJ)[draw,rectangle,inner sep=3pt,color=red] at (-70,15) {$\cN^{th}$ unified Jones inv.};

\node(UA)[draw,rectangle,inner sep=3pt,color=red] at (0,30) {Universal ADO link invariant};
\node(UJ)[draw,rectangle,inner sep=3pt,color=red] at (-60,30) {Universal Jones link invariant};

\node (J)               at (-52,25)    {$\IJJ \in \LLhJ$};
\node (A)               at (-10,25)    {$\I \in \LLh$};

\draw[->]             (Jl)      to node[right,xshift=2mm,font=\large]{}   (Jn);
\draw[->]             (Jl)      to node[right,xshift=2mm,font=\large]{}   (Jn-1);
\draw[->]             (Jl)      to node[right,xshift=2mm,font=\large]{}   (Jm);

\draw[->]             (J)      to node[right,xshift=2mm,font=\large]{}   (Jn);


\draw[->]             (A)      to node[right,xshift=2mm,font=\large]{}   (An);

\draw[->]             (Al)      to node[right,xshift=2mm,font=\large]{}   (An);
\draw[->]             (Al)      to node[right,xshift=2mm,font=\large]{}   (An-1);
\draw[->]             (Al)      to node[right,xshift=2mm,font=\large]{}   (Am);

\draw[->,dashed, red]             (IJ)      to node[left,xshift=-2mm,font=\large]{$\underset{\longleftarrow}{\mathrm{lim}}$}   (UJ);
\draw[->>,dashed, red]             (IA)      to node[right,xshift=2mm,font=\large]{$\underset{\longleftarrow}{\mathrm{lim}}$}   (UA);

\end{tikzpicture}
\end{center}

\vspace{-4mm}
\caption{Parallel: universal Jones and universal ADO link invariants}\label{par}
\end{figure}

\subsection{Geometric variables encoding quantum tools} Following the above discussion, we see that our weighted Lagrangian intersection $$\PA \in \LL=\Z[w^1_1,...,w^{l}_{\cN-1},u_1^{\pm 1},...,u_l^{\pm 1},x_1^{\pm 1},...,x_l^{\pm 1},y^{\pm 1}, d^{\pm 1}]$$ is given by grading the Lagrangian intersections in configuration spaces by $5$ types of variables: $$\{\bar{x}, \ y \ , \ d \ , \bar{u} \ , \bar{w}\}.$$ These variables come from geometry, being the variables of our local system $\LL$. On the other hand they encode quantum data, as we summarize in the dictionary presented below.
 \begin{enumerate}
\item[•]{\color{blue} \em Variables of the quantum invariant -- Winding around the link} 
\item[] The variables {\color{blue}\ \em $x_i$} record {linking numbers with the link}, as winding numbers around the {\color{blue}$p$-punctures}
\item[•]{\color{blue} \em Relative twisting}
\item[] The power of the variable {\color{blue}$d$} encodes a relative twisting of the submanifolds, given by the winding around the diagonal of the symmetric power of the punctured disc.
\item[•]{\color{carmine} \em Modified dimension}
\item[] The power of the variable {\color{carmine}$y$} counts the winding number around the {\color{carmine} $s$-puncture}
\item[•]{\color{carmine}  \em Pivotal structure}
\item[] The grading {\color{carmine} \bf \em $\bar{u}$} captures the {\color{carmine} semisimplicity/ non-semisimplicity} of the invariant, which is given by a power of ${\color{blue} \bar{x}}$, so a {power of the linking number with the link}.
\item[•]{\color{carmine}  \em Weighted intersection -- unification of all quantum levels}
\item[] The new weights {\color{carmine} \bf \em $\bar{w}$} make possible to unify and see all quantum invariants of colours bounded by the level $\cN$ directly from one topological viewpoint: the weighted intersection $\PA$. 
\end{enumerate}

\clearpage 
\begin{rmk}{\bf Dictionary: geometric variables and quantum tools}\\ Based on the above correspondence, we conclude that the first three types of variables from our ring $\LL$, $\{ \bar{x}, \  d, \  y \}$ are related to the geometry of the local system. More specifically they record monodromies around the punctures and the relative twisting in the configuration space.  

On the other hand, the variables $\bar{u}$ encode algebraic data provided by the pivotal structures of the quantum group. Finally the addition of the quantum weights $\bar{w}$ was a key point for the unification of $U_q(sl(2))$ quantum invariants at bounded colours, and led to the construction of the  universal Jones invariant and universal ADO invariant. We present this dictionary in Figure \ref{ColouredAlexvar}.
\end{rmk}
\begin{figure}[H]
\centering
{\phantom{A} \ \ \ \ \ \ \ \ \ \ \ \ \ \ \ \ \ \ \ \ \ \ \ \ \ \ \ \ \ \ \ \  \ \ \ \ \ \ \ \ \ } \hspace*{-10mm}\includegraphics[scale=0.24]{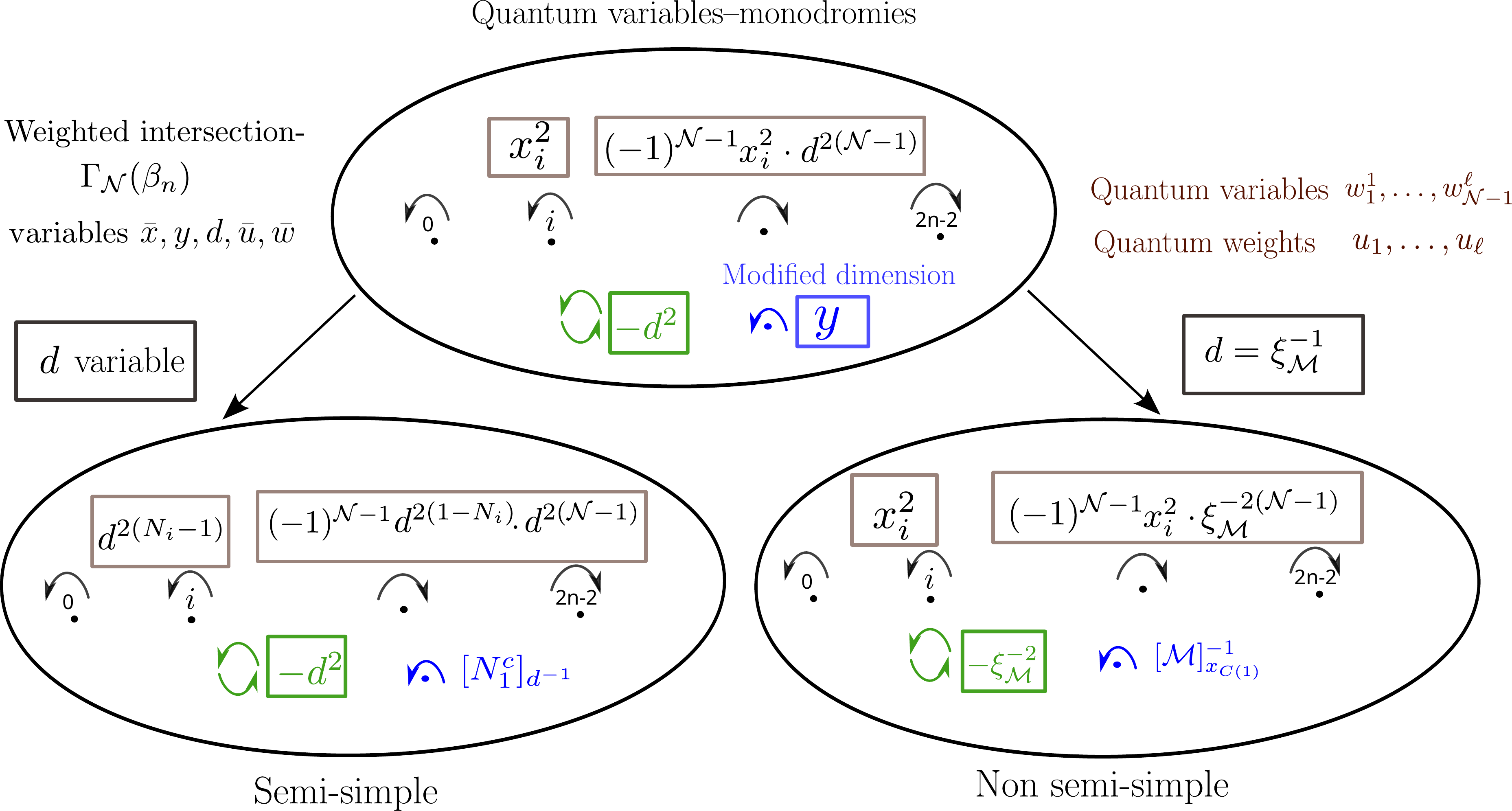}
\vspace{-3mm}
$\hspace{-25mm}\ \ \ \ \ \ \ \ \ \ \ \ \ J_{N_1,...,N_l}(L)- \text{variable  } d \hspace{43mm} \Phi^{\cM}(L)- \text{variables    }x_1,...,x_l$
$$\hspace{-20mm} \ \ \ \ \ \ \ \ \ \text{Coloured Jones polynomial} \hspace{35mm} \ \ \  \text{     Coloured Alexander polynomial}$$

\caption{\normalsize Quantum variables as monodromies of local systems}
\label{ColouredAlexvar}
\end{figure}

\subsection*{Acknowledgements} 
 I would like to especially thank Christian Blanchet, Kazuo Habiro, Jun Murakami and  Emmanuel Wagner for useful discussions related to this project. 
 The author gratefully acknowledges the support of the ANR grant ANR-24-CPJ1-0026-01 at Universit\'e Clermont Auvergne - LMBP. Also, she acknowledges partial support by grants of the Ministry of Research, Innovation and Digitization, CNCS - UEFISCDI, project numbers  PN-IV-P2-2.1-TE-2023-2040 and PN-IV-P1-PCE-2023-2001, within PNCDI IV.

 {\itshape 
 
Universit\'e Clermont Auvergne, CNRS, LMBP, F-63000 Clermont-Ferrand, France, \\Institute of Mathematics “Simion Stoilow” of the Romanian Academy, 21 Calea Grivitei Street, 010702 Bucharest, Romania.}

\

{\itshape cristina.anghel@uca.fr, cranghel@imar.ro}

\noindent \href{http://www.cristinaanghel.ro/}{www.cristinaanghel.ro}

\end{document}